\title{\uppercase{\textbf{\large{Weibel vanishing and the projective bundle formula for mixed characteristic motivic cohomology}}}}
\author{\textsc{tess bouis}}
\date{}
\definecolor{imperialred}{RGB}{237, 41, 57}
\definecolor{royalblue}{RGB}{64, 106, 212}
\definecolor{link}{RGB}{11,0,128}
\definecolor{gren}{RGB}{32,130,63}
\newlength{\outermargin} \setlength{\outermargin}{2.5cm}
\newlength{\mar} \setlength{\mar}{1cm}
\newlength{\len}
\newlength{\temp}\setlength{\temp}{\paperwidth}
\newtheorem{theorem}{Theorem}[section]
\newtheorem{lemma}[theorem]{Lemma}
\newtheorem{corollary}[theorem]{Corollary}
\newtheorem{proposition}[theorem]{Proposition}
\newtheorem{theoremintro}{Theorem}
\newtheorem{corollaryintro}[theoremintro]{Corollary}
\theoremstyle{definition}
\newtheorem{remark}[theorem]{Remark}
\newtheorem{example}[theorem]{Example}
\newtheorem{construction}[theorem]{Construction}
\newtheorem{definition}[theorem]{Definition}
\newtheorem{notation}[theorem]{Notation}
\newtheorem{conjecture}[theorem]{Conjecture}
\DeclareMathOperator{\Z}{\mathbb{Z}}
\DeclareMathOperator{\Q}{\mathbb{Q}}
\DeclareMathOperator{\F}{\mathbb{F}}
\renewcommand{\epsilon}{\varepsilon}
\DeclareFontFamily{U}{MnSymbolC}{}
\DeclareFontShape{U}{MnSymbolC}{m}{n}{
	<-5.5> MnSymbolC5
	<5.5-6.5> MnSymbolC6
	<6.5-7.5> MnSymbolC7
	<7.5-8.5> MnSymbolC8
	<8.5-9.5> MnSymbolC9
	<9.5-11.5> MnSymbolC10
	<11.5-> MnSymbolCb12
}{}
\DeclareSymbolFontAlphabet{\mathbb}{AMSb}
\DeclareSymbolFontAlphabet{\mathbbl}{bbold}
\newcommand{\Prism}{{\mathlarger{\mathbbl{\Delta}}}}
\numberwithin{equation}{theorem}
\begin{document}
	
	\maketitle
	
	\pagestyle{fancy}
	\fancyhead[EC]{TESS BOUIS}
	\fancyhead[OC]{\uppercase{Weibel vanishing and the projective bundle formula}}
	\fancyfoot[C]{\thepage}
	
	\begin{abstract}
		We prove that the motivic cohomology of mixed characteristic schemes, introduced in our previous work, satisfies various expected properties of motivic cohomology, including a motivic refinement of Weibel's vanishing in algebraic $K$-theory, the projective bundle formula, a comparison to Milnor $K$-theory, and a universal characterisation in terms of pro cdh descent. 
        These results extend those of Elmanto--Morrow to schemes which are not necessarily defined over a field.
	\end{abstract}

	{
		\hypersetup{linkcolor=black}
		\tableofcontents
	}
	
	\section{Introduction}
	
	\vspace{-\parindent}
	\hspace{\parindent}

    This paper is a sequel to \cite{bouis_motivic_2024}, where we introduced a theory of motivic complexes $\Z(i)^{\text{mot}}(X)$ for general quasi-compact quasi-separated (qcqs) schemes $X$. This theory extends the construction of Elmanto--Morrow \cite{elmanto_motivic_2023} to schemes which are not necessarily defined over a field, and recovers the classical theory of motivic cohomology when $X$ is smooth over a field \cite{elmanto_motivic_2023} or over a Dedekind domain \cite{bouis_beilinson-lichtenbaum_2025}. Our goal here is to prove several structural properties of the motivic complexes $\Z(i)^{\text{mot}}(X)$ for these possibly singular, mixed characteristic schemes $X$.
    
    
    
    One of the most interesting, yet mysterious features of the algebraic $K$-theory of singular schemes is the presence of nonzero negative $K$-groups. Most of the current understanding of negative $K$-groups relies on results on the behaviour of algebraic $K$-theory with respect to blowups \cite{cortinas_cyclic_2008,kerz_algebraic_2018}. It was proved in particular by Thomason \cite{thomason_K-groupes_1993} that algebraic $K$-theory sends the blowup square associated to a regular closed immersion to a long exact sequence of $K$-groups. The following result is a cohomological refinement of Thomason's result.
	
	\begin{theoremintro}[Regular blowup formula; see Theorem~\ref{theoremregularblowupformula}]\label{theoremintroregularblowupexcision}
		For every regular closed immersion $Y \rightarrow X$ of qcqs schemes ({\it i.e.}, the closed subscheme $Y$ is Zariski-locally on $X$ defined by a regular sequence) and every integer $i \geq 0$, the commutative diagram
		$$\begin{tikzcd}
			\Z(i)^{\emph{mot}}(X) \ar[r] \ar[d] & \Z(i)^{\emph{mot}}(Y) \ar[d] \\
			\Z(i)^{\emph{mot}}(\emph{Bl}_Y(X)) \ar[r] & \Z(i)^{\emph{mot}}(\emph{Bl}_Y(X) \times_X Y)
		\end{tikzcd}$$
		is a cartesian square in the derived category $\mathcal{D}(\Z)$.
	\end{theoremintro}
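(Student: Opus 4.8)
The plan is to reduce the statement to the behaviour of the cdh-sheafified motivic complex $\Z(i)^{\mathrm{cdh}}$ and of a handful of ``linearised'' invariants for which regular blowup formulas are available or can be bootstrapped. Write $c$ for the (Zariski-local) codimension of $Y$ in $X$, so that the exceptional divisor $E := \mathrm{Bl}_Y(X) \times_X Y$ is the projectivised normal bundle $\mathbb{P}_Y(N_{Y/X})$, a $\mathbb{P}^{c-1}$-bundle over $Y$; the square in the statement is an abstract blowup square, since $\mathrm{Bl}_Y(X) \to X$ is proper and an isomorphism over $X \setminus Y$, hence is cdh-distinguished, so $\Z(i)^{\mathrm{cdh}}$ carries it to a cartesian square. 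By the structural pullback description of $\Z(i)^{\mathrm{mot}}$ from \cite{bouis_motivic_2024} (refining \cite{elmanto_motivic_2023}) it then suffices to prove the analogous blowup formula for the fibre of the comparison map $\Z(i)^{\mathrm{mot}} \to \Z(i)^{\mathrm{cdh}}$. Via the arithmetic fracture square $\Z(i)^{\mathrm{mot}} \simeq \mathbb{Q}(i)^{\mathrm{mot}} \times_{(\prod_p \mathbb{Z}_p(i)^{\mathrm{mot}})_{\mathbb{Q}}} \prod_p \mathbb{Z}_p(i)^{\mathrm{mot}}$, and the identification of this fibre $p$-adically with (a truncation of) syntomic cohomology $\mathbb{Z}_p(i)^{\mathrm{syn}}$ and rationally with (a truncation of) the relevant graded piece of negative cyclic homology, one reduces --- using cdh descent again for the cdh-sheafifications --- to the regular blowup formula for syntomic cohomology ($p$-adically, uniformly in $p$) and for the graded pieces of cyclic homology over $\mathbb{Q}$.

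The rational statement can be bootstrapped from Thomason's theorem \cite{thomason_K-groupes_1993}: $K$-theory, hence $K(-)_{\mathbb{Q}}$, sends the square to a cartesian square, and so does $KH(-)_{\mathbb{Q}}$ by cdh descent; by the theorem of Corti\~{n}as et al.\ \cite{cortinas_cyclic_2008} the rationalised fibre $\mathrm{fib}(K \to KH)_{\mathbb{Q}}$ is computed by cyclic homology relative to $\mathbb{Q}$, which therefore satisfies the formula, and passing to the HKR filtration on Hochschild homology --- with graded pieces $\bigwedge^j \mathbb{L}_{-/\mathbb{Q}}[j]$ --- reduces matters to the classical blowup formula for the cotangent complex, and its exterior powers, along a regular closed immersion. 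Equivalently, one may argue directly with (negative or periodic) cyclic homology.

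The $p$-adic statement is the crux. The contribution of the exceptional divisor is handled by the projective bundle formula for prismatic cohomology and its Nygaard-graded pieces, which gives $\mathbb{Z}_p(i)^{\mathrm{syn}}(E) \simeq \bigoplus_{j=0}^{c-1} \mathbb{Z}_p(i-j)^{\mathrm{syn}}(Y)[-2j]$, while a general regular closed immersion is reduced to the zero section of a vector bundle by deformation to the normal cone. The main difficulty is that syntomic and prismatic cohomology are \emph{not} $\mathbb{A}^1$-invariant, so the deformation argument cannot proceed by homotopy invariance in the deformation parameter; instead one controls prismatic cohomology of the deformation space $M = \mathrm{Bl}_{Y \times \{0\}}(X \times \mathbb{A}^1)$ and of its two special fibres by a filtered d\'evissage, using that the normal bundle of $Y \times \mathbb{A}^1$ in $M$ is pulled back from $N_{Y/X}$ and applying the projective bundle formula fibrewise. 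This brings matters down to the universal case of the zero section $\mathrm{Spec}(R) \hookrightarrow \mathbb{A}^c_R$, where $\mathrm{Bl}_0(\mathbb{A}^c_R)$ is the total space of $\mathcal{O}(-1)$ over $\mathbb{P}^{c-1}_R$ and the formula can be verified directly from the prismatic projective bundle formula on $\mathbb{P}^{c-1}$ together with an explicit computation of the contribution of the tautological line bundle.

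Finally, one checks that replacing syntomic cohomology by the truncation actually entering $\Z(i)^{\mathrm{mot}}$ does not spoil the cartesian property: since $\mathrm{Bl}_Y(X) \to X$ is proper with fibres of dimension $< c$ --- controlled Zariski-locally on $X$ --- the connectivity estimates governing the truncation are preserved under proper pushforward along the blowup, so the truncated square remains cartesian. Reassembling the $p$-adic and rational inputs along the fracture square then gives the theorem. I expect the two main obstacles to be the $\mathbb{A}^1$-non-invariance workaround in the deformation to the normal cone for prismatic and syntomic cohomology, and making the syntomic argument uniform over all residue characteristics and compatible with the rational part --- this last point being precisely what goes beyond \cite{elmanto_motivic_2023}.
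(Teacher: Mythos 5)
Your overall skeleton (reduce to a rational and a mod~$p$ statement, and play the motivic complexes off against their cdh-sheafified counterparts, which send the square --- an abstract blowup square --- to a cartesian square for free) is the same as the paper's. The rational half is essentially correct, though the detour through Thomason and Corti\~{n}as et al.\ is superfluous and slightly backwards for a result meant to refine Thomason's theorem: once the fibre sequence of \cite[Corollary~$4.67$]{bouis_motivic_2024} identifies $\mathrm{fib}\big(\Q(i)^{\mathrm{mot}} \to \Q(i)^{\mathrm{cdh}}\big)$ with a shift of $R\Gamma_{\mathrm{Zar}}(-,\mathbb{L}\Omega^{<i}_{-_{\Q}/\Q})$ up to cdh-sheafified correction terms, all you need is the regular blowup formula for the finitely many graded pieces $R\Gamma_{\mathrm{Zar}}(-,\mathbb{L}^j_{-_{\Q}/\Q})$, which is already available (\cite[Lemma~$9.4.3$]{bhatt_absolute_2022}).

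The genuine gap is in your $p$-adic argument. You propose deformation to the normal cone for syntomic/prismatic cohomology, you correctly identify that the failure of $\mathbb{A}^1$-invariance blocks the standard form of that argument, and you then appeal to an unspecified ``filtered d\'evissage'' of the deformation space $M=\mathrm{Bl}_{Y\times\{0\}}(X\times\mathbb{A}^1)$ to compare its two special fibres --- but that comparison is precisely the hard point, and nothing you write supplies it. The paper's proof avoids the entire issue: mod~$p$, \cite[Corollary~$3.26$]{bouis_motivic_2024} reduces the statement to the regular blowup formula for $\F_p(i)^{\mathrm{BMS}}$; the bounded Nygaard filtration of \cite[Corollary~$5.31$]{antieau_beilinson_2020} reduces that to the presheaves $\mathcal{N}^j\Prism_-/p$; and the conjugate filtration on Hodge--Tate cohomology reduces those to mod-$p$ powers of the cotangent complex, where one cites \cite[Lemma~$9.4.3$]{bhatt_absolute_2022} again. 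No projective bundle formula, no deformation to the normal cone, and no explicit computation on $\mathrm{Bl}_0(\mathbb{A}^c)$ is needed. Your closing concern about the truncation is also misplaced: the truncated terms entering the definition of $\Z(i)^{\mathrm{mot}}$ appear only inside a cdh sheafification $L_{\mathrm{cdh}}\tau^{>i}(\cdots)$, and a cdh sheaf automatically sends the regular blowup square to a cartesian square, so no properness or connectivity estimate is required.
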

	
	However, algebraic $K$-theory fails to associate long exact sequences to general blowups. Motivated by Grothendieck's theorem on formal functions for quasi-coherent cohomology \cite[théorème~$4.1.5$]{grothendieck_elements_1961}, many people hoped for a formal analogue of these long exact sequences in algebraic $K$-theory, that would hold for general blowups. This was finally proved by Kerz--Strunk--Tamme \cite{kerz_algebraic_2018}, in the form of a pro cdh excision property for the algebraic $K$-theory of arbitrary noetherian schemes. Note that the pro cdh topology was recently introduced by Kelly--Saito \cite{kelly_procdh_2024}, as a way to encode this pro cdh excision property in a descent property for this Grothendieck topology. The following result is one of our main technical results, and relies in particular on the description of the motivic complexes $\Z(i)^{\text{mot}}$ in terms of quasi-coherent information (\cite[Theorem~B]{bouis_motivic_2024}) and on Grothendieck's theorem on formal functions, thus shedding some light on the analogy between quasi-coherent and $K$-theoretic techniques.
	
	\begin{theoremintro}[Pro cdh descent; see Theorem~\ref{theoremprocdhdescentformotiviccohomology}]\label{theoremintroprocdhdescent}
		For every integer $i \geq 0$, the presheaf $\Z(i)^{\emph{mot}}$ satisfies pro cdh descent on noetherian schemes. That is, for every abstract blowup square
		$$\begin{tikzcd}
			Y' \ar[r] \ar[d] & X' \ar[d] \\
			Y \ar[r] & X
		\end{tikzcd}$$
		of noetherian schemes, the associated commutative diagram
		$$\begin{tikzcd}
			\Z(i)^{\emph{mot}}(X) \ar[r] \ar[d] & \Z(i)^{\emph{mot}}(X') \ar[d] \\
			\{\Z(i)^{\emph{mot}}(rY)\}_r \ar[r] & \{\Z(i)^{\emph{mot}}(rY')\}_r
		\end{tikzcd}$$
		is a weakly cartesian square of pro objects in the derived category $\mathcal{D}(\Z)$.
	\end{theoremintro}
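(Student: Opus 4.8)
The plan is to deduce the statement from the description of $\Z(i)^{\emph{mot}}$ in terms of quasi-coherent data established in \cite[Theorem~B]{bouis_motivic_2024}, thereby reducing pro cdh descent to a statement about quasi-coherent cohomology that follows from Grothendieck's theorem on formal functions — this being the quasi-coherent shadow of the $K$-theoretic pro cdh excision of Kerz--Strunk--Tamme \cite{kerz_algebraic_2018}. Recall first that a commutative square of pro-objects in $\mathcal{D}(\Z)$ is weakly cartesian exactly when its total fibre is pro-zero, and that total fibres commute with finite limits. Since $\Z(i)^{\emph{mot}}$ is a Nisnevich sheaf (by \cite{bouis_motivic_2024}) and the pro cdh topology on noetherian schemes is generated, in the sense of Kelly--Saito \cite{kelly_procdh_2024}, by the Nisnevich topology together with abstract blowup squares, one first reduces to the case where $X = \mathrm{Spec}(A)$ is affine and noetherian, $Y = V(I)$ for an ideal $I \subseteq A$, and $f \colon X' \to X$ is proper and restricts to an isomorphism over $X \setminus Y$, with $Y' = X' \times_X Y$, $rY = \mathrm{Spec}(A/I^r)$, and $rY' = X' \times_X rY$.

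Next, one invokes \cite[Theorem~B]{bouis_motivic_2024} to present $\Z(i)^{\emph{mot}}(-)$, functorially, as a finite limit built out of the cdh-local motivic complex $\Z(i)^{\emph{cdh}}(-)$ together with the relevant ``quasi-coherent'' complexes — wedge powers of the cotangent complex, (truncations of) the derived de Rham complex over $\Z$, and, after $p$-completion, the Nygaard-filtered prismatic complex $\Prism$ and syntomic complexes — as well as the cdh-sheafifications of the latter. It then suffices to verify pro cdh descent for each constituent separately. The constituents that are cdh sheaves (namely $\Z(i)^{\emph{cdh}}$ and the various $L_{\mathrm{cdh}}$'s) are in particular invariant under nilpotent thickenings, since $Z_{\mathrm{red}} \hookrightarrow Z$ fits into an abstract blowup square for every scheme $Z$; hence the pro-objects $\{\Z(i)^{\emph{cdh}}(rY)\}_r$ and the analogous ones over $rY'$ are constant, and the square in question reduces to the honest square attached to the cdh cover $X' \sqcup Y \to X$, which is cartesian by cdh descent.

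It remains to treat the genuinely quasi-coherent constituents; write $F$ for such a functor, so $F(X) = R\Gamma(X, \mathcal{E}_X)$, $F(rY')=R\Gamma(rY', \mathcal{E}_{rY'})$, etc., for explicit complexes of quasi-coherent sheaves. Using that the pro-systems of relative cotangent complexes $\{L_{rY/X}\}_r$ and $\{L_{rY'/X'}\}_r$ are pro-zero — their transition maps become null once the index is doubled — one identifies $\{F(rY)\}_r$ with $\{F(X)\otimes_A^{\mathrm{L}} A/I^r\}_r$ and $\{F(rY')\}_r$ with $\{R\Gamma(X', \mathcal{E}_{X'})\otimes_A^{\mathrm{L}}A/I^r\}_r$; then the theorem on formal functions of \cite[théorème~$4.1.5$]{grothendieck_elements_1961}, in its pro-form, applied to the proper morphism $f$ and to the coherent complexes $R\Gamma(X', -)$ of the pieces of $\mathcal{E}_{X'}$ that are genuinely defined on $X'$ (namely the wedge powers of $L_{X'/X}$, which is coherent because $f$ is of finite type; the remaining pieces being pulled back from $X$, where the projection formula applies), exhibits the bottom row of the square as the $I$-adic reduction of the top row. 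One is thereby reduced to showing that the total fibre $C$ of the honest square $[F(X) \to R\Gamma(X', \mathcal{E}_{X'})]$ satisfies $C \xrightarrow{\ \sim\ } \{C\otimes_A^{\mathrm{L}}A/I^r\}_r$ as pro-objects; this holds because $f$ is an isomorphism over $X\setminus Y$, so that the cohomology of $C$ is $I$-power-torsion, whence the $\mathrm{Tor}$-terms pro-vanish by Artin--Rees and the $\mathrm{H}^0$-terms are eventually constant. The $p$-completed prismatic and syntomic constituents are handled the same way, running the formal functions argument $p$-adically.

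The main obstacle I expect is twofold. First, one must make the quasi-coherent description of \cite[Theorem~B]{bouis_motivic_2024} precise and natural enough to commute the total-fibre construction past it, controlling the finite-limit presentation of $\Z(i)^{\emph{mot}}$ uniformly along the thickenings $rY$, and in particular tracking the interaction of the (Nygaard and Hodge) filtrations with this presentation. Second, and more seriously, one must establish the pro-form of the theorem on formal functions for the derived complexes that actually occur: this requires reducing, via the transitivity triangle for $L_{-/X}$ and the projection formula, to coherent complexes on the noetherian scheme $X'$, an Artin--Rees/Mittag-Leffler argument to pass from the $I$-adic completion to the pro-system, and the pro-vanishing of $\{L_{rY/X}\}_r$ together with its prismatic analogue. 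By contrast, the nil-invariance input governing the cdh-local part is soft, and the reduction to the affine, proper case is routine.
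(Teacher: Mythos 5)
Your overall skeleton agrees with the paper's: split $\Z(i)^{\text{mot}}$ into cdh-sheafy constituents (which satisfy pro cdh descent for the soft reason you give) and quasi-coherent constituents, and handle the latter by Morrow's argument — pro-vanishing of $\{\mathbb{L}_{rY/X}\}_r$, the formal functions theorem in pro-form (the paper uses Lurie's version to drop finite-dimensionality hypotheses), and the observation that the total fibre of the honest square has $I$-power-torsion, coherent cohomology. That part of your plan is sound and is exactly Proposition~\ref{propositionprocdhdescentforcotangentcomplex} and its supporting lemmas.

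The genuine gap is in the sentence ``the $p$-completed prismatic and syntomic constituents are handled the same way, running the formal functions argument $p$-adically.'' This does not work as stated, for two reasons. First, the complexes $\mathcal{N}^{\geq i}\Prism_{-}\{i\}$ and $\Z_p(i)^{\text{BMS}}$ are not finite complexes of coherent sheaves, so Grothendieck--Lurie formal functions does not apply to them directly; one must first reduce \emph{modulo $p$}, where the boundedness of the Nygaard filtration in a fixed weight (\cite[Corollary~$5.31$]{antieau_beilinson_2020}) and the conjugate filtration on $\overline{\Prism}$ express $\F_p(i)^{\text{BMS}}$ through finitely many mod-$p$ powers of the cotangent complex. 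This only yields the mod-$p$ statement. Second — and this is the real crux — knowing that $\{C_r/p\}_r$ is weakly zero for every prime $p$ and that $\{C_r\otimes_{\Z}\Q\}_r$ is weakly zero does \emph{not} formally imply that $\{C_r\}_r$ is weakly zero, and the relevant constituent is the infinite product $\prod_{p}\Z_p(i)^{\text{BMS}}$, so one cannot argue one prime at a time either. The paper closes this gap with two inputs your proposal lacks: (i) a uniform-in-$p$ choice of indices (Lemmas~\ref{lemmaprozeroimpliesprozeromodp} and~\ref{lemmaprozeroimpliesprozeropcompletion}), and (ii) the fact that a torsion abelian group which is a product of derived $p$-complete groups is \emph{bounded} torsion (Lemma~\ref{lemmatorsionimpliesboundedtorsion}, resting on \cite[Theorem~$1.1$]{bhatt_torsion_2019}), which is what lets the mod-$N$ vanishing kill the image of a transition map. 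Moreover, the rational pro cdh descent for $\big(\prod_p\Z_p(i)^{\text{BMS}}\big)_{\Q}$ is itself not obtained by any $p$-adic formal-functions argument: it is extracted by rotating the arithmetic fracture square for $\Z(i)^{\text{TC}}$, using pro cdh descent for $\Q(i)^{\text{mot}}$ and for the two characteristic-zero derived de Rham corners (Propositions~\ref{propositionprocdhdescentforderiveddeRhamchar0} and~\ref{propositionprocdhdescentforrigidanalyticdR}). Until you supply these gluing steps, your argument establishes pro cdh descent only rationally and modulo each prime, not integrally.
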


    Theorem~\ref{theoremintroprocdhdescent} is formally equivalent to the analogous statement for the presheaves $\Z(i)^{\text{TC}}$, introduced in \cite{bouis_motivic_2024}. Over a field, this was proved by Elmanto--Morrow by reduction to powers of the cotangent complex \cite[Section~$8.1$]{elmanto_motivic_2023}. In mixed characteristic, the main new difficulty is to prove pro cdh descent for the profinite completion of $\Z(i)^{\text{TC}}$, which is naturally expressed as a product over all prime numbers $p$ of contributions coming from integral $p$-adic Hodge theory. Surprisingly, this fact ultimately relies on a statement purely in terms of the derived de Rham cohomology of characteristic zero schemes (see Section~\ref{subsectionprocdhdescentmotiviccohomology}). In particular, our proof of Theorem~\ref{theoremintroprocdhdescent} does not follow the same strategy as Kerz--Strunk--Tamme's proof of pro cdh descent for algebraic $K$-theory (see Remark~\ref{remarkprocdhdescentforKtheory}).
	
	Kelly--Saito moreover proved that non-connective algebraic $K$-theory not only satisfies pro cdh descent, but is the pro cdh sheafification of connective algebraic $K$-theory \cite{kelly_procdh_2024}. Combined with the observation of Bhatt--Lurie that connective algebraic $K$-theory is left Kan extended on commutative rings from smooth $\Z$-algebras \cite{elmanto_modules_2020}, this motivated the definition of the {\it pro cdh motivic complexes} $\Z(i)^{\text{procdh}}$, as the pro cdh sheafification of the left Kan extension of the classical motivic complexes~$\Z(i)^{\text{cla}}$ given by Bloch's cycle complexes.
	
	\begin{corollaryintro}[Comparison to pro cdh motivic cohomology; see Theorem~\ref{theoremcomparisonprocdhmotivic}]\label{corollaryintrocomparisonprocdhmotivic}
		Let $X$ be a noetherian scheme. Then for every integer $i \geq 0$, there is a natural equivalence
		$$\Z(i)^{\emph{procdh}}(X) \xlongrightarrow{\sim} \Z(i)^{\emph{mot}}(X)$$
		in the derived category $\mathcal{D}(\Z)$.
	\end{corollaryintro}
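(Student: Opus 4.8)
The plan is to realise the comparison map via the universal properties of left Kan extension and pro cdh sheafification, and then to prove that it is an equivalence by a Noetherian induction over abstract blowup squares, using Theorem~\ref{theoremintroprocdhdescent} together with the motivic refinement of Weibel's vanishing proved in this paper.

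First I would construct the map. By the comparison results recalled in the introduction, the restriction of $\Z(i)^{\text{mot}}$ to finitely generated smooth $\Z$-algebras is the classical motivic complex $\Z(i)^{\text{cla}}$; hence the universal property of left Kan extension produces a natural transformation $L\Z(i)^{\text{cla}} \to \Z(i)^{\text{mot}}$ of presheaves on qcqs schemes, where $L\Z(i)^{\text{cla}}$ denotes the left Kan extension from smooth $\Z$-algebras. By Theorem~\ref{theoremintroprocdhdescent} the target satisfies pro cdh descent on noetherian schemes, so this transformation factors uniquely through the pro cdh sheafification, giving the comparison map $\Z(i)^{\text{procdh}} = L_{\text{procdh}} L\Z(i)^{\text{cla}} \to \Z(i)^{\text{mot}}$.

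To show that it is an equivalence on a noetherian scheme $X$, I would argue by Noetherian induction on $\dim X$. Both presheaves are finitary --- $\Z(i)^{\text{mot}}$ by construction, and $\Z(i)^{\text{procdh}}$ because left Kan extension and, by the analysis of the pro cdh topology in \cite{kelly_procdh_2024}, pro cdh sheafification on noetherian schemes preserve finitariness --- and both satisfy pro cdh descent, so it is enough to prove that the fibre $\mathcal{C}_i$, again a finitary pro cdh sheaf, vanishes. In characteristic $p$ and in characteristic $0$ one can appeal to the results of Elmanto--Morrow \cite{elmanto_motivic_2023}, so the new content concerns schemes of mixed characteristic; in dimension zero, and more generally on regular schemes, both sides recover Bloch's higher Chow complex (for $\Z(i)^{\text{mot}}$ by \cite{bouis_beilinson-lichtenbaum_2025} and finitariness), so $\mathcal{C}_i$ vanishes there. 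For the inductive step, after a dévissage reducing to $X$ reduced I would apply the pro cdh descent square of Theorem~\ref{theoremintroprocdhdescent} to an abstract blowup square whose closed part $Z$ has dimension $< \dim X$, such as the blowup of $X$ along its singular locus, and run the argument of Kerz--Strunk--Tamme \cite{kerz_algebraic_2018}: the boundary pro systems $\{rZ\}$ and $\{rZ'\}$ are annihilated by the inductive hypothesis, and the remaining contribution --- coming from a blowup whose total space need not be regular --- is controlled by the bound on the cohomological amplitude of $\Z(i)^{\text{mot}}(-)$ in terms of the Krull dimension that the motivic Weibel vanishing theorem provides.

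The crux is this inductive step. Since resolution of singularities is unavailable in mixed characteristic, blowing up the singular locus does not terminate in a regular scheme, and one is forced to run the pro cdh Noetherian induction in which only the lower-dimensional boundary of each blowup square is ever directly accessible; the two essential inputs are then the degreewise bound coming from the motivic Weibel vanishing theorem and the full force of the pro cdh descent of Theorem~\ref{theoremintroprocdhdescent}, in particular its assertion about the pro systems $\{rZ\}$ of infinitesimal thickenings. The remaining technical point is to check that the fibre $\mathcal{C}_i$ meets precisely the finitariness and descent hypotheses required to feed it into this machinery.
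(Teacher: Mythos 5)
The construction of the comparison map in your first paragraph matches the paper's (the lisse-motivic comparison map of Definition~\ref{definitionlissemotiviccomparisonmap}, factored through pro cdh sheafification via Theorem~\ref{theoremprocdhdescentformotiviccohomology}). The proof that it is an equivalence, however, has a genuine gap, and it is exactly at the point you yourself flag as ``the crux.''

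Your Noetherian induction does not close. In the abstract blowup square with closed part $Y$ the singular locus, the inductive hypothesis handles the pro systems $\{rY\}_r$ and $\{rY'\}_r$, but the total space $X'$ of the blowup has the \emph{same} dimension as $X$ and, in mixed characteristic, is not regular; pro cdh descent therefore only trades the statement for $X$ against the statement for another singular scheme of the same dimension. Motivic Weibel vanishing (Theorem~\ref{theoremmotivicWeibelvanishing}) gives the same amplitude bound $[\,\cdot\,,i+d]$ for $X$ and for $X'$, so it provides no decreasing invariant, and you offer no other mechanism for termination. (The Kerz--Strunk--Tamme machinery you invoke is designed to prove a \emph{vanishing} in a range of degrees --- see \cite[Proposition~$8.10$]{elmanto_motivic_2023}, which the paper does use, but for Theorem~\ref{theoremmotivicWeibelvanishing}, not here --- and does not directly prove that a pro cdh sheaf is an equivalence onto another.) Two further points: the paper states explicitly that $\Z(i)^{\text{procdh}}$ is \emph{not} finitary on general qcqs schemes, so your claim that pro cdh sheafification preserves finitariness cannot be used as stated; and your proposed base case (regular schemes) is not the right one, since the induction never reaches regular schemes.

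What the paper does instead is apply Kelly--Saito's universal characterisation \cite[Theorem~$9.7$]{kelly_procdh_2024}: a finitary pro cdh sheaf agreeing with $\Z(i)^{\text{lisse}}$ on the local rings of the pro cdh topos --- which Kelly--Saito identify as nil extensions of henselian valuation rings --- must coincide with $\Z(i)^{\text{procdh}}$. The essential input you are missing is therefore Lemma~\ref{lemmanilextensionhenselianvaluationringlisse=mot}: that $\Z(i)^{\text{lisse}}(R)\to\Z(i)^{\text{mot}}(R)$ is an equivalence for $R$ a nil extension of a henselian valuation ring. This in turn rests on the comparison to lisse motivic cohomology (Corollary~\ref{corollarylissemotivicmaincomparisontheorem}) together with the degree bounds of Lemmas~\ref{lemmamotiviccohomologyofhenselianvaluationringisindegreesatmosti} and~\ref{lemmanilfibreofmotiviccohomologyisindegreesatmosti}. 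A repaired version of your argument would have to replace the dimension induction by this stalkwise check on pro cdh points.
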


    The proof of Corollary~\ref{corollaryintrocomparisonprocdhmotivic} relies on pro cdh descent (Theorem~\ref{theoremintroprocdhdescent}) and on the following comparison to lisse motivic cohomology. 
    
    Note that the pro cdh motivic complexes $\Z(i)^{\text{procdh}}$ are not finitary, so they cannot coincide with the motivic complexes $\Z(i)^{\text{mot}}$ on general qcqs schemes. However, together with the finitariness of the Nisnevich sheaves $\Z(i)^{\text{mot}}$ proved in \cite[Theorem~C]{bouis_motivic_2024}, this gives a universal cycle-theoretic characterisation of the motivic complexes $\Z(i)^{\text{mot}}$ on general qcqs schemes.

    \begin{theoremintro}[Comparison to lisse motivic cohomology; see Corollary~\ref{corollarylissemotivicmaincomparisontheorem}]\label{theoremintrolissemotiviccohomology}
        Let $A$ be a local ring. Then for every integer $i \geq 0$, there is a natural equivalence
        $$\Z(i)^{\emph{lisse}}(A) \xlongrightarrow{\sim} \tau^{\leq i} \Z(i)^{\text{mot}}(A)$$
        in the derived category $\mathcal{D}(\Z)$, where $\Z(i)^{\emph{lisse}}$ denotes the weight-$i$ lisse motivic cohomology of~$A$, defined as the left Kan extension from smooth $\Z$-algebras of the classical motivic complex $\Z(i)^{\emph{cla}}$. In particular, the functor $\tau^{\leq i} \Z(i)^{\emph{mot}}$ is left Kan extended on local rings from local essentially smooth $\Z$-algebras.
    \end{theoremintro}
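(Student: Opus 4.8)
The plan is to construct a natural comparison map, reduce the right-hand side to its own truncation by a degree estimate on the left, and then verify the equivalence after decomposing both sides into a rational and a $p$-adic part.

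First, since $\Z(i)^{\mathrm{mot}}$ recovers Bloch's cycle complex $\Z(i)^{\mathrm{cla}}$ on smooth $\Z$-algebras by the comparison over Dedekind domains \cite{bouis_beilinson-lichtenbaum_2025}, the universal property of the left Kan extension yields a natural transformation $\Z(i)^{\mathrm{lisse}} \to \Z(i)^{\mathrm{mot}}$ of presheaves on commutative rings. To factor it through $\tau^{\leq i}\Z(i)^{\mathrm{mot}}$ on local rings, I would show that $\Z(i)^{\mathrm{lisse}}(A)$ is concentrated in degrees $\leq i$ for $A$ local. Every morphism from a smooth $\Z$-algebra $R$ to $A$ sends $R \setminus \mathfrak{p}_R$ into $A^{\times}$, where $\mathfrak{p}_R$ is the preimage of the maximal ideal $\mathfrak{m}_A$; hence in the colimit computing $\Z(i)^{\mathrm{lisse}}(A)$ each term factors through the localisation $R_{\mathfrak{p}_R}$, and since $\Z(i)^{\mathrm{cla}}$ is finitary the cohomology of $\Z(i)^{\mathrm{lisse}}(A)$ is a filtered colimit of the groups $H^n(\Z(i)^{\mathrm{cla}}(R_{\mathfrak{p}_R}))$, which vanish for $n > i$ by the Gersten-type vanishing for local essentially smooth $\Z$-algebras (a consequence of the Gersten conjecture over Dedekind domains and the vanishing of motivic cohomology of a field above its weight). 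The same reduction identifies $\Z(i)^{\mathrm{lisse}}$ on local rings with the left Kan extension from local essentially smooth $\Z$-algebras, so that the ``in particular'' clause follows once the main equivalence is known, using $\Z(i)^{\mathrm{lisse}}(R) = \Z(i)^{\mathrm{cla}}(R) = \tau^{\leq i}\Z(i)^{\mathrm{mot}}(R)$ for such $R$.

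Next I would prove that the map $\Z(i)^{\mathrm{lisse}}(A) \to \tau^{\leq i}\Z(i)^{\mathrm{mot}}(A)$ is an equivalence. Both functors are finitary — the left-hand side as a left Kan extension from finitely presented algebras, the right-hand side because $\Z(i)^{\mathrm{mot}}$ is finitary \cite{bouis_motivic_2024} and $\tau^{\leq i}$ commutes with filtered colimits — so it suffices to treat local rings essentially of finite type over $\Z$, which are excellent of finite Krull dimension. When $A$ is essentially smooth over $\Z$ the two sides agree by the previous step; when $A$ is an $\F_p$-algebra or a $\Q$-algebra I would deduce the statement from the equicharacteristic comparison of Elmanto--Morrow \cite{elmanto_motivic_2023}, once the left Kan extension from smooth $\Z$-algebras is identified with the one from smooth algebras over the residue characteristic — immediate for $\Q$ (which is ind-smooth over $\Z$), but a genuine point in characteristic $p$. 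For the remaining singular mixed characteristic case, I would use the description of $\Z(i)^{\mathrm{mot}}$ as the fibre product of the cdh-sheafified complex $\Z(i)^{\mathrm{cdh}} = L_{\mathrm{cdh}}\Z(i)^{\mathrm{cla}}$ and $\Z(i)^{\mathrm{TC}}$ over $L_{\mathrm{cdh}}\Z(i)^{\mathrm{TC}}$, so that $\mathrm{fib}(\Z(i)^{\mathrm{mot}} \to \Z(i)^{\mathrm{cdh}}) = \mathrm{fib}(\Z(i)^{\mathrm{TC}} \to L_{\mathrm{cdh}}\Z(i)^{\mathrm{TC}})$, and analyse this fibre term by term: rationally it reduces, as in \cite{elmanto_motivic_2023}, to powers of the cotangent complex; after profinite completion it is a product over $p$ of syntomic contributions, whose truncation $\tau^{\leq i}$ is controlled by the Beilinson--Lichtenbaum comparison with $p$-adic (pro-)étale cohomology \cite{bouis_beilinson-lichtenbaum_2025} together with the rigidity and continuity of mod-$p$ étale cohomology; and the cdh-local term $\tau^{\leq i}\Z(i)^{\mathrm{cdh}}$ is handled by pro cdh descent (Theorem~\ref{theoremintroprocdhdescent}) in place of resolution of singularities. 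These identifications should show that $\tau^{\leq i}$ of the correction term coincides with its left Kan extension from smooth $\Z$-algebras, where it vanishes, whence the claim.

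I expect the main obstacle to be this last, genuinely mixed characteristic, step: showing that $\tau^{\leq i}\,\mathrm{fib}(\Z(i)^{\mathrm{TC}} \to L_{\mathrm{cdh}}\Z(i)^{\mathrm{TC}})$ is left Kan extended from smooth $\Z$-algebras over an arbitrary singular local ring, without the quasisyntomic hypotheses customary in $p$-adic Hodge theory and without resolution of singularities, and reconciling the ``lisse over $\Z$'' normalisation with the one over the residue characteristic. This is precisely where the Beilinson--Lichtenbaum comparison and pro cdh descent (Theorem~\ref{theoremintroprocdhdescent}) must substitute for the geometric inputs available in the equicharacteristic case.
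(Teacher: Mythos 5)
Your opening moves match the paper's: the comparison map is the one induced by the universal property of the left Kan extension, the degree bound on $\Z(i)^{\text{lisse}}$ of a local ring comes from the Zariski-local bound $\tau^{\leq i}\Z(i)^{\text{cla}} = \Z(i)^{\text{cla}}$ of Geisser--Levine/Geisser passed through the colimit, and the global strategy of checking the statement rationally and mod $p$ is also the paper's. But the core of your argument — the singular mixed characteristic case — rests on the wrong tools, and the step you yourself flag as "the main obstacle" is exactly where the proof has to happen. Two specific problems. First, pro cdh descent cannot play the role you assign to it: it is a statement about abstract blowup squares of noetherian schemes (relating $X$, $X'$, and pro-systems of infinitesimal thickenings), not a device for computing $\tau^{\leq i}\Z(i)^{\text{cdh}}$ of a local ring or for showing that anything is left Kan extended from smooth $\Z$-algebras. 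In the paper the logical order is in fact the reverse of what you propose: the lisse comparison is proved first, in Section~\ref{sectionmilnorandlisse}, with no pro cdh input, and is then fed into the pro cdh comparison theorem. Second, the fibre square $\Z(i)^{\text{mot}} = \Z(i)^{\text{cdh}} \times_{L_{\text{cdh}}\Z(i)^{\text{TC}}} \Z(i)^{\text{TC}}$ is a dead end here precisely because the structure of $\Z(i)^{\text{cdh}}$ on general mixed characteristic valuation rings is the open problem ($F$-smoothness) that the introduction warns about; truncation also does not interact cleanly with this fibre sequence.

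The missing idea is the mod-$p$ syntomic description of \cite[Theorem~$5.10$]{bouis_motivic_2024}: $\Z/p^k(i)^{\text{mot}} = \mathrm{fib}\big(\Z/p^k(i)^{\text{syn}} \to L_{\text{cdh}}\tau^{>i}\Z/p^k(i)^{\text{syn}}\big)$. Because the error term is a cdh sheafification of something concentrated in degrees $> i$, the statement "$\tau^{\leq i}\Z/p^k(i)^{\text{mot}}$ is left Kan extended from smooth $\Z$-algebras" becomes equivalent to the same statement for $\tau^{\leq i}\Z/p^k(i)^{\text{syn}}$, hence (since $\Z/p^k(i)^{\text{syn}}$ is itself left Kan extended) to the statement that $\tau^{>i}\Z/p^k(i)^{\text{syn}}$ is left Kan extended, which by \cite[Lemma~$7.6$]{elmanto_motivic_2023} reduces to its \emph{rigidity}; that rigidity follows from the fibre sequence $R\Gamma_{\text{ét}}(-,j_!\mu_{p^k}^{\otimes i}) \to \Z/p^k(i)^{\text{syn}} \to \Z/p^k(i)^{\text{BMS}}$ and Gabber rigidity. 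This replaces your entire case analysis (which in any event leaves the equicharacteristic-$p$ renormalisation of the Kan extension unresolved). Two further points you omit: (i) $\tau^{\leq i}$ does not commute with reduction mod $p$ — the discrepancy is $\mathrm{H}^{i+1}(-)[p]$ — so assembling the rational and mod-$p$ statements into the integral one requires the surjectivity of $\mathrm{H}^i_{\text{mot}}(R,\Z(i)) \to \mathrm{H}^i_{\text{mot}}(R,\Z/p^k(i))$ for local $R$ (Lemma~\ref{lemmatopdegreeintegraltomodpissurjective}, itself a consequence of the mod-$p^k$ Kan extension statement); and (ii) the rational case is handled not via the cotangent complex but via the Adams decomposition of connective rational $K$-theory, which shows $\tau^{\leq 2i}\Q(i)^{\text{mot}}$ is left Kan extended and combines with the degree-$\leq i$ bound on $\Q(i)^{\text{lisse}}$ of local rings.
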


    Theorem~\ref{theoremintrolissemotiviccohomology} was first proved by Elmanto--Morrow for local rings over a field \cite[Theorem~$7.7$]{elmanto_motivic_2023}, by using the projective bundle formula and the comparison to classical motivic cohomology. We provide a more direct argument, which does not rely on these ingredients, for this comparison to lisse motivic cohomology, thus reproving in particular the result of Elmanto--Morrow in equicharacteristic.

    For $i=1$, this comparison to lisse motivic cohomology also implies that the motivic complex $\Z(1)^{\text{mot}}$ is related in the expected way to the unit and Picard groups (see Example~\ref{exampleweightonemotiviccohomology}). In turn, this relation to the unit group, together with the multiplicative structure on the motivic complexes $\Z(i)^{\text{mot}}$, induces symbol maps from Milnor $K$-theory to the Milnor range of motivic cohomology, and we prove the following.

    \begin{theoremintro}[Milnor $K$-theory; see Theorem~\ref{theoremcomparisontoMilnorKtheory})]\label{theoremintroMilnorKtheory}
        Let $A$ be a henselian local ring. Then for any integers $i \geq 0$ and $n \geq 1$, there is a natural isomorphism
			$$\widehat{\emph{K}}{}^{\emph{M}}_i(A)/n \xlongrightarrow{\cong} \emph{H}^i_{\emph{mot}}(A,\Z(i))/n$$
			of abelian groups, where $\widehat{\emph{K}}{}^{\emph{M}}_i(A)$ denotes the $i^{\emph{th}}$ improved Milnor $K$-group of $A$ in the sense of \cite{kerz_milnor_2010}.
    \end{theoremintro}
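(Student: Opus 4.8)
The plan is to establish the isomorphism by combining three inputs: the comparison to lisse motivic cohomology (Theorem~\ref{theoremintrolissemotiviccohomology}), the known relation between lisse motivic cohomology and improved Milnor $K$-theory, and a Bockstein/base-change argument to pass to mod-$n$ coefficients. First I would recall that for a local ring $A$, the weight-$i$ lisse motivic complex $\Z(i)^{\mathrm{lisse}}(A)$ sits in cohomological degrees $\leq i$, and that its top cohomology group $\mathrm{H}^i(\Z(i)^{\mathrm{lisse}}(A))$ is, by left Kan extension from smooth $\Z$-algebras together with the Nesterenko--Suslin--Totaro computation in the smooth case, naturally identified with the $i$-th \emph{improved} Milnor $K$-group $\widehat{\mathrm{K}}{}^{\mathrm{M}}_i(A)$ — this is exactly the content of the Kerz--Gabber presentation of improved Milnor $K$-theory (\cite{kerz_milnor_2010}), which is designed precisely so that it is the left Kan extension of ordinary Milnor $K$-theory from smooth algebras over infinite fields (and it agrees with $\mathrm{K}^{\mathrm{M}}$ for local rings with infinite residue field). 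Combining this with Theorem~\ref{theoremintrolissemotiviccohomology}, which gives $\Z(i)^{\mathrm{lisse}}(A) \xrightarrow{\sim} \tau^{\leq i}\Z(i)^{\mathrm{mot}}(A)$, yields a natural isomorphism $\widehat{\mathrm{K}}{}^{\mathrm{M}}_i(A) \xrightarrow{\cong} \mathrm{H}^i_{\mathrm{mot}}(A, \Z(i))$ \emph{integrally} — note $\mathrm{H}^i(\tau^{\leq i}\Z(i)^{\mathrm{mot}}(A)) = \mathrm{H}^i_{\mathrm{mot}}(A,\Z(i))$ since truncation does not affect the top degree.

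Once the integral statement is in hand, reducing mod $n$ is almost formal, but there is a subtlety: I want $\mathrm{H}^i_{\mathrm{mot}}(A,\Z(i))/n \cong \mathrm{H}^i_{\mathrm{mot}}(A,\Z/n(i))$, i.e. the universal coefficient sequence must degenerate in the top degree. This holds because $\Z(i)^{\mathrm{mot}}(A)$ is supported in degrees $\leq i$ for $A$ local — this is again part of Theorem~\ref{theoremintrolissemotiviccohomology}, or rather its consequence that $\tau^{\leq i}\Z(i)^{\mathrm{mot}}(A) = \Z(i)^{\mathrm{lisse}}(A)$ controls everything in degree $i$ and there is nothing in degree $i+1$ obstructing the Bockstein. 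Concretely, from the cofiber sequence $\Z(i)^{\mathrm{mot}}(A) \xrightarrow{n} \Z(i)^{\mathrm{mot}}(A) \to \Z/n(i)^{\mathrm{mot}}(A)$ one gets $0 \to \mathrm{H}^i_{\mathrm{mot}}(A,\Z(i))/n \to \mathrm{H}^i_{\mathrm{mot}}(A,\Z/n(i)) \to \mathrm{H}^{i+1}_{\mathrm{mot}}(A,\Z(i))[n] \to 0$, and the last term vanishes for $A$ local; so the mod-$n$ motivic cohomology in the top degree is just $\widehat{\mathrm{K}}{}^{\mathrm{M}}_i(A)/n$. The hypothesis that $A$ is \emph{henselian} local (rather than merely local) is what I would then use to identify this further, if needed, with the mod-$n$ Milnor $K$-theory computed via the Gersten-type or symbolic presentation and to ensure the symbol map is surjective; for a general local ring one only gets the improved Milnor $K$-group, but improved Milnor $K$-theory already appears in the statement, so the henselian hypothesis is presumably there to make the comparison map the expected \emph{symbol} map (coming from the multiplicative structure and the weight-one identification of Example~\ref{exampleweightonemotiviccohomology}) rather than an abstract isomorphism — I would check that the map constructed via cup products agrees with the Kan-extension isomorphism by reducing to the smooth case where both are the Nesterenko--Suslin symbol.

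**The main obstacle** I anticipate is precisely this last point of \emph{naturality/compatibility of the symbol map}: the theorem does not merely assert the existence of an abstract isomorphism but that the map induced by Milnor symbols $\widehat{\mathrm{K}}{}^{\mathrm{M}}_i(A)/n \to \mathrm{H}^i_{\mathrm{mot}}(A,\Z(i))/n$ is an isomorphism. Constructing that symbol map requires the multiplicative structure on $\{\Z(i)^{\mathrm{mot}}\}_i$ together with the identification of $\mathrm{H}^1_{\mathrm{mot}}(A,\Z(1))$ with (a shift of) the units $A^\times$ (Example~\ref{exampleweightonemotiviccohomology}), giving a map $(A^\times)^{\otimes i} \to \mathrm{H}^i_{\mathrm{mot}}(A,\Z(i))$ that one must show kills the Steinberg relations — hence factors through $\widehat{\mathrm{K}}{}^{\mathrm{M}}_i$ — and then agrees with the lisse comparison isomorphism. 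The cleanest route is to verify this after left Kan extension: both sides are left Kan extended from smooth $\Z$-algebras (the left side by the very definition of improved Milnor $K$-theory, the right side by the "in particular" clause of Theorem~\ref{theoremintrolissemotiviccohomology}), the symbol map is natural, so it suffices to check the smooth case, where $\mathrm{H}^i_{\mathrm{mot}} = \mathrm{H}^i_{\mathrm{cla}}$ and the statement is the classical Nesterenko--Suslin--Totaro theorem that Milnor symbols generate the top motivic cohomology. The henselian hypothesis then enters only to guarantee that the mod-$n$ truncation argument of the previous paragraph is unobstructed in the generality needed (e.g. via rigidity for the relevant residue fields), but I expect it can be relaxed to arbitrary local rings if one is content with the improved Milnor $K$-group throughout — which is already what the statement uses.
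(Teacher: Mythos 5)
Your reduction to an \emph{integral} isomorphism $\widehat{\mathrm{K}}{}^{\mathrm{M}}_i(A) \cong \mathrm{H}^i(\Z(i)^{\mathrm{lisse}}(A)) \cong \mathrm{H}^i_{\mathrm{mot}}(A,\Z(i))$ contains a fatal gap: the claimed identification of $\mathrm{H}^i(\Z(i)^{\mathrm{lisse}}(A))$ with the improved Milnor $K$-group rests on ``the Nesterenko--Suslin--Totaro computation in the smooth case,'' but that computation is only known for fields and, by Elmanto--Morrow, for equicharacteristic local rings. For local essentially smooth $\Z$-algebras of mixed characteristic the statement $\widehat{\mathrm{K}}{}^{\mathrm{M}}_i(A)\cong \mathrm{H}^i_{\mathrm{cla}}(A,\Z(i))$ is an open problem (it would essentially require Gersten-type results for Milnor $K$-theory and motivic cohomology over Dedekind bases); the paper records the integral comparison precisely as Conjecture~\ref{conjecturecomparisonMilnorKtheoryandmotiviccohomology} and explains in Remark~\ref{remarksmoothcaseimpliesgeneralcaseMilnorcomparison} that even the smooth case is unknown in mixed characteristic. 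So your left-Kan-extension strategy reduces the theorem to an open conjecture rather than proving it. (Your mod-$n$ Bockstein step is fine and is essentially Lemma~\ref{lemmatopdegreeintegraltomodpissurjective} together with Corollary~\ref{corollaryHilbert90}, but it is not where the difficulty lies.)

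The actual proof must work with finite coefficients from the start and splits into cases. If $A$ contains a field one quotes Elmanto--Morrow. Otherwise $A$ is $p$-henselian of mixed characteristic $(0,p)$: for $n$ prime to $p$ one uses rigidity --- both for improved Milnor $K$-theory (\cite[Proposition~$10\,(7)$]{kerz_milnor_2010}) and for étale cohomology, after identifying $\mathrm{H}^i_{\mathrm{mot}}(A,\Z(i))/n$ with $\mathrm{H}^i_{\text{\'et}}(A,\mu_n^{\otimes i})$ --- to reduce to the equicharacteristic ring $A/p$; for $n=p^k$ one identifies $\mathrm{H}^i_{\mathrm{mot}}(A,\Z(i))/p^k$ with $\mathrm{H}^i(\Z/p^k(i)^{\mathrm{BMS}}(A))$ and invokes L\"uders--Morrow's theorem that the symbol map from $\widehat{\mathrm{K}}{}^{\mathrm{M}}_i(A)/p^k$ to syntomic cohomology is an isomorphism for $p$-henselian local rings. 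This is also why the henselian hypothesis cannot be ``relaxed to arbitrary local rings'' as you suggest: both rigidity and the syntomic identification require it, and for general local rings the statement remains conjectural. Your discussion of the construction of the symbol map and its factorisation through $\widehat{\mathrm{K}}{}^{\mathrm{M}}_i$ is on the right track (this is carried out in Lemmas~\ref{lemmasymbolmapmilnorKtheorywelldefined} and~\ref{lemmasymbolmapfactorsthoughimprovedMilnorKgroup}, using Gersten injectivity for $\mathrm{K}_2$ and a transfer argument), but it does not rescue the main reduction.
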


    The proof of Theorem~\ref{theoremintroMilnorKtheory} relies in particular on results of Lüders--Morrow in $p$-adic Hodge theory \cite{luders_milnor_2023}. Note that for local rings over a field, Elmanto--Morrow proved this result with $\Z$-coefficients and without assuming the local ring to be henselian \cite[Theorem~$7.12$]{elmanto_motivic_2023}. Although we expect the same result to hold for general local rings, even the smooth case is open in mixed characteristic (see Remark~\ref{remarksmoothcaseimpliesgeneralcaseMilnorcomparison}).
    
	
    Similarly, one can use the relation to the Picard group in weight one and the multiplicative structure on the motivic complexes $\Z(i)^{\text{mot}}$ to formulate the projective bundle formula, as we explain now.

    Assuming the existence of a well-behaved derived category of motives $\mathcal{D}_{\text{mot}}(X)$, the motivic cohomology groups of a scheme $X$ should be given by
	$$\text{H}^j_{\text{mot}}(X,\Z(i)) \cong \text{Hom}_{\mathcal{D}_{\text{mot}}(X)}(\text{M}(X),\Z(i)[j]),$$
	where $\text{M}(X) \in \mathcal{D}_{\text{mot}}(X)$ is the motive associated to $X$, and $\Z(i) \in \mathcal{D}_{\text{mot}}(X)$ are the Tate motives, fitting for every integer $r \geq 0$ in a natural decomposition in $\mathcal{D}_{\text{mot}}(X)$:
	$$\text{M}(\mathbb{P}^r_{\Z}) \cong \bigoplus_{j=0}^r \Z(j)[2j].$$
	In the $\mathbb{A}^1$-invariant framework, Voevodsky constructed such a derived category of motives, in which the classical motivic complexes $\Z(i)^{\text{cla}}$ can be interpreted in terms of these Tate motives. Without assuming $\mathbb{A}^1$-invariance, Annala--Iwasa \cite{annala_motivic_2023} and Annala--Hoyois--Iwasa \cite{annala_algebraic_2025,annala_atiyah_2024} recently introduced a more general derived category of motives, where the decomposition of the motive $\text{M}(\mathbb{P}^r_{\Z})$, {\it i.e.}, the {\it projective bundle formula}, is isolated as the key defining property. The following result states that the motivic complexes $\Z(i)^{\text{mot}}$ fit within this theory of non-$\mathbb{A}^1$-invariant motives.

    \needspace{5\baselineskip}
	
	\begin{theoremintro}[Projective bundle formula; see Theorem~\ref{theoremprojectivebundleformula}]\label{theoremintroprojectivebundleformula}
		Let $X$ be a qcqs scheme, $i \geq 0$ be an integer, and $\mathcal{E}$ be a vector bundle of rank $r$ on $X$. Then for every integer $i \geq 0$, the powers of the motivic first Chern class $c_1^{\emph{mot}}(\mathcal{O}(1)) \in \emph{Pic}(\mathbb{P}_X(\mathcal{E})) \cong \emph{H}^2_{\emph{mot}}(\mathbb{P}_X(\mathcal{E}),\Z(1))$ induce a natural equivalence
		$$\bigoplus_{i=0}^{r-1} \Z(i-j)^{\emph{mot}}(X)[-2j] \xlongrightarrow{\sim} \Z(i)^{\emph{mot}}(\mathbb{P}_X(\mathcal{E}))$$
		in the derived category $\mathcal{D}(\Z)$.
	\end{theoremintro}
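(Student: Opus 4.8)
The motivic complex $\Z(i)^{\mathrm{mot}}$ is assembled, along a natural cartesian square, out of the cdh-local (equivalently $\mathbb{A}^1$-invariant) motivic complex $\Z(i)^{\mathrm{cdh}}$, the trace-theoretic complex $\Z(i)^{\mathrm{TC}}$, and their cdh-local comparison $L_{\mathrm{cdh}}\Z(i)^{\mathrm{TC}}$; the plan is to prove the projective bundle formula for each of these three theories and then reassemble. First I would reduce to the trivial bundle: each $\Z(j)^{\mathrm{mot}}$ is a Nisnevich sheaf (\cite[Theorem~C]{bouis_motivic_2024}), $\mathbb{P}_X(\mathcal{E})$ is Zariski-locally on $X$ isomorphic to $\mathbb{P}^{r-1}_X$, and the motivic first Chern class is natural, so both sides of the asserted equivalence are Nisnevich sheaves in $X$ and it suffices to treat $\mathbb{P}^{r-1}_X$; using finitariness of the Nisnevich sheaves $\Z(j)^{\mathrm{mot}}$ and Noetherian approximation, one may moreover assume $X$ affine of finite type over $\Z$. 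Since the motivic first Chern class maps compatibly to the first Chern classes of $\Z(i)^{\mathrm{cdh}}$ and $\Z(i)^{\mathrm{TC}}$ --- all of them arising in weight one from $\mathbb{G}_m$ and the Picard group --- the map of the theorem is a morphism of cartesian squares, so it suffices to prove the formula for $\Z(i)^{\mathrm{cdh}}$, for $\Z(i)^{\mathrm{TC}}$, and for $L_{\mathrm{cdh}}\Z(i)^{\mathrm{TC}}$.

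For the trace-theoretic complex I would start from Beilinson's semiorthogonal decomposition
\[ \mathrm{Perf}(\mathbb{P}_X(\mathcal{E})) \;\simeq\; \big\langle\, \mathcal{O},\ \mathcal{O}(-1),\ \ldots,\ \mathcal{O}(1-r)\,\big\rangle \otimes p^*\mathrm{Perf}(X), \]
which forces the projective bundle formula for every localizing invariant, and in particular for $\mathrm{TC}$ --- this is already the mechanism underlying Thomason's projective bundle formula in $K$-theory \cite{thomason_K-groupes_1993}. This must be upgraded to the motivic filtration on $\mathrm{TC}$: since that filtration is multiplicative and $c_1^{\mathrm{TC}}(\mathcal{O}(1))$ lies in filtration weight one, the filtered statement reduces to the projective bundle formula on the associated graded, which $p$-adically is the projective bundle formula for syntomic cohomology $\Z_p(i)^{\mathrm{syn}}$ --- essentially the prismatic cohomology of projective space over a general base prism, in the style of Bhatt--Lurie --- and rationally is the projective bundle formula for the HKR-filtered negative cyclic homology, that is, the classical computation of the Hodge-filtered algebraic de Rham cohomology of $\mathbb{P}^{r-1}$. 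Assembling these contributions along the arithmetic fracture square defining $\Z(i)^{\mathrm{TC}}$ gives the formula for $\Z(i)^{\mathrm{TC}}$, with the decomposition generated by powers of $c_1^{\mathrm{TC}}(\mathcal{O}(1))$; the formula for $L_{\mathrm{cdh}}\Z(i)^{\mathrm{TC}}$ then follows from that for $\Z(i)^{\mathrm{TC}}$ by cdh-sheafification.

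For the cdh-local complex, $\Z(i)^{\mathrm{cdh}}$ is an $\mathbb{A}^1$-invariant cdh sheaf which on regular schemes recovers classical motivic cohomology, where the projective bundle formula, together with its Chern-class decomposition, is Voevodsky's classical result over an arbitrary regular noetherian base. Over a field the formula for $\Z(i)^{\mathrm{cdh}}$ on all schemes is due to Elmanto--Morrow \cite{elmanto_motivic_2023}, by reduction via cdh descent --- and resolution of singularities or alterations --- to the regular case. To carry this out in mixed characteristic, where such resolutions are unavailable, I would instead use the pro cdh descent of Theorem~\ref{theoremintroprocdhdescent}: for a reduced finite-type $\Z$-scheme $X$ with singular locus $Z \subsetneq X$ and blow-up $X' = \mathrm{Bl}_Z(X)$ with exceptional divisor $Z'$, pro cdh descent expresses $\Z(i)^{\mathrm{cdh}}(\mathbb{P}^{r-1}_X)$ in terms of $\Z(i)^{\mathrm{cdh}}(\mathbb{P}^{r-1}_{X'})$ together with the pro-systems attached to $\mathbb{P}^{r-1}_Z$ and $\mathbb{P}^{r-1}_{Z'}$; the latter two have strictly smaller dimension and, $\Z(i)^{\mathrm{cdh}}$ being insensitive to nilpotents, are independent of the thickening, so the analysis reduces to propagating the formula along the tower of blow-ups of singular loci. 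Reassembling the three cases through the cartesian square, the Chern-class compatibilities recorded above identify the resulting equivalence with the one induced by $c_1^{\mathrm{mot}}(\mathcal{O}(1))$.

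The step I expect to be the main obstacle is the cdh-local part in mixed characteristic: in the absence of resolution of singularities one must genuinely route the reduction to regular schemes through the pro cdh descent of Theorem~\ref{theoremintroprocdhdescent} --- the technical core of the paper --- controlling the tower of blow-ups of singular loci (which does not change dimension) and, crucially, ensuring that it is the Chern-class decomposition, not merely an abstract equivalence, that survives these reductions and the cdh-sheafification. A secondary, more computational difficulty is the upgrade of Beilinson's decomposition from $\mathrm{TC}$ as a spectrum to its motivic filtration, which requires controlling the motivic filtration on $\mathrm{TC}(\mathbb{P}^{r-1}_X)$ prime by prime --- this is exactly where the projective bundle formula for prismatic/syntomic cohomology and the multiplicativity of the motivic filtration do the essential work --- together with the coherent matching, across the gluing square and the sheafification, of the first Chern classes of the motivic, cdh-local, syntomic, and de Rham theories.
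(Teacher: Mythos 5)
There is a genuine gap, and it sits exactly where you predicted: the cdh-local corner of your cartesian square. Your strategy is to decompose $\Z(i)^{\mathrm{mot}}$ via the square with corners $\Z(i)^{\mathrm{cdh}}$, $\Z(i)^{\mathrm{TC}}$ and $L_{\mathrm{cdh}}\Z(i)^{\mathrm{TC}}$ and to prove the projective bundle formula for each corner; this is precisely the Elmanto--Morrow route in equicharacteristic, and it is precisely what the paper says cannot be done here. In mixed characteristic the projective bundle formula for $\Z(i)^{\mathrm{cdh}}$ is only known conditionally on $F$-smoothness of the relevant valuation rings (\cite{bachmann_A^1-invariant_2024}, known over a field and over a perfectoid base by \cite{bouis_cartier_2023}, open in general). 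Your proposed repair --- routing the reduction to regular schemes through pro cdh descent and a tower of blow-ups of singular loci --- does not close this gap: $\Z(i)^{\mathrm{cdh}}$ is already a cdh sheaf, so (pro) cdh descent buys nothing new, and what descent reduces you to is not regular noetherian schemes (there is no resolution of singularities in mixed characteristic, and the tower of blow-ups of singular loci need not terminate) but henselian valuation rings, which are the local rings of the cdh topology. On those valuation rings the projective bundle formula for $\Z(i)^{\mathrm{cdh}}$ \emph{is} the open $F$-smoothness question, so the argument is circular at the decisive point.

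The paper's actual proof avoids $\Z(i)^{\mathrm{cdh}}$ with torsion coefficients altogether. Rationally it uses the degeneration of the Atiyah--Hirzebruch spectral sequence and the $\mathbb{P}^1$-bundle formula for $K$-theory. Mod $p$ it uses the fibre sequence $\F_p(i)^{\mathrm{mot}} \to \F_p(i)^{\mathrm{syn}} \to (L_{\mathrm{cdh}}\tau^{>i}\F_p(i)^{\mathrm{syn}})(-)$ from \cite[Theorem~$5.10$]{bouis_motivic_2024}: the syntomic term satisfies the $\mathbb{P}^1$-bundle formula by Bhatt--Lurie, and the hard work (Propositions~\ref{propositionP^1bundleformulaforcdhlocalhighdegreesyntomiccohomology} and~\ref{propositionCP1bundleformulaformixedcharvaluationringrank1}) is to establish it for $L_{\mathrm{cdh}}\tau^{>i}\F_p(i)^{\mathrm{syn}}$ by henselian $v$-excision, reduction to rank-one henselian valuation rings, rigidity to pass to the residue field, and a degeneration argument with Selmer $K$-theory --- none of which appears in your outline. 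Two further structural points: the paper first proves only the $\mathbb{P}^1$-bundle formula (Theorem~\ref{theoremP^1bundleformulaformotiviccohomology}) and then deduces the general projective bundle formula from it together with regular blowup excision (Theorem~\ref{theoremregularblowupformula}) by the inductive argument of Annala--Iwasa, rather than attacking $\mathbb{P}^r$ directly; and the regular blowup formula itself is proved by a separate reduction to powers of the cotangent complex (rationally via derived de Rham cohomology, mod $p$ via the Nygaard-filtered prismatic description), which your proposal does not address. Your treatment of the $\mathrm{TC}$ corner (Beilinson's semiorthogonal decomposition upgraded along the multiplicative motivic filtration to syntomic and Hodge-filtered de Rham cohomology) is sound in spirit and close to the compatibilities the paper records in its Section on first Chern classes, but it cannot rescue the overall strategy.
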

	
	Theorem~\ref{theoremintroprojectivebundleformula} is proved by Elmanto--Morrow in the equicharacteristic case \cite{elmanto_motivic_2023}, where the proof relies on the projective bundle formula for the complexes $\Z(i)^{\text{cdh}}$ \cite{bachmann_A^1-invariant_2024}. In mixed characteristic, however, the cdh-local motivic complexes $\Z(i)^{\text{cdh}}$ are known to satisfy the projective bundle formula only conditionally on a certain property of valuation rings, called $F$-smoothness \cite{bhatt_syntomic_2023,bachmann_A^1-invariant_2024}. This condition can be proved in mixed characteristic for valuation rings over a perfectoid base: this is the main result of \cite{bouis_cartier_2023}. The case of general valuation rings remaining open, our proof of Theorem~\ref{theoremintroprojectivebundleformula} is different from that of Elmanto--Morrow, and uses in particular our description of motivic cohomology with finite coefficients in terms of syntomic cohomology (\cite[Theorem~$5.10$]{bouis_motivic_2024}) and a special case of Theorem~\ref{theoremintroregularblowupexcision}.

    Finally, we explain how the previous results, and in particular pro cdh descent, can be used to give a motivic description of the negative $K$-groups of singular schemes.
	
	An important conjecture of Weibel \cite{weibel_K-theory_1980} states that for every noetherian scheme $X$ of dimension at most $d$, the negative $K$-groups $\text{K}_{-n}(X)$ vanish for integers $n > d$. This conjecture was settled by Kerz--Strunk--Tamme \cite{kerz_algebraic_2018}, as a consequence of pro cdh descent for algebraic $K$-theory. The proof of the following result uses the techniques of Kerz--Strunk--Tamme \cite{kerz_algebraic_2018} as reformulated by Elmanto--Morrow \cite{elmanto_motivic_2023}, who proved the same result over a field. In particular, Theorem~\ref{theoremintroweibelvanishing} relies on pro cdh descent for motivic cohomology (Theorem~\ref{theoremintroprocdhdescent}) and on Antieau--Mathew--Morrow--Nikolaus' rigidity theorem for syntomic cohomology (\cite[Theorem~$5.2$]{antieau_beilinson_2020}).
	
	\begin{theoremintro}[Motivic Weibel vanishing; see Theorem~\ref{theoremmotivicWeibelvanishing}]\label{theoremintroweibelvanishing}
		Let $X$ be a noetherian scheme of finite dimension $d$, and $i \geq 0$ be an integer. Then for every integer $j > i+d$, the motivic cohomology group $\emph{H}^j_{\emph{mot}}(X,\Z(i))$ is zero.
	\end{theoremintro}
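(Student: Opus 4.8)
The plan is to follow the strategy of Elmanto--Morrow in the equicharacteristic case \cite{elmanto_motivic_2023}, itself a motivic refinement of the Kerz--Strunk--Tamme proof of Weibel's conjecture \cite{kerz_algebraic_2018}. That argument rests on two inputs: pro cdh descent for motivic cohomology, and a bound by dimension on the ``infinitesimal'' (syntomic) correction term. The point of this paper is to supply both in mixed characteristic, via Theorem~\ref{theoremintroprocdhdescent} and the rigidity theorem for syntomic cohomology of Antieau--Mathew--Morrow--Nikolaus \cite[Theorem~5.2]{antieau_beilinson_2020}, which holds over an arbitrary base.

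First I would reduce the statement to a bound on a single auxiliary presheaf. Set $W(i) := \mathrm{fib}\bigl(\Z(i)^{\text{mot}} \to \Z(i)^{\text{cdh}}\bigr)$, the fiber of the cdh-sheafification map; by the construction of the motivic complexes in \cite{bouis_motivic_2024}, this is also the fiber of the cdh-sheafification map for the complexes $\Z(i)^{\text{TC}}$. From the long exact sequence of the fiber sequence $W(i)(X) \to \Z(i)^{\text{mot}}(X) \to \Z(i)^{\text{cdh}}(X)$, the theorem reduces to showing that, for $X$ noetherian of dimension $d$, both $\Z(i)^{\text{cdh}}(X)$ and $W(i)(X)$ are concentrated in cohomological degrees $\leq i+d$. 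The first bound is soft: $\Z(i)^{\text{cdh}}$ is the cdh-sheafification of the lisse motivic complexes, which are concentrated in degrees $\leq i$ on local rings (Theorem~\ref{theoremintrolissemotiviccohomology}), and the cdh-cohomological dimension of $X$ is at most $d$ (compare \cite{elmanto_motivic_2023}).

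The heart of the proof is the bound $W(i)(X) \in \mathcal D(\Z)^{\leq i+d}$, which I would obtain by the Kerz--Strunk--Tamme induction on $d = \dim X$. The presheaf $W(i)$ has three key properties: it is finitary on noetherian schemes (as $\Z(i)^{\text{mot}}$ is, by \cite[Theorem~C]{bouis_motivic_2024}, and hence so is its cdh-sheafification); it vanishes on regular noetherian rings, where $\Z(i)^{\text{mot}}$ agrees with its cdh-sheafification; and it satisfies pro cdh descent, since $\Z(i)^{\text{mot}}$ does by Theorem~\ref{theoremintroprocdhdescent} and the cdh sheaf $\Z(i)^{\text{cdh}}$ does a fortiori (being nil-invariant, so that the relevant pro-systems of thickenings are pro-constant). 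Granting these, the induction proceeds exactly as in \cite{kerz_algebraic_2018,elmanto_motivic_2023}: by finitariness one reduces to $X$ of finite type over $\Z$, hence excellent; one then picks a nowhere dense closed $Z \subseteq X$ with regular complement and applies pro cdh descent, in the form of pro-excision for the resulting abstract blowup or conductor square, to express $W(i)(X)$ --- weakly, as a pro object --- through $W(i)$ of a modification of $X$, of the infinitesimal thickenings of $Z$, and of the thickenings of the exceptional locus. The thickenings have dimension $<d$ and are bounded by the inductive hypothesis; the vanishing of $W(i)$ on regular rings, together with a further iteration invoking de Jong alterations, disposes of the modification term. The base case $d=0$, namely $\Z(i)^{\text{mot}}(A) \in \mathcal D(\Z)^{\leq i}$ for $A$ artinian local with residue field $\kappa$, follows from the arithmetic fracture square applied to $W(i)(A) \simeq \mathrm{fib}\bigl(\Z(i)^{\text{TC}}(A) \to \Z(i)^{\text{TC}}(\kappa)\bigr)$: the rational part reduces to powers of the relative cotangent complex $\mathbb{L}_{\kappa/A}$ of the nilpotent surjection $A \twoheadrightarrow \kappa$, as over a field of characteristic zero, and the $p$-complete part --- the weight-$i$ relative syntomic cohomology of that surjection --- is concentrated in degrees $\leq i$ by the rigidity theorem \cite[Theorem~5.2]{antieau_beilinson_2020}; the same continuity input also underlies the passage to completions used along the induction.

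The step I expect to be the main obstacle is the geometric bookkeeping of the inductive argument: one must arrange the abstract blowup and conductor squares, and the alterations, so that after finitely many applications of pro cdh descent every term other than a strictly lower-dimensional thickening has been moved onto a regular scheme, where $W(i)$ vanishes, without ever raising the dimension --- this is the part of \cite{kerz_algebraic_2018} that must be reproduced rather than merely quoted, now with $W(i)$ in place of infinitesimal $K$-theory. By contrast, the two genuinely mixed-characteristic difficulties, pro cdh descent and the behaviour of $p$-adic invariants under completion, have been packaged into the cited results, so that the rest of the argument is essentially formal.
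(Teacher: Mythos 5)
Your overall architecture matches the paper's: split off the fibre $W(i)=\mathrm{fib}(\Z(i)^{\text{mot}}\to\Z(i)^{\text{cdh}})$, bound $\Z(i)^{\text{cdh}}(X)$ by $i+d$ separately, and control $W(i)$ by a Kerz--Strunk--Tamme-style induction on dimension driven by pro cdh descent. But the inductive input you feed into that induction is wrong, and this is a genuine gap. You assert that $W(i)$ \emph{vanishes on regular noetherian rings}, ``where $\Z(i)^{\text{mot}}$ agrees with its cdh-sheafification''. In mixed characteristic this comparison is not established in the paper and is not known in general: it amounts to cdh descent for $\Z(i)^{\text{TC}}$ on regular schemes, and is tied to exactly the open issues ($F$-smoothness of general valuation rings, Beilinson--Lichtenbaum-type comparisons) that the introduction flags as the reason the cdh-local projective bundle formula is only conditional. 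Moreover, even granting it, your induction would then need the modification $X'$ produced along the way to be regular, which forces you into alterations and transfers --- precisely the ``geometric bookkeeping'' you identify as the main obstacle. The paper avoids both problems by quoting the axiomatised induction of Elmanto--Morrow (their Proposition~$8.10$), whose hypotheses are: finitariness, pro cdh descent, boundedness by degree $i$ on \emph{henselian valuation rings} (Lemma~\ref{lemmamotiviccohomologyofhenselianvaluationringisindegreesatmosti}: these are cdh points, so there $\Z(i)^{\text{mot}}\simeq\Z(i)^{\text{cdh}}\simeq\Z(i)^{\text{lisse}}$ and $W(i)$ genuinely vanishes), and boundedness by degree $i$ of the nil-fibres on \emph{all} noetherian local rings (Lemma~\ref{lemmanilfibreofmotiviccohomologyisindegreesatmosti}). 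No statement about general regular schemes is needed or used.

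A second, smaller gap is in your base case. The arithmetic fracture square plus rigidity gives that the nil-fibre $F=\mathrm{fib}(\Z(i)^{\text{mot}}(A)\to\Z(i)^{\text{mot}}(A/I))$ is in degrees $\leq i$ rationally and mod $p$; but this only kills $\mathrm{H}^j(F)$ for $j\geq i+2$ (torsion and torsion-free), and leaves $\mathrm{H}^{i+1}(F)$ open, since $\mathrm{H}^{i+1}(F)[p]$ is a quotient of $\mathrm{H}^i(F/p)$, which need not vanish. The paper closes this by showing $\mathrm{H}^i_{\text{mot}}(A,\Z(i))\to\mathrm{H}^i_{\text{mot}}(A/I,\Z(i))$ is surjective, using the left Kan extension theorem (Theorem~\ref{theoremmotiviccohomologyisleftKanextendedonlocalringsinsmalldegrees}) and the torsion-freeness of $\mathrm{H}^{i+1}_{\text{mot}}(A,\Z(i))$ from Corollary~\ref{corollaryHilbert90}; your proposal does not supply this step. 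Your bound on $\Z(i)^{\text{cdh}}(X)$ and your verification of pro cdh descent for $W(i)$ are fine and agree with the paper.
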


	Via the Atiyah--Hirzebruch spectral sequence relating motivic cohomology to algebraic $K$-theory (\cite[Theorem~C]{bouis_motivic_2024}), Theorem~\ref{theoremintroweibelvanishing} is a motivic refinement of Weibel's vanishing conjecture in $K$\nobreakdash-theory. Moreover, this result also implies a new cohomological description of the lowest nonzero negative $K$\nobreakdash-groups (see Corollary~\ref{corollaryweibelupgraded}).

	\subsection*{Notation}
	
	\vspace{-\parindent}
	\hspace{\parindent}



    Given a commutative ring $R$, an $R$-algebra $S$, and a scheme $X$ over $\text{Spec}(R)$, denote by~$X_{S}$ the base change $X \times_{\text{Spec}(R)} \text{Spec}(S)$ of $X$ from $R$ to $S$. If $X$ is a derived scheme, this base change is implicitly the derived base change from $R$ to $S$. We sometimes use the derived base even on classical schemes, and say explicitly when we do so.

    Given a commutative ring $R$, denote by $\mathcal{D}(R)$ the derived category of $R$-modules; it is implicitly the derived $\infty$-category of $R$-modules, and is in particular naturally identified with the category of $R$-linear spectra. Given an element $d$ of $R$, also denote by $(-)^\wedge_d$ the $d$-adic completion functor in the derived category~$\mathcal{D}(R)$.

    Given a commutative ring $R$ and an ideal $I$ of $R$, the pair $(R,I)$ is called {\it henselian} if it satisfies Hensel's lemma. A local ring $R$ is called {\it henselian} if the pair $(R,\mathfrak{m})$ is henselian, where $\mathfrak{m}$ is the maximal ideal of $R$. Henselian local rings are the local rings for the Nisnevich topology. A commutative ring $R$ is called {\it $d$-henselian}, for $d$ an element of $R$, if the pair $(R,(d))$ is henselian. A functor $F(-)$ on commutative rings is called {\it rigid} if for every henselian pair $(R,I)$, the natural map $F(R) \rightarrow F(R/I)$ is an equivalence.

    We use several Grothendieck topologies, including the Zariski, Nisnevich, and cdh topologies. Denote by $L_{\text{Zar}}$, $L_{\text{Nis}}$, and $L_{\text{cdh}}$ the sheafification functors for these topologies.

	\subsection*{Acknowledgements}
	
	\vspace{-\parindent}
	\hspace{\parindent}

    I am very grateful to Matthew Morrow for sharing many insights on motivic cohomology, and for careful readings of this manuscript.	I would also like to thank Jacob Lurie and Georg Tamme for many helpful comments and corrections on a preliminary version of this paper, and Ben Antieau, Denis\nobreakdash-Charles Cisinski, Dustin Clausen, Frédéric Déglise, Elden Elmanto, Quentin Gazda, Marc \hbox{Hoyois}, Ryomei Iwasa, Shane Kelly, Niklas Kipp, Arnab Kundu, Shuji Saito, and Georg Tamme for helpful discussions. This project has received funding from the European Research Council (ERC) under the European Union’s Horizon 2020 research and innovation programme (grant agreement No. 101001474).

	
	\needspace{4\baselineskip}

    \section{Comparison to Milnor \texorpdfstring{$K$}{TEXT}-theory and lisse motivic cohomology}\label{sectionmilnorandlisse}
	
	\vspace{-\parindent}
	\hspace{\parindent}

    \vspace{-\parindent}
	\hspace{\parindent}
	
	In this section, we study the motivic complexes $\Z(i)^{\text{mot}}$ on local rings. We prove that these are left Kan extended, in degrees at most $i$, from local essentially smooth $\Z$-algebras (Theorem~\ref{theoremmotiviccohomologyisleftKanextendedonlocalringsinsmalldegrees}). Via the comparison to syntomic cohomology in the smooth case (\cite[Theorem~D\,(3)]{bouis_motivic_2024}), this means that the motivic complexes~$\Z(i)^{\text{mot}}$ are controlled, up to degree~$i$, by classical motivic cohomology (Corollary~\ref{corollarylissemotivicmaincomparisontheorem}). We use the latter result to construct a comparison map from the $i^{\text{th}}$ (improved) Milnor $K$-group of a general local ring $A$ to the motivic cohomology group $\text{H}^i_{\text{mot}}(A,\Z(i))$, and prove that this map is an isomorphism with finite coefficients (Theorem~\ref{theoremcomparisontoMilnorKtheory}).
	
	\subsection{Comparison to lisse motivic cohomology}
	
	\vspace{-\parindent}
	\hspace{\parindent}
	
	In this subsection, we prove a comparison between motivic cohomology and lisse motivic cohomology on general local rings (Corollary~\ref{corollarylissemotivicmaincomparisontheorem}), which generalises to mixed characteristic the analogous comparison result of Elmanto--Morrow over a field (\cite[Theorem~$7.7$]{elmanto_motivic_2023}). To do so, we use the following comparison map, where lisse motivic cohomology is defined on animated commutative rings as the left Kan extension of classical motivic cohomology from smooth $\Z$-algebras (\cite[Definition~$3.7$]{bouis_motivic_2024}).
	
	\begin{definition}[Lisse-motivic comparison map]\label{definitionlissemotiviccomparisonmap}
		For every integer $i \in \Z$, the {\it lisse-motivic comparison map} is the map
		$$\Z(i)^{\text{lisse}}(-) \longrightarrow \Z(i)^{\text{mot}}(-)$$
		of functors from animated commutative rings to the derived category $\mathcal{D}(\Z)$ defined as the composite
		$$\big(L_{\text{AniRings}/\text{Sm}_{\Z}} \Z(i)^{\text{cla}}\big)(-) \longrightarrow \big(L_{\text{AniRings}/\text{Sm}_{\Z}} \Z(i)^{\text{mot}}\big)(-) \longrightarrow \Z(i)^{\text{mot}}(-),$$
		where the first map is the map induced by \cite[Definition~$3.23$]{bouis_motivic_2024} and the second map is the canonical map.
	\end{definition}
	
	\begin{lemma}\label{lemmarationalmotiviccohoisLKEindegreesatmost2i}
		For every integer $i \geq 0$, the functor
		$$\tau^{\leq 2i} \Q(i)^{\emph{mot}}(-) : \emph{AniRings} \longrightarrow \mathcal{D}(\Q)$$
		is left Kan extended from smooth $\Z$-algebras.
	\end{lemma}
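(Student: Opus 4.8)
The statement is about rational motivic cohomology $\Q(i)^{\mathrm{mot}}$ truncated in degrees $\leq 2i$; I want to show this truncated functor is left Kan extended from smooth $\Z$-algebras. The natural strategy is to reduce to something whose left Kan extension property is already known or easy, exploiting that rationally motivic cohomology should decompose into simpler pieces. The key input I would use is the relation, established in \cite{bouis_motivic_2024}, between $\Z(i)^{\mathrm{mot}}$ and the other motivic-type invariants (syntomic/$\mathrm{TC}$ and the cdh-local piece), together with the fact that \emph{rationally} the Adams/weight decomposition of $K$-theory or of $\mathrm{TC}$ makes the weight-$i$ part accessible. More concretely, rationalised algebraic $K$-theory is left Kan extended from smooth $\Z$-algebras (connective $K$-theory is, by Bhatt--Lurie/Elmanto et al., and rationally one can control the negative part), and the Atiyah--Hirzebruch / motivic filtration identifies $\Q(i)^{\mathrm{mot}}$ with the weight-$i$ graded piece; so the plan is to transfer the left Kan extension property along this identification, being careful about the truncation range.

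**Steps.** First I would recall that on smooth $\Z$-algebras, $\Q(i)^{\mathrm{mot}}$ agrees with classical rational motivic cohomology $\Q(i)^{\mathrm{cla}}$, which vanishes in degrees $> 2i$ (indeed is concentrated in degrees $\leq i$ for the Milnor-type part, but certainly $\tau^{\leq 2i}$ is no constraint there); hence on smooth $\Z$-algebras the map $L_{\mathrm{AniRings}/\mathrm{Sm}_\Z}\Q(i)^{\mathrm{mot}} \to \Q(i)^{\mathrm{mot}}$ is an equivalence, and the left Kan extension $L_{\mathrm{AniRings}/\mathrm{Sm}_\Z}\Q(i)^{\mathrm{mot}}$ is by construction left Kan extended. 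Second, I would show that $\tau^{\leq 2i}$ of the natural map $L_{\mathrm{AniRings}/\mathrm{Sm}_\Z}\Q(i)^{\mathrm{mot}} \to \Q(i)^{\mathrm{mot}}$ is an equivalence on all animated commutative rings. For this, the clean route is to identify both sides with the weight-$i$ part of a rationalised $K$-theory (or $\mathrm{TC}$) spectrum: on the left, the left Kan extension of rational $K$-theory from $\mathrm{Sm}_\Z$ computes connective rational $K$-theory (since connective $K$-theory is left Kan extended from smooth $\Z$-algebras), and taking the weight-$i$ Adams summand and the motivic filtration commutes suitably with this; on the right, $\Q(i)^{\mathrm{mot}}$ is by definition (in \cite{bouis_motivic_2024}) built so that in the appropriate range it captures exactly this weight-$i$ piece. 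The truncation $\tau^{\leq 2i}$ is exactly what is needed to kill the discrepancy between connective and non-connective $K$-theory, i.e.\ the negative $K$-groups, which contribute only in motivic degrees $> 2i$ in weight $i$ — this is the role of the truncation bound.

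**Main obstacle.** The delicate point is justifying that the motivic filtration / weight decomposition interacts correctly with left Kan extension rationally, and precisely pinning down the degree bound at which the left-Kan-extended (hence "connective-type") invariant agrees with $\Z(i)^{\mathrm{mot}}$. In other words, I expect the crux to be: $L_{\mathrm{AniRings}/\mathrm{Sm}_\Z}\Q(i)^{\mathrm{mot}}$ is computed by connective rational $K$-theory's weight-$i$ piece, while $\Q(i)^{\mathrm{mot}}$ itself is the weight-$i$ piece of a theory that sees negative $K$-groups, and these differ exactly in cohomological degrees $> 2i$. Making this comparison rigorous likely uses the pro cdh / cdh-local description of $\Z(i)^{\mathrm{mot}}$ together with the fact that rationally the relevant cotangent-complex and de Rham contributions are controlled by smooth algebras, reducing everything to a statement about Hochschild or de Rham complexes which are manifestly left Kan extended. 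So the plan is: (1) reduce to the weight-$i$ rational $\mathrm{TC}$/$K$-theory picture via \cite{bouis_motivic_2024}; (2) use that rational connective $K$-theory is left Kan extended from $\mathrm{Sm}_\Z$; (3) check the comparison map is an equivalence after $\tau^{\leq 2i}$ by analysing where negative $K$-groups live in the weight grading; and the hard part will be controlling this truncation range cleanly in the mixed-characteristic setting.
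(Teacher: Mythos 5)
Your proposal is correct and follows essentially the same route as the paper: the proof there uses that connective rational $K$-theory $\tau^{\leq 0}\mathrm{K}(-;\Q)$ is left Kan extended from smooth $\Z$-algebras and then invokes the rational Adams/Atiyah--Hirzebruch splitting $\tau^{\leq 0}\mathrm{K}(-;\Q) \simeq \bigoplus_{i\geq 0}\tau^{\leq 0}\big(\Q(i)^{\mathrm{mot}}(-)[2i]\big)$ from the prequel, which is exactly your observation that the truncation $\tau^{\leq 2i}$ in weight $i$ corresponds to the connective cover of $K$-theory. The ``delicate point'' you flag is already packaged in that splitting, so no detour through cdh-local or Hochschild/de Rham descriptions is needed.
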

	
	\begin{proof}
		By \cite[Example~$1.0.6$]{elmanto_modules_2020}, connective algebraic $K$-theory
		$$\tau^{\leq 0} \text{K}(-;\Q) : \text{AniRings} \longrightarrow \mathcal{D}(\Q)$$
		is left Kan extended from smooth $\Z$-algebras. By \cite[Corollary~$4.56$]{bouis_motivic_2024}, this implies that the functor
		$$\bigoplus_{i \geq 0} \tau^{\leq 0}\big(\Q(i)^{\text{mot}}(-)[2i]\big) : \text{AniRings} \longrightarrow \mathcal{D}(\Q)$$
		is left Kan extended from smooth $\Z$-algebras, which is equivalent to the desired result.
	\end{proof}
	
	\begin{corollary}\label{corollaryrationallissemotiviccomparison}
		Let $R$ be an animated commutative ring. Then for every integer $i \geq 0$, the lisse-motivic comparison map induces a natural equivalence
		$$\Q(i)^{\emph{lisse}}(R) \xlongrightarrow{\sim} \tau^{\leq 2i} \Q(i)^{\emph{mot}}(R)$$
		in the derived category $\mathcal{D}(\Q)$.
	\end{corollary}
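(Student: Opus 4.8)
The plan is to deduce this from Lemma~\ref{lemmarationalmotiviccohoisLKEindegreesatmost2i} by a routine left Kan extension argument. First I would observe that $\Q(i)^{\text{lisse}}(R)$ is concentrated in cohomological degrees $\leq 2i$ for every animated commutative ring $R$: by definition it is the colimit of the complexes $\Q(i)^{\text{cla}}(S)$ over smooth $\Z$-algebras $S$ mapping to $R$, each such Bloch cycle complex satisfies $\text{H}^j(\Z(i)^{\text{cla}}(S)) \cong \text{CH}^i(S, 2i-j) = 0$ for $j > 2i$, and the full subcategory $\mathcal{D}(\Q)^{\leq 2i} \subseteq \mathcal{D}(\Q)$ of complexes in cohomological degrees $\leq 2i$ is stable under colimits. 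Consequently, by the universal property of the truncation $\tau^{\leq 2i}$, the lisse-motivic comparison map factors uniquely through a natural transformation
$$\Q(i)^{\text{lisse}}(-) \longrightarrow \tau^{\leq 2i}\Q(i)^{\text{mot}}(-),$$
and it is this factored map that the corollary claims is an equivalence.

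Both functors appearing here are left Kan extended from smooth $\Z$-algebras: the source by its very definition, and the target by Lemma~\ref{lemmarationalmotiviccohoisLKEindegreesatmost2i}. Since the inclusion $\text{Sm}_{\Z} \hookrightarrow \text{AniRings}$ is fully faithful, it therefore suffices to prove that the displayed map is an equivalence when $R$ is a smooth $\Z$-algebra. For such $R$, restriction of a left Kan extension from $\text{Sm}_{\Z}$ back to $\text{Sm}_{\Z}$ is the identity, so that $\Q(i)^{\text{lisse}}(R) \simeq \Q(i)^{\text{cla}}(R)$ and, unwinding Definition~\ref{definitionlissemotiviccomparisonmap}, the lisse-motivic comparison map on $R$ is the rationalisation of the canonical comparison $\Z(i)^{\text{cla}}(R) \to \Z(i)^{\text{mot}}(R)$. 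The latter is an equivalence because the motivic complexes $\Z(i)^{\text{mot}}$ recover classical motivic cohomology on schemes smooth over a Dedekind domain, in particular over $\Z$ (\cite{bouis_beilinson-lichtenbaum_2025}); and since $\Z(i)^{\text{mot}}(R) \simeq \Z(i)^{\text{cla}}(R)$ is then itself concentrated in degrees $\leq 2i$, the natural map $\tau^{\leq 2i}\Q(i)^{\text{mot}}(R) \to \Q(i)^{\text{mot}}(R)$ is an equivalence as well. The displayed map on $R$, composed with the latter equivalence, is the comparison map, which is an equivalence; hence by two-out-of-three the displayed map is an equivalence on $R$, as needed.

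I do not anticipate a genuine difficulty here: the substance of the statement is already contained in Lemma~\ref{lemmarationalmotiviccohoisLKEindegreesatmost2i} — which rests on the left Kan extendedness of rational connective $K$-theory and on the weight decomposition of rational motivic cohomology of \cite[Corollary~$4.56$]{bouis_motivic_2024} — together with the comparison theorem for $\Z(i)^{\text{mot}}$ on schemes smooth over $\Z$. The only points deserving a little care are the bookkeeping facts that $\Q(i)^{\text{lisse}}(-)$ genuinely lands in degrees $\leq 2i$ (so that the factorisation through $\tau^{\leq 2i}\Q(i)^{\text{mot}}$ exists), and that the restriction to smooth $\Z$-algebras of the comparison map of Definition~\ref{definitionlissemotiviccomparisonmap} agrees with the canonical classical-to-motivic comparison — which is immediate, since the left Kan extension counit appearing in that definition is invertible on smooth $\Z$-algebras.
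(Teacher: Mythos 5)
Your reduction is the same as the paper's: both functors are left Kan extended from smooth $\Z$-algebras (the source by definition, the target by Lemma~\ref{lemmarationalmotiviccohoisLKEindegreesatmost2i}), so everything comes down to the smooth case, where $\Q(i)^{\text{lisse}}=\Q(i)^{\text{cla}}$ and the truncation is harmless because $\Z(i)^{\text{cla}}$ sits in degrees at most $2i$. Where you diverge is in how you settle the smooth case. The paper only invokes the \emph{rational} splitting of algebraic $K$-theory by Adams operations: for a smooth (hence regular) $\Z$-algebra this identifies $\Q(i)^{\text{mot}}(R)[2i]$ with the weight-$i$ Adams eigenspace of $\text{K}(R;\Q)$, which is classical rational motivic cohomology, and no integral comparison is needed. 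You instead cite the full integral comparison $\Z(i)^{\text{cla}}(R)\simeq\Z(i)^{\text{mot}}(R)$ for smooth $\Z$-schemes from \cite{bouis_beilinson-lichtenbaum_2025}.

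That substitution is the one point I would push back on. This section of the paper is explicitly architected to \emph{avoid} the comparison to classical motivic cohomology on smooth $\Z$-schemes (see the discussion preceding Corollary~\ref{corollarylissemotivicmaincomparisontheorem}, and note that the proof of that corollary again uses only the syntomic comparison plus the rational Adams splitting). The integral comparison for smooth schemes over a Dedekind domain is, in the equicharacteristic analogue of Elmanto--Morrow and in general, proved using the projective bundle formula; in the present paper the projective bundle formula is established only in Section~\ref{sectionpbf}, and its proof depends on the first Chern class of Definition~\ref{definitionmotivicfirstChernclass}, hence on Example~\ref{exampleweightonemotiviccohomology}, hence on Corollary~\ref{corollarylissemotivicmaincomparisontheorem} and ultimately on the very corollary you are proving. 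So importing \cite{bouis_beilinson-lichtenbaum_2025} here is at best a far heavier input than necessary and at worst circular within the logical order of these papers. Replacing that citation by the rational Adams-operation argument (which is all you need, since the statement is rational) makes your proof coincide with the paper's. The remaining bookkeeping in your write-up --- that $\Q(i)^{\text{lisse}}$ lands in degrees at most $2i$ so the map factors through the truncation, and that the comparison map restricts to the classical-to-motivic map on smooth $\Z$-algebras --- is correct.
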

	
	\begin{proof}
		If $R$ is a smooth $\Z$-algebra, the result is a consequence of the rational splitting of algebraic $K$\nobreakdash-theory induced by Adams operations and of the fact that, by construction, the classical motivic complex $\Z(i)^{\text{cla}}(R) \in \mathcal{D}(\Z)$ is in degrees at most~$2i$. In general, this is then a consequence of Lemma~\ref{lemmarationalmotiviccohoisLKEindegreesatmost2i}.
	\end{proof}
	
	\begin{proposition}\label{propositionrationalcomparisonlissemotivicforlocalrings}
		Let $R$ be a local ring. Then for every integer $i \geq 0$, the lisse-motivic comparison map induces a natural equivalence
		$$\Q(i)^{\emph{lisse}}(R) \longrightarrow \tau^{\leq i} \Q(i)^{\emph{mot}}(R)$$
		in the derived category $\mathcal{D}(\Q)$. Moreover, the motivic cohomology group $\emph{H}^j_{\emph{mot}}(R,\Q(i))$ is zero for \hbox{$i < j \leq 2i$}.
	\end{proposition}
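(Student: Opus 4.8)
The plan is to deduce the whole proposition from the comparison already available in cohomological degrees at most $2i$ (Corollary~\ref{corollaryrationallissemotiviccomparison}). The essential content is the second assertion, the vanishing of $\text{H}^j_{\text{mot}}(R,\Q(i))$ for $i<j\leq 2i$, which by Corollary~\ref{corollaryrationallissemotiviccomparison} amounts to the statement that $\Q(i)^{\text{lisse}}(R)$ lies in $\mathcal{D}(\Q)^{\leq i}$. Indeed, granting this, Corollary~\ref{corollaryrationallissemotiviccomparison} identifies $\Q(i)^{\text{lisse}}(R)$ with $\tau^{\leq 2i}\Q(i)^{\text{mot}}(R)$, whose cohomology agrees with that of $\Q(i)^{\text{mot}}(R)$ in degrees at most $2i$; hence $\text{H}^j_{\text{mot}}(R,\Q(i))=0$ for $i<j\leq 2i$ and $\tau^{\leq 2i}\Q(i)^{\text{mot}}(R)=\tau^{\leq i}\Q(i)^{\text{mot}}(R)$, which gives the first assertion, the equivalence being the truncation of the lisse-motivic comparison map (it factors through $\tau^{\leq i}$ because its source already lies in $\mathcal{D}(\Q)^{\leq i}$).

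To prove $\Q(i)^{\text{lisse}}(R)\in\mathcal{D}(\Q)^{\leq i}$, recall that $\Q(i)^{\text{lisse}}$ is by definition the left Kan extension of classical motivic cohomology from smooth $\Z$-algebras; by continuity of Bloch's higher Chow groups and transitivity of left Kan extensions, it is equivalently the left Kan extension of $\Q(i)^{\text{cla}}$ from essentially smooth $\Z$-algebras. The key point is that, $R$ being local, the essentially smooth \emph{local} $\Z$-algebras $A$ equipped with a \emph{local} homomorphism $A\to R$ are cofinal among all essentially smooth $\Z$-algebras over $R$: for any essentially smooth $B\to R$, localising $B$ at the preimage of the maximal ideal of $R$ yields an initial object of the relevant comma category. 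Consequently $\Q(i)^{\text{lisse}}(R)$ is a colimit of the complexes $\Q(i)^{\text{cla}}(A)$ over such $A$.

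Each such $A$ is a regular local ring, essentially smooth over $\Z$, and its classical motivic cohomology is therefore computed by a Gersten complex whose contributions in codimension $p$ are sums of groups $\text{H}^{j-p}_{\text{mot}}(k(x),\Z(i-p))$ over the points $x$ of codimension $p$; since the motivic cohomology of a field is concentrated in cohomological degrees at most its weight, these contributions all vanish once $j>i$, so $\text{H}^j_{\text{cla}}(A,\Z(i))=0$ for $j>i$, i.e. $\Q(i)^{\text{cla}}(A)\in\mathcal{D}(\Q)^{\leq i}$. As $\mathcal{D}(\Q)^{\leq i}$ is closed under colimits, $\Q(i)^{\text{lisse}}(R)$ lies in $\mathcal{D}(\Q)^{\leq i}$ as well, which finishes the argument.

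The main obstacle is the input on classical motivic cohomology over a Dedekind base: that schemes essentially smooth over $\Z$ satisfy the Gersten resolution for motivic cohomology, including at the residue characteristics — equivalently, that the classical motivic complex of an essentially smooth local $\Z$-algebra is concentrated in degrees at most its weight. I would invoke this from the theory of motivic cohomology over Dedekind domains (cf.~\cite{bouis_beilinson-lichtenbaum_2025}); rationally it can also be extracted from the Gersten conjecture for the $K$-theory of schemes essentially smooth over $\Z$ together with the rational decomposition of $K$-theory by Adams operations. A secondary point to handle with care is the cofinality reduction of the left Kan extension to essentially smooth local $\Z$-algebras with local structure maps, which is exactly where the hypothesis that $R$ is local enters.
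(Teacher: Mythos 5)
Your proposal is correct and takes essentially the same route as the paper: both arguments reduce everything to showing that $\Q(i)^{\text{lisse}}(R)$ is concentrated in degrees at most $i$ (via left Kan extension computed over local essentially smooth $\Z$-algebras mapping locally to $R$) and then conclude from Corollary~\ref{corollaryrationallissemotiviccomparison} exactly as you describe. The only divergence is in justifying the degree bound for the classical motivic complex of a local essentially smooth $\Z$-algebra: the paper simply cites \cite[Corollary~4.4]{geisser_motivic_2004}, which gives this vanishing directly without invoking the full Gersten resolution over a Dedekind base, whereas your route through the Gersten complex is a heavier input than needed — though your rational fallback via the $K$-theoretic Gersten conjecture and Adams operations is adequate for the $\Q$-coefficient statement at hand.
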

	
	\begin{proof}
		The classical motivic complex $\Z(i)^{\text{cla}}(-)$ is Zariski-locally in degrees at most $i$ (\cite[Corollary~$4.4$]{geisser_motivic_2004}). By taking left Kan extension, this implies that the lisse motivic complex $\Z(i)^{\text{lisse}}(-)$ is also Zariski-locally in degrees at most $i$. In particular, the lisse motivic complex $\Q(i)^{\text{lisse}}(R)$ is in degrees at most $i$. The result is then a consequence of Corollary~\ref{corollaryrationallissemotiviccomparison}.
	\end{proof}

	\begin{remark}\label{remarkDrinfeldtheoremK-1onhenselianlocalrings}
		By Drinfeld's theorem (\cite[Theorem~$3.7$]{drinfeld_infinite_2006}), the $K$-group $\text{K}_{-1}(R)$ vanishes for every henselian local ring $R$. By \cite[Corollary~$4.60$]{bouis_motivic_2024}, this implies that for every integer $i \geq 0$, the motivic cohomology group $\text{H}^{2i+1}_{\text{mot}}(R,\Q(i))$ is zero, {\it i.e.}, that the motivic cohomology group $\text{H}^{2i+1}_{\text{mot}}(R,\Z(i))$ is torsion.
	\end{remark}
	
	\begin{corollary}\label{corollaryrationalmotiviccohomologyisleftKanextendedinsmalldegrees}
		For every integer $i \geq 0$, the functor $\tau^{\leq i} \Q(i)^{\emph{mot}}$, from local rings to the derived category $\mathcal{D}(\Q)$, is left Kan extended from local essentially smooth $\Z$-algebras.
	\end{corollary}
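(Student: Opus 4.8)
The statement to prove is Corollary~\ref{corollaryrationalmotiviccohomologyisleftKanextendedinsmalldegrees}: that $\tau^{\leq i}\Q(i)^{\text{mot}}$ on local rings is left Kan extended from local essentially smooth $\Z$-algebras. The strategy is to combine the two results just established. By Proposition~\ref{propositionrationalcomparisonlissemotivicforlocalrings}, for every local ring $R$ the lisse-motivic comparison map gives a natural equivalence $\Q(i)^{\text{lisse}}(R)\xrightarrow{\sim}\tau^{\leq i}\Q(i)^{\text{mot}}(R)$. So it suffices to show that $\Q(i)^{\text{lisse}}$, restricted to local rings, is left Kan extended from local essentially smooth $\Z$-algebras. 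This is essentially formal from the definition of $\Q(i)^{\text{lisse}}$ as a left Kan extension from smooth $\Z$-algebras, together with the fact that $\Q(i)^{\text{lisse}}$ is Zariski-locally in degrees $\leq i$ (used already in the proof of Proposition~\ref{propositionrationalcomparisonlissemotivicforlocalrings}, citing Geisser).

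First I would recall that, by construction, $\Q(i)^{\text{lisse}}$ on animated commutative rings is the left Kan extension of $\Q(i)^{\text{cla}}$ from $\mathrm{Sm}_\Z$. A standard fact about left Kan extensions is that filtered colimits are computed by localization: for a local ring $R$, one can write $R$ as a filtered colimit of smooth $\Z$-algebras $P_\alpha$ along maps, and then localize at the pullback of the maximal ideal — concretely, $R$ is a filtered colimit of local essentially smooth $\Z$-algebras (the localizations $(P_\alpha)_{\mathfrak{q}_\alpha}$). Since $\Q(i)^{\text{lisse}}$ commutes with filtered colimits (being a left Kan extension, hence finitary) and since localization is exact, the value $\Q(i)^{\text{lisse}}(R)$ is the filtered colimit of the $\Q(i)^{\text{lisse}}((P_\alpha)_{\mathfrak{q}_\alpha})$. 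The point is that this exhibits $\Q(i)^{\text{lisse}}$ on local rings as the (pointwise) left Kan extension of its own restriction to local essentially smooth $\Z$-algebras — provided the indexing category of local essentially smooth $\Z$-algebras over $R$ is filtered, which it is by the standard argument (one can refine any two maps and coequalize them inside a common smooth $\Z$-algebra, then localize). Transporting this along the equivalence of Proposition~\ref{propositionrationalcomparisonlissemotivicforlocalrings} — which is natural, hence compatible with the colimit — gives the claim for $\tau^{\leq i}\Q(i)^{\text{mot}}$.

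The one subtlety, and what I expect to be the main point requiring care, is the interplay between the truncation $\tau^{\leq i}$ and the left Kan extension: left Kan extension does not commute with truncation in general, so one cannot simply say "$\tau^{\leq i}$ of a left Kan extension is a left Kan extension." This is precisely why the hypothesis that $\Q(i)^{\text{lisse}}(R)$ is already in degrees $\leq i$ for local $R$ (from Geisser's bound, as in Proposition~\ref{propositionrationalcomparisonlissemotivicforlocalrings}) is essential: on the subcategory of local rings, $\tau^{\leq i}\Q(i)^{\text{lisse}}\simeq\Q(i)^{\text{lisse}}$, so the truncation is invisible there and the formal left-Kan-extension argument applies directly to $\Q(i)^{\text{lisse}}$ itself. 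Equivalently, one argues: $\tau^{\leq i}\Q(i)^{\text{mot}}$ on local rings agrees with $\Q(i)^{\text{lisse}}$ (Proposition~\ref{propositionrationalcomparisonlissemotivicforlocalrings}), the latter is Zariski-locally — in particular on local essentially smooth $\Z$-algebras — in degrees $\leq i$ hence equals the restriction of $\Q(i)^{\text{lisse}}$ there too, and $\Q(i)^{\text{lisse}}$ is finitary and left Kan extended from smooth $\Z$-algebras, which on local rings passes to left Kan extension from local essentially smooth $\Z$-algebras by the filtered-colimit/localization argument above.
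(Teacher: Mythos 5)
Your overall strategy is the right one and matches the paper's: the paper's proof is exactly the combination of Proposition~\ref{propositionrationalcomparisonlissemotivicforlocalrings} with a globally left-Kan-extended functor that agrees with $\tau^{\leq i}\Q(i)^{\text{mot}}$ on local rings (the paper uses $\tau^{\leq 2i}\Q(i)^{\text{mot}}$ via Lemma~\ref{lemmarationalmotiviccohoisLKEindegreesatmost2i}, you use $\Q(i)^{\text{lisse}}$ via its definition; by Corollary~\ref{corollaryrationallissemotiviccomparison} these are the same functor, so the difference is cosmetic). Your remarks on the truncation being invisible on local rings are also correct and address a real subtlety.

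However, the mechanism you give for passing from ``left Kan extended from smooth $\Z$-algebras'' to ``left Kan extended from local essentially smooth $\Z$-algebras on local rings'' is wrong as stated. A general local ring $R$ is \emph{not} a filtered colimit of smooth $\Z$-algebras, nor of local essentially smooth $\Z$-algebras: the latter are regular, hence reduced, and a filtered colimit of reduced rings is reduced, so already $k[x]_{(x)}/(x^2)$ is a counterexample. Left Kan extension from $\text{Sm}_{\Z}$ is a sifted colimit, not a filtered one, and for non-ind-smooth $R$ it is genuinely computed by a simplicial resolution $P_\bullet \to R$ with each $P_m$ ind-smooth (compare the proof of Lemma~\ref{lemmasymbolmapfactorsthoughimprovedMilnorKgroup} in the paper, where such a resolution by \emph{local} ind-smooth $\Z$-algebras with henselian surjective face maps is used for exactly this purpose). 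The correct argument localizes each term of such a resolution at the preimage of the maximal ideal and checks cofinality in the sifted setting; your ``finitary plus filtered colimit of localizations'' reasoning only applies to the (filtered) ind-smooth case and does not cover general local rings. The statement you are invoking is true and standard, so the gap is reparable, but the justification as written does not establish it.
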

	
	\begin{proof}
		This is a consequence of Lemma~\ref{lemmarationalmotiviccohoisLKEindegreesatmost2i} and Proposition~\ref{propositionrationalcomparisonlissemotivicforlocalrings}.
	\end{proof}
	
	\begin{proposition}\label{propositionmodpmotiviccohoisleftKanextendedinsmalldegrees}
		Let $p$ be a prime number, and $k$ be an integer. Then for every integer~\hbox{$i \geq 0$}, the functor
		$$\tau^{\leq i} \Z/p^k(i)^{\emph{mot}}(-) : \emph{Rings} \longrightarrow \mathcal{D}(\Z/p^k)$$
		is left Kan extended from smooth $\Z$-algebras.
	\end{proposition}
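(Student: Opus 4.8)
The plan is to transfer the statement to syntomic cohomology, where the truncation $\tau^{\leq i}$ removes exactly the part of the complex that fails to be left Kan extended from smooth $\Z$-algebras. By \cite[Theorem~5.10]{bouis_motivic_2024} there is a natural equivalence $\Z/p^k(i)^{\text{mot}}(-) \simeq \Z/p^k(i)^{\text{syn}}(-)$ of functors from commutative rings to $\mathcal{D}(\Z/p^k)$, so the claim is equivalent to the assertion that $\tau^{\leq i}\Z/p^k(i)^{\text{syn}}$ is left Kan extended from smooth $\Z$-algebras. Unlike in the rational case, this cannot be reduced to a property of connective algebraic $K$-theory: the functor $\tau^{\leq 0}\text{K}(-;\Z/p^k)$ is not left Kan extended from smooth $\Z$-algebras, because it detects the negative $K$-groups, which vanish on regular rings but not in general; hence \cite[Corollary~4.56]{bouis_motivic_2024} yields left Kan extendedness only in degrees $\leq 2i$ with rational coefficients (Lemma~\ref{lemmarationalmotiviccohoisLKEindegreesatmost2i}), and not with mod-$p^k$ coefficients.

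For the syntomic statement, the point is that while the full complex $\Z/p^k(i)^{\text{syn}}(R)$ is not left Kan extended from smooth $\Z$-algebras — its cohomology in degrees $> i$ involves the Frobenius-twisted, Nygaard-completed prismatic cohomology, built from $S^1$-Tate constructions — its truncation $\tau^{\leq i}\Z/p^k(i)^{\text{syn}}(R)$ is controlled entirely by data that is left Kan extended from smooth $\Z$-algebras: the Hodge--Tate, conjugate-filtered part of prismatic cohomology, whose graded pieces are shifts of the derived exterior powers $(\bigwedge^j L_{R/\Z})/p^k$ of the cotangent complex (left Kan extended from polynomial $\Z$-algebras), together with the contribution of the units of $R$ via $\mathrm{dlog}$ (left Kan extended from smooth $\Z$-algebras, since $\mathbb{G}_m$ is smooth). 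As the property of being left Kan extended from smooth $\Z$-algebras is stable under reduction mod $p^k$, finite limits and colimits, and truncation of objects already concentrated in the relevant degrees, the conclusion follows. Concretely, one can organise this using the Beilinson--Lichtenbaum-type comparison identifying $\tau^{\leq i}\Z/p^k(i)^{\text{syn}}$ with a suitable truncation of lisse motivic cohomology (equivalently, with $p$-adic étale Tate twists; cf.\ \cite{bouis_beilinson-lichtenbaum_2025}), the former being left Kan extended from smooth $\Z$-algebras essentially by construction.

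The main obstacle is to make precise that $\tau^{\leq i}$ genuinely isolates the cotangent-complex-and-units part of syntomic cohomology and is insensitive to the prismatic Frobenius/Nygaard input: this amounts to a careful analysis of the degree ranges in the fibre sequence defining $\Z/p^k(i)^{\text{syn}}$ from the Nygaard-filtered prismatic complex, combined with the comparison between syntomic and motivic cohomology in that range. Once this degree-range input is available, the remaining steps (finitariness of $\Z/p^k(i)^{\text{syn}}$, and stability of left Kan extension under the operations above) are formal.
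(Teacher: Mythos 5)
Your opening reduction is essentially the right one, but note that \cite[Theorem~5.10]{bouis_motivic_2024} is not an equivalence $\Z/p^k(i)^{\text{mot}}(-) \simeq \Z/p^k(i)^{\text{syn}}(-)$: it exhibits $\Z/p^k(i)^{\text{mot}}$ as the fibre of $\Z/p^k(i)^{\text{syn}} \to L_{\text{cdh}}\tau^{>i}\Z/p^k(i)^{\text{syn}}$. Since that third term lives in degrees $\geq i+1$, the two complexes do agree after applying $\tau^{\leq i}$, so your reduction to showing that $\tau^{\leq i}\Z/p^k(i)^{\text{syn}}$ is left Kan extended survives. (Your parenthetical claim that $\tau^{\leq 0}\mathrm{K}(-;\Z/p^k)$ is not left Kan extended from smooth $\Z$-algebras is false: connective $K$-theory is left Kan extended from smooth $\Z$-algebras, with any coefficients, since left Kan extension commutes with reduction mod $p^k$. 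The rational argument fails to carry over because the Adams weight decomposition is unavailable integrally, not because of negative $K$-groups.)

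The genuine gap is the second half. You correctly identify the crux --- that $\tau^{\leq i}$ must not destroy left Kan extendedness --- but the mechanism you propose (a degree-range analysis of the Nygaard/conjugate filtration isolating a ``cotangent complex plus units'' part, or an identification of $\tau^{\leq i}\Z/p^k(i)^{\text{syn}}$ with truncated lisse motivic cohomology) is not carried out, and the second option is close to circular: identifying $\tau^{\leq i}\Z/p^k(i)^{\text{syn}}$ with lisse motivic cohomology on \emph{general} rings is essentially equivalent to the left Kan extension statement being proved (it is known for smooth $\Z$-algebras by \cite{bouis_beilinson-lichtenbaum_2025}, and extending it to all rings is exactly the content of the proposition). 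Moreover, ``left Kan extendedness is stable under truncation of objects concentrated in the relevant degrees'' is not a usable principle here: truncation is a limit, left Kan extension a colimit, and the entire difficulty is that they need not commute. The paper's actual argument runs the other way: since $\Z/p^k(i)^{\text{syn}}$ is itself left Kan extended from smooth $\Z$-algebras by construction, it suffices to show that the complementary piece $\tau^{>i}\Z/p^k(i)^{\text{syn}}$ is left Kan extended; by \cite[Lemma~7.6]{elmanto_motivic_2023} this follows once one knows that $\tau^{>i}\Z/p^k(i)^{\text{syn}}$ is \emph{rigid}, i.e.\ invariant under henselian pairs; and rigidity is extracted from the fibre sequence $R\Gamma_{\text{ét}}(-,j_!\mu_{p^k}^{\otimes i}) \to \Z/p^k(i)^{\text{syn}}(-) \to \Z/p^k(i)^{\text{BMS}}(-)$, using Gabber's rigidity for the étale term and the Antieau--Mathew--Morrow--Nikolaus rigidity theorem for $\tau^{>i}$ of the Bhatt--Morrow--Scholze term. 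That rigidity input is the missing idea in your sketch.
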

	
	\begin{proof}
		By \cite[Theorem~$5.10$]{bouis_motivic_2024}, this is equivalent to the fact that the functor $\tau^{\leq i} \Z/p^k(i)^{\text{syn}}(-)$ on commutative rings is left Kan extended from smooth $\Z$-algebras. The functor $\Z/p^k(i)^{\text{syn}}(-)$ is left Kan extended from smooth $\Z$-algebras (\cite[Notation~$5.7$]{bouis_motivic_2024}), so this is equivalent to the fact that the functor $\tau^{>i} \Z/p^k(i)^{\text{syn}}(-)$ on commutative rings is left Kan extended from smooth $\Z$-algebras. By \cite[Lemma~$7.6$]{elmanto_motivic_2023}, it then suffices to prove that the functor $\tau^{>i} \Z/p^k(i)^{\text{syn}}(-)$ is rigid. To prove this, consider the fibre sequence of $\mathcal{D}(\Z/p^k)$-valued functors
		$$R\Gamma_{\text{ét}}(-,j_! \mu_{p^k}^{\otimes i}) \longrightarrow \Z/p^k(i)^{\text{syn}}(-) \longrightarrow \Z/p^k(i)^{\text{BMS}}(-)$$
		on commutative rings (\cite[Remark~$8.4.4$]{bhatt_absolute_2022}). By rigidity for étale cohomology (\cite{gabber_affine_1994}, see also \cite[Corollary~$1.18\,(1)$]{bhatt_arc-topology_2021}), the first term of this fibre sequence is rigid. The desired result is then a consequence of \cite[Theorem~$2.27$]{bouis_motivic_2024}.
	\end{proof}
	
	\begin{corollary}\label{corollarymodpmotiviccohomologyisleftKanextendedinsmalldegreesonlocalrings}
		Let $p$ be a prime number, and $k \geq 1$ be an integer. Then for every integer~\hbox{$i \geq 0$}, the functor $\tau^{\leq i} \Z/p^k(i)^{\emph{mot}}$, from local rings to the derived category $\mathcal{D}(\Z/p^k)$, is left Kan extended from local essentially smooth $\Z$-algebras.
	\end{corollary}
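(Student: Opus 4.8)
The plan is to deduce this from Proposition~\ref{propositionmodpmotiviccohoisleftKanextendedinsmalldegrees} purely by a cofinality argument, and in fact the mod-$p^k$ case is cleaner than the rational one (Corollary~\ref{corollaryrationalmotiviccohomologyisleftKanextendedinsmalldegrees}): there Lemma~\ref{lemmarationalmotiviccohoisLKEindegreesatmost2i} only produced left Kan extension of $\tau^{\leq 2i}\Q(i)^{\text{mot}}$ and one had to improve the bound to $i$, whereas Proposition~\ref{propositionmodpmotiviccohoisleftKanextendedinsmalldegrees} already gives the sharp truncation. Write $G := \tau^{\leq i}\Z/p^k(i)^{\text{mot}}(-)$; by Proposition~\ref{propositionmodpmotiviccohoisleftKanextendedinsmalldegrees} it is left Kan extended from smooth $\Z$-algebras as a functor on all commutative rings, hence in particular finitary. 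It therefore suffices to establish the general principle that any finitary functor $\text{Rings} \to \mathcal{D}(\Z/p^k)$ which is left Kan extended from smooth $\Z$-algebras restricts, on local rings, to a functor which is left Kan extended from local essentially smooth $\Z$-algebras.

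To prove this principle, I would fix a local ring $A$ with maximal ideal $\mathfrak{m}$, let $\mathcal{C}_A$ be the category of smooth $\Z$-algebras equipped with a ring map to $A$, and let $\mathcal{L}_A$ be the category of local essentially smooth $\Z$-algebras equipped with a local ring map to $A$; the hypothesis gives $G(A) \simeq \operatorname{colim}_{(P,f) \in \mathcal{C}_A} G(P)$, and the claim is that the canonical comparison $\operatorname{colim}_{(B,\phi) \in \mathcal{L}_A} G(B) \to G(A)$ is an equivalence. The mechanism is the localization functor $\Theta : \mathcal{C}_A \to \mathcal{L}_A$, $(P,f) \mapsto (P_{f^{-1}(\mathfrak{m})}, \bar f)$, together with the natural transformation $G|_{\mathcal{C}_A} \Rightarrow G \circ \Theta$ induced by functoriality of $G$. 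The two things to check are: (i) $\Theta$ is cofinal — given $(B,\phi) = (Q_{\mathfrak{q}}, \phi)$ with $Q$ smooth over $\Z$, locality of $\phi$ forces the composite $f : Q \to Q_{\mathfrak{q}} \xrightarrow{\phi} A$ to satisfy $f^{-1}(\mathfrak{m}) = \mathfrak{q}$, so $(Q,f)$ lies in the comma category $(B,\phi)/\Theta$, which moreover is filtered by a tensor-over-$\Z$-and-localize construction that uses that $A$ is local (the same construction shows $\mathcal{L}_A$ itself is filtered, and nonempty since the prime-field localization $\Z_{(\ell)}$ or $\Q$ always maps locally to $A$); and (ii) the natural transformation induces an equivalence on colimits — using finitariness to write $G(P_{f^{-1}(\mathfrak{m})}) \simeq \operatorname{colim}_{g \notin f^{-1}(\mathfrak{m})} G(P[1/g])$, each $(P[1/g], f[1/g])$ is again an object of $\mathcal{C}_A$ since $f(g) \in A^\times$, and the resulting reindexing functor to $\mathcal{C}_A$ is cofinal by the identical argument. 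Chaining (i) and (ii) identifies $\operatorname{colim}_{\mathcal{L}_A} G(B)$ with $\operatorname{colim}_{\mathcal{C}_A} G(P) \simeq G(A)$ along the canonical map, and the whole construction is manifestly natural in $A$.

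The main obstacle is not a conceptual point but the bookkeeping in steps (i) and (ii): one must check that every auxiliary localization that appears is still a local essentially smooth $\Z$-algebra carrying a local structure map to $A$, that the various comma categories are genuinely filtered, and that the two colimit presentations of $G(A)$ are compared via the canonical transformation rather than an ad hoc one. This is exactly the standard procedure by which left Kan extension from smooth $\Z$-algebras passes to left Kan extension from local essentially smooth $\Z$-algebras on local rings, and once it is recorded the corollary is immediate from Proposition~\ref{propositionmodpmotiviccohoisleftKanextendedinsmalldegrees}.
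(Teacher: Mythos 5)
Your proposal is correct and follows the same route as the paper: the paper's entire proof is the single sentence ``This is a consequence of Proposition~\ref{propositionmodpmotiviccohoisleftKanextendedinsmalldegrees},'' leaving implicit exactly the standard localization/cofinality argument you spell out. Your observation that the mod-$p^k$ case needs no truncation improvement (unlike the rational case) matches the structure of the paper's argument as well.
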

	
	\begin{proof}
		This is a consequence of Proposition~\ref{propositionmodpmotiviccohoisleftKanextendedinsmalldegrees}.
	\end{proof}
	
	\begin{lemma}\label{lemmatopdegreeintegraltomodpissurjective}
		Let $R$ be a local ring, $p$ be a prime number, and $k \geq 1$ be an integer. Then for every integer $i \geq 0$, the natural map of abelian groups
		$$\emph{H}^i_{\emph{mot}}(R,\Z(i)) \longrightarrow \emph{H}^i_{\emph{mot}}(R,\Z/p^k(i))$$
		is surjective.
	\end{lemma}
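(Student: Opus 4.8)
I would prove this by comparison with lisse motivic cohomology, exploiting that the latter is concentrated in small degrees. Write $\Z/p^k(i)^{\text{lisse}}$ for the cofibre of $\Z(i)^{\text{lisse}} \xrightarrow{p^k} \Z(i)^{\text{lisse}}$, and consider the commutative square
$$\begin{tikzcd}
\text{H}^i_{\text{lisse}}(R,\Z(i)) \ar[r] \ar[d] & \text{H}^i_{\text{lisse}}(R,\Z/p^k(i)) \ar[d] \\
\text{H}^i_{\text{mot}}(R,\Z(i)) \ar[r] & \text{H}^i_{\text{mot}}(R,\Z/p^k(i))
\end{tikzcd}$$
whose vertical maps are induced by the lisse-motivic comparison map of Definition~\ref{definitionlissemotiviccomparisonmap} and whose horizontal maps are reduction modulo $p^k$. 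It suffices to show that the top map is surjective and the right map is an isomorphism: then the composite $\text{H}^i_{\text{lisse}}(R,\Z(i)) \to \text{H}^i_{\text{mot}}(R,\Z/p^k(i))$ is surjective and factors through the bottom map, which is therefore surjective.

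For the top map, recall from the proof of Proposition~\ref{propositionrationalcomparisonlissemotivicforlocalrings} that $\Z(i)^{\text{cla}}(-)$, hence its left Kan extension $\Z(i)^{\text{lisse}}(-)$, is Zariski-locally concentrated in cohomological degrees $\leq i$ (\cite[Corollary~$4.4$]{geisser_motivic_2004}); as $R$ is local, $\Z(i)^{\text{lisse}}(R)$ lies in degrees $\leq i$, so $\text{H}^{i+1}_{\text{lisse}}(R,\Z(i)) = 0$ and the long exact sequence of $\Z(i)^{\text{lisse}}(R) \xrightarrow{p^k} \Z(i)^{\text{lisse}}(R) \to \Z/p^k(i)^{\text{lisse}}(R)$ identifies $\text{H}^i_{\text{lisse}}(R,\Z/p^k(i))$ with $\text{H}^i_{\text{lisse}}(R,\Z(i))/p^k$; in particular the top map is surjective. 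For the right map, I would argue that both groups compute $\operatorname*{colim}_{(P \to R)} \text{H}^i_{\text{mot}}(P,\Z/p^k(i))$ over the category of local essentially smooth $\Z$-algebras $P$ over $R$, compatibly with the comparison map. Indeed, $\tau^{\leq i}\Z/p^k(i)^{\text{mot}}$ on local rings is left Kan extended from such $P$ by Corollary~\ref{corollarymodpmotiviccohomologyisleftKanextendedinsmalldegreesonlocalrings}, while $\Z/p^k(i)^{\text{lisse}}$, being the left Kan extension of $\Z/p^k(i)^{\text{cla}}$ from smooth $\Z$-algebras, is on local rings the left Kan extension from local essentially smooth $\Z$-algebras by the same reduction; moreover $\Z/p^k(i)^{\text{cla}}(P) \simeq \Z/p^k(i)^{\text{mot}}(P)$ for such $P$ (comparison of motivic and classical complexes for schemes smooth over $\Z$, \cite{bouis_motivic_2024,bouis_beilinson-lichtenbaum_2025}), and $\Z/p^k(i)^{\text{cla}}(P)$ lies in degrees $\leq i$ (again \cite[Corollary~$4.4$]{geisser_motivic_2004}, which is preserved by $-\otimes^{\mathbb L}\Z/p^k$). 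Since $\text{H}^i$ is the top cohomology functor on complexes in degrees $\leq i$ and as such preserves colimits, it commutes with both defining colimits, giving the required isomorphism.

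The step I expect to require the most care is the second half of the previous paragraph: verifying that evaluating a left Kan extension on the local ring $R$ and then applying $\text{H}^i$ commutes with the defining colimit, which relies on all the complexes in sight being concentrated in degrees $\leq i$ and on the cofinality — valid because $R$ is local — between smooth $\Z$-algebras mapping to $R$ and local essentially smooth $\Z$-algebras mapping to $R$. Everything else is formal; note also that, by the long exact sequence of $\Z(i)^{\text{mot}}(R) \xrightarrow{p^k} \Z(i)^{\text{mot}}(R) \to \Z/p^k(i)^{\text{mot}}(R)$, the assertion of the lemma is equivalent to the vanishing of the $p^k$-torsion — equivalently, the $p$-torsion — of $\text{H}^{i+1}_{\text{mot}}(R,\Z(i))$, which is perhaps a cleaner way to record the conclusion once it is proved.
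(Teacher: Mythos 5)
Your proof is correct and follows essentially the same route as the paper's: both arguments use the left Kan extension statement of Corollary~\ref{corollarymodpmotiviccohomologyisleftKanextendedinsmalldegreesonlocalrings} to reduce to local essentially smooth $\Z$-algebras $P$, where the crux in either case is the vanishing of (the $p$-power torsion of) $\text{H}^{i+1}_{\text{mot}}(P,\Z(i))$. The only real difference is in packaging and in how that vanishing is sourced: you route through the lisse complex and invoke the full classical--motivic comparison on smooth $\Z$-schemes, while the paper works with a single henselian surjection $P \rightarrow R$ from a local ind-smooth $\Z$-algebra and deduces the degree-$(i{+}1)$ vanishing directly from \cite[Theorem~$5.10$]{bouis_motivic_2024} together with \cite[Corollary~$4.4$]{geisser_motivic_2004}.
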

	
	\begin{proof}
		Let $P \rightarrow R$ be a henselian surjection, where $P$ is a local ind-smooth $\Z$-algebra. By Corollary~\ref{corollarymodpmotiviccohomologyisleftKanextendedinsmalldegreesonlocalrings}, the functor $\tau^{\leq i} \Z/p^k(i)^{\text{mot}}$ is left Kan extended on local rings from local essentially smooth $\Z$-algebras, so the natural map of abelian groups
		$$\text{H}^i_{\text{mot}}(P,\Z/p^k(i)) \longrightarrow \text{H}^i_{\text{mot}}(R,\Z/p^k(i))$$
		is surjective. That is, the right vertical map in the commutative diagram of abelian groups
		$$\begin{tikzcd}
			\text{H}^i_{\text{mot}}(P,\Z(i)) \ar[r] \ar[d] & \text{H}^i_{\text{mot}}(P,\Z/p^k(i)) \ar[d] \\
			\text{H}^i_{\text{mot}}(R,\Z(i)) \ar[r] & \text{H}^i_{\text{mot}}(R,\Z/p^k(i))
		\end{tikzcd}$$
		is surjective. To prove that the bottom horizontal map is surjective, it thus suffices to prove that the top vertical map is surjective. The local ring $P$ is a filtered colimit of local essentially smooth $\Z$-algebras, so it suffices to prove that this top vertical map is surjective for local essentially smooth $\Z$-algebras. To prove this, it suffices to prove that the motivic complex $\Z(i)^{\text{mot}}(-)$ is zero in degree~$i+1$ on local essentially smooth $\Z$-algebras, which is a formal consequence of \cite[Theorem~$5.10$]{bouis_motivic_2024} and \cite[Corollary~$4.4$]{geisser_motivic_2004}. 
	\end{proof}
	
	\begin{corollary}\label{corollaryHilbert90}
		Let $R$ be a local ring. Then for every integer $i \geq 1$, the motivic cohomology group $\emph{H}^{i+1}_{\emph{mot}}(R,\Z(i))$ is zero. If the local ring $R$ is moreover henselian, then the motivic cohomology group $\emph{H}^1_{\emph{mot}}(R,\Z(0))$ is zero. 
	\end{corollary}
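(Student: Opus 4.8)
The plan is to show, in each of the two cases, that the motivic cohomology group in question is simultaneously torsion and torsion-free, hence zero.

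First, for $i \geq 1$, I would record that $\text{H}^{i+1}_{\text{mot}}(R,\Z(i))$ is torsion. Since rationalisation is flat over $\Z$, there is a natural identification $\text{H}^{i+1}_{\text{mot}}(R,\Q(i)) \cong \text{H}^{i+1}_{\text{mot}}(R,\Z(i)) \otimes_\Z \Q$, and the second assertion of Proposition~\ref{propositionrationalcomparisonlissemotivicforlocalrings} gives $\text{H}^{j}_{\text{mot}}(R,\Q(i)) = 0$ for $i < j \leq 2i$, which applies to $j = i+1$ precisely because $i \geq 1$ forces $i+1 \leq 2i$. Hence $\text{H}^{i+1}_{\text{mot}}(R,\Z(i)) \otimes_\Z \Q = 0$, i.e. the group is torsion.

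Next I would prove that $\text{H}^{i+1}_{\text{mot}}(R,\Z(i))$ is torsion-free, this time for any $i \geq 0$. Fix a prime number $p$ and an integer $k \geq 1$, and consider the cofibre sequence $\Z(i)^{\text{mot}}(R) \xrightarrow{p^k} \Z(i)^{\text{mot}}(R) \longrightarrow \Z/p^k(i)^{\text{mot}}(R)$ in $\mathcal{D}(\Z)$. Its long exact cohomology sequence yields the short exact sequence
$$0 \longrightarrow \text{H}^i_{\text{mot}}(R,\Z(i))/p^k \longrightarrow \text{H}^i_{\text{mot}}(R,\Z/p^k(i)) \longrightarrow \text{H}^{i+1}_{\text{mot}}(R,\Z(i))[p^k] \longrightarrow 0,$$
in which the first nontrivial map is the factorisation through $\text{H}^i_{\text{mot}}(R,\Z(i))/p^k$ of the natural map $\text{H}^i_{\text{mot}}(R,\Z(i)) \to \text{H}^i_{\text{mot}}(R,\Z/p^k(i))$. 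By Lemma~\ref{lemmatopdegreeintegraltomodpissurjective}, that natural map is surjective, hence so is the first map above, and therefore $\text{H}^{i+1}_{\text{mot}}(R,\Z(i))[p^k] = 0$. Letting $p$ and $k$ vary shows that $\text{H}^{i+1}_{\text{mot}}(R,\Z(i))$ is torsion-free.

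Combining the two steps, for $i \geq 1$ the group $\text{H}^{i+1}_{\text{mot}}(R,\Z(i))$ is both torsion and torsion-free, hence zero. For the final assertion, take $i = 0$: torsion-freeness of $\text{H}^1_{\text{mot}}(R,\Z(0))$ is the $i = 0$ instance of the previous paragraph, while for $R$ henselian Remark~\ref{remarkDrinfeldtheoremK-1onhenselianlocalrings} (its $i = 0$ case, coming from Drinfeld's vanishing of $\text{K}_{-1}$) gives $\text{H}^1_{\text{mot}}(R,\Q(0)) = 0$, so $\text{H}^1_{\text{mot}}(R,\Z(0))$ is also torsion; hence it vanishes. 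I do not anticipate a genuine obstacle here — the argument is bookkeeping with the universal-coefficient sequence — beyond checking that the surjectivity input of Lemma~\ref{lemmatopdegreeintegraltomodpissurjective} is correctly matched with the edge map of that sequence, and that the rational vanishing range $i < j \leq 2i$ does contain $j = i+1$ when $i \geq 1$.
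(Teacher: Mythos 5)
Your proposal is correct and follows essentially the same route as the paper: torsion-freeness via the universal-coefficient sequence combined with the surjectivity of Lemma~\ref{lemmatopdegreeintegraltomodpissurjective}, and torsionness via Proposition~\ref{propositionrationalcomparisonlissemotivicforlocalrings} for $i \geq 1$ (resp.\ Remark~\ref{remarkDrinfeldtheoremK-1onhenselianlocalrings} for $i=0$ and $R$ henselian). The only cosmetic difference is that you run the coefficient sequence modulo $p^k$ rather than just modulo $p$, which changes nothing.
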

	
	\begin{proof}
		By Lemma~\ref{lemmatopdegreeintegraltomodpissurjective} and the short exact sequence of abelian groups
		$$0 \longrightarrow \text{H}^i_{\text{mot}}(R,\Z(i))/p \longrightarrow \text{H}^i_{\text{mot}}(R,\F_p(i)) \longrightarrow \text{H}^{i+1}_{\text{mot}}(R,\Z(i))[p] \longrightarrow 0$$
		for every prime number $p$ and every integer $i \geq 0$, the abelian group $\text{H}^{i+1}_{\text{mot}}(R,\Z(i))$ is torsionfree. By Proposition~\ref{propositionrationalcomparisonlissemotivicforlocalrings} if $i \geq 1$, and by Remark~\ref{remarkDrinfeldtheoremK-1onhenselianlocalrings} if $i=0$ and $R$ is henselian, it is also torsion, so it is zero.
	\end{proof}
	
	\begin{theorem}\label{theoremmotiviccohomologyisleftKanextendedonlocalringsinsmalldegrees}
		For every integer $i \geq 0$, the functor $\tau^{\leq i} \Z(i)^{\emph{mot}}$, from local rings to the derived category $\mathcal{D}(\Z)$, is left Kan extended from local essentially smooth $\Z$-algebras.
	\end{theorem}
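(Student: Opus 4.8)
The plan is to deduce the integral statement from the already-established rational case (Corollary~\ref{corollaryrationalmotiviccohomologyisleftKanextendedinsmalldegrees}) and mod-$p$ cases (Corollary~\ref{corollarymodpmotiviccohomologyisleftKanextendedinsmalldegreesonlocalrings}, with $k=1$, for all primes $p$) by an arithmetic fracture square argument. Write $L$ for left Kan extension along the inclusion of local essentially smooth $\Z$-algebras into local rings, and $\theta_R \colon L\Z(i)^{\text{mot}}(R) \to \Z(i)^{\text{mot}}(R)$ for the canonical comparison map, for $R$ a local ring. For every local ring $R$ the category indexing $L(-)(R)$ is filtered, and $\tau^{\le i}$ preserves filtered colimits in $\mathcal{D}(\Z)$; hence the left Kan extension of $\tau^{\le i}\Z(i)^{\text{mot}}$, evaluated at $R$, is $\tau^{\le i}\big(L\Z(i)^{\text{mot}}(R)\big)$, the canonical comparison map is $\tau^{\le i}(\theta_R)$, and the theorem is equivalent to the assertion that $\theta_R$ induces an isomorphism on cohomology groups in degrees $\le i$, for every local ring $R$.

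Since $-\otimes_{\Z}\Q$ and $-\otimes^{\mathbb{L}}_{\Z}\Z/p$ commute with colimits, and in particular with $L$, the base changes $\theta_R \otimes_{\Z}\Q$ and $\theta_R\otimes^{\mathbb{L}}_{\Z}\Z/p$ are identified with the analogous comparison maps for $\Q(i)^{\text{mot}}$ and $\Z/p(i)^{\text{mot}}$; by Corollaries~\ref{corollaryrationalmotiviccohomologyisleftKanextendedinsmalldegrees} and~\ref{corollarymodpmotiviccohomologyisleftKanextendedinsmalldegreesonlocalrings} these are isomorphisms on cohomology in degrees $\le i$. Writing $C(R)$ for the cofibre of $\theta_R$, this says that $C(R)\otimes_{\Z}\Q$ and each $C(R)\otimes^{\mathbb{L}}_{\Z}\Z/p$ are concentrated in cohomological degrees $\ge i$. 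I would then run the usual descent of connectivity: for $j < i$, the mod-$p$ short exact sequences show $\text{H}^j(C(R))$ is $p$-divisible and $p$-torsionfree for every prime $p$, hence a $\Q$-vector space, while its rationalisation $\text{H}^j(C(R))\otimes_{\Z}\Q = \text{H}^j(C(R)\otimes_{\Z}\Q)$ vanishes; so $\text{H}^j(C(R)) = 0$ for $j < i$.

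The step I expect to be the genuine obstacle is the top degree $j = i$: the property of inducing an equivalence after applying $\tau^{\le i}$ is not stable under cofibres (whereas being concentrated in cohomological degrees $\ge i$ is), so the descent from the rational and mod-$p$ localisations is not formal there. To handle it, I would note that the mod-$p$ sequences also give $\text{H}^i(C(R))[p] = 0$ for all $p$, so $\text{H}^i(C(R))$ is torsionfree and embeds into $\text{H}^i(C(R))\otimes_{\Z}\Q = \text{H}^i(C(R)\otimes_{\Z}\Q)$; and the composite $\text{H}^i(\Z(i)^{\text{mot}}(R)) \to \text{H}^i(C(R)) \to \text{H}^i(C(R))\otimes_{\Z}\Q$ factors through $\text{H}^i(\Q(i)^{\text{mot}}(R)) \to \text{H}^i(C(R)\otimes_{\Z}\Q)$, which vanishes because the rational comparison map is an isomorphism in degree $i$. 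Hence $\text{H}^i(\Z(i)^{\text{mot}}(R)) \to \text{H}^i(C(R))$ is zero, which combined with $\text{H}^j(C(R)) = 0$ for $j < i$ gives that $\theta_R$ is an isomorphism in degrees $\le i$, as wanted. (One could instead eliminate this degree-$i$ bookkeeping by first improving the inputs to the statement that $C(R)\otimes_{\Z}\Q$ and each $C(R)\otimes^{\mathbb{L}}_{\Z}\Z/p$ are concentrated in degrees $\ge i+1$ --- using that $\text{H}^{i+1}$ of $\Q(i)^{\text{mot}}$ vanishes on local essentially smooth $\Z$-algebras by Proposition~\ref{propositionrationalcomparisonlissemotivicforlocalrings}, and that $\Z(i)^{\text{cla}}$ is concentrated in degrees $\le i$ on such rings by \cite[Corollary~4.4]{geisser_motivic_2004} --- after which $C(R)$ itself is concentrated in degrees $\ge i+1$ by a fracture square with no boundary subtlety.)
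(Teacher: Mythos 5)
Your high-level strategy (fracture into rational and mod-$p$ pieces, fed by Corollaries~\ref{corollaryrationalmotiviccohomologyisleftKanextendedinsmalldegrees} and~\ref{corollarymodpmotiviccohomologyisleftKanextendedinsmalldegreesonlocalrings}) is the paper's, but your opening reduction has a genuine gap. You assert that the comma category of local essentially smooth $\Z$-algebras over a local ring $R$ is filtered. It is sifted (it has finite coproducts, formed by localised tensor products), but it is not filtered: there is no way to coequalise parallel pairs inside the category of essentially smooth algebras, and the left Kan extension here is computed by geometric realisation of simplicial resolutions --- this is exactly how the paper uses the present theorem in the proof of Lemma~\ref{lemmasymbolmapfactorsthoughimprovedMilnorKgroup}. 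Truncation does not commute with geometric realisations: in the skeletal spectral sequence, $\mathrm{H}^j(|X_\bullet|)$ for $j\leq i$ receives contributions from $\mathrm{H}^{j+s}(X_s)$ for all $s\geq 0$, hence from cohomological degrees above $i$ (and $\Z(i)^{\mathrm{mot}}(P)$ is not known to be coconnective in degree $i$ for $P$ local essentially smooth; only its vanishing in degree $i+1$ is used in the paper). Consequently the map $L(\tau^{\leq i}\Z(i)^{\mathrm{mot}})(R)\to\tau^{\leq i}\big(L\Z(i)^{\mathrm{mot}}(R)\big)$ need not be an equivalence, so the theorem is \emph{not} equivalent to the assertion that your untruncated $\theta_R$ is an isomorphism on $\mathrm{H}^{\leq i}$; and for the same reason the two corollaries, which assert Kan-extendedness of the \emph{truncated} functors, do not give the coconnectivity of $C(R)\otimes_{\Z}\Q$ and $C(R)\otimes^{\mathbb{L}}_{\Z}\Z/p$ that the rest of your argument consumes. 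Everything after your first paragraph is therefore an analysis of the wrong map.

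The repair is to never move $\tau^{\leq i}$ across the Kan-extension colimit. Set $G=\tau^{\leq i}\Z(i)^{\mathrm{mot}}$ and apply the fracture argument to $LG\to G$ itself: since $-\otimes_{\Z}\Q$ and $-\otimes^{\mathbb{L}}_{\Z}\Z/p$ commute with all colimits, it suffices that $G\otimes_{\Z}\Q$ and $G/p$ be left Kan extended. Rationalisation is $t$-exact, so $G\otimes_{\Z}\Q=\tau^{\leq i}\Q(i)^{\mathrm{mot}}$ and Corollary~\ref{corollaryrationalmotiviccohomologyisleftKanextendedinsmalldegrees} applies verbatim. Mod $p$, one needs the identification $\big(\tau^{\leq i}\Z(i)^{\mathrm{mot}}(R)\big)/p\simeq\tau^{\leq i}\F_p(i)^{\mathrm{mot}}(R)$ for every local ring $R$, which is precisely the surjectivity of $\mathrm{H}^i_{\mathrm{mot}}(R,\Z(i))\to\mathrm{H}^i_{\mathrm{mot}}(R,\F_p(i))$, i.e.\ Lemma~\ref{lemmatopdegreeintegraltomodpissurjective}; then Corollary~\ref{corollarymodpmotiviccohomologyisleftKanextendedinsmalldegreesonlocalrings} finishes. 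This identification is the one non-formal ingredient; your degree-$i$ bookkeeping is in effect an attempt to substitute for it, but it is carried out on the untruncated comparison map, on the wrong side of the unjustified commutation.
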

	
	\begin{proof}
		It suffices to prove the result rationally, and modulo $p$ for every prime number $p$. The result rationally is Corollary~\ref{corollaryrationalmotiviccohomologyisleftKanextendedinsmalldegrees}. Let $p$ be a prime number. For every local ring $R$, the natural map of abelian groups
		$$\text{H}^i_{\text{mot}}(R,\Z(i)) \longrightarrow \text{H}^i_{\text{mot}}(R,\F_p(i))$$
		is surjective by Lemma~\ref{lemmatopdegreeintegraltomodpissurjective}, so the natural map
		$$\big(\tau^{\leq i} \Z(i)^{\text{mot}}(R)\big)/p \longrightarrow \tau^{\leq i} \F_p(i)^{\text{mot}}(R)$$
		is an equivalence in the derived category $\mathcal{D}(\F_p)$. The result modulo $p$ is then Corollary~\ref{corollarymodpmotiviccohomologyisleftKanextendedinsmalldegreesonlocalrings}.
	\end{proof}
	
	Note that the proof of Theorem~\ref{theoremmotiviccohomologyisleftKanextendedonlocalringsinsmalldegrees} is similar to the proof of Elmanto--Morrow in equicharacteristic. The following consequence, however, uses the comparison to syntomic cohomology \cite[Theorem~$5.10$]{bouis_motivic_2024} in the case of smooth $\Z$-schemes. The proof of the latter, in any characteristic, is somehow simpler than the proof of Elmanto--Morrow's (stronger) comparison result to classical motivic cohomology: in particular, it does not use a presentation lemma, or the projective bundle formula. The proof of Corollary~\ref{corollarylissemotivicmaincomparisontheorem} then provides an alternative argument to the proof of \cite[Theorem~$7.7$]{elmanto_motivic_2023}.
	
	\begin{corollary}[Comparison to lisse motivic cohomology]\label{corollarylissemotivicmaincomparisontheorem}
		Let $R$ be a local ring. Then for every integer $i \geq 0$, the lisse-motivic comparison map induces a natural equivalence
		$$\Z(i)^{\emph{lisse}}(R) \xlongrightarrow{\sim} \tau^{\leq i} \Z(i)^{\emph{mot}}(R)$$
		in the derived category $\mathcal{D}(\Z)$.
	\end{corollary}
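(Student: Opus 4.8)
The plan is to deduce the comparison from Theorem~\ref{theoremmotiviccohomologyisleftKanextendedonlocalringsinsmalldegrees}, by showing that both functors appearing in the statement are left Kan extended, on local rings, from local essentially smooth $\Z$-algebras, and that the lisse-motivic comparison map is an equivalence on the latter. Since the left Kan extension of an equivalence is an equivalence, this will finish the proof.

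First I would record two formal reductions. On a local ring $R$, the complex $\Z(i)^{\text{lisse}}(R)$ lies in degrees $\leq i$: this was already observed in the proof of Proposition~\ref{propositionrationalcomparisonlissemotivicforlocalrings}, being a consequence of the fact that $\Z(i)^{\text{cla}}$ is Zariski-locally in degrees $\leq i$ (\cite[Corollary~$4.4$]{geisser_motivic_2004}) together with left Kan extension. Hence the lisse-motivic comparison map of Definition~\ref{definitionlissemotiviccomparisonmap} factors canonically through the truncation $\tau^{\leq i}\Z(i)^{\text{mot}}(R)$, which is the map in the statement. Moreover, the restriction of $\Z(i)^{\text{lisse}}$ to local rings is left Kan extended from local essentially smooth $\Z$-algebras: this is built into its definition as the left Kan extension of $\Z(i)^{\text{cla}}$ from $\text{Sm}_{\Z}$, since for a local ring any map from a smooth $\Z$-algebra factors through the localization at the preimage of the maximal ideal, so that the colimit computing $\Z(i)^{\text{lisse}}(R)$ may be reindexed over local essentially smooth $\Z$-algebras $P \to R$, on each of which $\Z(i)^{\text{lisse}}(P) \simeq \Z(i)^{\text{cla}}(P)$. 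The analogous property for $\tau^{\leq i}\Z(i)^{\text{mot}}$ on local rings is precisely Theorem~\ref{theoremmotiviccohomologyisleftKanextendedonlocalringsinsmalldegrees}.

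It therefore remains to prove that the map $\Z(i)^{\text{lisse}}(P) \longrightarrow \tau^{\leq i}\Z(i)^{\text{mot}}(P)$ is an equivalence for $P$ a local essentially smooth $\Z$-algebra. Since all functors involved are finitary and $\Z(i)^{\text{lisse}}$ agrees with $\Z(i)^{\text{cla}}$ on smooth $\Z$-algebras, it is enough — after writing $P$ as a filtered colimit of smooth $\Z$-algebras $Q$, and using that $\Z(i)^{\text{cla}}(P)$ is concentrated in degrees $\leq i$ for $P$ local — to identify $\tau^{\leq i}\Z(i)^{\text{mot}}(Q)$ with $\tau^{\leq i}\Z(i)^{\text{cla}}(Q)$, compatibly with the comparison maps, for each smooth $\Z$-algebra $Q$. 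Rationally, this holds by Corollary~\ref{corollaryrationallissemotiviccomparison} (after applying $\tau^{\leq i}$); with $\Z/p^k$-coefficients it follows from the description of motivic cohomology with finite coefficients in terms of syntomic cohomology (\cite[Theorem~$5.10$]{bouis_motivic_2024}), together with the comparison of syntomic and classical motivic cohomology on smooth $\Z$-schemes in the range of degrees $\leq i$; assembling these through the arithmetic fracture square then yields the desired integral statement in degrees $\leq i$.

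I expect the main obstacle to be this last point — controlling $\tau^{\leq i}\Z(i)^{\text{mot}}$ on smooth $\Z$-schemes — which is exactly where the smooth-case comparison to syntomic cohomology of \cite{bouis_motivic_2024} does the essential work, replacing in mixed characteristic the comparison to classical motivic cohomology (via a presentation lemma and the projective bundle formula) used by Elmanto--Morrow in equicharacteristic. Everything else reduces to formal manipulations of left Kan extensions and truncations.
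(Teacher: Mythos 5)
Your proposal is correct and follows essentially the same route as the paper: the paper likewise combines the degree bound on $\Z(i)^{\text{cla}}$ from \cite[Corollary~$4.4$]{geisser_motivic_2004}, the identification of the classical-motivic comparison map in degrees $\leq i$ on smooth $\Z$-schemes via \cite[Theorem~$5.10$]{bouis_motivic_2024} together with the rational splitting by Adams operations, and then concludes by Theorem~\ref{theoremmotiviccohomologyisleftKanextendedonlocalringsinsmalldegrees}. Your write-up merely spells out the formal left-Kan-extension bookkeeping in more detail.
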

	
	\begin{proof}
		The classical motivic complex $\Z(i)^{\text{cla}}(-)$ is Zariski-locally in degrees at most $i$ (\cite[Corollary~$4.4$]{geisser_motivic_2004}). By \cite[Theorem~$5.10$]{bouis_motivic_2024} and the rational splitting of algebraic $K$-theory induced by Adams operations, the classical-motivic comparison map $\Z(i)^{\text{cla}}(-) \rightarrow \Z(i)^{\text{mot}}(-)$ is an isomorphism in degrees less than or equal to $i$ on smooth $\Z$-schemes.
        The result is then a consequence of Theorem~\ref{theoremmotiviccohomologyisleftKanextendedonlocalringsinsmalldegrees}.
	\end{proof}

	In the rest of this section, we restrict our attention to henselian local rings, in order to describe the motivic cohomology group $\text{H}^2_{\text{mot}}(-,\Z(1))$.
	
	\begin{lemma}\label{lemmavanishingpadici+1}
		Let $R$ be a henselian local ring, and $p$ be a prime number. Then for any integers $i \geq 0$ and $k \geq 1$, the motivic cohomology group $\emph{H}^{i+1}_{\emph{mot}}(R,\Z/p^k(i))$ is zero.
	\end{lemma}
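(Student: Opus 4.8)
The plan is to reduce to $\F_p$-coefficients using the Bockstein sequence and Corollary~\ref{corollaryHilbert90}, and then to descend from the residue field to $R$ by a rigidity argument in the range of degrees above the weight.

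\emph{Reduction to $\F_p$-coefficients.} The cofibre sequence $\Z(i)^{\text{mot}}(R) \xrightarrow{p^k} \Z(i)^{\text{mot}}(R) \to \Z/p^k(i)^{\text{mot}}(R)$ induces a long exact sequence in cohomology; since $\text{H}^{i+1}_{\text{mot}}(R,\Z(i)) = 0$ by Corollary~\ref{corollaryHilbert90} (here using that $R$ is henselian when $i = 0$), this yields a natural isomorphism $\text{H}^{i+1}_{\text{mot}}(R,\Z/p^k(i)) \cong \text{H}^{i+2}_{\text{mot}}(R,\Z(i))[p^k]$. Hence it suffices to prove that the abelian group $\text{H}^{i+2}_{\text{mot}}(R,\Z(i))$ has no $p$-torsion, equivalently — taking $k = 1$ above — that $\text{H}^{i+1}_{\text{mot}}(R,\F_p(i)) = 0$.

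\emph{The field case.} For any field $\kappa$, the motivic complex $\Z(i)^{\text{mot}}(\kappa)$ coincides with Bloch's cycle complex $\Z(i)^{\text{cla}}(\kappa)$, which is concentrated in cohomological degrees $\leq i$: this is the theorem of Geisser--Levine with $\F_p$-coefficients when $\kappa$ has characteristic $p$, and \cite[Corollary~$4.4$]{geisser_motivic_2004} in general. In particular $\text{H}^{i+1}_{\text{mot}}(\kappa,\F_p(i)) = 0$. The same reasoning gives $\text{H}^{i+1}_{\text{mot}}(P,\F_p(i)) = 0$ for $P$ a local ind-smooth $\Z$-algebra, using the comparison of \cite{bouis_beilinson-lichtenbaum_2025} for schemes smooth over a Dedekind domain, the finitariness of $\Z(i)^{\text{mot}}$ from \cite[Theorem~C]{bouis_motivic_2024}, and again \cite[Corollary~$4.4$]{geisser_motivic_2004}.

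\emph{Descent, and the main obstacle.} It remains to transfer the vanishing $\text{H}^{i+1}_{\text{mot}}(-,\F_p(i)) = 0$ from $\kappa = R/\mathfrak m$ to $R$ along the henselian pair $(R,\mathfrak m)$ (or, alternatively, along a henselian surjection from a local ind-smooth $\Z$-algebra, as in the proof of Lemma~\ref{lemmatopdegreeintegraltomodpissurjective}). This is the crux, and amounts to the rigidity of the truncation $\tau^{>i}\F_p(i)^{\text{mot}}$. I would approach it via the comparison to syntomic cohomology: by \cite[Theorem~$5.10$]{bouis_motivic_2024} the truncation $\tau^{\leq i}\F_p(i)^{\text{mot}}$ agrees with $\tau^{\leq i}\F_p(i)^{\text{syn}}$, while $\tau^{>i}\F_p(i)^{\text{syn}}$ is rigid by the argument of Proposition~\ref{propositionmodpmotiviccohoisleftKanextendedinsmalldegrees} (which reduces it to the rigidity of the étale complexes $R\Gamma_{\text{ét}}(-,j_!\mu_p^{\otimes i})$, via Gabber's theorem \cite{gabber_affine_1994}); so one is reduced to showing that the defect between $\F_p(i)^{\text{mot}}$ and $\F_p(i)^{\text{syn}}$ in degrees $> i$ — a cdh-local contribution — is itself rigid. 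Granting this, rigidity gives $\text{H}^{i+1}_{\text{mot}}(R,\F_p(i)) \cong \text{H}^{i+1}_{\text{mot}}(\kappa,\F_p(i)) = 0$, as desired. The hard part is precisely this rigidity above the weight, where syntomic cohomology and motivic cohomology genuinely diverge and the equicharacteristic argument of Elmanto--Morrow does not transcribe directly.
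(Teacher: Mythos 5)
Your reduction to $\F_p$-coefficients via the Bockstein sequence and Corollary~\ref{corollaryHilbert90} is valid (and not circular, since that corollary precedes the lemma), though it is a detour the paper does not need: the argument below works verbatim with $\Z/p^k$-coefficients. The real problem is the final step. You correctly identify that everything hinges on passing from the residue field $\kappa=R/\mathfrak m$ to $R$, but you then write ``Granting this'' for precisely the assertion that needs proof, namely the rigidity of the cdh-local defect $L_{\text{cdh}}\tau^{>i}\Z/p^k(i)^{\text{syn}}$ (equivalently of $\tau^{>i}\Z/p^k(i)^{\text{mot}}$). That statement is not established anywhere in the paper and is far from formal: cdh sheafification does not preserve rigidity in general, since $(L_{\text{cdh}}F)(R)$ for a henselian local ring $R$ is computed from the whole cdh topos of $\operatorname{Spec}(R)$ (whose points are henselian valuation rings over $R$), not just from the closed point. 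The paper's Lemma~\ref{lemmaDcdhsheafificationofrigidisweaklyrigid} gives a statement of this flavour only under strong extra hypotheses. So as written your proof has a genuine gap at its crux.

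The paper's proof shows that this full rigidity is not needed; a weaker, purely diagrammatic statement suffices. By \cite[Theorem~$5.10$]{bouis_motivic_2024} and the fact that $L_{\text{cdh}}\tau^{>i}\Z/p^k(i)^{\text{syn}}$ is concentrated in degrees $\geq i+1$, the group $\text{H}^{i+1}_{\text{mot}}(R,\Z/p^k(i))$ is the \emph{kernel} of
$$\text{H}^{i+1}\big(\Z/p^k(i)^{\text{syn}}(R)\big)\longrightarrow \text{H}^{i+1}\big((L_{\text{cdh}}\tau^{>i}\Z/p^k(i)^{\text{syn}})(R)\big),$$
so one only has to prove this map is \emph{injective}. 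This follows by chasing the commutative square formed with the corresponding map for $R/\mathfrak m$: the left vertical map $\text{H}^{i+1}(\Z/p^k(i)^{\text{syn}}(R))\to \text{H}^{i+1}(\Z/p^k(i)^{\text{syn}}(R/\mathfrak m))$ is an isomorphism by the rigidity of $\tau^{>i}\Z/p^k(i)^{\text{syn}}$ (already available from the proof of Proposition~\ref{propositionmodpmotiviccohoisleftKanextendedinsmalldegrees}), and the bottom horizontal map is an isomorphism because a field is a local ring for the cdh topology, so that cdh sheafification does nothing there. Hence the top map is injective and its kernel vanishes. If you replace your ``granting this'' step by this diagram chase, your argument closes; note that it also makes your Bockstein reduction and the separate discussion of the field case unnecessary.
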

	
	\begin{proof}
		By \cite[Theorem~$5.10$]{bouis_motivic_2024}, the motivic cohomology group $\text{H}^{i+1}_{\text{mot}}(R,\Z/p^k(i))$ is naturally identified with the kernel of the natural map of abelian groups
		$$\text{H}^{i+1}(\Z/p^k(i)^{\text{syn}}(R)) \longrightarrow \text{H}^{i+1}((L_{\text{cdh}} \tau^{>i} \Z/p^k(i)^{\text{syn}})(R))$$
		for every commutative ring $R$. If $R$ is henselian local, let $\mathfrak{m}$ be its maximal ideal, and consider the natural commutative diagram
		$$\begin{tikzcd}
			\text{H}^{i+1}(\Z/p^k(i)^{\text{syn}}(R)) \ar[r] \ar[d] & \text{H}^{i+1}((L_{\text{cdh}} \tau^{>i} \Z/p^k(i)^{\text{syn}})(R)) \ar[d] \\
			\text{H}^{i+1}(\Z/p^k(i)^{\text{syn}}(R/\mathfrak{m})) \ar[r] & \text{H}^{i+1}((L_{\text{cdh}} \tau^{>i} \Z/p^k(i)^{\text{syn}})(R/\mathfrak{m}))
		\end{tikzcd}$$
		of abelian groups. The functor $\tau^{>i} \Z/p^k(i)^{\text{syn}}$ is rigid (proof of Proposition~\ref{propositionmodpmotiviccohoisleftKanextendedinsmalldegrees}), so the left vertical map is an isomorphism. The field $R/\mathfrak{m}$ is a local ring for the cdh topology, so the bottom horizontal map is an isomorphism. In particular, the top horizontal map is injective.
	\end{proof}

	\begin{proposition}\label{propositionweightoneH3(1)vanishes}
		Let $R$ be a henselian local ring. Then for every integer $i \geq 1$, the motivic cohomology group $\emph{H}^{i+2}_{\emph{mot}}(R,\Z(i))$ is zero.
	\end{proposition}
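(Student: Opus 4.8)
The plan is to prove that $\text{H}^{i+2}_{\text{mot}}(R,\Z(i))$ is both torsion-free and torsion, hence zero — exactly the strategy of Corollary~\ref{corollaryHilbert90}, shifted up by one cohomological degree.

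For the torsion-free statement, I would fix a prime number $p$ and use the short exact sequence of abelian groups
$$0 \longrightarrow \text{H}^{i+1}_{\text{mot}}(R,\Z(i))/p \longrightarrow \text{H}^{i+1}_{\text{mot}}(R,\F_p(i)) \longrightarrow \text{H}^{i+2}_{\text{mot}}(R,\Z(i))[p] \longrightarrow 0$$
induced by the cofibre sequence $\Z(i)^{\text{mot}}(R) \xrightarrow{p} \Z(i)^{\text{mot}}(R) \rightarrow \F_p(i)^{\text{mot}}(R)$. Since $R$ is henselian local, Lemma~\ref{lemmavanishingpadici+1} (applied with $k=1$) gives $\text{H}^{i+1}_{\text{mot}}(R,\F_p(i)) = 0$, so $\text{H}^{i+2}_{\text{mot}}(R,\Z(i))[p] = 0$. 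As $p$ is arbitrary, $\text{H}^{i+2}_{\text{mot}}(R,\Z(i))$ is torsion-free.

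For the torsion statement, I would observe that $\text{H}^{i+2}_{\text{mot}}(R,\Q(i)) \cong \text{H}^{i+2}_{\text{mot}}(R,\Z(i)) \otimes_{\Z}\Q$, since $\Q(i)^{\text{mot}}(R) \simeq \Z(i)^{\text{mot}}(R)\otimes_{\Z}\Q$ and $\Q$ is flat, and then argue that this rational group vanishes. For $i \geq 1$ the degree $i+2$ lies in the range $i < j \leq 2i+1$: the subrange $i < j \leq 2i$ is covered by Proposition~\ref{propositionrationalcomparisonlissemotivicforlocalrings}, and the only remaining value $j = 2i+1$ (which occurs precisely when $i=1$, giving $\text{H}^3_{\text{mot}}(R,\Q(1))$) is covered by Remark~\ref{remarkDrinfeldtheoremK-1onhenselianlocalrings}, that is, by Drinfeld's vanishing of $\text{K}_{-1}$ for henselian local rings. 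Hence $\text{H}^{i+2}_{\text{mot}}(R,\Z(i))$ is torsion.

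An abelian group that is both torsion-free and torsion is zero, which gives the result. I do not anticipate a serious obstacle here: the work is entirely in correctly bookkeeping the two vanishing ranges in the rational step, and in noting that the henselian hypothesis is genuinely used — it enters through Lemma~\ref{lemmavanishingpadici+1} and, for the weight-one case $i=1$, through Remark~\ref{remarkDrinfeldtheoremK-1onhenselianlocalrings}.
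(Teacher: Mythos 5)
Your proposal is correct and follows exactly the paper's own argument: torsion-freeness from Lemma~\ref{lemmavanishingpadici+1} via the Bockstein short exact sequence, and the torsion statement from Proposition~\ref{propositionrationalcomparisonlissemotivicforlocalrings} for $i \geq 2$ together with Remark~\ref{remarkDrinfeldtheoremK-1onhenselianlocalrings} for the boundary case $i=1$. Nothing to add.
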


	\begin{proof}
		By Lemma~\ref{lemmavanishingpadici+1} and the short exact sequence of abelian groups
		$$0 \longrightarrow \text{H}^{i+1}_{\text{mot}}(R,\Z(i))/p \longrightarrow \text{H}^{i+1}_{\text{mot}}(R,\F_p(i)) \longrightarrow \text{H}^{i+2}_{\text{mot}}(R,\Z(i))[p] \longrightarrow 0$$
		for every prime number $p$, the abelian group $\text{H}^{i+2}_{\text{mot}}(R,\Z(i))$ is torsionfree. By Proposition~\ref{propositionrationalcomparisonlissemotivicforlocalrings} if $i \geq 2$, and by Remark~\ref{remarkDrinfeldtheoremK-1onhenselianlocalrings} if $i=1$, it is also torsion, so it is zero.
	\end{proof}

	The following example is a consequence of \cite[Example~$3.9$]{bouis_motivic_2024}, Corollary~\ref{corollarylissemotivicmaincomparisontheorem}, and Proposition~\ref{propositionweightoneH3(1)vanishes}.

	\begin{example}\label{exampleweightonemotiviccohomology}
		For every qcqs scheme $X$, the natural map
		$$R\Gamma_{\text{Nis}}(X,\mathbb{G}_m)[-1] \longrightarrow \Z(1)^{\text{mot}}(X),$$
		defined as the Nisnevich sheafification of the lisse-motivic comparison map (see also Definition~\ref{definitionmotivicfirstChernclass}), is an isomorphism in degrees at most three. That is, the motivic complex $\Z(1)^{\text{mot}}(X)$ vanishes in degrees at most zero, and there are natural isomorphisms of abelian groups
		$$\text{H}^1_{\text{mot}}(X,\Z(1)) \cong \mathcal{O}(X)^{\times}, \quad \text{H}^2_{\text{mot}}(X,\Z(1)) \cong \text{Pic}(X), \quad \text{H}^3_{\text{mot}}(X,\Z(1)) \cong \text{H}^2_{\text{Nis}}(X,\mathbb{G}_m).$$
	\end{example}
	
	\subsection{Comparison to Milnor \texorpdfstring{$K$}{TEXT}-theory}
	
	\vspace{-\parindent}
	\hspace{\parindent}
	
	In this subsection, we construct, for every integer $i \geq 1$, a symbol map
	$$\text{K}^{\text{M}}_i(A) \longrightarrow \text{H}^i_{\text{mot}}(A,\Z(i))$$
	for local rings $A$ (Definition~\ref{definitionsymbolmap}), through which we compare the Milnor $K$-groups to motivic cohomology (Theorem~\ref{theoremcomparisontoMilnorKtheory}). Note that the arguments in Lemmas~\ref{lemmasymbolmapmilnorKtheorywelldefined} and \ref{lemmasymbolmapfactorsthoughimprovedMilnorKgroup} are very similar to that of \cite[Section~$7$]{elmanto_motivic_2023}, except for the Gersten injectivity for classical motivic cohomology, which is unknown integrally in mixed characteristic.
	
	For every commutative ring $A$, the lisse-motivic comparison map (Definition~\ref{definitionlissemotiviccomparisonmap} and \cite[Example~$3.9$]{bouis_motivic_2024}) induces on $\text{H}^1$ a natural isomorphism of abelian groups
	$$A^\times \xlongrightarrow{\cong} \text{H}^1_{\text{mot}}(A,\Z(1)).$$
	By multiplicativity of the motivic complexes, this induces, for every integer $i \geq 0$, a symbol map of abelian groups
	$$(A^{\times})^{\otimes i} \longrightarrow \text{H}^i_{\text{mot}}(A,\Z(i)).$$
	
	\begin{lemma}\label{lemmaMilnorK_2}
		For every local essentially smooth $\Z$-algebra $A$, there is a natural isomorphism
		$$\emph{H}^2_{\emph{mot}}(A,\Z(2)) \cong \emph{K}_2(A)$$
		of abelian groups.
	\end{lemma}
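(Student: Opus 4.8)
The plan is to reduce the statement to a computation in classical motivic cohomology and then to read it off the motivic (Atiyah--Hirzebruch) spectral sequence for regular schemes.

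First I would reduce to classical motivic cohomology. As recorded in the proof of Corollary~\ref{corollarylissemotivicmaincomparisontheorem}, the classical-motivic comparison map $\Z(i)^{\text{cla}}(-) \to \Z(i)^{\text{mot}}(-)$ is an isomorphism in degrees at most $i$ on smooth $\Z$-schemes. A local essentially smooth $\Z$-algebra is a filtered colimit of smooth $\Z$-algebras, and both $\Z(i)^{\text{cla}}$ and $\Z(i)^{\text{mot}}$ are finitary (the latter by \cite[Theorem~C]{bouis_motivic_2024}), so the comparison is an isomorphism in degrees $\leq i$ for $A$ as well; taking $i=2$ yields a natural isomorphism $\text{H}^2_{\text{cla}}(A,\Z(2)) \cong \text{H}^2_{\text{mot}}(A,\Z(2))$. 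It thus suffices to construct a natural isomorphism $\text{H}^2_{\text{cla}}(A,\Z(2)) \cong \text{K}_2(A)$.

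For this I would invoke the motivic spectral sequence for smooth schemes over the Dedekind domain $\Z$ (Friedlander--Suslin over a field, Levine in general): a convergent spectral sequence with $E_2$-page $E_2^{p,q} = \text{H}^{p-q}_{\text{cla}}(A,\Z(-q))$ and abutment $\text{K}_{-p-q}(A)$. Since $A$ is local, $\Z(i)^{\text{cla}}(A)$ is concentrated in cohomological degrees $\leq i$ by \cite[Corollary~4.4]{geisser_motivic_2004}; together with $\Z(0)^{\text{cla}}(A) \simeq \Z$ in degree $0$ and $\Z(1)^{\text{cla}}(A) \simeq A^{\times}[-1]$ in degree $1$, this shows that the groups $\text{H}^{2i-2}_{\text{cla}}(A,\Z(i))$ lying on the antidiagonal $p+q=-2$ all vanish except the one with $i=2$, namely $\text{H}^2_{\text{cla}}(A,\Z(2))$ at bidegree $(p,q)=(0,-2)$. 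The same degree bound (together with the vanishing of $\Z(i)^{\text{cla}}$ for $i<0$) forces every differential into or out of $E_r^{0,-2}$ to have vanishing source or target, so $E_2^{0,-2}=E_\infty^{0,-2}$ is the only contribution to $\text{K}_2(A)$, and the edge map is the desired isomorphism $\text{K}_2(A) \xlongrightarrow{\cong} \text{H}^2_{\text{cla}}(A,\Z(2))$. It is natural in $A$ and, by multiplicativity of the spectral sequence together with the weight-one identification $\text{K}_1(A) \cong A^{\times} \cong \text{H}^1_{\text{cla}}(A,\Z(1))$, compatible with the symbol maps.

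The only point requiring genuine care is the availability and low-weight degeneration of this spectral sequence over the base $\Z$ rather than over a field; the degeneration itself is pure bookkeeping with the degree bound of \cite{geisser_motivic_2004} and the explicit weight-$0$ and weight-$1$ complexes, so I expect no real obstacle. Should one prefer to stay inside the present framework, an alternative is to argue after rationalisation --- where $\text{K}_2(A)_{\Q} \cong \text{H}^2(\Q(2)^{\text{cla}}(A))$ by the Adams decomposition, compatibly with Corollary~\ref{corollaryrationallissemotiviccomparison} --- and with $\F_p$-coefficients via \cite[Theorem~5.10]{bouis_motivic_2024}, identifying $\F_p(2)^{\text{mot}}(A)$ with syntomic cohomology and matching it against $\text{K}_2(A)/p$ through the same Atiyah--Hirzebruch filtration; this route is longer but uses only inputs already available here.
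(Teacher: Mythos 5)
Your proposal is correct and follows essentially the same route as the paper: reduce to classical motivic cohomology via the classical--motivic comparison in degrees $\leq i$ (the paper invokes Corollary~\ref{corollarylissemotivicmaincomparisontheorem} for this), then read off $\text{K}_2$ from the Atiyah--Hirzebruch spectral sequence for classical motivic cohomology over $\Z$ using Geisser's degree bound and the weight-one computation. The paper's proof is just a terser version of your degeneration bookkeeping, citing \cite[Remark~$3.3$]{bouis_motivic_2024} for the spectral sequence over the Dedekind base, so the availability concern you flag is already handled there.
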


	\begin{proof}
		By Corollary~$2.12$, the classical-motivic comparison map
		$$\text{H}^2_{\text{cla}}(A,\Z(2)) \longrightarrow \text{H}^2_{\text{mot}}(A,\Z(2))$$
		is an isomorphism of abelian groups. The result is then a consequence of the Atiyah--Hirzebruch spectral sequence for classical motivic cohomology (\cite[Remark~$3.3$]{bouis_motivic_2024}), where we use that the classical motivic complex $\Z(1)^{\text{cla}}(A) \in \mathcal{D}(\Z)$ is concentrated in degree one (\cite[Example~$3.5$]{bouis_motivic_2024} and \cite[Corollary~$4.4$]{geisser_motivic_2004}).
	\end{proof}
	
	\begin{lemma}\label{lemmasymbolmapmilnorKtheorywelldefined}
		Let $A$ be a local ring. Then for every integer $i \geq 0$, the natural map of abelian groups
		$$(A^{\times})^{\otimes i} \longrightarrow \emph{H}^i_{\emph{mot}}(A,\Z(i))$$
		induced by the lisse-motivic comparison map factors through the Milnor $K$-group $\emph{K}^{\emph{M}}_i(A)$.
	\end{lemma}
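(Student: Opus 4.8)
The claim is that the symbol map $(A^\times)^{\otimes i} \to \mathrm{H}^i_{\mathrm{mot}}(A,\Z(i))$ factors through $\mathrm{K}^{\mathrm M}_i(A)$, i.e. that the Steinberg relation holds: whenever $a, 1-a \in A^\times$, the symbol $a \otimes (1-a)$ maps to zero (together with the graded-commutativity/alternating relations, which follow formally from the graded-commutativity of the multiplicative structure on $\Z(*)^{\mathrm{mot}}$). So the plan is: first reduce to checking the Steinberg relation for a pair $(a, 1-a)$ with both units; second, reduce the problem to the universal case; third, handle that universal case by a Gersten/local-to-global argument or by reduction to a field.

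The plan is as follows. The multiplicativity of the lisse-motivic comparison map means the symbol map is compatible with products, so it suffices to treat $i=2$: the Steinberg relation in weight two will propagate to all weights by multiplicativity (a symbol $\{a_1,\dots,a_i\}$ with $a_j + a_{j+1} = 1$ factors through the weight-two piece $\{a_j,a_{j+1}\}$). For $i=2$, one wants $\{a,1-a\} = 0$ in $\mathrm{H}^2_{\mathrm{mot}}(A,\Z(2))$ for all $a$ with $a, 1-a \in A^\times$. The universal such ring is $R_0 := \Z[a, a^{-1}, (1-a)^{-1}]$, which is a smooth (in fact essentially smooth after localising, but here honestly smooth) $\Z$-algebra; the element $a\otimes(1-a)$ in question is pulled back from $R_0$ along the map $R_0 \to A$ classifying $a$. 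But $A$ is only a local ring, so I would instead work with the local ring $R := (R_0)_{\mathfrak p}$ at the prime $\mathfrak p$ which is the image of the maximal ideal of $A$; this $R$ is a local essentially smooth $\Z$-algebra, and the symbol $\{a,1-a\}$ over $A$ is the image of the corresponding symbol over $R$ under the induced map $\mathrm{H}^2_{\mathrm{mot}}(R,\Z(2)) \to \mathrm{H}^2_{\mathrm{mot}}(A,\Z(2))$. So it suffices to prove $\{a,1-a\}=0$ over every local essentially smooth $\Z$-algebra $R$ in which $a$ and $1-a$ are units.

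For such $R$, Lemma~\ref{lemmaMilnorK_2} gives a natural isomorphism $\mathrm{H}^2_{\mathrm{mot}}(R,\Z(2)) \cong \mathrm{K}_2(R)$, and one checks that under this identification the symbol map agrees with the classical symbol $(R^\times)^{\otimes 2} \to \mathrm K_2(R)$ coming from the product in $K$-theory (this compatibility is again a multiplicativity statement, using that in weight one both theories compute $R^\times$ and that the comparison maps are ring maps). Then the Steinberg relation $\{a,1-a\}=0$ holds in $\mathrm K_2(R)$ — this is a classical fact about $K_2$ of commutative rings, valid for any unit $a$ with $1-a$ also a unit. Hence the symbol vanishes in $\mathrm{H}^2_{\mathrm{mot}}(R,\Z(2))$, and therefore in $\mathrm{H}^2_{\mathrm{mot}}(A,\Z(2))$, and therefore in every weight over $A$ by the multiplicativity reduction above.

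The main obstacle, as the paper flags in the remark preceding the lemma, is that the naive approach used over a field — which relies on Gersten injectivity for classical motivic cohomology to reduce to the residue field, where the Steinberg relation is the standard one for Milnor $K$-theory of fields — is not available integrally in mixed characteristic. The device that circumvents this is the combination of (a) the reduction to the \emph{universal} pair, which lands one in a local essentially smooth $\Z$-algebra regardless of the characteristic, and (b) Lemma~\ref{lemmaMilnorK_2}, which for such rings identifies weight-two motivic cohomology with $\mathrm K_2$, a group in which the Steinberg relation is elementary and characteristic-independent. The delicate point to get right is the compatibility of the motivic symbol map with the $K$-theoretic product under the isomorphism of Lemma~\ref{lemmaMilnorK_2}; this is where one must be careful to track the multiplicative structures and the identification $R^\times \cong \mathrm H^1_{\mathrm{mot}}(R,\Z(1))$ through the Atiyah--Hirzebruch spectral sequence.
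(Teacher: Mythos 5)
Your reductions match the paper's exactly: multiplicativity reduces the statement to the Steinberg relation in weight two, and naturality reduces that to the universal local essentially smooth $\Z$-algebra --- your $R=(\Z[a,a^{-1},(1-a)^{-1}])_{\mathfrak p}$ is the same ring as the paper's $\Z[t]_{\mathfrak p}$, since $t$ and $1-t$ are automatically invertible there. The divergence, and the gap, is in the endgame. You propose to transport the symbol across the isomorphism $\mathrm{H}^2_{\mathrm{mot}}(R,\Z(2)) \cong \mathrm{K}_2(R)$ of Lemma~\ref{lemmaMilnorK_2} and then invoke the Steinberg relation in $\mathrm{K}_2$ of a commutative ring. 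For this you need to know that the motivic symbol $(R^\times)^{\otimes 2} \to \mathrm{H}^2_{\mathrm{mot}}(R,\Z(2))$, built from the multiplicative structure on the motivic filtration, corresponds under that isomorphism to the classical Steinberg symbol on $\mathrm{K}_2(R)$. You flag this as the delicate point, but the justification offered (``the comparison maps are ring maps'') does not establish it: the isomorphism of Lemma~\ref{lemmaMilnorK_2} is an edge-map identification extracted from the Atiyah--Hirzebruch spectral sequence, and one would need both the multiplicativity of that spectral sequence (identifying the filtration product with the Loday product on $\mathrm{K}_2$) and the agreement, up to sign, of the Loday product with the Steinberg symbol, before the relation $\{a,1-a\}=1$ in $\mathrm{K}_2(R)$ can be invoked. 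Neither is supplied, so as written this is a genuine gap at the crux of the argument.

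The paper's proof is arranged precisely to avoid this compatibility. It uses Lemma~\ref{lemmaMilnorK_2} only through its naturality, in order to import the Gersten injectivity for $\mathrm{K}_2$ of the regular local ring $\Z[t]_{\mathfrak p}$ (Gillet, Dennis--Stein) and conclude that $\mathrm{H}^2_{\mathrm{mot}}(\Z[t]_{\mathfrak p},\Z(2)) \to \mathrm{H}^2_{\mathrm{mot}}(\mathrm{Frac}(\Z[t]_{\mathfrak p}),\Z(2))$ is injective; no multiplicative compatibility is required for that. The Steinberg relation is then verified over the fraction field, where the symbol map is the classical one into classical motivic cohomology and the relation is the theorem of Nesterenko--Suslin and Totaro. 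If you want to keep your shorter route, you must actually prove the symbol compatibility (and cite the Steinberg relation for $\mathrm{K}_2$ of general commutative rings); otherwise, the detour through the fraction field closes the gap without it.
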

	
	\begin{proof}
		By definition of the Milnor $K$-groups, it suffices to prove that the symbol map respects the Steinberg relations. Let $a \in A$ be an element such that $a$ and $1-a$ are units in $A$. By multiplicativity of the motivic complexes, it suffices to consider the case $i=2$ and to prove that $a \otimes (1-a)$ is sent to zero via the symbol map. Let $\Z[t] \rightarrow A$ be the ring homomorphism sending $t$ to $a$, and let $\mathfrak{p} \subset \Z[t]$ be the prime ideal defined as the inverse image of the maximal ideal of $A$ via this ring homomorphism. By naturality of the symbol map, the diagram of abelian groups
		$$\begin{tikzcd}
			(\Z[t]_{\mathfrak{p}})^{\times} \otimes_{\Z} (\Z[t]_{\mathfrak{p}})^{\times} \ar[r] \ar[d] & \text{H}^2_{\text{mot}}(\Z[t]_{\mathfrak{p}},\Z(2)) \ar[d] \\
			A^{\times} \otimes_{\Z} A^{\times} \ar[r] & \text{H}^2_{\text{mot}}(A,\Z(2))
		\end{tikzcd}$$
		is commutative. It then suffices to prove that the top horizontal arrow of this diagram sends $t \otimes (1-t)$ to zero. The local ring $\Z[t]_{\mathfrak{p}}$ is essentially smooth over $\Z$, so the right vertical map of the commutative diagram of abelian groups
		$$\begin{tikzcd}
			(\Z[t]_{\mathfrak{p}})^{\times} \otimes_{\Z} (\Z[t]_{\mathfrak{p}})^{\times} \ar[r] \ar[d] & \text{H}^2_{\text{mot}}(\Z[t]_{\mathfrak{p}},\Z(2)) \ar[d] \\
			(\text{Frac}(\Z[t]_{\mathfrak{p}}))^{\times} \otimes_{\Z} (\text{Frac}(\Z[t]_{\mathfrak{p}}))^{\times} \ar[r] & \text{H}^2_{\text{mot}}(\text{Frac}(\Z[t]_{\mathfrak{p}}),\Z(2))
		\end{tikzcd}$$
		is injective. Indeed, by Lemma~\ref{lemmaMilnorK_2} this is equivalent to the fact that the natural map
		$$\text{K}_2(A) \longrightarrow \text{K}_2(\text{Frac}(A))$$
		is injective, and this is the Gersten injectivity for $\text{K}_2$ (\cite[Corollary~$6$]{gillet_relative_1987} and \cite[Theorem~$2.2$]{dennis_K2_1975}). It then suffices to prove that the bottom horizontal map of this diagram sends $t \otimes (1-t)$ to zero. This is a consequence of the fact that the symbol map to classical motivic cohomology respects the Steinberg relation for fields (\cite{nesterenko_homology_1989}, see also \cite{totaro_milnor_1992}). 
	\end{proof}
	
	\begin{definition}[Symbol map]\label{definitionsymbolmap}
		Let $A$ be a local ring. For every integer $i \geq 0$, the {\it symbol map}
		$$\text{K}^{\text{M}}_{\text{i}}(A) \longrightarrow \text{H}^i_{\text{mot}}(A,\Z(i))$$
		is the natural map of abelian groups of Lemma~\ref{lemmasymbolmapmilnorKtheorywelldefined}.
	\end{definition}
	
	Following \cite{kerz_milnor_2010}, for $A$ a local ring and $i \geq 0$ an integer, we denote by $\widehat{\text{K}}{}^{\text{M}}_i(A)$ the $i^{\text{th}}$ {\it improved Milnor $K$-group} of $A$.
	
	\begin{lemma}\label{lemmasymbolmapfactorsthoughimprovedMilnorKgroup}
		Let $A$ be a local ring. Then for every integer $i \geq 0$, the symbol map
		$$\emph{K}^{\emph{M}}_i(A) \longrightarrow \emph{H}^i_{\emph{mot}}(A,\Z(i))$$
		factors through the improved Milnor $K$-group $\widehat{\emph{K}}{}^{\emph{M}}_i(A)$.
	\end{lemma}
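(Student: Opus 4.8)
The plan is to follow the strategy of Elmanto--Morrow \cite[Section~$7$]{elmanto_motivic_2023}: use Kerz's structural description of improved Milnor $K$-theory to reduce the statement to showing that the symbol map annihilates a suitable kernel, and then handle that kernel by reduction to local essentially smooth $\Z$-algebras via the left Kan extension property of Theorem~\ref{theoremmotiviccohomologyisleftKanextendedonlocalringsinsmalldegrees}. The one point at which the equicharacteristic argument does not transpose directly is its use of Gersten injectivity for classical motivic cohomology, which is unavailable integrally in mixed characteristic.

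First, recall from Kerz \cite{kerz_milnor_2010} that for every local ring $A$ the natural map $\text{K}^{\text{M}}_i(A) \to \widehat{\text{K}}{}^{\text{M}}_i(A)$ is surjective, and that the natural map $\widehat{\text{K}}{}^{\text{M}}_i(A) \to \widehat{\text{K}}{}^{\text{M}}_i(A(t)) = \text{K}^{\text{M}}_i(A(t))$ is injective, where $A(t)$ denotes the localisation of $A[t]$ at the polynomials of unit content (a local ring with infinite residue field). It follows that the kernel of $\text{K}^{\text{M}}_i(A) \twoheadrightarrow \widehat{\text{K}}{}^{\text{M}}_i(A)$ coincides with the kernel of $\text{K}^{\text{M}}_i(A) \to \text{K}^{\text{M}}_i(A(t))$; since the symbol map of Definition~\ref{definitionsymbolmap} is natural in $A$, and since $\text{K}^{\text{M}}_i(A) \to \widehat{\text{K}}{}^{\text{M}}_i(A)$ is surjective, the lemma is equivalent to the assertion that this kernel is annihilated by the symbol map $\text{K}^{\text{M}}_i(A) \to \text{H}^i_{\text{mot}}(A,\Z(i))$.

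Next, I would treat a local essentially smooth $\Z$-algebra $B$. There the classical--motivic comparison map $\Z(i)^{\text{cla}}(B) \to \Z(i)^{\text{mot}}(B)$ is an isomorphism in degrees at most $i$ (proof of Corollary~\ref{corollarylissemotivicmaincomparisontheorem}), and the symbol map to $\text{H}^i_{\text{cla}}(B,\Z(i))$ is identified, via the computation of classical motivic cohomology in the Milnor range (as for $i=2$ in the proof of Lemma~\ref{lemmaMilnorK_2}), with the canonical map onto $\widehat{\text{K}}{}^{\text{M}}_i(B)$; in particular the symbol map for $B$ kills the kernel above. Then, for an arbitrary local ring $A$, the functor $\tau^{\leq i}\Z(i)^{\text{mot}}$ is left Kan extended from local essentially smooth $\Z$-algebras (Theorem~\ref{theoremmotiviccohomologyisleftKanextendedonlocalringsinsmalldegrees}), so that $\text{H}^i_{\text{mot}}(A,\Z(i))$ is computed as a colimit of the groups $\text{H}^i_{\text{mot}}(B,\Z(i))$; comparing symbol maps along this colimit, and using the compatibility of the kernel description above with the construction $A \mapsto A(t)$, one propagates the vanishing of the symbol on the kernel from the essentially smooth case to $A$.

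The main obstacle is precisely this last propagation. In the equicharacteristic argument of Elmanto--Morrow, one deduces injectivity of $\text{H}^i_{\text{mot}}(A,\Z(i)) \to \text{H}^i_{\text{mot}}(A(t),\Z(i))$ from Gersten injectivity for classical motivic cohomology and concludes immediately; here that input is not available, so one must instead organise the colimit over local essentially smooth $\Z$-algebras carefully and transport the vanishing through the left Kan extension, keeping track of the naturality of the symbol map with respect to the extensions $A \to A(t)$ and of the compatibility of the classical--motivic comparison map with the symbol maps on smooth $\Z$-algebras.
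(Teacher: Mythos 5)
Your proposal correctly identifies the relevant reduction (the statement is only non-trivial when the residue field of $A$ is finite with fewer than $M_i$ elements, since otherwise $\text{K}^{\text{M}}_i(A) \to \widehat{\text{K}}{}^{\text{M}}_i(A)$ is an isomorphism), but the core of your argument is missing. You reduce the lemma to showing that the symbol map annihilates $\ker(\text{K}^{\text{M}}_i(A) \to \text{K}^{\text{M}}_i(A(t)))$, and then you acknowledge that the natural way to conclude --- injectivity of $\text{H}^i_{\text{mot}}(A,\Z(i)) \to \text{H}^i_{\text{mot}}(A(t),\Z(i))$, i.e.\ a Gersten-type statement --- is unavailable, proposing instead to ``organise the colimit carefully and transport the vanishing through the left Kan extension.'' This is precisely the step that requires an idea, and no mechanism is supplied: an element of $\ker(\text{K}^{\text{M}}_i(A) \to \widehat{\text{K}}{}^{\text{M}}_i(A))$ lifts to $\text{K}^{\text{M}}_i(P_0)$ for a local ind-smooth resolution $P_\bullet \to A$, but there is no reason it lifts to the corresponding kernel for $P_0$, so the vanishing in the smooth case does not formally propagate. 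Moreover, your treatment of the essentially smooth case itself assumes that the symbol map to $\text{H}^i_{\text{cla}}(B,\Z(i))$ is identified with the projection onto $\widehat{\text{K}}{}^{\text{M}}_i(B)$; for $i>2$ this is the Nesterenko--Suslin--Kerz computation for smooth schemes over a mixed characteristic Dedekind domain, which the paper explicitly records as open (Remark~\ref{remarksmoothcaseimpliesgeneralcaseMilnorcomparison}), so even your base case is conditional.

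The paper's proof takes a different route that sidesteps both problems: it passes to the finite local extension $A' = A \otimes_{\Z_{(p)}} V$, where $V$ is the finite \'etale extension of $\Z_{(p)}$ corresponding to $\F_{p^\ell}$ with $\ell$ coprime to $[\F_q:\F_p]$ and $p^\ell \geq M_i$, so that $A'$ has a large residue field and $\text{K}^{\text{M}}_i(A') = \widehat{\text{K}}{}^{\text{M}}_i(A')$. The left Kan extension property (Theorem~\ref{theoremmotiviccohomologyisleftKanextendedonlocalringsinsmalldegrees}), applied to a simplicial resolution by local ind-smooth $\Z$-algebras, is used not to propagate a vanishing statement but to construct a transfer $N_\ell : \text{H}^i_{\text{mot}}(A',\Z(i)) \to \text{H}^i_{\text{mot}}(A,\Z(i))$ with $N_\ell \circ \pi^* = \ell$, whence the kernel of $\text{H}^i_{\text{mot}}(A,\Z(i)) \to \text{H}^i_{\text{mot}}(A',\Z(i))$ is $\ell$-torsion; running the argument for a second coprime $\ell'$ kills the image of any element of $\ker(\text{K}^{\text{M}}_i(A) \to \widehat{\text{K}}{}^{\text{M}}_i(A))$. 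If you want to salvage your approach, you would need to either prove the Gersten injectivity you flag as unavailable, or replace the propagation step by a transfer construction of this kind.
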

	
	\begin{proof}
		Let $M_i \geq 1$ be the integer defined in \cite{kerz_milnor_2010}. If the residue field of the local ring $A$ has at least $M_i$ elements, then the natural map $$\text{K}^{\text{M}}_i(A) \longrightarrow \widehat{\text{K}}{}^{\text{M}}_i(A)$$
		is an isomorphism of abelian groups (\cite[Proposition~$10\,(5)$]{kerz_milnor_2010}). Assume now that the residue field of the local ring $A$ has less than $M_i$ elements. We want to prove that the symbol map
		$$\text{K}{}^{\text{M}}_i(A) \longrightarrow \text{H}^i_{\text{mot}}(A,\Z(i))$$
		factors through the surjective map $\text{K}{}^{\text{M}}_i(A) \rightarrow \widehat{\text{K}}{}^{\text{M}}_i(A)$, {\it i.e.}, that every element of the abelian group $\text{ker}(\text{K}{}^{\text{M}}_i(A) \rightarrow \widehat{\text{K}}{}^{\text{M}}_i(A))$ is sent to zero by the previous symbol map. Let $\mathfrak{m}$ be the maximal ideal of the local ring $A$, and $p$ be its residue characteristic. The residue field~$A/\mathfrak{m}$ of the local ring $A$ is isomorphic to a finite extension $\F_q$ of $\F_p$. Let $\ell \geq 1$ be an integer which is coprime to the degree of this extension, and such that $p^{\ell} \geq M_i$. As a tensor product of finite field extensions of coprime degree, the commutative ring $\F_q \otimes_{\F_p} \F_{p^{\ell}}$ is a field. Let $V$ be the finite étale extension of $\Z_{(p)}$ corresponding to the field extension $\F_{p^\ell}$ of $\F_p$. The commutative ring $A' := A \otimes_{\Z_{(p)}} V$ is finite over the local ring $A$, and the quotient $A'/\mathfrak{m}A'$ is a field, so the commutative ring $A'$ is a local $A$-algebra, whose residue field has at least $M_i$ elements.
		
		Let $P_{\bullet} \rightarrow A$ be a simplicial resolution of the local ring $A$ where each term $P_m$ is a local ind-smooth $\Z$-algebra, and each face map $P_{m+1} \rightarrow P_m$ is a henselian surjection. By Theorem~\ref{theoremmotiviccohomologyisleftKanextendedonlocalringsinsmalldegrees}, there is then a natural equivalence
		$$\underset{m}{\text{colim}}\, \tau^{\leq i} \Z(i)^{\text{mot}}(P_m) \xlongrightarrow{\sim} \tau^{\leq i} \Z(i)^{\text{mot}}(A)$$
		in the derived category $\mathcal{D}(\Z)$. In particular, this equivalence induces a natural isomorphism
		$$\text{coeq}\big(\text{H}^i_{\text{mot}}(P_1,\Z(i)) \hspace{1mm} \substack{\longrightarrow\\[-0.9em] \longrightarrow} \hspace{1mm} \text{H}^i_{\text{mot}}(P_0,\Z(i))\big) \xlongrightarrow{\cong} \text{H}^i_{\text{mot}}(A,\Z(i))$$
		of abelian groups, where the motivic cohomology groups in the left term are naturally identified with classical motivic cohomology groups by Corollary~\ref{corollarylissemotivicmaincomparisontheorem}. Similarly, $P_{\bullet} \otimes_{\Z_{(p)}} V \rightarrow A'$ is a simplicial resolution of the local ring $A'$ where each term $P_m \otimes_{\Z_{(p)}} V$ is an ind-smooth $V$\nobreakdash-algebra, and each face map $P_{m+1} \otimes_{\Z_{(p)}} V \rightarrow P_m \otimes_{\Z_{(p)}} V$ is a henselian surjection, so there is a natural isomorphism
		$$\text{coeq}\big(\text{H}^i_{\text{mot}}(P_1 \otimes_{\Z_{(p)}} V,\Z(i)) \hspace{1mm} \substack{\longrightarrow\\[-0.9em] \longrightarrow} \hspace{1mm} \text{H}^i_{\text{mot}}(P_0 \otimes_{\Z_{(p)}} V,\Z(i))\big) \xlongrightarrow{\cong} \text{H}^i_{\text{mot}}(A',\Z(i))$$
		of abelian groups, where the motivic cohomology groups of the left term are naturally identified with classical motivic cohomology groups. Classical motivic cohomology of smooth schemes over a mixed characteristic Dedekind domain admits functorial transfer maps along finite étale morphisms, so the previous two isomorphisms induce a transfer map
		$$N_{\ell} : \text{H}^i_{\text{mot}}(A',\Z(i)) \longrightarrow \text{H}^i_{\text{mot}}(A,\Z(i))$$
		such that pre-composition with the natural map $\text{H}^i_{\text{mot}}(A,\Z(i)) \rightarrow \text{H}^i_{\text{mot}}(A',\Z(i))$ is multiplication by~$\ell$. In particular, the kernel of the natural map $\text{H}^i_{\text{mot}}(A,\Z(i)) \rightarrow \text{H}^i_{\text{mot}}(A',\Z(i))$ is $\ell$-torsion.
		
		Consider the commutative diagram
		$$\begin{tikzcd}
			\text{K}{}^{\text{M}}_i(A) \ar[r] \ar[d] & \text{K}{}^{\text{M}}_i(A') \ar[d] \\
			\text{H}^i_{\text{mot}}(A,\Z(i)) \ar[r] & \text{H}^i_{\text{mot}}(A',\Z(i))
		\end{tikzcd}$$
		of abelian groups, and let $x$ be an element of the abelian group $\text{ker}(\text{K}{}^{\text{M}}_i(A) \rightarrow \widehat{\text{K}}{}^{\text{M}}_i(A))$. The residue field of the local ring $A'$ has at least $M_i$ elements, so the natural map $\text{K}{}^{\text{M}}_i(A') \rightarrow \widehat{\text{K}}{}^{\text{M}}_i(A')$ is an isomorphism (\cite[Proposition~$10\,(5)$]{kerz_milnor_2010}), and $x$ is sent to zero by the top horizontal map. In particular, the image of $x$ by the left vertical map is in the kernel of the bottom horizontal map, and is thus $\ell$-torsion by the previous paragraph. Let $\ell' \geq 1$ be an integer which is coprime to $\ell$ and to the degree of $\F_q$ over~$\F_p$, and such that $p^{\ell'} \geq M_i$. The previous argument for this integer $\ell'$ implies that the image of $x$ by the left vertical map is also $\ell'$-torsion, hence it is zero.
	\end{proof}
	
	\begin{conjecture}\label{conjecturecomparisonMilnorKtheoryandmotiviccohomology}
		Let $A$ be a local ring. Then for every integer $i \geq 0$, the natural map
		$$\widehat{\text{K}}{}^{\text{M}}_i(A) \longrightarrow \text{H}^i_{\text{mot}}(A,\Z(i))$$
		induced by Lemma~\ref{lemmasymbolmapfactorsthoughimprovedMilnorKgroup} is an isomorphism of abelian groups.
	\end{conjecture}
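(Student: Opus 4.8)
Since this is stated as a conjecture, the following is a plan together with an honest account of where it stalls; for $i \leq 1$ the statement already follows from the weight-one identifications of Example~\ref{exampleweightonemotiviccohomology} (with $\widehat{\text{K}}{}^{\text{M}}_0 = \Z$ and $\widehat{\text{K}}{}^{\text{M}}_1(A) = A^\times$), so the content lies in weights $i \geq 2$. The equicharacteristic case is \cite[Theorem~$7.12$]{elmanto_motivic_2023}, and the strategy I would follow is theirs. First, one reduces to local essentially smooth $\Z$-algebras: by Theorem~\ref{theoremmotiviccohomologyisleftKanextendedonlocalringsinsmalldegrees}, $\text{H}^i_{\text{mot}}(A,\Z(i))$ is the coequaliser of $\text{H}^i_{\text{mot}}(P_1,\Z(i)) \rightrightarrows \text{H}^i_{\text{mot}}(P_0,\Z(i))$ for a simplicial resolution $P_\bullet \to A$ by local ind-smooth $\Z$-algebras with henselian surjections (as in the proof of Lemma~\ref{lemmasymbolmapfactorsthoughimprovedMilnorKgroup}), and a compatible presentation of $\widehat{\text{K}}{}^{\text{M}}_i(A)$ --- using that $P_0^\times \to A^\times$ is onto along a henselian surjection and that $\text{K}^{\text{M}}_i \to \widehat{\text{K}}{}^{\text{M}}_i$ is onto \cite{kerz_milnor_2010} --- lets one deduce the general case from the essentially smooth one. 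This reduction is recorded in Remark~\ref{remarksmoothcaseimpliesgeneralcaseMilnorcomparison}.

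For $A$ local essentially smooth over $\Z$, the classical-motivic comparison map is an isomorphism in degrees $\leq i$ (as recalled in the proof of Corollary~\ref{corollarylissemotivicmaincomparisontheorem}), so $\text{H}^i_{\text{mot}}(A,\Z(i)) \cong \text{H}^i_{\text{cla}}(A,\Z(i))$ and the conjecture becomes the assertion that the symbol map $\widehat{\text{K}}{}^{\text{M}}_i(A) \to \text{H}^i_{\text{cla}}(A,\Z(i))$ is an isomorphism --- the (integral) Gersten conjecture for Milnor $K$-theory of regular local rings in mixed characteristic. Surjectivity should be within reach: the top classical motivic cohomology group of a regular local ring is generated by symbols, by a moving argument reducing to the fraction field together with Nesterenko--Suslin--Totaro \cite{nesterenko_homology_1989}. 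For injectivity I would argue as over a field: use Gersten injectivity $\text{H}^i_{\text{cla}}(A,\Z(i)) \hookrightarrow \text{H}^i_{\text{cla}}(\operatorname{Frac}(A),\Z(i))$ to reduce to $A$ a field, where $\widehat{\text{K}}{}^{\text{M}}_i = \text{K}^{\text{M}}_i \xrightarrow{\cong} \text{H}^i_{\text{cla}}$ by Nesterenko--Suslin--Totaro, and transport injectivity back down along the Gersten resolution for Milnor $K$-theory.

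The main obstacle is precisely this last input. Gersten injectivity for the classical (Bloch cycle-complex) motivic cohomology of smooth $\Z$-schemes is not known integrally in mixed characteristic --- this is the gap flagged in the discussion preceding Lemma~\ref{lemmasymbolmapmilnorKtheorywelldefined}, and the reason that lemma could only invoke Gersten injectivity for $\text{K}_2$ (available by Gillet--Levine and Dennis--Stein) after reducing to $i = 2$. What survives with current technology is the finite-coefficient statement: there the relevant Gersten behaviour is supplied by the description of $\Z/p^k(i)^{\text{mot}}$ in terms of syntomic cohomology (\cite[Theorem~$5.10$]{bouis_motivic_2024}) together with the $p$-adic Hodge-theoretic input of Lüders--Morrow \cite{luders_milnor_2023}, and this is carried out later as Theorem~\ref{theoremcomparisontoMilnorKtheory}, at the cost of assuming $A$ henselian (the henselian hypothesis entering through the vanishing of $\text{H}^{i+1}_{\text{mot}}(-,\Z/p^k(i))$, Lemma~\ref{lemmavanishingpadici+1}). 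Rationally, left Kan extension from essentially smooth $\Z$-algebras together with the field case gives $\widehat{\text{K}}{}^{\text{M}}_i(A) \otimes \Q \cong \text{H}^i_{\text{mot}}(A,\Q(i))$. But the mod-$n$ and rational comparisons do not combine to the integral statement --- they leave a possible divisible torsion discrepancy undetected --- so a genuinely new ingredient is needed, which is why the statement is left as a conjecture.
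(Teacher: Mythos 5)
The statement is a conjecture, and the paper offers no proof of it: it only records the equicharacteristic case (Elmanto--Morrow), the reduction to local ind-smooth $\Z$-algebras via left Kan extension (Remark~\ref{remarksmoothcaseimpliesgeneralcaseMilnorcomparison}), the link to Gersten injectivity (Remark~\ref{remarkMilnorcomparisonisinjectiveiffGersteninjectivitytrueforMilnorKgroupssmoothcase}), and the finite-coefficient henselian case (Theorem~\ref{theoremcomparisontoMilnorKtheory}). Your account reproduces exactly this state of affairs and correctly identifies the missing ingredient as the integral Gersten-type statement for smooth $\Z$-schemes in mixed characteristic, so it is consistent with the paper's own treatment.
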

	
	The previous conjecture was proved by Elmanto--Morrow for equicharacteristic local rings (\cite[Theorem~$7.12$]{elmanto_motivic_2023}). Their proof uses as an input the analogous result in the smooth case for classical motivic cohomology, which is unknown in mixed characteristic (see Remark~\ref{remarksmoothcaseimpliesgeneralcaseMilnorcomparison}).
	
	\begin{theorem}[Singular Nesterenko--Suslin isomorphism with finite coefficients]\label{theoremcomparisontoMilnorKtheory}
		Let $A$ be a henselian local ring. Then for any integers $i \geq 0$ and $n \geq 1$, the natural map
		$$\widehat{\emph{K}}{}_i^{\emph{M}}(A)/n \longrightarrow \emph{H}^i_{\emph{mot}}(A,\Z(i))/n$$
		is an isomorphism of abelian groups.
	\end{theorem}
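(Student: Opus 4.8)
The plan is to strip off the finite coefficients prime by prime, replace motivic by syntomic cohomology in the single relevant cohomological degree, and then feed in a $p$-adic Hodge theory computation of the mod-$p^k$ improved Milnor $K$-groups. Since $\Z/n \cong \bigoplus_{p \mid n} \Z/p^{v_p(n)}$ as rings, the complex $\Z/n(i)^{\text{mot}}$ splits accordingly, so both sides of the claimed isomorphism and the symbol map between them decompose into finite direct sums indexed by the primes dividing $n$; this reduces us to $n = p^k$ a prime power, and the case $i = 0$ being trivial I would take $i \geq 1$. By Corollary~\ref{corollaryHilbert90} one has $\text{H}^{i+1}_{\text{mot}}(A,\Z(i)) = 0$ for the henselian local ring $A$, so the Bockstein sequence yields a natural isomorphism $\text{H}^i_{\text{mot}}(A,\Z(i))/p^k \xrightarrow{\sim} \text{H}^i_{\text{mot}}(A,\Z/p^k(i))$; under it the assertion becomes the statement that the mod-$p^k$ reduction of the symbol map of Lemma~\ref{lemmasymbolmapfactorsthoughimprovedMilnorKgroup}, viewed as a map $\widehat{\text{K}}{}^{\text{M}}_i(A)/p^k \to \text{H}^i_{\text{mot}}(A,\Z/p^k(i))$, is an isomorphism.

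Next I would pass to syntomic cohomology. The comparison of \cite[Theorem~5.10]{bouis_motivic_2024} is realized by a fibre sequence $\Z/p^k(i)^{\text{mot}} \to \Z/p^k(i)^{\text{syn}} \to L_{\text{cdh}} \tau^{>i} \Z/p^k(i)^{\text{syn}}$ in which the last term lies in cohomological degrees $> i$ (this boundedness being the key point already exploited in Lemma~\ref{lemmavanishingpadici+1}); applying $\tau^{\leq i}$ gives $\tau^{\leq i}\Z/p^k(i)^{\text{mot}} \simeq \tau^{\leq i}\Z/p^k(i)^{\text{syn}}$, hence $\text{H}^i_{\text{mot}}(A,\Z/p^k(i)) \cong \text{H}^i_{\text{syn}}(A,\Z/p^k(i))$ compatibly with the symbol maps. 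It thus remains to show that the symbol map $\widehat{\text{K}}{}^{\text{M}}_i(A)/p^k \to \text{H}^i_{\text{syn}}(A,\Z/p^k(i))$ is an isomorphism for every henselian local ring $A$.

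For this final point, observe that a henselian local ring is $p$-henselian for every prime $p$, a quotient of a henselian pair being again henselian. If $p$ is invertible in $A$, the argument is classical: in the fibre sequence $R\Gamma_{\text{ét}}(-,j_!\mu_{p^k}^{\otimes i}) \to \Z/p^k(i)^{\text{syn}} \to \Z/p^k(i)^{\text{BMS}}$ of \cite[Remark~8.4.4]{bhatt_absolute_2022} the last term vanishes on $A$ and $j$ is the identity, so $\Z/p^k(i)^{\text{syn}}(A) \simeq R\Gamma_{\text{ét}}(A,\mu_{p^k}^{\otimes i})$, which by rigidity for étale cohomology \cite{gabber_affine_1994} is $R\Gamma_{\text{ét}}(A/\mathfrak{m}_A,\mu_{p^k}^{\otimes i})$; the norm residue isomorphism identifies its $\text{H}^i$ with $\text{K}^{\text{M}}_i(A/\mathfrak{m}_A)/p^k$, and since $1+\mathfrak{m}_A$ is $p$-divisible by Hensel's lemma the reduction $\widehat{\text{K}}{}^{\text{M}}_i(A)/p^k \to \widehat{\text{K}}{}^{\text{M}}_i(A/\mathfrak{m}_A)/p^k$ is an isomorphism, which closes this case. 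The essential case is that $p$ lies in $\mathfrak{m}_A$, i.e.\ the residue characteristic is $p$; here I would invoke the theorem of Lüders--Morrow \cite{luders_milnor_2023}, computing the mod-$p^k$ improved Milnor $K$-theory of $p$-henselian local rings in terms of syntomic cohomology, which gives exactly that $\widehat{\text{K}}{}^{\text{M}}_i(A)/p^k \to \text{H}^i_{\text{syn}}(A,\Z/p^k(i))$ is an isomorphism. This is the main obstacle, and it is irreducibly $p$-adic: the case $p \in \mathfrak{m}_A$ does not descend to the residue field — already $1+p\Z_p$ fails to be $p$-divisible, so $\text{H}^i_{\text{syn}}(A,\Z/p^k(i))$ sees arithmetic invisible to $A/\mathfrak{m}_A$ — and \cite{luders_milnor_2023} is precisely the integral $p$-adic Hodge theoretic input that matches it against $\widehat{\text{K}}{}^{\text{M}}_i$. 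Everything preceding this invocation is formal, using only the comparison theorems already established above.
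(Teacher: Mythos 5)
Your argument is correct and is, in its essential case, the same as the paper's: both proofs reduce $\text{H}^i_{\text{mot}}(A,\Z(i))/p^k$ to the degree-$i$ syntomic (equivalently, for $p$-henselian rings, Bhatt--Morrow--Scholze) cohomology group using the surjectivity/vanishing in degree $i+1$ (your use of Corollary~\ref{corollaryHilbert90} is interchangeable with the paper's use of Lemma~\ref{lemmatopdegreeintegraltomodpissurjective}), and then quote Lüders--Morrow \cite{luders_milnor_2023} as the decisive $p$-adic input. You diverge in the auxiliary cases, and two points deserve a comment. First, for $p$ invertible in $A$ you reduce to the residue field via rigidity and the norm residue isomorphism, whereas the paper reduces to $A/p$ (with $p$ the residue characteristic) and invokes Elmanto--Morrow's theorem for $\F_p$-algebras; both work, but your justification that $\widehat{\text{K}}{}^{\text{M}}_i(A)/p^k \to \widehat{\text{K}}{}^{\text{M}}_i(A/\mathfrak{m})/p^k$ is an isomorphism ($p$-divisibility of $1+\mathfrak{m}$) only directly handles $i=1$; for higher $i$ you should cite the rigidity statement for improved Milnor $K$-theory of henselian pairs with invertible coefficients, \cite[Proposition~$10\,(7)$]{kerz_milnor_2010}, which is exactly what the paper uses. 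Second, the paper dispatches \emph{all} equicharacteristic henselian local rings first via \cite[Theorem~$7.12$]{elmanto_motivic_2023}, so it only ever applies \cite{luders_milnor_2023} in mixed characteristic $(0,p)$; your case "$p \in \mathfrak{m}_A$" also includes henselian local $\F_p$-algebras, so your proof additionally requires that Lüders--Morrow's theorem holds for $p$-henselian local rings of equal characteristic $p$ (where it amounts to the comparison with logarithmic de Rham--Witt forms). This is true but should be checked against the precise hypotheses of \cite[Theorem~$3.1$]{luders_milnor_2023}, or else the equicharacteristic case should be split off and handled by Elmanto--Morrow as in the paper.
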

	
	\begin{proof}
		If the local ring $A$ contains a field, then the natural map
		$$\widehat{\text{K}}{}^{\text{M}}_i(A) \longrightarrow \text{H}^i_{\text{mot}}(A,\Z(i))$$
		is an isomorphism of abelian groups (\cite[Theorem~$7.12$]{elmanto_motivic_2023}). Otherwise, $A$ is a henselian local ring of mixed characteristic $(0,p)$ for some prime number $p$. In particular, the local ring~$A$ is $p$-henselian. If $p$ does not divide the integer $n$, then consider the commutative diagram
		$$\begin{tikzcd}
			\widehat{\text{K}}{}^{\text{M}}_i(A)/n \ar[r] \ar[d] & \text{H}^i_{\text{mot}}(A,\Z(i))/n \ar[d] \\
			\widehat{\text{K}}{}^{\text{M}}_i(A/p)/n \ar[r] & \text{H}^i_{\text{mot}}(A/p,\Z(i))/n
		\end{tikzcd}$$
		of abelian groups. The left vertical map is an isomorphism by \cite[Proposition~$10\,(7)$]{kerz_milnor_2010}. By Lemma~\ref{lemmatopdegreeintegraltomodpissurjective} and \cite[Corollary~$5.6$]{bouis_motivic_2024}, the right vertical map is naturally identified with the natural map of abelian groups
		$$\text{H}^i_{\text{ét}}(A,\mu_n^{\otimes i}) \longrightarrow \text{H}^i_{\text{ét}}(A/p,\mu_n^{\otimes i}),$$
		which is an isomorphism by rigidity of étale cohomology (\cite{gabber_affine_1994}, see also \cite[Corollary~$1.18\,(1)$]{bhatt_arc-topology_2021}). The local ring $A/p$ is an $\F_p$-algebra, so the bottom horizontal map is an isomorphism (\cite[Theorem~$7.12$]{elmanto_motivic_2023}), and the result is true in this case. It then suffices to prove that for every integer $k \geq 1$, the natural map
		$$\widehat{\text{K}}{}^{\text{M}}_i(A)/p^k \longrightarrow \text{H}^i_{\text{mot}}(A,\Z(i))/p^k$$
		is an isomorphism of abelian groups. By Lemma~\ref{lemmatopdegreeintegraltomodpissurjective} and \cite[Corollary~$5.11$]{bouis_motivic_2024}, the natural map
		$$\text{H}^i_{\text{mot}}(A,\Z(i))/p^k \longrightarrow \text{H}^i(\Z/p^k(i)^{\text{BMS}}(A))$$
		is an isomorphism of abelian groups. By \cite[Theorem~$3.1$]{luders_milnor_2023}, the composite map
		$$\widehat{\text{K}}{}^{\text{M}}_i(A)/p^k \longrightarrow \text{H}^i_{\text{mot}}(A,\Z(i))/p^k \longrightarrow \text{H}^i(\Z/p^k(i)^{\text{BMS}}(A))$$
		is an isomorphism of abelian groups, hence the desired result.
	\end{proof}
	
	\begin{remark}\label{remarksmoothcaseimpliesgeneralcaseMilnorcomparison}
		If Conjecture~\ref{conjecturecomparisonMilnorKtheoryandmotiviccohomology} is true for local ind-smooth $\Z$-algebras, then the left Kan extension properties \cite[Proposition~$1.17$]{luders_milnor_2023} and Theorem~\ref{theoremmotiviccohomologyisleftKanextendedonlocalringsinsmalldegrees} imply that Conjecture~\ref{conjecturecomparisonMilnorKtheoryandmotiviccohomology} is true for all local rings. See the proof of \cite[Theorem~$7.12$]{elmanto_motivic_2023} for more details.
	\end{remark}
	
	\begin{remark}\label{remarkMilnorcomparisonisinjectiveiffGersteninjectivitytrueforMilnorKgroupssmoothcase}
		Let $A$ be a local essentially smooth $\Z$-algebra, $i \geq 0$ be an integer, and consider the commutative diagram
		$$\begin{tikzcd}
			\widehat{\text{K}}{}^{\text{M}}_i(A) \ar[r] \ar[d] & \text{H}^i_{\text{mot}}(A,\Z(i)) \ar[d] \\
			\widehat{\text{K}}{}^{\text{M}}_i(\text{Frac}(A)) \ar[r] & \text{H}^i_{\text{mot}}(\text{Frac}(A),\Z(i))
		\end{tikzcd}$$
		of abelian groups. The bottom horizontal map is an isomorphism by the Nesterenko--Suslin isomorphism for fields (\cite{nesterenko_homology_1989}, see also \cite[Theorem~$7.12$]{elmanto_motivic_2023}). The left vertical being injective then implies that the top horizontal map is injective. That is, the Gersten injectivity conjecture for the improved Milnor $K$-groups would imply the injectivity part of Conjecture~\ref{conjecturecomparisonMilnorKtheoryandmotiviccohomology}. Knowing the Gersten injectivity conjecture for the motivic cohomology group $\text{H}^i_{\text{mot}}(-,\Z(i))$ would imply that these two facts are equivalent. See \cite{luders_relative_2022} for related results on the Gersten conjecture for improved Milnor $K$-groups.
	\end{remark}

	\section{Weibel vanishing and pro cdh descent}\label{sectionweibelprocdh}
	
	\vspace{-\parindent}
	\hspace{\parindent}
	
	In this section, we study the motivic complexes $\Z(i)^{\text{mot}}$ on noetherian schemes. We prove a general vanishing result which refines Weibel's vanishing conjecture on negative $K$-groups (Theorem~\ref{theoremmotivicWeibelvanishing}), and prove that they coincide with Kelly--Saito's pro cdh motivic complexes $\Z(i)^{\text{procdh}}$ (Theorem~\ref{theoremcomparisonprocdhmotivic}). The key input for both these results is the fact that the motivic complexes $\Z(i)^{\text{mot}}$ satisfy pro cdh excision (Theorem~\ref{theoremprocdhdescentformotiviccohomology}), {\it i.e.}, that they send abstract blowup squares to pro cartesian squares.
	
	\begin{notation}[Abstract blowup square]\label{notationabstractblowupsquare}
		An {\it abstract blowup square} (of noetherian schemes) is a cartesian square
		\begin{equation}\label{equationabstractblowupsquare}
			\begin{tikzcd}
				Y' \ar[r] \ar[d] & X' \ar[d] \\
				Y \ar[r] & X
			\end{tikzcd}
		\end{equation}
		of qcqs schemes (resp. of noetherian schemes)
		such that $X' \rightarrow X$ is proper and finitely presented, \hbox{$Y \rightarrow X$} is a finitely presented closed immersion, and the induced map \hbox{$X'\setminus Y' \rightarrow X \setminus Y$} is an isomorphism. In this context, we also denote, for every integer $r \geq 0$, by $rY$ (resp.~$rY'$) the $r-1^{\text{st}}$ infinitesimal thickening of $Y$ inside $X$ (resp. of $Y'$ inside $X'$).
	\end{notation}	
	
	We use Kelly--Saito's recent definition in \cite{kelly_procdh_2024} of the pro cdh topology to encode the fact that a Nisnevich sheaf ({\it e.g.}, the motivic complex $\Z(i)^{\text{mot}}$) satisfies pro cdh excision. Kelly--Saito proved in particular that if $S$ has finite valuative dimension and noetherian topological space, then the pro cdh topos of $S$ is hypercomplete and has enough points. For our purposes, the following definition will be used only for noetherian schemes $S$.
	
	\begin{definition}[Pro cdh descent, after \cite{kelly_procdh_2024}]\label{definitionprocdhsheaf}
		Let $S$ be a qcqs scheme. A {\it pro cdh sheaf} on finitely presented $S$-schemes is a presheaf
		$$F : \text{Sch}_S^{\text{fp,op}} \longrightarrow \mathcal{D}(\Z)$$
		satisfying Nisnevich descent, and such that for every abstract blowup square of finitely presented $S$-schemes $(\ref{equationabstractblowupsquare})$, the natural commutative diagram
		$$\begin{tikzcd}
			F(X) \ar[r] \ar[d] & F(X') \ar[d] \\
			\{F(rY)\}_r \ar[r] & \{F(rY')\}_r
		\end{tikzcd}$$
		is a weakly cartesian square of pro objects in the derived category $\mathcal{D}(\Z)$.\footnote{By this, we mean that all the cohomology groups of the total fibre of this commutative square are zero as pro abelian groups. All the presheaves $F$ that we will consider (most importantly, the presheaves $\Z(i)^{\text{mot}}$) are bounded above on noetherian schemes, by a constant depending only on the dimension of their input (for the motivic complexes $\Z(i)^{\text{mot}}$, this is \cite[Proposition~$4.49$]{bouis_motivic_2024}); this definition of weakly cartesian square will then be equivalent to being weakly cartesian in the stable $\infty$-category of pro objects in the derived category $\mathcal{D}(\Z)$, in the sense of \cite[Definition~$2.27$]{land_k-theory_2019}.}
	\end{definition}
	
	\subsection{Pro cdh descent for the cotangent complex}
	
	\vspace{-\parindent}
	\hspace{\parindent}
	
	In this subsection, we review the pro cdh descent for powers of the cotangent complex on noetherian schemes (Proposition~\ref{propositionprocdhdescentforcotangentcomplex}). On finite-dimensional noetherian schemes, this is \cite[Theorem~$2.10$]{morrow_pro_2016}. On general noetherian schemes, the proof follows the sketch presented in \cite[proof of Lemma~$8.5$]{elmanto_motivic_2023}. In particular, the arguments are exactly as in \cite{morrow_pro_2016}, except for the following generalisation of Grothendieck's formal functions theorem (\cite[Corollary~$4.1.7$]{grothendieck_elements_1961}), where the finite dimensionality hypothesis is removed. We give some details for the sake of completeness.
	
	For every commutative ring $A$, recall that a pro $A$-module $\{M_r\}_r$ is zero if for every index~$r$, there exists an index $r' \geq r$ such that the map $M_{r'} \rightarrow M_r$ is the zero map. Similarly, a map $\{M_r\}_r \rightarrow \{N_r\}_r$ of pro $A$-modules is an isomorphism if its kernel and cokernel are zero pro $A$-modules. We say that a pro object $\{C_r\}_r$ in the derived category $\mathcal{D}(A)$ is weakly zero if all its cohomology groups are zero pro $A$-modules. Note that all the pro complexes that we will consider are uniformly bounded above, so this definition is equivalent to being weakly zero in the stable $\infty$-category of pro objects in the derived category $\mathcal{D}(A)$ (\cite[Definition~$2.27$]{land_k-theory_2019}). Similarly, we say that a map $\{C_r\}_r \rightarrow \{C_r'\}_r$ of pro objects in the derived category $\mathcal{D}(A)$ is a weak equivalence if its fibre is weakly zero as a pro object in the derived category $\mathcal{D}(A)$.
	
	\begin{lemma}[Formal functions theorem, after Lurie \cite{lurie_spectral_2019}]\label{lemmaformalfunctionstheoremLurie}
		Let $A$ be a noetherian commutative ring, $I$ be an ideal of $A$, $X$ be a proper scheme over $\emph{Spec}(A)$, and $X^\wedge_I$ be the formal completion of $X$ along the vanishing locus of $I$. Then for every coherent sheaf $\mathcal{F}$ over $X$, the natural map
		$$R\Gamma_{\emph{Zar}}(X,\mathcal{F}) \longrightarrow R\Gamma_{\emph{Zar}}(X^\wedge_I,\mathcal{F}^\wedge_I)$$
		where $\mathcal{F}^\wedge_I$ is the pullback of $\mathcal{F}$ along the natural map $X^\wedge_I \rightarrow X$, exhibits the target as the $I$-adic completion\footnote{The cohomology groups of these coherent sheaves are finitely generated $A$-modules (because $X$ is proper over $\text{Spec}(A)$), so the derived $I$-adic completion and the classical $I$-adic completion coincide in this context.} of the source in the derived category $\mathcal{D}(A)$. More precisely, the natural map
		$$\{R\Gamma_{\emph{Zar}}(X,\mathcal{F})/I^r\}_r \longrightarrow \{R\Gamma_{\emph{Zar}}(X \times_{\emph{Spec}(A)} \emph{Spec}(A/I^r), \mathcal{F} \otimes^{\mathbb{L}}_{\mathcal{O}_X} \mathcal{O}_X/I^r\mathcal{O}_X)\}_r$$
		is a weak equivalence of pro objects in the derived category $\mathcal{D}(A)$.
	\end{lemma}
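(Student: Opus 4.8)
The plan is to reduce the statement to Lurie's version of the theorem on formal functions in spectral algebraic geometry, and then to upgrade the resulting derived $I$-adic completion statement to the more precise pro-statement. First I would invoke the formal GAGA / formal functions comparison in the noetherian setting: since $X$ is proper over $\mathrm{Spec}(A)$ and $\mathcal{F}$ is coherent, each cohomology group $\mathrm{H}^n_{\mathrm{Zar}}(X,\mathcal{F})$ is a finitely generated $A$-module, so the derived $I$-adic completion of $R\Gamma_{\mathrm{Zar}}(X,\mathcal{F})$ agrees with the termwise classical $I$-adic completion of these finitely generated modules. The content of the formal functions theorem (as in \cite{lurie_spectral_2019}, applied to the noetherian affine base $\mathrm{Spec}(A)$ and the proper morphism $X \to \mathrm{Spec}(A)$) is precisely that the natural map
$$R\Gamma_{\mathrm{Zar}}(X,\mathcal{F}) \longrightarrow R\Gamma_{\mathrm{Zar}}(X^\wedge_I,\mathcal{F}^\wedge_I)$$
identifies the target with this $I$-adic completion in $\mathcal{D}(A)$; here one uses that cohomology of the formal completion is computed as the limit over the infinitesimal thickenings, together with the compatibility of pushforward with the relevant completions.

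Next I would pass from the completed statement to the pro-statement. The key point is that for a uniformly bounded-above pro object $\{C_r\}_r$ in $\mathcal{D}(A)$ whose transition maps have finitely generated cohomology, the pro system $\{C_r\}_r$ is weakly equivalent to the constant pro system on $\lim_r C_r$ exactly when the induced pro systems on each cohomology group stabilise in the Mittag-Leffler sense; for the system $\{R\Gamma_{\mathrm{Zar}}(X,\mathcal{F})/I^r\}_r$ this is automatic because, by the above, $\mathrm{H}^n$ of the $r$-th term is $\mathrm{H}^n_{\mathrm{Zar}}(X,\mathcal{F})/I^r \cdot(\text{finitely generated module})$ up to bounded $I$-power-torsion discrepancies, and the Artin--Rees lemma controls these discrepancies. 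Concretely, I would compare the two pro systems
$$\{R\Gamma_{\mathrm{Zar}}(X,\mathcal{F})/I^r\}_r \quad\text{and}\quad \{R\Gamma_{\mathrm{Zar}}(X\times_{\mathrm{Spec}(A)}\mathrm{Spec}(A/I^r),\mathcal{F}\otimes^{\mathbb{L}}_{\mathcal{O}_X}\mathcal{O}_X/I^r\mathcal{O}_X)\}_r$$
by noting that both have the same limit (namely $R\Gamma(X^\wedge_I,\mathcal{F}^\wedge_I)$) and that the fibre of the comparison map is, in each cohomological degree, a pro system built from the higher derived $I$-adic completion terms (the $\mathrm{Tor}$ and $\lim^1$ contributions), which vanish pro-systematically by Artin--Rees since all the modules in sight are finitely generated over the noetherian ring $A$.

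The main obstacle is the removal of the finite-dimensionality hypothesis present in Grothendieck's original formulation \cite[Corollary~4.1.7]{grothendieck_elements_1961}: without a dimension bound one cannot argue by dévissage on a finite filtration, so I would instead lean on Lurie's formulation, which is stated for arbitrary noetherian (in fact more general) bases, and on the fact that the cohomology of a coherent sheaf on a proper scheme is still finitely generated regardless of dimension. The subtlety is purely bookkeeping about pro objects: one must check that ``weakly cartesian / weakly zero'' in the naive cohomological sense coincides with the homotopy-theoretic notion of \cite[Definition~2.27]{land_k-theory_2019} under the uniform boundedness hypothesis (which holds here, since $X$ is proper of bounded cohomological dimension), and this is already recorded in the discussion preceding the lemma. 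Once these identifications are in place, the proof is a formal consequence of Lurie's theorem together with Artin--Rees, and I would present only these two reductions in detail, referring to \cite{morrow_pro_2016} for the parts of the argument that are identical to the finite-dimensional case.
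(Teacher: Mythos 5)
Your treatment of the first statement coincides with the paper's: the paper simply cites \cite[Lemma~$8.5.1.1$]{lurie_spectral_2019}, and your reduction to Lurie's formal functions theorem (using finite generation of the $\mathrm{H}^n_{\mathrm{Zar}}(X,\mathcal{F})$ to identify derived and classical $I$-adic completion) is exactly that. For the second, pro-level statement the paper does something different from you: it obtains the pro-isomorphism by inspecting the internals of Lurie's proof (specifically the proof of \cite[Lemma~$8.1.2.3$]{lurie_spectral_2019}), whereas you attempt a self-contained deduction. A direct argument along your lines is indeed possible (it is essentially how \cite{morrow_pro_2016} handles the finite-dimensional case), so the divergence in route is legitimate; the problem is that your version of it has a gap.

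The gap is in the step where you pass from the completed statement to the pro-statement. You argue that the two pro systems ``have the same limit'' and that the fibre of the comparison map consists of ``$\mathrm{Tor}$ and $\lim^1$ contributions'' which vanish by Artin--Rees. Two pro objects with equivalent limits need not be weakly equivalent (a pro system can have zero limit and zero $\lim^1$ without being pro-zero), so the limit comparison --- which is all that Lurie's theorem gives you as a black box --- cannot be the engine of the argument; and $\lim^1$ terms simply do not appear in the fibre of a map of pro objects, which suggests the limit statement and the pro statement are being conflated. What actually has to be proved is a termwise identification: that $\{R\Gamma_{\mathrm{Zar}}(X_r,\mathcal{F}\otimes^{\mathbb{L}}_{\mathcal{O}_X}\mathcal{O}_X/I^r\mathcal{O}_X)\}_r$ agrees pro-systematically with $\{R\Gamma_{\mathrm{Zar}}(X,\mathcal{F})\otimes^{\mathbb{L}}_A A/I^r\}_r$ and hence, by Artin--Rees applied to the finitely generated $A$-modules $\mathrm{H}^n_{\mathrm{Zar}}(X,\mathcal{F})$, with $\{R\Gamma_{\mathrm{Zar}}(X,\mathcal{F})/I^r\}_r$. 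The first identification requires (i) the projection formula for $R\Gamma_{\mathrm{Zar}}(X,-)$, which lets you pull the derived base change $\otimes^{\mathbb{L}}_A A/I^r$ outside the cohomology, and (ii) the pro-comparison between $\mathcal{F}\otimes^{\mathbb{L}}_A A/I^r$ and the derived restriction $\mathcal{F}\otimes^{\mathbb{L}}_{\mathcal{O}_X}\mathcal{O}_X/I^r\mathcal{O}_X$ to the classical thickening; the latter is \cite[Lemma~$2.3$]{morrow_pro_2016} and is an Artin--Rees argument over the noetherian rings $\mathcal{O}_X(U)$ for affine opens $U$, not over $A$ --- these rings are not finite $A$-modules, so your blanket appeal to ``all the modules in sight are finitely generated over the noetherian ring $A$'' does not cover this step. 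Neither (i) nor (ii) appears in your sketch, and they are precisely where the content lies; once they are supplied, the argument closes without any further input from the formal completion $X^\wedge_I$.
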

	
	\begin{proof}
		The first statement is a special case of \cite[Lemma~$8.5.1.1$]{lurie_spectral_2019}. The second statement, although {\it a priori} stronger, follows by an examination of the previous proof (and in particular, the proof of \cite[Lemma~$8.1.2.3$]{lurie_spectral_2019}).
	\end{proof}
	
	\begin{lemma}\label{lemmacohomologygroupsofcotangentcomplexarefinitelygenerated}
		Let $A$ be a noetherian commutative ring, and $X$ be a proper scheme over $\emph{Spec}(A)$. Then for any integers $j \geq 0$ and $n \in \Z$, the $A$-module $\emph{H}^n_{\emph{Zar}}(X,\mathbb{L}^j_{-/A})$ is finitely generated.
	\end{lemma}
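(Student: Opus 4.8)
The plan is to reduce the claim, by means of a hypercohomology spectral sequence, to Grothendieck's finiteness theorem for the higher direct images of a single coherent sheaf along a proper morphism. First I would observe that, since $A$ is noetherian and $X$ is proper — hence of finite type — over $\text{Spec}(A)$, the structure morphism is of finite presentation, so the cotangent complex $L_{X/A}$ is connective (concentrated in non-positive cohomological degrees) and pseudo-coherent; in particular, for every $m \geq 0$ the cohomology sheaf $\mathcal{H}^{-m}(L_{X/A})$ is a coherent $\mathcal{O}_X$-module. Derived exterior powers preserve connectivity and pseudo-coherence: the homotopy sheaves of $\mathbb{L}^j_{X/A} = \mathbb{L}\Lambda^j L_{X/A}$ are built, via the Illusie (décalage) spectral sequence, from tensor, exterior, and divided powers of the cohomology sheaves of $L_{X/A}$, each of which is coherent since $\mathcal{O}_X$ is noetherian, and only finitely many such terms contribute in each fixed cohomological degree by connectivity and the grading by weight. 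Hence $\mathbb{L}^j_{X/A}$ is connective with coherent cohomology sheaves. Note that $\mathbb{L}^j_{X/A}$ is in general unbounded below (already $L_{X/A}$ is, when $X$ is singular over $A$), so it is only the individual cohomology sheaves, and not the complex itself, that are amenable to coherence statements.

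Next I would use that, being proper over $\text{Spec}(A)$, the scheme $X$ is quasi-compact and separated, hence covered by finitely many affine opens. Čech cohomology for such a cover shows that the functor $R\Gamma_{\text{Zar}}(X,-)$ on quasi-coherent $\mathcal{O}_X$-modules has cohomological amplitude bounded above by some integer $N$ depending only on $X$. This substitutes for Grothendieck's vanishing theorem and avoids having to know that $X$ — equivalently $A$ — has finite Krull dimension, which can fail for a general noetherian ring.

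Finally, fix $n \in \Z$ and consider the hypercohomology spectral sequence
\[ E_2^{p,q} = \text{H}^p_{\text{Zar}}(X, \mathcal{H}^q(\mathbb{L}^j_{X/A})) \Longrightarrow \text{H}^{p+q}_{\text{Zar}}(X, \mathbb{L}^j_{X/A}). \]
Since $\mathcal{H}^q(\mathbb{L}^j_{X/A}) = 0$ for $q > 0$ and $E_2^{p,q} = 0$ unless $0 \leq p \leq N$, only finitely many terms with $p+q = n$ are nonzero, and each of them is of the form $\text{H}^p_{\text{Zar}}(X, \mathcal{F})$ for a coherent $\mathcal{O}_X$-module $\mathcal{F}$, hence a finitely generated $A$-module by Grothendieck's coherence theorem for proper morphisms over a noetherian base (\cite[théorème~$3.2.1$]{grothendieck_elements_1961}; see also \cite[\S5.6]{lurie_spectral_2019}). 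As $A$ is noetherian, finitely generated $A$-modules are closed under extensions, so the finite filtration on $\text{H}^n_{\text{Zar}}(X, \mathbb{L}^j_{X/A})$ provided by the spectral sequence has finitely generated subquotients; therefore $\text{H}^n_{\text{Zar}}(X, \mathbb{L}^j_{X/A})$ is finitely generated. I expect the only real subtlety to be the bookkeeping in the first step, namely ensuring that the unbounded-below complex $\mathbb{L}^j_{X/A}$ enters the argument only through its coherent cohomology sheaves, which is exactly what the finite cohomological dimension from the second step and the spectral sequence make possible.
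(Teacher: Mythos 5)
Your proof is correct and follows essentially the same route as the paper's, which simply notes that the cohomology sheaves $\mathcal{H}^q(\mathbb{L}^j_{-/A})$ are coherent (since $X$ is of finite type over the noetherian ring $A$) and then invokes properness to conclude finite generation of the hypercohomology. Your additional care with the unboundedness below of $\mathbb{L}^j_{X/A}$, handled via the finite cohomological dimension of $R\Gamma_{\mathrm{Zar}}(X,-)$ and the hypercohomology spectral sequence, is a welcome elaboration of a step the paper leaves implicit.
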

	
	\begin{proof}
		The scheme $X$ is of finite type over $\text{Spec}(A)$, so the $\mathcal{O}_X$-module $\mathcal{H}^n_{\text{Zar}}(-,\mathbb{L}^j_{-/A})$ is coherent. Because $X$ is proper over $\text{Spec}(A)$, its cohomology groups are thus finitely generated $A$-modules.
	\end{proof}
	
	\begin{corollary}\label{corollaryM16Theorem2.4(iv)}
		Let $A$ be a noetherian commutative ring, $I$ be an ideal of $A$, and $X$ be a noetherian scheme which is proper over $\emph{Spec}(A)$. Then for any integers $j \geq 0$ and $n \in \Z$, the natural map
		$$\{ \emph{H}^n_{\emph{Zar}}(X,\mathbb{L}^j_{-/A})/I^r\}_r \longrightarrow \{\emph{H}^n_{\emph{Zar}}(X,\mathbb{L}^j_{-/A} \otimes^{\mathbb{L}}_{\mathcal{O}_X}\mathcal{O}_X / I^r \mathcal{O}_X) \}_r$$
		is an isomorphism of pro $A$-modules.
	\end{corollary}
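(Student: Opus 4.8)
The statement to prove is Corollary~\ref{corollaryM16Theorem2.4(iv)}: for a noetherian ring $A$, an ideal $I \subseteq A$, and $X$ proper over $\mathrm{Spec}(A)$, the natural map
$$\{ \mathrm{H}^n_{\mathrm{Zar}}(X,\mathbb{L}^j_{-/A})/I^r\}_r \longrightarrow \{\mathrm{H}^n_{\mathrm{Zar}}(X,\mathbb{L}^j_{-/A} \otimes^{\mathbb{L}}_{\mathcal{O}_X}\mathcal{O}_X / I^r \mathcal{O}_X) \}_r$$
is an isomorphism of pro $A$-modules. The strategy is to combine the formal functions theorem in pro form (Lemma~\ref{lemmaformalfunctionstheoremLurie}) applied to the coherent sheaf $\mathcal{F} = \mathbb{L}^j_{-/A}$ — more precisely, since $\mathbb{L}^j_{-/A}$ is not a single sheaf but a complex, one applies the lemma to each coherent cohomology sheaf $\mathcal{H}^m_{\mathrm{Zar}}(-,\mathbb{L}^j_{-/A})$, which is coherent by the argument in Lemma~\ref{lemmacohomologygroupsofcotangentcomplexarefinitelygenerated} — with the finite generation of the hypercohomology groups $\mathrm{H}^n_{\mathrm{Zar}}(X,\mathbb{L}^j_{-/A})$ over $A$, which is exactly Lemma~\ref{lemmacohomologygroupsofcotangentcomplexarefinitelygenerated}, together with the Artin--Rees lemma.

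First I would observe that by Lemma~\ref{lemmaformalfunctionstheoremLurie} (second statement), applied to the complex $\mathbb{L}^j_{-/A}$ — which is a bounded-above complex of quasi-coherent sheaves on $X$ with coherent cohomology, so the lemma applies termwise on cohomology sheaves and then assembles via a spectral sequence / convergence argument on the proper scheme $X$ — the natural map of pro complexes
$$\{R\Gamma_{\mathrm{Zar}}(X,\mathbb{L}^j_{-/A})/I^r\}_r \longrightarrow \{R\Gamma_{\mathrm{Zar}}(X, \mathbb{L}^j_{-/A} \otimes^{\mathbb{L}}_{\mathcal{O}_X}\mathcal{O}_X/I^r\mathcal{O}_X)\}_r$$
is a weak equivalence of pro objects in $\mathcal{D}(A)$, i.e. an isomorphism on all cohomology pro-groups. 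Passing to $\mathrm{H}^n$ identifies the target of the corollary's map with $\mathrm{H}^n\big(\{R\Gamma_{\mathrm{Zar}}(X,\mathbb{L}^j_{-/A})/I^r\}_r\big)$.

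Second, I need to compare $\mathrm{H}^n\big(\{R\Gamma_{\mathrm{Zar}}(X,\mathbb{L}^j_{-/A})/I^r\}_r\big)$ with $\{\mathrm{H}^n_{\mathrm{Zar}}(X,\mathbb{L}^j_{-/A})/I^r\}_r$. Writing $C := R\Gamma_{\mathrm{Zar}}(X,\mathbb{L}^j_{-/A}) \in \mathcal{D}(A)$, which has finitely generated cohomology modules over the noetherian ring $A$ by Lemma~\ref{lemmacohomologygroupsofcotangentcomplexarefinitelygenerated}, there is for each $r$ a short exact sequence $0 \to \mathrm{H}^n(C)/I^r \to \mathrm{H}^n(C/I^rC) \to \mathrm{H}^{n+1}(C)[I^r] \to 0$ coming from the derived tensor with $A/I^r$. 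The content is then that $\{\mathrm{H}^{n+1}(C)[I^r]\}_r$ is a zero pro-module: since $\mathrm{H}^{n+1}(C)$ is a finitely generated module over the noetherian ring $A$, the Artin--Rees lemma (in its standard consequence for pro-systems, exactly as used in \cite{morrow_pro_2016}) gives that $\{\mathrm{H}^{n+1}(C)[I^r]\}_r$ is pro-zero. Hence the map $\{\mathrm{H}^n(C)/I^r\}_r \to \{\mathrm{H}^n(C/I^rC)\}_r$ is a pro-isomorphism, and composing with the identification from step one yields the corollary.

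The main obstacle, such as it is, is the bookkeeping in step one: Lemma~\ref{lemmaformalfunctionstheoremLurie} is stated for a single coherent sheaf $\mathcal{F}$, whereas $\mathbb{L}^j_{-/A}$ is a complex, so one must either invoke the lemma for each coherent cohomology sheaf and run a (finite, since everything is bounded above and $X$ has finite cohomological dimension as a noetherian proper scheme) hypercohomology spectral sequence argument that is compatible with the pro-structure, or note that Lurie's proof applies verbatim to bounded-above pseudo-coherent complexes. Either way this is routine given the finiteness in Lemma~\ref{lemmacohomologygroupsofcotangentcomplexarefinitelygenerated}; I would spell out just enough to make the reduction to the single-sheaf case explicit, since the rest is the standard Morrow-style Artin--Rees argument.
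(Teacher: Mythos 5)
Your proposal is correct and uses the same two ingredients as the paper's proof — Lurie's formal functions theorem for a single coherent sheaf together with the Artin--Rees exactness of $\{-\otimes_A A/I^r\}_r$ on finitely generated modules (\cite[Theorem~$1.1$\,(ii)]{morrow_pro_2016}) — merely permuting the bookkeeping: the paper pro-izes the hypercohomology spectral sequence first and applies formal functions term by term, whereas you first assemble a complex-level formal functions statement and then pass from derived to naive reduction of cohomology via the universal-coefficients sequence. The only cosmetic point is that your term $\text{H}^{n+1}(C)[I^r]$ should be $\text{Tor}_1^A(\text{H}^{n+1}(C),A/I^r)$, whose pro-vanishing for finitely generated modules is exactly the Artin--Rees input you invoke.
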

	
	\begin{proof}
		By Lemma~\ref{lemmacohomologygroupsofcotangentcomplexarefinitelygenerated} and its proof, all the terms in the hypercohomology spectral sequence
		$$E_2^{p,q} = \text{H}^p_{\text{Zar}}(X,\mathcal{H}^q_{\text{Zar}}(-,\mathbb{L}^j_{-/R})) \Longrightarrow \text{H}^{p+q}_{\text{Zar}}(X,\mathbb{L}^j_{-/A})$$
		are finitely generated $A$-modules. The functor $\{-\otimes_A A/I^r\}_r$ is exact on the category of finitely generated $A$-modules (\cite[Theorem~$1.1$\,(ii)]{morrow_pro_2016}), so it induces a spectral sequence of pro $A$-modules
		$$E_2^{p,q} = \{\text{H}^p_{\text{Zar}}(X,\mathcal{H}^q_{\text{Zar}}(-,\mathbb{L}^j_{-/A}))/I^r\}_r \Longrightarrow \{\text{H}^{p+q}_{\text{Zar}}(X,\mathbb{L}^j_{-/A})/I^r\}_r.$$
		It then suffices to prove that the natural map of pro $A$-modules
		$$\{\text{H}^p_{\text{Zar}}(X,\mathcal{H}^q_{\text{Zar}}(-,\mathbb{L}^j_{-/A}))/I^r\}_r \longrightarrow \{\text{H}^p_{\text{Zar}}(X,\mathcal{H}^q_{\text{Zar}}(-,\mathbb{L}^j_{-/A} \otimes^{\mathbb{L}}_{\mathcal{O}_X} \mathcal{O}_X/I^r\mathcal{O}_X))\}_r$$
		is an isomorphism for all integers $p,q \geq 0$. The natural map 
		$$\{\text{H}^p_{\text{Zar}}(X,\mathcal{H}^q_{\text{Zar}}(-,\mathbb{L}^j_{-/A}))/I^r\}_r \longrightarrow \{\text{H}^p_{\text{Zar}}(X,\mathcal{H}^q_{\text{Zar}}(-,\mathbb{L}^j_{-/A}) \otimes_A^{\mathbb{L}} A/I^r)\}_r$$
		is an isomorphism by Lemma~\ref{lemmaformalfunctionstheoremLurie} applied to the coherent sheaf $\mathcal{H}^q_{\text{Zar}}(-,\mathbb{L}_{-/A})$ on $X$, and the natural map
		$$\{\text{H}^p_{\text{Zar}}(X,\mathcal{H}^q_{\text{Zar}}(-,\mathbb{L}^j_{-/A}) \otimes_A^{\mathbb{L}} A/I^r)\}_r \longrightarrow \{\text{H}^p_{\text{Zar}}(X,\mathcal{H}^q_{\text{Zar}}(-,\mathbb{L}^j_{-/A} \otimes^{\mathbb{L}}_{\mathcal{O}_X} \mathcal{O}_X/I^r\mathcal{O}_X))\}_r$$
		is an isomorphism by \cite[Lemma~$2.3$]{morrow_pro_2016}.
	\end{proof}
	
	\begin{lemma}\label{lemmaLemma2.8(i)}
		Let $A$ be a noetherian commutative ring, $I$ be an ideal of $A$, and $X$ be a proper scheme over $\emph{Spec}(A)$ such that the induced map $X \setminus V(I\mathcal{O}_X) \rightarrow \emph{Spec}(A) \setminus V(I)$ is an isomorphism.
		\begin{enumerate}
			\item For any integers $j \geq 0$ and $n \in \Z$, the natural map
			$$\{\emph{H}^n_{\emph{Zar}}(X,\mathbb{L}^j_{-/A} \otimes^{\mathbb{L}}_{\mathcal{O}_X} I^r \mathcal{O}_X)\}_r \longrightarrow \{I^r\emph{H}^n_{\emph{Zar}}(X,\mathbb{L}^j_{-/A})\}_r$$
			is an isomorphism of pro $A$-modules.
			\item For any integers $j \geq 0$ and $n \in \Z$ such that $(j,n) \neq (0,0)$, the $A$-module $\emph{H}^n_{\emph{Zar}}(X,\mathbb{L}^j_{-/A})$ is killed by a power of $I$; in particular, the pro $A$-module $\{I^r\emph{H}^n_{\emph{Zar}}(X,\mathbb{L}^j_{-/A})\}_r$ is zero.
		\end{enumerate}
	\end{lemma}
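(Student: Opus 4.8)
For $(1)$ the plan is to deduce the statement formally from Corollary~\ref{corollaryM16Theorem2.4(iv)}. For each integer $r \geq 1$, apply $\mathbb{L}^j_{X/A} \otimes^{\mathbb{L}}_{\mathcal{O}_X} (-)$ to the short exact sequence $0 \to I^r\mathcal{O}_X \to \mathcal{O}_X \to \mathcal{O}_X/I^r\mathcal{O}_X \to 0$ of $\mathcal{O}_X$-modules, take $R\Gamma_{\text{Zar}}(X,-)$, and pass to the associated long exact sequence of $A$-modules. This long exact sequence is functorial in $r$ (via the inclusions $I^{r+1}\mathcal{O}_X \hookrightarrow I^r\mathcal{O}_X$, the surjections $\mathcal{O}_X/I^{r+1}\mathcal{O}_X \twoheadrightarrow \mathcal{O}_X/I^r\mathcal{O}_X$, and the identity on $\mathbb{L}^j_{X/A}$), hence induces a long exact sequence of pro $A$-modules, in which $\{\text{H}^m_{\text{Zar}}(X,\mathbb{L}^j_{X/A})\}_r$ is the \emph{constant} pro $A$-module. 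By Corollary~\ref{corollaryM16Theorem2.4(iv)}, the remaining quotient terms $\{\text{H}^m_{\text{Zar}}(X,\mathbb{L}^j_{X/A} \otimes^{\mathbb{L}}_{\mathcal{O}_X} \mathcal{O}_X/I^r\mathcal{O}_X)\}_r$ are naturally identified with $\{\text{H}^m_{\text{Zar}}(X,\mathbb{L}^j_{X/A})/I^r\}_r$, compatibly with the structure maps from the constant term --- here one uses that the map $\text{H}^m_{\text{Zar}}(X,\mathbb{L}^j_{X/A}) \to \text{H}^m_{\text{Zar}}(X,\mathbb{L}^j_{X/A} \otimes^{\mathbb{L}}_{\mathcal{O}_X} \mathcal{O}_X/I^r\mathcal{O}_X)$ is killed by $I^r$, since $I^r\mathcal{O}_X$ acts by zero on $\mathcal{O}_X/I^r\mathcal{O}_X$. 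Granting this, the connecting maps in the rewritten sequence precompose to zero with the pro-epimorphisms $\text{H}^m_{\text{Zar}}(X,\mathbb{L}^j_{X/A}) \twoheadrightarrow \{\text{H}^m_{\text{Zar}}(X,\mathbb{L}^j_{X/A})/I^r\}_r$, so they vanish, and the long exact sequence breaks into short exact sequences
$$0 \longrightarrow \{\text{H}^n_{\text{Zar}}(X,\mathbb{L}^j_{X/A} \otimes^{\mathbb{L}}_{\mathcal{O}_X} I^r\mathcal{O}_X)\}_r \xlongrightarrow{\iota} \text{H}^n_{\text{Zar}}(X,\mathbb{L}^j_{X/A}) \longrightarrow \{\text{H}^n_{\text{Zar}}(X,\mathbb{L}^j_{X/A})/I^r\}_r \longrightarrow 0.$$
Since the kernel of the pro-map $M \to \{M/I^rM\}_r$ is $\{I^r M\}_r$ for every $A$-module $M$, the map $\iota$ identifies $\{\text{H}^n_{\text{Zar}}(X,\mathbb{L}^j_{X/A} \otimes^{\mathbb{L}}_{\mathcal{O}_X} I^r\mathcal{O}_X)\}_r$ with $\{I^r\text{H}^n_{\text{Zar}}(X,\mathbb{L}^j_{X/A})\}_r$, proving $(1)$.

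For $(2)$ the plan is to push forward to the affine base. Let $f \colon X \to \text{Spec}(A)$ be the structure morphism, which is proper. Then $Rf_*\mathbb{L}^j_{X/A}$ has coherent cohomology sheaves (by properness and noetherianity, as in the proof of Lemma~\ref{lemmacohomologygroupsofcotangentcomplexarefinitelygenerated}), and since $\text{Spec}(A)$ is affine one has $\text{H}^n_{\text{Zar}}(X,\mathbb{L}^j_{X/A}) \cong \Gamma\big(\text{Spec}(A), \mathcal{H}^n(Rf_*\mathbb{L}^j_{X/A})\big)$. Over the open subscheme $U := \text{Spec}(A) \setminus V(I)$, base change along the open immersion $U \hookrightarrow \text{Spec}(A)$ identifies $(Rf_*\mathbb{L}^j_{X/A})|_U$ with $R(f|_{f^{-1}(U)})_*\big(\mathbb{L}^j_{X/A}|_{f^{-1}(U)}\big)$; by hypothesis $f$ restricts to an isomorphism $f^{-1}(U) = X \setminus V(I\mathcal{O}_X) \xlongrightarrow{\sim} U$, so $\mathbb{L}^j_{X/A}|_{f^{-1}(U)} = \mathbb{L}^j_{f^{-1}(U)/U}$ vanishes for $j \geq 1$ and equals $\mathcal{O}_{f^{-1}(U)}$ for $j = 0$. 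Hence $\mathcal{H}^n(Rf_*\mathbb{L}^j_{X/A})|_U = 0$ whenever $(j,n) \neq (0,0)$, so in that case $\mathcal{H}^n(Rf_*\mathbb{L}^j_{X/A})$ is a coherent $\mathcal{O}_{\text{Spec}(A)}$-module supported on $V(I)$, hence killed by a power of $I$; consequently $\text{H}^n_{\text{Zar}}(X,\mathbb{L}^j_{X/A})$ is killed by a power of $I$, and in particular $\{I^r\text{H}^n_{\text{Zar}}(X,\mathbb{L}^j_{X/A})\}_r$ is zero.

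The delicate points are bookkeeping rather than new ideas. In $(1)$: that a levelwise exact sequence of towers of $A$-modules is exact as a sequence of pro $A$-modules; that the identification of Corollary~\ref{corollaryM16Theorem2.4(iv)} is compatible with the structure maps out of the constant pro-object, so that rewriting the long exact sequence is legitimate; and that the natural map whose bijectivity is asserted is precisely $\iota$ corestricted to its image $\{I^r\text{H}^n_{\text{Zar}}(X,\mathbb{L}^j_{X/A})\}_r$. In $(2)$: the coherence and open-immersion base change for $Rf_*\mathbb{L}^j_{X/A}$, and the triviality of the cotangent complex over the locus where $f$ is an isomorphism. As in \cite{morrow_pro_2016}, no input beyond Corollary~\ref{corollaryM16Theorem2.4(iv)} is needed.
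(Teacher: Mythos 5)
Your proof is correct and follows essentially the same route as the paper: for (1), the long exact sequence of pro $A$-modules coming from $0 \to \{I^r\mathcal{O}_X\}_r \to \mathcal{O}_X \to \{\mathcal{O}_X/I^r\mathcal{O}_X\}_r \to 0$, with the boundary maps killed by Corollary~\ref{corollaryM16Theorem2.4(iv)}; for (2), coherence/finite generation of $\mathrm{H}^n_{\mathrm{Zar}}(X,\mathbb{L}^j_{-/A})$ plus support on $V(I)$. The extra detail you supply (the epimorphism argument for vanishing of the connecting maps, and the identification of the kernel of $M \to \{M/I^rM\}_r$ with $\{I^rM\}_r$) is exactly what the paper leaves implicit.
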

	
	\begin{proof}
		$(1)$ The short exact sequence $0 \rightarrow \{I^r \mathcal{O}_X \}_r \rightarrow \mathcal{O}_X \rightarrow \{\mathcal{O}_X / I^r \mathcal{O}_X \}_r \rightarrow 0$ of pro $\mathcal{O}_X$-modules induces a long exact sequence
		$$\cdots \rightarrow \text{H}^n_{\text{Zar}}(X,\mathbb{L}^j_{-/A} \otimes^{\mathbb{L}}_{\mathcal{O}_X} I^r \mathcal{O}_X)\}_r \rightarrow \text{H}^n_{\text{Zar}}(X,\mathbb{L}^j_{-/A}) \rightarrow \{\text{H}^n_{\text{Zar}}(X,\mathbb{L}^j_{-/A} \otimes^{\mathbb{L}}_{\mathcal{O}_X} \mathcal{O}_X/I^r\mathcal{O}_X)\}_r \rightarrow \cdots$$
		of pro $A$-modules. By Corollary~\ref{corollaryM16Theorem2.4(iv)}, the boundary maps of this long exact sequence vanish, hence the natural map
		$$\{\text{H}^n_{\text{Zar}}(X,\mathbb{L}^j_{-/A} \otimes^{\mathbb{L}}_{\mathcal{O}_X} I^r \mathcal{O}_X)\}_r \longrightarrow \{I^r\text{H}^n_{\text{Zar}}(X,\mathbb{L}^j_{-/A})\}_r$$
		is an isomorphism of pro $A$-modules.
		\\$(2)$ By Lemma~\ref{lemmacohomologygroupsofcotangentcomplexarefinitelygenerated}, the $A$-module $\text{H}^n_{\text{Zar}}(X,\mathbb{L}^j_{-/A})$ is finitely generated. Because the map $$X \setminus V(I\mathcal{O}_X) \rightarrow \text{Spec}(A) \setminus V(I)$$ is an isomorphism, this $A$-module is moreover supported on $V(I)$ if $(j,n) \neq (0,0)$. If $(j,n) \neq (0,0)$, this implies that the $A$-module $\text{H}^n_{\text{Zar}}(X,\mathbb{L}^j_{-/A})$ is killed by a power of $I$.
	\end{proof}
	
	\begin{corollary}\label{corollaryM16lemma2.8(ii)}
		Let $A$ be a noetherian commutative ring, $I$ be an ideal of $A$, and $X$ be a proper scheme over $\emph{Spec}(A)$ such that the induced map $X \setminus V(I\mathcal{O}_X) \rightarrow \emph{Spec}(A) \setminus V(I)$ is an isomorphism. Then for every integer $j \geq 0$, the natural map
		$$\{ \mathbb{L}^j_{A/\Z} \otimes^{\mathbb{L}}_A I^r \}_r \longrightarrow \{ R\Gamma_{\emph{Zar}}(X,\mathbb{L}^j_{-/\Z} \otimes^{\mathbb{L}}_{\mathcal{O}_X} I^r \mathcal{O}_X) \}_r$$
		is a weak equivalence of pro objects in the derived category $\mathcal{D}(A)$.
	\end{corollary}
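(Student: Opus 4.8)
The plan is to deduce this absolute statement for $\mathbb{L}^j_{-/\Z}$ from the relative statements for $\mathbb{L}^\bullet_{-/A}$ recorded in Lemma~\ref{lemmaLemma2.8(i)}, by means of the transitivity filtration along $\Z \to A \to \mathcal{O}_X$ (so the proof is, as for the preceding lemmas, exactly that of \cite[Lemma~$2.8$]{morrow_pro_2016}, with the finite-dimensionality input replaced by Lemma~\ref{lemmaformalfunctionstheoremLurie} and its consequences). Concretely, the transitivity cofibre sequence $\mathbb{L}_{A/\Z} \otimes^{\mathbb{L}}_A \mathcal{O}_X \to \mathbb{L}_{\mathcal{O}_X/\Z} \to \mathbb{L}_{\mathcal{O}_X/A}$ induces on the derived exterior power $\mathbb{L}^j_{\mathcal{O}_X/\Z}$ a finite filtration whose graded pieces are $\mathbb{L}^i_{A/\Z} \otimes^{\mathbb{L}}_A \mathbb{L}^{j-i}_{\mathcal{O}_X/A}$ for $0 \leq i \leq j$, the bottom piece ($i=j$) being $\mathbb{L}^j_{A/\Z} \otimes^{\mathbb{L}}_A \mathcal{O}_X$ and included as a subobject. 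I would tensor this filtration over $\mathcal{O}_X$ by $I^r\mathcal{O}_X$ and apply $R\Gamma_{\text{Zar}}(X,-)$; since $X$ is qcqs, $R\Gamma_{\text{Zar}}(X,-) \colon \mathcal{D}_{\mathrm{qc}}(X) \to \mathcal{D}(A)$ commutes with colimits, so the projection formula identifies the image of the $i$-th graded piece with $\mathbb{L}^i_{A/\Z} \otimes^{\mathbb{L}}_A R\Gamma_{\text{Zar}}(X, \mathbb{L}^{j-i}_{-/A} \otimes^{\mathbb{L}}_{\mathcal{O}_X} I^r\mathcal{O}_X)$.

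Next I would treat the two ranges of indices separately. For $i < j$, Lemma~\ref{lemmaLemma2.8(i)}\,$(1)$ identifies $\{\text{H}^n_{\text{Zar}}(X, \mathbb{L}^{j-i}_{-/A} \otimes^{\mathbb{L}}_{\mathcal{O}_X} I^r\mathcal{O}_X)\}_r$ with $\{I^r \text{H}^n_{\text{Zar}}(X, \mathbb{L}^{j-i}_{-/A})\}_r$, which vanishes by Lemma~\ref{lemmaLemma2.8(i)}\,$(2)$ since $j - i \geq 1$; hence $\{R\Gamma_{\text{Zar}}(X, \mathbb{L}^{j-i}_{-/A} \otimes^{\mathbb{L}}_{\mathcal{O}_X} I^r\mathcal{O}_X)\}_r$ is weakly zero, and tensoring with the connective complex $\mathbb{L}^i_{A/\Z}$ keeps it weakly zero: all the pro complexes in sight are uniformly bounded above, so each cohomology group of the tensor product is a finite iterated extension of $\text{Tor}^A_\bullet$'s of pro-zero systems, hence pro-zero. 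For the bottom piece $i=j$, the factor $\mathbb{L}^0_{\mathcal{O}_X/A} = \mathcal{O}_X$ reduces us to the $j=0$ case of Lemma~\ref{lemmaLemma2.8(i)}\,$(1)$, giving $\{\text{H}^n_{\text{Zar}}(X, I^r\mathcal{O}_X)\}_r \cong \{I^r\text{H}^n_{\text{Zar}}(X,\mathcal{O}_X)\}_r$; for $n \neq 0$ this is pro-zero by part $(2)$, while for $n=0$ the map $A \to \text{H}^0_{\text{Zar}}(X,\mathcal{O}_X)$ is an isomorphism after inverting any element of $I$ (flat base change, using that $X \setminus V(I\mathcal{O}_X) \to \text{Spec}(A) \setminus V(I)$ is an isomorphism) and both are finitely generated $A$-modules, so its kernel and cokernel are killed by a power of $I$. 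Thus $\{R\Gamma_{\text{Zar}}(X, I^r\mathcal{O}_X)\}_r$ is weakly equivalent to $\{I^r\}_r$ (concentrated in degree zero), and tensoring with $\mathbb{L}^j_{A/\Z}$ identifies the bottom graded piece with $\{\mathbb{L}^j_{A/\Z} \otimes^{\mathbb{L}}_A I^r\}_r$.

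Combining these, after $R\Gamma_{\text{Zar}}(X,-)$ and passage to pro objects the inclusion of the bottom of the filtration becomes a weak equivalence $\{\mathbb{L}^j_{A/\Z} \otimes^{\mathbb{L}}_A R\Gamma_{\text{Zar}}(X, I^r\mathcal{O}_X)\}_r \xlongrightarrow{\sim} \{R\Gamma_{\text{Zar}}(X, \mathbb{L}^j_{-/\Z} \otimes^{\mathbb{L}}_{\mathcal{O}_X} I^r\mathcal{O}_X)\}_r$ (its cofibre, filtered by the graded pieces with $i<j$, is weakly zero); precomposing with the weak equivalence $\{\mathbb{L}^j_{A/\Z} \otimes^{\mathbb{L}}_A I^r\}_r \xlongrightarrow{\sim} \{\mathbb{L}^j_{A/\Z} \otimes^{\mathbb{L}}_A R\Gamma_{\text{Zar}}(X, I^r\mathcal{O}_X)\}_r$ from the previous paragraph, and unravelling the projection formula, gives the asserted weak equivalence. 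I expect the main obstacle to be the bookkeeping around the transitivity filtration for derived exterior powers of the possibly unbounded-below connective $\mathcal{O}_X$-complexes $\mathbb{L}_{A/\Z} \otimes^{\mathbb{L}}_A \mathcal{O}_X$ and $\mathbb{L}_{\mathcal{O}_X/A}$, together with the (routine but slightly delicate) stability of ``weakly zero'' under tensoring a uniformly bounded-above pro object by $\mathbb{L}^i_{A/\Z}$; once these are in place, everything reduces to Lemma~\ref{lemmaLemma2.8(i)} and the formal functions theorem (Lemma~\ref{lemmaformalfunctionstheoremLurie}).
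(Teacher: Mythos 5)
Your proposal is correct and follows essentially the same route as the paper: the transitivity filtration on $\mathbb{L}^j_{\mathcal{O}_X/\Z}$ with graded pieces $\mathbb{L}^i_{A/\Z} \otimes^{\mathbb{L}}_A \mathbb{L}^{j-i}_{-/A}$, killing the pieces with $j-i \geq 1$ via Lemma~\ref{lemmaLemma2.8(i)}, and reducing the bottom piece to the pro-isomorphism $\{I^r\}_r \rightarrow \{I^r \mathrm{H}^0_{\mathrm{Zar}}(X,\mathcal{O}_X)\}_r$, which holds because the kernel and cokernel of $A \rightarrow \mathrm{H}^0_{\mathrm{Zar}}(X,\mathcal{O}_X)$ are killed by a power of $I$. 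The paper merely defers the filtration bookkeeping and the Artin--Rees step to Morrow's original argument, whereas you spell them out; there is no substantive difference.
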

	
	\begin{proof}
		By Lemma~\ref{lemmaLemma2.8(i)}, and for any integers $n,a,b \in \Z$, the pro $A$-module $$\{\text{H}^n(\mathbb{L}^j_{A/\Z} \otimes^{\mathbb{L}}_A \text{H}^a_{\text{Zar}}(X,\mathbb{L}^b_{-/A} \otimes^{\mathbb{L}}_{\mathcal{O}_X} I^r \mathcal{O}_X))\}_r$$ is zero, except if $(a,b)=(0,0)$. By transitivity for the powers of the cotangent complex (see the proof of \cite[Lemma~$2.8\,(ii)$]{morrow_pro_2016} for more details), this implies that the natural map
		$$\{\text{H}^n(\mathbb{L}^j_{A/\Z} \otimes^{\mathbb{L}}_A \text{H}^0_{\text{Zar}}(X,I^r\mathcal{O}_X))\}_r \longrightarrow \{\text{H}^n_{\text{Zar}}(X,\mathbb{L}^j_{-/\Z} \otimes^{\mathbb{L}}_{\mathcal{O}_X} I^r \mathcal{O}_X)\}_r$$
		is an isomorphism of pro $A$-modules. Let $B$ be the $A$-algebra $\text{H}^0_{\text{Zar}}(X,\mathcal{O}_X)$. Applying Lemma~\ref{lemmaLemma2.8(i)}\,$(1)$ for $j=n=0$, it then suffices to prove that the natural map $\{I^r\}_r \rightarrow \{I^r B\}_r$ is an isomorphism of pro $A$-modules. The $A$-algebra $B$ is finite and isomorphic to $A$ away from the vanishing locus of $I$, so the kernel and cokernel of the structure map $A \rightarrow B$ are killed by a power of $I$. The result is then a formal consequence of \cite[Theorem~$1.1$\,(ii)]{morrow_pro_2016}.
	\end{proof}
	
	\begin{lemma}\label{lemmaM16Lemma2.4.(i)}
		Let $Y \rightarrow X$ be a closed immersion of noetherian schemes, and $\mathcal{I}$ be the associated ideal sheaf on $X$. Then for every integer $j \geq 0$, the natural map
		$$\{ R\Gamma_{\emph{Zar}}(X,\mathbb{L}^j_{-/\Z} \otimes^{\mathbb{L}}_{\mathcal{O}_X} \mathcal{O}_X/\mathcal{I}^r) \}_r \longrightarrow \{ R\Gamma_{\emph{Zar}}(rY,\mathbb{L}^j_{-/\Z}) \}_r$$
		is a weak equivalence of pro objects in the derived category $\mathcal{D}(A)$.
	\end{lemma}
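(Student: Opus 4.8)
The plan is to reduce the statement to the affine case, which is \cite[Lemma~$2.4\,(i)$]{morrow_pro_2016}, and then to globalise using that a noetherian scheme is quasi-compact, so that its Zariski cohomology is computed by a \emph{finite} \v{C}ech complex. First I would make the comparison map explicit. For each $r \geq 1$ the closed immersion $\iota_r \colon rY \hookrightarrow X$ is affine and cut out by $\mathcal{I}^r$, and it induces by transitivity a natural map $\mathbb{L}^j_{-/\Z}\otimes^{\mathbb{L}}_{\mathcal{O}_X}\mathcal{O}_X/\mathcal{I}^r \rightarrow \iota_{r*}\mathbb{L}^j_{rY/\Z}$ of complexes of quasi-coherent $\mathcal{O}_X$-modules, compatibly in $r$; denote by $\mathcal{C}_r$ its cofibre. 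The transitivity cofibre sequence $\mathbb{L}_{X/\Z}\otimes^{\mathbb{L}}_{\mathcal{O}_X}\mathcal{O}_{rY}\rightarrow \mathbb{L}_{rY/\Z}\rightarrow \mathbb{L}_{rY/X}$, together with the induced finite filtration on derived exterior powers, exhibits $\mathcal{C}_r$ as a finite iterated extension of the complexes $(\mathbb{L}^a_{-/\Z}\otimes^{\mathbb{L}}_{\mathcal{O}_X}\mathcal{O}_{rY})\otimes^{\mathbb{L}}_{\mathcal{O}_{rY}}\mathbb{L}^b_{rY/X}$ with $a+b=j$ and $b\geq 1$; in particular $\mathcal{C}_r$ is a complex of quasi-coherent sheaves supported on $Y$. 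Since $\iota_r$ is affine, one has $R\Gamma_{\text{Zar}}(rY,\mathbb{L}^j_{rY/\Z})\simeq R\Gamma_{\text{Zar}}(X,\iota_{r*}\mathbb{L}^j_{rY/\Z})$, so the assertion of the lemma is equivalent to the statement that the pro object $\{R\Gamma_{\text{Zar}}(X,\mathcal{C}_r)\}_r$ is weakly zero.

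When $X=\text{Spec}(B)$ is affine there is no higher Zariski cohomology, so $R\Gamma_{\text{Zar}}(X,\mathcal{C}_r)$ is just the cofibre of $\mathbb{L}^j_{B/\Z}\otimes^{\mathbb{L}}_B B/I^r\rightarrow \mathbb{L}^j_{(B/I^r)/\Z}$, and $\{R\Gamma_{\text{Zar}}(X,\mathcal{C}_r)\}_r$ is weakly zero by \cite[Lemma~$2.4\,(i)$]{morrow_pro_2016} (whose proof amounts, via the filtration above, to the pro-vanishing of $\{\mathbb{L}^b_{(B/I^r)/B}\}_r$ for $b\geq 1$, ultimately a consequence of the fact that $\{I^r/I^{2r}\}_r$ is a zero pro-module). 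For general noetherian $X$, the point is that $X$ is quasi-compact, so $R\Gamma_{\text{Zar}}(X,-)$ is computed by a \emph{finite} \v{C}ech complex and the affine case can be propagated. I would choose a finite affine open cover $X=U_1\cup\dots\cup U_N$; then \v{C}ech descent for the complex of quasi-coherent sheaves $\mathcal{C}_r$ expresses $R\Gamma_{\text{Zar}}(X,\mathcal{C}_r)$ as the totalisation of the length-$(\leq N)$ alternating \v{C}ech complex of the $R\Gamma_{\text{Zar}}(U_{i_0}\cap\dots\cap U_{i_k},\mathcal{C}_r)$, hence as a finite limit of such terms. Each intersection $U_{i_0}\cap\dots\cap U_{i_k}$ is a quasi-compact separated open subscheme of the affine scheme $U_{i_0}$, so a second application of the same reduction — now with affine intersections — brings us to the affine case. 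All the complexes in sight are uniformly bounded above (by $N$), and a finite limit of weakly zero pro objects is weakly zero (pro-abelian groups form an abelian category, and one uses the long exact sequence of a fibre sequence), so $\{R\Gamma_{\text{Zar}}(X,\mathcal{C}_r)\}_r$ is weakly zero, as desired.

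The main subtlety, and the reason the argument is routed through a finite \v{C}ech complex rather than through cohomology sheaves, is that $X$ is allowed to have infinite Krull dimension: one cannot pass from the (easily checked, Zariski-local) fact that the cohomology sheaves of $\mathcal{C}_r$ form zero pro-sheaves to the statement about $R\Gamma_{\text{Zar}}(X,-)$ by means of a hypercohomology spectral sequence, which would fail to converge, nor by Grothendieck vanishing. Using that $X$ is noetherian, hence quasi-compact, so that $R\Gamma_{\text{Zar}}(X,-)$ is a finite limit of sections over affines, is exactly what circumvents this; apart from this, the arguments are as in \cite{morrow_pro_2016}.
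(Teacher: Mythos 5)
Your proof is correct and follows essentially the same route as the paper's: the paper likewise reduces to the affine case by induction on a finite affine cover of the quasi-compact quasi-separated scheme $X$, and then invokes Morrow's affine pro-isomorphism for powers of the cotangent complex (the paper cites \cite[Corollary~$4.5\,(ii)$]{morrow_pro_2018} where you cite the closely related \cite[Lemma~$2.4\,(i)$]{morrow_pro_2016}). Your explicit discussion of why the globalisation must go through a finite \v{C}ech totalisation rather than a hypercohomology spectral sequence is sound and simply spells out what the paper leaves implicit in the phrase ``by induction''.
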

	
	\begin{proof}
		The scheme $X$ is noetherian, hence quasi-separated, so we may assume by induction that $X$ is affine. In this case, the result is \cite[Corollary~$4.5\,(ii)$]{morrow_pro_2018}.
	\end{proof}
	
	\begin{proposition}\label{propositionprocdhdescentforcotangentcomplex}
		Let $j \geq 0$ be an integer. Then for every abstract blowup square of noetherian schemes (\ref{equationabstractblowupsquare}), the natural commutative diagram
		$$\begin{tikzcd}
			R\Gamma_{\emph{Zar}}(X,\mathbb{L}^j_{-/\Z}) \ar[r] \ar[d] & R\Gamma_{\emph{Zar}}(X',\mathbb{L}^j_{-/\Z}) \ar[d] \\
			\{R\Gamma_{\emph{Zar}}(rY,\mathbb{L}^j_{-/\Z})\}_r \ar[r] & \{R\Gamma_{\emph{Zar}}(rY',\mathbb{L}^j_{-/\Z})\}_r
		\end{tikzcd}$$
		is a weakly cartesian square of pro objects in the derived category $\mathcal{D}(\Z)$. In particular, the presheaf $R\Gamma_{\emph{Zar}}(-,\mathbb{L}^j_{-/\Z})$ is a pro cdh sheaf on noetherian schemes.
	\end{proposition}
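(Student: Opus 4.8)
The plan is to reduce by Zariski descent to the case $X = \text{Spec}(A)$ affine with $A$ noetherian, and then to identify the total fibre of the square with the fibre of a single map that Corollary~\ref{corollaryM16lemma2.8(ii)} shows to be a weak equivalence; this is the strategy of \cite{morrow_pro_2016} and the sketch in \cite[proof of Lemma~$8.5$]{elmanto_motivic_2023}. For the reduction, note that each of the four presheaves in the square is a Zariski sheaf in the variable $X$ — this holds for $R\Gamma_{\text{Zar}}(-,\mathbb{L}^j_{-/\Z})$ by construction, and restricting an abstract blowup square of noetherian schemes along an affine open $U \subseteq X$ yields again an abstract blowup square, with $rY$ and $rY'$ restricting to the corresponding infinitesimal thickenings over $U$ — and forming the total fibre commutes with the relevant limits, so one may indeed assume $X = \text{Spec}(A)$. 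Write $I \subseteq A$ for the ideal defining $Y$, so that $rY = \text{Spec}(A/I^r)$; the ideal sheaf of $Y'$ in $X'$ is $I\mathcal{O}_{X'}$, and $rY'$ is cut out by $I^r\mathcal{O}_{X'}$.

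Next I would apply Lemma~\ref{lemmaM16Lemma2.4.(i)} to the closed immersions $Y \hookrightarrow X$ and $Y' \hookrightarrow X'$, which replaces, up to weak equivalence of pro objects, the bottom row of the square by $\{R\Gamma_{\text{Zar}}(X,\mathbb{L}^j_{-/\Z} \otimes^{\mathbb{L}}_{\mathcal{O}_X} \mathcal{O}_X/I^r)\}_r$ and $\{R\Gamma_{\text{Zar}}(X',\mathbb{L}^j_{-/\Z} \otimes^{\mathbb{L}}_{\mathcal{O}_{X'}} \mathcal{O}_{X'}/I^r\mathcal{O}_{X'})\}_r$. Tensoring the short exact sequences $0 \to I^r\mathcal{O}_X \to \mathcal{O}_X \to \mathcal{O}_X/I^r \to 0$ on $X$ and $0 \to I^r\mathcal{O}_{X'} \to \mathcal{O}_{X'} \to \mathcal{O}_{X'}/I^r\mathcal{O}_{X'} \to 0$ on $X'$ with $\mathbb{L}^j_{-/\Z}$ and applying $R\Gamma_{\text{Zar}}$, the two vertical fibres of the resulting square become $\{\mathbb{L}^j_{A/\Z} \otimes^{\mathbb{L}}_A I^r\}_r$ and $\{R\Gamma_{\text{Zar}}(X',\mathbb{L}^j_{-/\Z} \otimes^{\mathbb{L}}_{\mathcal{O}_{X'}} I^r\mathcal{O}_{X'})\}_r$, so the total fibre of the original square is the fibre of the natural map
$$\{\mathbb{L}^j_{A/\Z} \otimes^{\mathbb{L}}_A I^r\}_r \longrightarrow \{R\Gamma_{\text{Zar}}(X',\mathbb{L}^j_{-/\Z} \otimes^{\mathbb{L}}_{\mathcal{O}_{X'}} I^r\mathcal{O}_{X'})\}_r.$$
Since $X' \to \text{Spec}(A)$ is proper and finitely presented and, by the abstract blowup hypothesis, $X' \setminus V(I\mathcal{O}_{X'}) = X' \setminus Y' \to \text{Spec}(A) \setminus V(I)$ is an isomorphism, Corollary~\ref{corollaryM16lemma2.8(ii)} (applied with its $X$ taken to be $X'$) shows this map is a weak equivalence of pro objects in $\mathcal{D}(\Z)$. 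Hence the total fibre is weakly zero, i.e. the square is weakly cartesian. For the final assertion, $R\Gamma_{\text{Zar}}(-,\mathbb{L}^j_{-/\Z})$ satisfies Zariski, hence Nisnevich, descent, and the pro cdh excision just established is the remaining requirement of Definition~\ref{definitionprocdhsheaf}, so it is a pro cdh sheaf on noetherian schemes.

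As for the main obstacle: all the real content sits in the preparatory results — the Lurie-style formal functions theorem (Lemma~\ref{lemmaformalfunctionstheoremLurie}), the finite generation of the cohomology of the powers of the cotangent complex of a proper scheme (Lemma~\ref{lemmacohomologygroupsofcotangentcomplexarefinitelygenerated}), and the transitivity argument behind Corollary~\ref{corollaryM16lemma2.8(ii)} — so the proposition itself is a formal assembly. The one point that requires genuine care is the reduction to the affine case, namely checking that restriction to an affine open is compatible with the formation of the infinitesimal thickenings $rY$ and $rY'$, together with the routine pro-system bookkeeping with ideal sheaves (for instance, that $\{(I\mathcal{O}_{X'})^r\}_r$ and $\{I^r\mathcal{O}_{X'}\}_r$ agree).
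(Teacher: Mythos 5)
Your proof is correct and follows essentially the same route as the paper: reduce to the affine case using quasi-separatedness/Zariski descent, replace the bottom row via Lemma~\ref{lemmaM16Lemma2.4.(i)}, and identify the total fibre with the fibre of the map $\{\mathbb{L}^j_{A/\Z}\otimes^{\mathbb{L}}_A I^r\}_r \to \{R\Gamma_{\text{Zar}}(X',\mathbb{L}^j_{-/\Z}\otimes^{\mathbb{L}}_{\mathcal{O}_{X'}} I^r\mathcal{O}_{X'})\}_r$, which is a weak equivalence by Corollary~\ref{corollaryM16lemma2.8(ii)}. The points you flag as requiring care (compatibility of restriction with the thickenings $rY$, $rY'$, and the identification of the ideal sheaf of $rY'$ with $I^r\mathcal{O}_{X'}$) are exactly the routine checks the paper leaves implicit.
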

	
	\begin{proof}
		The scheme $X$ is noetherian, hence quasi-separated, so we may assume by induction that $X$ is affine, given by the spectrum of a noetherian commutative ring $A$. Let $I$ be the ideal of $A$ defining the closed subscheme $Y$ of $\text{Spec}(A)$. By Lemma~\ref{lemmaM16Lemma2.4.(i)}, the desired statement is equivalent to the fact that the commutative diagram
		$$\begin{tikzcd}
			\mathbb{L}^j_{A/\Z} \ar[r] \ar[d] & R\Gamma_{\text{Zar}}(X',\mathbb{L}^j_{-/\Z}) \ar[d] \\
			\{\mathbb{L}^j_{A/\Z} \otimes^{\mathbb{L}}_A A/I^r\}_r \ar[r] & \{R\Gamma_{\text{Zar}}(X',\mathbb{L}^j_{-/\Z} \otimes^{\mathbb{L}}_{\mathcal{O}_{X'}} \mathcal{O}_{X'}/I^r\mathcal{O}_{X'})\}_r
		\end{tikzcd}$$
		is a weakly cartesian square of pro objects in the derived category $\mathcal{D}(\Z)$. Taking fibres along the vertical maps, this is exactly Corollary~\ref{corollaryM16lemma2.8(ii)}.
	\end{proof}
	
	We now use Proposition~\ref{propositionprocdhdescentforcotangentcomplex} to prove pro cdh descent for variants of the cotangent complex (Corollary~\ref{corollaryprocdhdescentforcotangentcomplexandvariants}). In the following two lemmas, we consider inverse systems of objects in the derived category~$\mathcal{D}(\Z)$. We say that an inverse system $(C_r)_r$ in the derived category $\mathcal{D}(\Z)$ is {\it essentially zero} if for every index $r$ and every integer $n \in \Z$, there exists an index $r' \geq r$ such that the map $\text{H}^n(C_{r'}) \rightarrow \text{H}^n(C_r)$ is the zero map. In particular, an inverse system $(C_r)_r$ in the derived category $\mathcal{D}(\Z)$ is essentially zero if and only if the associated pro object $\{C_r\}_r$ in the derived category $\mathcal{D}(\Z)$ is weakly zero.
	
	\begin{lemma}\label{lemmaprozeroimpliesprozeromodp}
		Let $(C_r)_r$ be an inverse system in the derived category $\mathcal{D}(\Z)$. If $(C_r)_r$ is essentially zero, then $\big(\prod_{p \in \mathbb{P}} C_r/p\big)_r$ is essentially zero.
	\end{lemma}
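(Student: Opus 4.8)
The plan is to reduce to a statement about abelian groups via the universal coefficient short exact sequence, and then to run a two-step diagram chase using the essential vanishing hypothesis — one step to handle the ``$\mathrm{H}^n/p$'' part of the product and a second, at a later index, for the ``$\mathrm{H}^{n+1}[p]$'' part.

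For every object $C$ of $\mathcal{D}(\Z)$ and every prime number $p$, the object $C/p$ is defined by the cofibre sequence $C \xrightarrow{p} C \to C/p$, so the associated long exact sequence in cohomology gives, for every $n \in \Z$, a short exact sequence of abelian groups
\[ 0 \longrightarrow \mathrm{H}^n(C)/p \longrightarrow \mathrm{H}^n(C/p) \longrightarrow \mathrm{H}^{n+1}(C)[p] \longrightarrow 0, \]
natural in $C$. Arbitrary products are exact in the category of abelian groups, so the functor $\prod_{p \in \mathbb{P}}(-)$ commutes with the cohomology of cochain complexes; applying it to the sequences above for $C = C_r$ yields, naturally in $r$, a short exact sequence
\[ 0 \longrightarrow \prod_{p \in \mathbb{P}} \mathrm{H}^n(C_r)/p \longrightarrow \mathrm{H}^n\Big(\prod_{p \in \mathbb{P}} C_r/p\Big) \longrightarrow \prod_{p \in \mathbb{P}} \mathrm{H}^{n+1}(C_r)[p] \longrightarrow 0. \]

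Now I would fix an index $r$ and an integer $n$. Using that $(C_r)_r$ is essentially zero, first choose $r_1 \geq r$ such that $\mathrm{H}^n(C_{r_1}) \to \mathrm{H}^n(C_r)$ is the zero map, and then choose $r_2 \geq r_1$ such that $\mathrm{H}^{n+1}(C_{r_2}) \to \mathrm{H}^{n+1}(C_{r_1})$ is the zero map. I claim the transition map $\mathrm{H}^n\big(\prod_p C_{r_2}/p\big) \to \mathrm{H}^n\big(\prod_p C_r/p\big)$ vanishes, which proves the lemma with $r' = r_2$. Indeed, given a class $y$ in the source, its image under the surjection onto $\prod_p \mathrm{H}^{n+1}(C_{r_2})[p]$ maps to zero in $\prod_p \mathrm{H}^{n+1}(C_{r_1})[p]$, because the latter map is induced componentwise by the restriction to $p$-torsion of the zero map $\mathrm{H}^{n+1}(C_{r_2}) \to \mathrm{H}^{n+1}(C_{r_1})$; by naturality of the displayed short exact sequence, the image $y_1$ of $y$ in $\mathrm{H}^n\big(\prod_p C_{r_1}/p\big)$ therefore lies in the subgroup $\prod_p \mathrm{H}^n(C_{r_1})/p$. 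Finally, the induced map $\prod_p \mathrm{H}^n(C_{r_1})/p \to \prod_p \mathrm{H}^n(C_r)/p$ is zero, since $\mathrm{H}^n(C_{r_1}) \to \mathrm{H}^n(C_r)$ is zero, so the image of $y_1$ — hence of $y$ — in $\mathrm{H}^n\big(\prod_p C_r/p\big)$ is zero.

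There is no genuine obstacle here: the only slightly delicate points are the exactness of infinite products of abelian groups (used both to commute $\prod_{p}$ past cohomology and to keep the displayed sequence short exact) and the naturality in $C$ of the universal coefficient sequence, and both are standard.
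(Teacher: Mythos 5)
Your proof is correct and follows essentially the same route as the paper's: the universal coefficient short exact sequence for $C_r/p$, a first index $r_1$ killing the transition on $\mathrm{H}^n$ and a second index $r_2$ killing it on $\mathrm{H}^{n+1}$, combined in a two-step diagram chase. The only cosmetic difference is that the paper runs the chase one prime at a time and then observes that $r_2$ is independent of $p$ before taking the product, whereas you take the product of the short exact sequences first; both are valid.
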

	
	\begin{proof}
		Assume that the inverse system $(C_r)_r$ is essentially zero. Let $r_0$ be an index of this inverse system, $n \in \Z$ be an integer, and $p$ be a prime number. We will use repeatedly that for every index $r$, there is a natural short exact sequence
		$$0 \longrightarrow \text{H}^n(C_r)/p \longrightarrow \text{H}^n(C_r/p) \longrightarrow \text{H}^{n+1}(C_r)[p] \longrightarrow 0$$
		of abelian groups. Let $r_1 \geq r_0$ be an index such that the map $\text{H}^n(C_{r_1}) \rightarrow \text{H}^n(C_{r_0})$ is the zero map. Then for every index $r \geq r_1$, the map $\text{H}^n(C_r)/p \rightarrow \text{H}^n(C_{r_0})/p$ is the zero map, and the map $\text{H}^n(C_r/p) \rightarrow \text{H}^n(C_{r_0}/p)$ thus factors through the map $\text{H}^n(C_r/p) \rightarrow \text{H}^{n+1}(C_r)[p]$. Let~\hbox{$r_2 \geq r_1$} be an index such that the map $\text{H}^{n+1}(C_{r_2}) \rightarrow \text{H}^{n+1}(C_{r_1})$ is the zero map. Then the map $$\text{H}^{n+1}(C_{r_2})[p] \rightarrow \text{H}^{n+1}(C_{r_1})[p]$$ is the zero map. By construction, the map $$\text{H}^n(C_{r_2}/p) \longrightarrow \text{H}^n(C_{r_0}/p)$$ factors as
		$$\text{H}^n(C_{r_2}/p) \longrightarrow \text{H}^{n+1}(C_{r_2})[p] \xlongrightarrow{0} \text{H}^{n+1}(C_{r_1})[p] \longrightarrow \text{H}^n(C_{r_0}/p),$$
		and is thus also the zero map. The index $r_2$ does not depend on the prime number $p$, so the map
		$$\prod_{p \in \mathbb{P}} \text{H}^n(C_{r_2}/p) \longrightarrow \prod_{p \in \mathbb{P}} \text{H}^n(C_{r_0}/p)$$
		is the zero map, and the inverse system $\big(\prod_{p \in \mathbb{P}} C_r/p\big)_r$ is essentially zero.
	\end{proof}
	
	\begin{lemma}\label{lemmaprozeroimpliesprozeropcompletion}
		Let $(C_r)_r$ be an inverse system in the derived category $\mathcal{D}(\Z)$. If $(C_r)_r$ is essentially zero, then $\big(\prod_{p \in \mathbb{P}} (C_r)^\wedge_p\big)_r$ is essentially zero.
	\end{lemma}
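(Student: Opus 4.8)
The plan is to reduce the statement to one about inverse systems of abelian groups, by computing the cohomology of the derived $p$-completion, and then to observe that the index witnessing essential-zeroness can be chosen independently of the prime $p$, so that forming the product over all primes does no harm.

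First I would record the standard description of the derived $p$-adic completion: for any object $C$ of $\mathcal{D}(\Z)$ and any prime number $p$, writing $\Q_p/\Z_p = \text{colim}_n \Z/p^n$ and using $(-)^\wedge_p = R\lim_n(-\otimes^{\mathbb{L}}_{\Z}\Z/p^n)$, one obtains a natural equivalence $C^\wedge_p \simeq R\text{Hom}_{\Z}(\Q_p/\Z_p,C)[1]$ in $\mathcal{D}(\Z)$. Since $\Q_p/\Z_p$ has projective dimension one over $\Z$, the associated hypercohomology spectral sequence degenerates, yielding for every integer $j$ a natural short exact sequence of abelian groups
$$0 \longrightarrow \text{Ext}^1_{\Z}\big(\Q_p/\Z_p,\text{H}^j(C)\big) \longrightarrow \text{H}^j(C^\wedge_p) \longrightarrow \text{Hom}_{\Z}\big(\Q_p/\Z_p,\text{H}^{j+1}(C)\big) \longrightarrow 0.$$

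Now I would apply this with $C = C_r$ and take the product over $p \in \mathbb{P}$. Since the functor $\prod_{p \in \mathbb{P}}$ is exact on $\mathcal{D}(\Z)^{\mathbb{P}}$, we have $\text{H}^j\big(\prod_p (C_r)^\wedge_p\big) \cong \prod_p \text{H}^j((C_r)^\wedge_p)$, and hence a short exact sequence of inverse systems of abelian groups, natural in $r$, whose outer terms are $\big(\prod_p \text{Ext}^1_{\Z}(\Q_p/\Z_p,\text{H}^j(C_r))\big)_r$ and $\big(\prod_p \text{Hom}_{\Z}(\Q_p/\Z_p,\text{H}^{j+1}(C_r))\big)_r$. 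The point is that both of these are essentially zero: because $(C_r)_r$ is essentially zero, for every index $r_0$ and every degree $m$ there is an index $r' \geq r_0$ with $\text{H}^m(C_{r'}) \to \text{H}^m(C_{r_0})$ the zero map, and applying the functors $\text{Ext}^1_{\Z}(\Q_p/\Z_p,-)$ and $\text{Hom}_{\Z}(\Q_p/\Z_p,-)$ turns this into a zero map for every prime $p$ at once — the same index $r'$ works for all $p$ — so the product over $p$ of these transition maps vanishes.

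It then remains to use the elementary fact that in a short exact sequence $0 \to (A_r)_r \to (B_r)_r \to (C_r)_r \to 0$ of inverse systems of abelian groups, the system $(B_r)_r$ is essentially zero whenever $(A_r)_r$ and $(C_r)_r$ are (given $r_0$ and $m$, pick $r_1 \geq r_0$ killing the transition map of $(C_r)_r$ to level $r_0$, then $r_2 \geq r_1$ killing that of $(A_r)_r$ to level $r_1$, and diagram-chase — exactly the two-step argument in the proof of Lemma~\ref{lemmaprozeroimpliesprozeromodp}). Applied in each cohomological degree $j$, this shows $\big(\text{H}^j(\prod_p (C_r)^\wedge_p)\big)_r$ is essentially zero for all $j$, which is the claim. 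The only delicate point is precisely the uniformity in $p$ of the vanishing index; the merit of the $\text{Ext}^1_{\Z}(\Q_p/\Z_p,-)$ / $\text{Hom}_{\Z}(\Q_p/\Z_p,-)$ formulation is that pure functoriality provides it. A more hands-on route through the finite levels $(C_r \otimes^{\mathbb{L}}_{\Z}\Z/p^k)_r$ together with an induction on $k$ would, by contrast, be the main obstacle, since each step of the induction consumes index jumps and so destroys the uniformity in $k$ needed to pass to the limit defining $(-)^\wedge_p$. Note that Lemma~\ref{lemmaprozeroimpliesprozeromodp} itself is not used in the argument above, though it could be substituted into the degreewise analysis.
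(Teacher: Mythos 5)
Your proposal is correct and is essentially the paper's own proof: the same short exact sequence $0 \to \mathrm{Ext}^1(\Q_p/\Z_p,\mathrm{H}^n(C_r)) \to \mathrm{H}^n((C_r)^\wedge_p) \to \mathrm{Hom}(\Q_p/\Z_p,\mathrm{H}^{n+1}(C_r)) \to 0$, the same two-step choice of indices $r_1, r_2$, and the same key observation that these indices are independent of $p$ so the product over all primes causes no loss. Packaging the final step as "two-out-of-three for essentially zero systems in a short exact sequence of inverse systems" rather than chasing the factorisation directly is only a cosmetic difference.
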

	
	\begin{proof}
		Assume that the inverse system $(C_r)_r$ is essentially zero. Let $r_0$ be an index of this inverse system, $n \in \Z$ be an integer, and $p$ be a prime number. We will use repeatedly that for every index $r \geq 0$, there is a short exact sequence
		$$0 \longrightarrow \text{Ext}^1_{\Z_p}(\Q_p/\Z_p, \text{H}^n(C_r)) \longrightarrow \text{H}^n((C_r)^\wedge_p) \longrightarrow \text{Hom}_{\Z_p}(\Q_p/\Z_p, \text{H}^{n+1}(C_r)) \longrightarrow 0$$
		of abelian groups. Let $r_1 \geq r_0$ be an index such that the map $\text{H}^n(C_{r_1}) \rightarrow \text{H}^n(C_{r_0})$ is the zero map. Then for every index $r \geq r_1$, the map $\text{Ext}^1_{\Z_p}(\Q_p/\Z_p, \text{H}^n(C_r)) \rightarrow \text{Ext}^1_{\Z_p}(\Q_p/\Z_p, \text{H}^n(C_{r_0}))$ is the zero map, and the map $\text{H}^n((C_r)^\wedge_p) \rightarrow \text{H}^n((C_{r_0})^\wedge_p)$ thus factors through the map $$\text{H}^n((C_r)^\wedge_p) \longrightarrow \text{Hom}_{\Z_p}(\Q_p/\Z_p, \text{H}^{n+1}(C_r)).$$ Let $r_2 \geq r_1$ be an index such that the map $\text{H}^{n+1}(C_{r_2}) \rightarrow \text{H}^{n+1}(C_{r_1})$ is the zero map. Then the map $\text{Hom}_{\Z_p}(\Q_p/\Z_p, \text{H}^{n+1}(C_{r_2})) \rightarrow \text{Hom}_{\Z_p}(\Q_p/\Z_p, \text{H}^{n+1}(C_{r_1}))$ is the zero map. By construction, the map $$\text{H}^n((C_{r_2})^\wedge_p) \longrightarrow \text{H}^n((C_{r_0})^\wedge_p)$$ factors as
		$$\text{H}^n((C_{r_2})^\wedge_p) \longrightarrow \text{Hom}_{\Z_p}(\Q_p/\Z_p, \text{H}^{n+1}(C_{r_2})) \xlongrightarrow{0} \text{Hom}_{\Z_p}(\Q_p/\Z_p, \text{H}^{n+1}(C_{r_1})) \longrightarrow \text{H}^n((C_{r_0})^\wedge_p),$$
		and is thus the zero map. The index $r_2$ does not depend on the prime number $p$, so the map
		$$\prod_{p \in \mathbb{P}} \text{H}^n((C_{r_2})^\wedge_p) \longrightarrow \prod_{p \in \mathbb{P}} \text{H}^n((C_{r_0})^\wedge_p)$$
		is the zero map, and the inverse system $\big(\prod_{p \in \mathbb{P}} (C_r)^\wedge_p\big)_r$ is essentially zero.
	\end{proof}	
	
	\begin{corollary}\label{corollaryprocdhdescentforcotangentcomplexandvariants}
		Let $j \geq 0$ be an integer, and let $F$ be one of the presheaves $$R\Gamma_{\emph{Zar}}\big(-,\mathbb{L}^j_{-/\Z}\big),\text{ } R\Gamma_{\emph{Zar}}\big(-,\prod_{p \in \mathbb{P}} \mathbb{L}^j_{-_{\F_p}/\F_p}\big), \text{ } R\Gamma_{\emph{Zar}}\big(-,\prod_{p \in \mathbb{P}}(\mathbb{L}^j_{-/\Z})^\wedge_p\big), \text{ and } R\Gamma_{\emph{Zar}}\big(-,\mathbb{L}^j_{-_{\Q}/\Q}\big),$$
		where $-_{\F_p}$ is the derived base change from $\Z$ to $\F_p$. Then the presheaf $F$ is a pro cdh sheaf on noetherian schemes.
	\end{corollary}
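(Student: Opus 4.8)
The first of the four presheaves is a pro cdh sheaf on noetherian schemes by Proposition~\ref{propositionprocdhdescentforcotangentcomplex}, so the plan is to reduce the other three to it. Write $F_0$ for $R\Gamma_{\text{Zar}}(-,\mathbb{L}^j_{-/\Z})$. First I would record that, for a noetherian scheme $Z$, derived base change for the cotangent complex together with the compatibility of $\Lambda^j$ with base change of modules gives natural equivalences $\mathbb{L}^j_{Z_{\F_p}/\F_p} \simeq \mathbb{L}^j_{Z/\Z}\otimes^{\mathbb{L}}_{\Z}\F_p$ and $\mathbb{L}^j_{Z_{\Q}/\Q} \simeq \mathbb{L}^j_{Z/\Z}\otimes_{\Z}\Q$ of quasi-coherent complexes. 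Pushing forward along the affine morphisms $Z_{\F_p}\to Z$ and $Z_{\Q}\to Z$, and using that $R\Gamma_{\text{Zar}}(Z,-)$ commutes with products, with tensoring by the perfect $\Z$-complexes $\Z/p^k$, with the (countable) limit defining $p$-completion, and — for $Z$ qcqs — with filtered colimits, one obtains natural equivalences
$$R\Gamma_{\text{Zar}}\Big(-, \prod_{p} \mathbb{L}^j_{-_{\F_p}/\F_p}\Big) \simeq \prod_{p} F_0(-)/p, \qquad R\Gamma_{\text{Zar}}\Big(-, \prod_{p} (\mathbb{L}^j_{-/\Z})^\wedge_p\Big) \simeq \prod_{p} \big(F_0(-)\big)^\wedge_p,$$
and $R\Gamma_{\text{Zar}}(-,\mathbb{L}^j_{-_{\Q}/\Q}) \simeq F_0(-)\otimes_{\Z}\Q$.

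Next, Nisnevich descent for all four presheaves is clear, since they are Zariski hypercohomology of complexes of quasi-coherent sheaves and the argument is the same as for $F_0$ in Proposition~\ref{propositionprocdhdescentforcotangentcomplex}. For the abstract blowup square condition, fix an abstract blowup square~(\ref{equationabstractblowupsquare}) of noetherian schemes, and for every integer $r\geq 0$ let $T_r\in\mathcal{D}(\Z)$ be the total fibre of the square obtained by applying $F_0$; the inverse system $(T_r)_r$ is essentially zero by Proposition~\ref{propositionprocdhdescentforcotangentcomplex}. The operations $\prod_{p}(-)/p$, $\prod_{p}(-)^\wedge_p$ and $(-)\otimes_{\Z}\Q$ all commute with finite limits, hence with forming total fibres, so by the equivalences of the previous paragraph the total fibres of the squares obtained from the remaining three presheaves are, at each level $r$, respectively $\prod_{p} T_r/p$, $\prod_{p} (T_r)^\wedge_p$ and $T_r\otimes_{\Z}\Q$. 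These inverse systems are again essentially zero: by Lemma~\ref{lemmaprozeroimpliesprozeromodp}, by Lemma~\ref{lemmaprozeroimpliesprozeropcompletion}, and because $(-)\otimes_{\Z}\Q$ is exact (if $\text{H}^n(T_{r'})\to\text{H}^n(T_r)$ vanishes then so does its rationalisation). Hence each of the three squares is weakly cartesian, and each of the remaining presheaves is a pro cdh sheaf on noetherian schemes.

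The only step requiring any care is the identification of each presheaf $F$ with the corresponding operation applied to $F_0$, that is, the commutation of $R\Gamma_{\text{Zar}}(Z,-)$ with derived mod-$p$ reduction, $p$-completion, and rationalisation. The first two need no finiteness hypothesis: $\Z/p^k$ is perfect over $\Z$, so tensoring by it commutes with the triangulated functor $R\Gamma_{\text{Zar}}(Z,-)$, and $R\Gamma_{\text{Zar}}(Z,-)$ commutes with the limit computing $p$-completion. The rational case uses that $R\Gamma_{\text{Zar}}(Z,-)$ commutes with filtered colimits for $Z$ qcqs (in particular noetherian), which is where the hypotheses enter; alternatively, one may simply rerun the proof of Proposition~\ref{propositionprocdhdescentforcotangentcomplex} rationally. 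I expect this bookkeeping, together with the precise base change statement for $\Lambda^j\mathbb{L}$, to be the main — and rather mild — obstacle; the passage to essential vanishing is then immediate from Lemmas~\ref{lemmaprozeroimpliesprozeromodp} and~\ref{lemmaprozeroimpliesprozeropcompletion}.
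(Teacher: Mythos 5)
Your proposal is correct and follows the same route as the paper: the paper's proof also reduces the last three presheaves to Proposition~\ref{propositionprocdhdescentforcotangentcomplex} via Lemmas~\ref{lemmaprozeroimpliesprozeromodp} and~\ref{lemmaprozeroimpliesprozeropcompletion} and the exactness of rationalisation, calling this a "formal consequence". You have simply made explicit the bookkeeping (base change for $\Lambda^j\mathbb{L}$, commutation of $R\Gamma_{\mathrm{Zar}}$ with products, perfect tensors, limits and filtered colimits, and with total fibres) that the paper leaves implicit.
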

	
	\begin{proof}
		The presheaf $F$ is a Nisnevich sheaf, so the result is equivalent to proving that $F$ sends an abstract blowup square of noetherian schemes to a weakly cartesian square of pro objects in the derived category~$\mathcal{D}(\Z)$. For $R\Gamma_{\text{Zar}}\big(-,\mathbb{L}^j_{-/\Z}\big)$, this is Proposition~\ref{propositionprocdhdescentforcotangentcomplex}. For $R\Gamma_{\text{Zar}}\big(-,\prod_{p \in \mathbb{P}} \mathbb{L}^j_{-_{\F_p}/\F_p}\big)$, this is a formal consequence of Proposition~\ref{propositionprocdhdescentforcotangentcomplex} and Lemma~\ref{lemmaprozeroimpliesprozeromodp}. For $R\Gamma_{\text{Zar}}\big(-,\prod_{p \in \mathbb{P}} (\mathbb{L}^j_{-/\Z})^\wedge_p\big)$, this is similarly a formal consequence of Proposition~\ref{propositionprocdhdescentforcotangentcomplex} and Lemma~\ref{lemmaprozeroimpliesprozeropcompletion}. And for $R\Gamma_{\text{Zar}}\big(-,\mathbb{L}^j_{-_{\Q}/\Q}\big)$, this is a consequence of Proposition~\ref{propositionprocdhdescentforcotangentcomplex} and the fact that the rationalisation of a zero pro system of abelian groups is zero.
	\end{proof}
	
	\subsection{Pro cdh descent for motivic cohomology}\label{subsectionprocdhdescentmotiviccohomology}
	
	\vspace{-\parindent}
	\hspace{\parindent}
	
	In this subsection, we prove pro cdh descent for the motivic complexes $\Z(i)^{\text{mot}}$ (Theo\-rem \ref{theoremprocdhdescentformotiviccohomology}). We use the fracture square \cite[Corollary~$4.31$]{bouis_motivic_2024} to decompose the proof into several steps, which ultimately all rely on Corollary~\ref{corollaryprocdhdescentforcotangentcomplexandvariants}. We start with the following rational results.
	
	\begin{proposition}[\cite{elmanto_motivic_2023}]\label{propositionprocdhdescentforderiveddeRhamchar0}
		For every integer $i \geq 0$, the presheaf $R\Gamma_{\emph{Zar}}\big(-,\widehat{\mathbb{L}\Omega}^{\geq i}_{-_{\Q}/\Q}\big)$ is a pro cdh sheaf on noetherian schemes.
	\end{proposition}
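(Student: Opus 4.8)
The plan is to reduce, via the Hodge filtration on Hodge-completed derived de Rham cohomology, to the pro cdh descent for the powers of the cotangent complex established in Corollary~\ref{corollaryprocdhdescentforcotangentcomplexandvariants}. Recall that the presheaf $\widehat{\mathbb{L}\Omega}_{-_{\mathbb{Q}}/\mathbb{Q}}$ carries a complete, exhaustive, decreasing Hodge filtration $\widehat{\mathbb{L}\Omega}^{\geq \bullet}_{-_{\mathbb{Q}}/\mathbb{Q}}$ whose $j$-th graded piece is naturally identified with $\mathbb{L}^j_{-_{\mathbb{Q}}/\mathbb{Q}}[-j]$, so that $\widehat{\mathbb{L}\Omega}^{\geq i}_{-_{\mathbb{Q}}/\mathbb{Q}} \simeq \lim_j \big(\widehat{\mathbb{L}\Omega}^{\geq i}_{-_{\mathbb{Q}}/\mathbb{Q}}/\widehat{\mathbb{L}\Omega}^{\geq j}_{-_{\mathbb{Q}}/\mathbb{Q}}\big)$. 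The presheaf $R\Gamma_{\mathrm{Zar}}\big(-,\widehat{\mathbb{L}\Omega}^{\geq i}_{-_{\mathbb{Q}}/\mathbb{Q}}\big)$ is a Nisnevich sheaf (it is a limit over $j$ of Nisnevich sheaves, the Hodge graded pieces being quasi-coherent), so it suffices to prove that it sends every abstract blowup square of noetherian schemes to a weakly cartesian square of pro objects in $\mathcal{D}(\mathbb{Q})$.

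For a fixed integer $j \geq i$, the finite quotient $\widehat{\mathbb{L}\Omega}^{\geq i}_{-_{\mathbb{Q}}/\mathbb{Q}}/\widehat{\mathbb{L}\Omega}^{\geq j}_{-_{\mathbb{Q}}/\mathbb{Q}}$ is a finite iterated extension of the complexes $\mathbb{L}^k_{-_{\mathbb{Q}}/\mathbb{Q}}[-k]$ for $i \leq k < j$. The class of pro cdh sheaves is stable under shifts and fibres (being a Nisnevich sheaf is, and so is being weakly zero as a pro object, by the long exact sequence of pro homology groups), so Corollary~\ref{corollaryprocdhdescentforcotangentcomplexandvariants} shows that $R\Gamma_{\mathrm{Zar}}\big(-,\widehat{\mathbb{L}\Omega}^{\geq i}_{-_{\mathbb{Q}}/\mathbb{Q}}/\widehat{\mathbb{L}\Omega}^{\geq j}_{-_{\mathbb{Q}}/\mathbb{Q}}\big)$ is a pro cdh sheaf for every $j \geq i$. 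Fixing an abstract blowup square of noetherian schemes and writing $T$ (resp. $T^{(j)}$) for the total fibre of the commutative square obtained by applying $R\Gamma_{\mathrm{Zar}}\big(-,\widehat{\mathbb{L}\Omega}^{\geq i}_{-_{\mathbb{Q}}/\mathbb{Q}}\big)$ (resp. $R\Gamma_{\mathrm{Zar}}\big(-,\widehat{\mathbb{L}\Omega}^{\geq i}_{-_{\mathbb{Q}}/\mathbb{Q}}/\widehat{\mathbb{L}\Omega}^{\geq j}_{-_{\mathbb{Q}}/\mathbb{Q}}\big)$) to it, each pro object $T^{(j)}$ is then weakly zero, and $T \simeq \lim_j T^{(j)}$.

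The main obstacle is to deduce that the limit $T$ is still weakly zero: a cofiltered limit of weakly zero pro objects is not weakly zero in general, and one needs a pro-vanishing that is uniform in $j$. This is where the noetherian finite-dimensionality enters. The schemes appearing in a given abstract blowup square all have Krull dimension bounded by some integer $d$; on noetherian schemes of dimension at most $d$, the presheaf $R\Gamma_{\mathrm{Zar}}\big(-,\widehat{\mathbb{L}\Omega}^{\geq i}_{-_{\mathbb{Q}}/\mathbb{Q}}\big)$, and with it the total fibres $T$ and $T^{(j)}$, are uniformly bounded above by a constant depending only on $i$ and $d$ (see the footnote to Definition~\ref{definitionprocdhsheaf}). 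Combined with the Milnor exact sequences relating the cohomology of $T$ to the limit and derived limit of the cohomology of the $T^{(j)}$, this boundedness produces, for each cohomological degree $n$, an index in the pro-direction that trivialises $\mathrm{H}^n$ of the total fibre uniformly in $j$ — by the same two-step argument as in Lemmas~\ref{lemmaprozeroimpliesprozeromodp} and~\ref{lemmaprozeroimpliesprozeropcompletion} — so that $T$ is weakly zero and the proof concludes. Alternatively, one may argue more directly along the lines of the proof of Proposition~\ref{propositionprocdhdescentforcotangentcomplex}: reduce to the affine case $X = \mathrm{Spec}(A)$, take fibres along the vertical maps of the square, and apply Lurie's formal functions theorem (Lemma~\ref{lemmaformalfunctionstheoremLurie}) to the coherent Hodge graded pieces on the proper scheme $X'$, again passing to the Hodge-complete limit at the end.
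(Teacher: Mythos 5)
Your reduction to the finite Hodge quotients is fine, but the passage to the limit over $j$ --- which you correctly single out as the main obstacle --- is not actually closed by the argument you give, and this is a genuine gap. Both claims you use to close it fail. First, the total fibres $T^{(j)}$ are \emph{not} uniformly bounded above independently of $j$: the graded piece $R\Gamma_{\mathrm{Zar}}(-,\mathbb{L}^k_{-_{\Q}/\Q}[-k])$ lives in degrees up to $k+d$, so the a priori upper bound for $T^{(j)}$ grows with $j$. (The complete object is indeed bounded above, but that is not visible from the limit presentation; it already requires nontrivial input on $\widehat{\mathbb{L}\Omega}_{-_{\Q}/\Q}$.) Second, and more seriously, in a fixed cohomological degree $n$ the tower $\{T^{(j)}\}_j$ does not stabilise: for a singular noetherian ring the cotangent complex $\mathbb{L}_{A_{\Q}/\Q}$ is unbounded below, so $\mathbb{L}^k_{A_{\Q}/\Q}[-k]$ can contribute to degree $n$ for infinitely many $k$. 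Hence the index $r_1(j)$ witnessing the pro-vanishing of $\mathrm{H}^n(T^{(j)})$ genuinely depends on $j$, and the analogy with Lemmas~\ref{lemmaprozeroimpliesprozeromodp} and~\ref{lemmaprozeroimpliesprozeropcompletion} does not apply: those lemmas work because mod-$p$ reduction and $p$-completion touch only two cohomology groups through a short exact sequence, uniformly in $p$, whereas here you face a $\lim$/$\lim^1$ over an infinite tower with non-stabilising layers. Your alternative ending ``passing to the Hodge-complete limit at the end'' has exactly the same gap.

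The paper's proof avoids the infinite limit altogether. Following \cite[proof of Theorem~$8.2$]{elmanto_motivic_2023}, one uses the fibre sequence $\widehat{\mathbb{L}\Omega}^{\geq i}_{-_{\Q}/\Q} \to \widehat{\mathbb{L}\Omega}_{-_{\Q}/\Q} \to \mathbb{L}\Omega^{<i}_{-_{\Q}/\Q}$: the middle term is a cdh sheaf by \cite[Proposition~$4.41$]{bouis_motivic_2024} (cdh descent for the \emph{unfiltered} Hodge-completed derived de Rham cohomology in characteristic zero), hence in particular a pro cdh sheaf, and the third term has a \emph{finite} filtration with graded pieces $R\Gamma_{\mathrm{Zar}}(-,\mathbb{L}^k_{-_{\Q}/\Q}[-k])$ for $0 \leq k < i$, which are pro cdh sheaves by Corollary~\ref{corollaryprocdhdescentforcotangentcomplexandvariants}; this is the same pattern as the proof of Proposition~\ref{propositionprocdhdescentforrigidanalyticdR}. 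To repair your argument you would need to import this descent statement for the unfiltered complex (or some other input controlling all the graded pieces at once); finite-dimensionality of the schemes alone does not suffice.
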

	
	\begin{proof}
		This is a part of \cite[proof of Theorem~$8.2$]{elmanto_motivic_2023}. More precisely, one uses \cite[Proposition~$4.41$]{bouis_motivic_2024} to reduce the proof to a finite number of powers of the cotangent complex relative to $\Q$, where this is Corollary~\ref{corollaryprocdhdescentforcotangentcomplexandvariants}. 
	\end{proof}
	
	The following result is a rigid-analytic variant of Proposition~\ref{propositionprocdhdescentforderiveddeRhamchar0}, where the relevant objects are defined in \cite[Section~$4.2$]{bouis_motivic_2024}.
	
	\begin{proposition}\label{propositionprocdhdescentforrigidanalyticdR}
		For every integer $i \geq 0$, the presheaf $R\Gamma_{\emph{Zar}}\big(-,\prod'_{p \in \mathbb{P}}  \underline{\widehat{\mathbb{L}\Omega}}^{\geq i}_{-_{\Q_p}/\Q_p}\big)$ is a pro cdh sheaf on noetherian schemes.
	\end{proposition}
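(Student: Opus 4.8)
The plan is to mimic the proof of Proposition~\ref{propositionprocdhdescentforderiveddeRhamchar0}, with the field $\Q$ replaced by the family $(\Q_p)_{p \in \mathbb{P}}$ and the Hodge-completed derived de Rham complex replaced by its rigid-analytic counterpart, and to reduce the whole statement to Corollary~\ref{corollaryprocdhdescentforcotangentcomplexandvariants}. First I would note that the presheaf $F := R\Gamma_{\text{Zar}}\big(-,\prod'_{p \in \mathbb{P}} \underline{\widehat{\mathbb{L}\Omega}}^{\geq i}_{-_{\Q_p}/\Q_p}\big)$ is a Nisnevich sheaf, being a restricted product of Zariski (hence Nisnevich) hypercohomology presheaves of quasi-coherent complexes. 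By the definition of a pro cdh sheaf, it then suffices to show that $F$ carries every abstract blowup square of noetherian schemes~$(\ref{equationabstractblowupsquare})$ to a weakly cartesian square of pro objects in $\mathcal{D}(\Z)$, that is, that its total fibre is weakly zero as a pro object.

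Next I would peel off the rigid-analytic Hodge filtration, whose graded pieces are shifts $\underline{\mathbb{L}}{}^j_{-_{\Q_p}/\Q_p}[-j]$ of the rigid-analytic cotangent complexes, for $j \geq i$. Arguing as in \cite[Proposition~$4.41$]{bouis_motivic_2024}, whose proof is insensitive to the rigid-analytic decoration, the total fibre attached by $F$ to an abstract blowup square is, in each cohomological degree, built from the total fibres attached to only finitely many of these graded pieces. This reduces the statement to showing that, for every integer $j \geq 0$, the presheaf $R\Gamma_{\text{Zar}}\big(-,\prod'_{p \in \mathbb{P}} \underline{\mathbb{L}}{}^j_{-_{\Q_p}/\Q_p}\big)$ is a pro cdh sheaf on noetherian schemes.

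For this last point I would invoke the construction of \cite[Section~$4.2$]{bouis_motivic_2024}: the rigid-analytic cotangent complex $\underline{\mathbb{L}}{}^j_{A_{\Q_p}/\Q_p}$ is obtained from the $p$-complete cotangent complex $(\mathbb{L}^j_{A/\Z})^\wedge_p$ by inverting $p$, and, by the adelic-type description of the restricted product there, $\prod'_{p \in \mathbb{P}} \underline{\mathbb{L}}{}^j_{A_{\Q_p}/\Q_p}$ is the rationalisation of the full product $\prod_{p \in \mathbb{P}} (\mathbb{L}^j_{A/\Z})^\wedge_p$. Since $R\Gamma_{\text{Zar}}\big(-,\prod_{p \in \mathbb{P}} (\mathbb{L}^j_{-/\Z})^\wedge_p\big)$ is a pro cdh sheaf by Corollary~\ref{corollaryprocdhdescentforcotangentcomplexandvariants}, and since rationalisation sends weakly zero pro objects to weakly zero pro objects (as already used in the proof of that corollary), this would complete the argument.

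The hard part will be the reduction in the second step: one must check that passing to the inverse limit defining the Hodge completion $\underline{\widehat{\mathbb{L}\Omega}}^{\geq i}$, and to the restricted product over all primes, does not destroy pro-weak-vanishing. Concretely, this amounts to a uniform bound, over noetherian inputs, on the range of Hodge weights that can contribute to a given cohomological degree of the total fibre of an abstract blowup square — the rigid-analytic shadow of the finiteness and support properties of the algebraic cotangent complex (Lemmas~\ref{lemmacohomologygroupsofcotangentcomplexarefinitelygenerated} and~\ref{lemmaLemma2.8(i)}) used repeatedly in this section — together with the finitariness of the restricted product, which is precisely why $\prod'$ and not $\prod$ appears in the statement.
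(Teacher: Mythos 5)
Your overall strategy --- peel off the Hodge filtration of $\underline{\widehat{\mathbb{L}\Omega}}^{\geq i}$ and reduce graded piece by graded piece to Corollary~\ref{corollaryprocdhdescentforcotangentcomplexandvariants} --- is not the route the paper takes here, and the step you yourself isolate as ``the hard part'' is a genuine gap rather than a deferred technicality. The object $\underline{\widehat{\mathbb{L}\Omega}}^{\geq i}$ is Hodge-\emph{completed}: in any fixed cohomological degree $n$, every graded piece $\underline{\mathbb{L}}{}^j[-j]$ with $j \geq \max(i,n)$ can a priori contribute (since $\mathrm{H}^n(\underline{\mathbb{L}}{}^j[-j]) = \mathrm{H}^{n-j}(\underline{\mathbb{L}}{}^j)$ and cotangent complexes sit in non-positive cohomological degrees, this group is unconstrained for all $j \geq n$), and weak vanishing of pro objects is not preserved under infinite limits, because the index $r'$ witnessing the vanishing may grow without bound with $j$. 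Whatever finiteness \cite[Proposition~$4.41$]{bouis_motivic_2024} supplies in the $\Q$-linear case is therefore precisely what would have to be re-established here, now uniformly in the prime $p$ as well as in the Hodge weight $j$, and you do not supply that argument; the fact that the paper proves Proposition~\ref{propositionprocdhdescentforderiveddeRhamchar0} by exactly the filtration argument you describe but deliberately switches strategy for the rigid-analytic statement is a strong indication that the transfer is not ``insensitive to the decoration.''

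The paper avoids the infinite filtration altogether. By \cite[Remark~$4.27$]{bouis_motivic_2024} there is a fibre sequence of presheaves
$$R\Gamma_{\mathrm{Zar}}\Big(-,\prod_{p \in \mathbb{P}}{}^{'}\underline{\widehat{\mathbb{L}\Omega}}^{\geq i}_{-_{\Q_p}/\Q_p}\Big) \longrightarrow R\Gamma_{\mathrm{Zar}}\Big(-,\prod_{p \in \mathbb{P}}{}^{'}\underline{\widehat{\mathbb{L}\Omega}}_{-_{\Q_p}/\Q_p}\Big) \longrightarrow R\Gamma_{\mathrm{Zar}}\Big(-,\big(\prod_{p \in \mathbb{P}}\big(\mathbb{L}\Omega^{<i}_{-/\Z}\big)^\wedge_p\big)_{\Q}\Big),$$
in which the middle term is an honest cdh sheaf on noetherian schemes by \cite[Corollary~$4.42$]{bouis_motivic_2024}, hence trivially a pro cdh sheaf, and the right-hand term carries a \emph{finite} filtration with graded pieces $R\Gamma_{\mathrm{Zar}}\big(-,\big(\prod_{p \in \mathbb{P}}(\mathbb{L}^j_{-/\Z})^\wedge_p\big)_{\Q}\big)$ for $0 \leq j < i$, each a pro cdh sheaf by Corollary~\ref{corollaryprocdhdescentforcotangentcomplexandvariants} together with the observation that rationalisation preserves weakly zero pro systems. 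One concludes by two-out-of-three. Your final step --- identifying $\prod'_{p}\underline{\mathbb{L}}{}^j_{-_{\Q_p}/\Q_p}$ with the rationalisation of $\prod_{p}(\mathbb{L}^j_{-/\Z})^\wedge_p$ and invoking Corollary~\ref{corollaryprocdhdescentforcotangentcomplexandvariants} --- is sound; it is the passage through the completed, infinite-weight part of the filtration that must either be replaced by the fibre sequence above or be supplemented with the missing uniform bound.
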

	
	\begin{proof}
		By \cite[Remark~$4.27$]{bouis_motivic_2024}, there is a fibre sequence of presheaves
		$$R\Gamma_{\text{Zar}}\Big(-,\prod_{p \in \mathbb{P}}{}^{'}  \underline{\widehat{\mathbb{L}\Omega}}^{\geq i}_{-_{\Q_p}/\Q_p}\Big) \longrightarrow R\Gamma_{\text{Zar}}\Big(-,\prod_{p \in \mathbb{P}}{}^{'}  \underline{\widehat{\mathbb{L}\Omega}}_{-_{\Q_p}/\Q_p}\Big) \longrightarrow R\Gamma_{\text{Zar}}\Big(-,\big(\prod_{p \in \mathbb{P}} \big(\mathbb{L}\Omega^{<i}_{-/\Z}\big)^\wedge_p\big)_{\Q}\Big)$$
		on qcqs derived schemes, and in particular on noetherian schemes. By \cite[Corollary~$4.42$]{bouis_motivic_2024}, the presheaf $R\Gamma_{\text{Zar}}\big(-,\prod'_{p \in \mathbb{P}} \widehat{\underline{\mathbb{L}\Omega}}_{-_{\Q_p}/\Q_p}\big)$ is a cdh sheaf on noetherian schemes, so it is a pro cdh sheaf on noetherian schemes. The presheaf $R\Gamma_{\text{Zar}}\big(-,\big(\prod_{p \in \mathbb{P}} \big(\mathbb{L}\Omega^{<i}_{-/\Z}\big)^\wedge_p\big)_{\Q}\big)$ has a finite filtration with graded pieces given by the presheaves $R\Gamma_{\text{Zar}}\big(-,\big(\prod_{p \in \mathbb{P}} \big(\mathbb{L}^j_{-/\Z}\big)^\wedge_p\big)_{\Q}\big)$ ($0 \leq j < i$). These presheaves are pro cdh sheaves on noetherian schemes by Corollary~\ref{corollaryprocdhdescentforcotangentcomplexandvariants}, so the presheaf $R\Gamma_{\text{Zar}}\big(-,\big(\prod_{p \in \mathbb{P}} \big(\mathbb{L}\Omega^{<i}_{-/\Z}\big)^\wedge_p\big)_{\Q}\big)$ is a pro cdh sheaf on noetherian schemes. This implies that the presheaf $R\Gamma_{\text{Zar}}\big(-,\prod'_{p \in \mathbb{P}}  \underline{\widehat{\mathbb{L}\Omega}}^{\geq i}_{-_{\Q_p}/\Q_p}\big)$ is a pro cdh sheaf on noetherian schemes.
	\end{proof}
	
	\begin{proposition}\label{propositionprocdhdescentforQ(i)TC}
		For every integer $i \geq 0$, the presheaf $\Q(i)^{\emph{mot}}$ is a pro cdh sheaf on noetherian schemes.
	\end{proposition}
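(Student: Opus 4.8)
The plan is to use the rational fracture square for motivic cohomology, namely \cite[Corollary~$4.31$]{bouis_motivic_2024}, which expresses $\Q(i)^{\text{mot}}$ in terms of the rationalised syntomic/TC contribution and a de Rham piece. More precisely, I would reduce the statement to pro cdh descent for the presheaf $\Q(i)^{\text{TC}}$ (equivalently $\Q(i)^{\text{mot}}$, since rationally the Atiyah--Hirzebruch filtration splits and $\Q(i)^{\text{mot}} = \Q(i)^{\text{TC}}$ in the relevant range by the results of \cite{bouis_motivic_2024}), and then use the arithmetic fracture square relating $\Q(i)^{\text{TC}}$ to its profinite completion and its rationalisation. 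The key point is that each of the three resulting pieces has been shown to be a pro cdh sheaf in the preceding propositions: the profinitely-completed piece is governed, via integral $p$-adic Hodge theory, by the rigid-analytic de Rham cohomology $\prod'_{p}\underline{\widehat{\mathbb{L}\Omega}}^{\geq i}_{-_{\Q_p}/\Q_p}$ (Proposition~\ref{propositionprocdhdescentforrigidanalyticdR}), while the genuinely rational piece is governed by the Hodge-completed derived de Rham cohomology $\widehat{\mathbb{L}\Omega}^{\geq i}_{-_{\Q}/\Q}$ over $\Q$ (Proposition~\ref{propositionprocdhdescentforderiveddeRhamchar0}).

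Concretely, the steps are: first, record that $\Q(i)^{\text{mot}}$ agrees with $\Q(i)^{\text{TC}}$ so that it suffices to treat the latter (or work directly with $\Q(i)^{\text{mot}}$ via its own fracture square). Second, invoke the relevant fracture square from \cite{bouis_motivic_2024} to write $\Q(i)^{\text{TC}}$ as a pullback (or as sitting in a fibre sequence) built from: (a) the rational de Rham term $R\Gamma_{\text{Zar}}(-,\widehat{\mathbb{L}\Omega}^{\geq i}_{-_{\Q}/\Q})$; (b) the "$p$-adic" term which, after profinite completion, is identified with $R\Gamma_{\text{Zar}}(-,\prod'_{p}\underline{\widehat{\mathbb{L}\Omega}}^{\geq i}_{-_{\Q_p}/\Q_p})$ using the comparison of syntomic cohomology with $p$-adic Hodge theory; and (c) the correction term coming from the difference between the product and the restricted product, which by \cite[Remark~$4.27$]{bouis_motivic_2024} and the analysis in Proposition~\ref{propositionprocdhdescentforrigidanalyticdR} has a finite filtration with graded pieces of the form $R\Gamma_{\text{Zar}}(-,(\prod_{p}(\mathbb{L}^j_{-/\Z})^\wedge_p)_{\Q})$ for $0 \leq j < i$. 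Third, observe that the property of being a pro cdh sheaf is closed under fibre sequences, finite limits, and retracts in the presheaf category (since the total-fibre criterion of Definition~\ref{definitionprocdhsheaf} is exact), so it is enough to know each constituent is a pro cdh sheaf. Each of these is then handed off to Propositions~\ref{propositionprocdhdescentforderiveddeRhamchar0} and \ref{propositionprocdhdescentforrigidanalyticdR} and Corollary~\ref{corollaryprocdhdescentforcotangentcomplexandvariants}.

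The main obstacle is bookkeeping rather than a genuinely new idea: one must match the pieces of the fracture square for $\Q(i)^{\text{mot}}$ precisely with the de Rham presheaves for which pro cdh descent has already been established, being careful about Hodge-completion conventions, the distinction between $\prod$ and $\prod'$ over all primes $p$, and the fact that the identification of the $p$-adic contribution with rigid-analytic de Rham cohomology requires $\Q_p$-coefficients (so that the de Rham comparison in $p$-adic Hodge theory applies). Once these identifications are in place, the conclusion is immediate from the closure properties above. I expect no serious difficulty in the noetherian-scheme reduction, since all presheaves involved are Nisnevich sheaves and the pro cdh condition only needs to be checked on abstract blowup squares, exactly as in the proof of Proposition~\ref{propositionprocdhdescentforcotangentcomplex}.
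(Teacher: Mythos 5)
There is a genuine gap: your argument is circular in the paper's logical architecture. The fracture square of \cite[Corollary~$4.31$]{bouis_motivic_2024} has four corners: $\Q(i)^{\text{TC}}$, $R\Gamma_{\text{Zar}}\big({-},\widehat{\mathbb{L}\Omega}^{\geq i}_{-_{\Q}/\Q}\big)$, $\big(\prod_{p}\Z_p(i)^{\text{BMS}}\big)_{\Q}$, and $R\Gamma_{\text{Zar}}\big({-},\prod'_{p}\underline{\widehat{\mathbb{L}\Omega}}^{\geq i}_{-_{\Q_p}/\Q_p}\big)$. To extract the top-left corner you must independently know that the other three are pro cdh sheaves. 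The two right-hand corners are indeed Propositions~\ref{propositionprocdhdescentforderiveddeRhamchar0} and~\ref{propositionprocdhdescentforrigidanalyticdR}, but the bottom-left corner $\big(\prod_{p}\Z_p(i)^{\text{BMS}}\big)_{\Q}$ is \emph{not} available at this stage: in the paper it is Corollary~\ref{corollaryprocdhdescentforrationalisedBMSforallp}, which is \emph{deduced from} the present proposition by running the same fracture square in the opposite direction. Your attempt to supply it by "identifying" the rationalised profinite syntomic term with the rigid-analytic de Rham term is where the argument breaks: the bottom horizontal map of the square is not an equivalence (its fibre is, by cartesianness, the fibre of $\Q(i)^{\text{TC}}\to R\Gamma_{\text{Zar}}\big({-},\widehat{\mathbb{L}\Omega}^{\geq i}_{-_{\Q}/\Q}\big)$, i.e.\ exactly the unknown quantity), so there is no such identification and no independent handle on that corner.

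The paper's proof avoids the arithmetic fracture square entirely. It uses the purely rational description \cite[Corollary~$4.67$]{bouis_motivic_2024}, which exhibits $\Q(i)^{\text{mot}}$ as the fibre of a map whose other terms are either cdh sheaves (hence trivially pro cdh sheaves) or the Zariski cohomology of $\mathbb{L}\Omega^{<i}_{-_{\Q}/\Q}$; the latter has a finite filtration with graded pieces $R\Gamma_{\text{Zar}}\big({-},\mathbb{L}^j_{-_{\Q}/\Q}\big)$, which are pro cdh sheaves by Corollary~\ref{corollaryprocdhdescentforcotangentcomplexandvariants}. (An alternative, also noted in the paper, is the Adams decomposition \cite[Corollary~$4.60$]{bouis_motivic_2024} combined with Kerz--Strunk--Tamme's pro cdh descent for $K$-theory.) If you want to salvage your route, you would first need an independent proof of pro cdh descent for $\big(\prod_{p}\Z_p(i)^{\text{BMS}}\big)_{\Q}$, which is precisely the hard mixed-characteristic input the paper organises the other way around.
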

	
	\begin{proof}
		By \cite[Corollary~$4.67$]{bouis_motivic_2024}, there is a fibre sequence of presheaves
		$$\Q(i)^{\text{mot}}(-) \longrightarrow \Q(i)^{\text{cdh}}(-) \longrightarrow \text{cofib}\Big(R\Gamma_{\text{Zar}}\big(-,\mathbb{L}\Omega^{<i}_{-_{\Q}/\Q}\big) \longrightarrow R\Gamma_{\text{cdh}}\big(-,\Omega^{<i}_{-_{\Q}/\Q}\big)\Big)[-1]$$
		on qcqs derived schemes, and in particular on noetherian schemes. Cdh sheaves are in particular pro cdh sheaves, so it suffices to prove that the presheaf $R\Gamma_{\text{Zar}}\big(X,\mathbb{L}\Omega^{<i}_{-_{\Q}/\Q}\big)$ is a pro cdh sheaf on noetherian schemes. This presheaf has a finite filtration with graded pieces given by the presheaves $R\Gamma_{\text{Zar}}\big(-,\mathbb{L}^j_{-_{\Q}/\Q}\big)$ ($0 \leq j < i$), so the result is a consequence of Corollary~\ref{corollaryprocdhdescentforcotangentcomplexandvariants}.
		
		Alternatively, one can prove this result by using \cite[Corollary~$4.60$]{bouis_motivic_2024} and pro cdh descent for algebraic $K$-theory (\cite[Theorem~A]{kerz_algebraic_2018}).
	\end{proof}

	\begin{corollary}\label{corollaryprocdhdescentforQ(i)TC}
		For every integer $i \geq 0$, the presheaf $\Q(i)^{\emph{TC}}$ is a pro cdh sheaf on noetherian schemes.
	\end{corollary}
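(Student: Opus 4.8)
The plan is to bootstrap from the rational case of the motivic complexes (Proposition~\ref{propositionprocdhdescentforQ(i)TC}), using the fracture square of \cite[Corollary~$4.31$]{bouis_motivic_2024}. In one of its equivalent forms, this square exhibits $\Z(i)^{\text{mot}}$ as the pullback of
$$\Z(i)^{\text{cdh}}(-) \longrightarrow L_{\text{cdh}}\Z(i)^{\text{TC}}(-) \longleftarrow \Z(i)^{\text{TC}}(-),$$
and since opposite edges of a pullback square in a stable $\infty$-category have equivalent cofibres, there is a natural equivalence
$$\text{cofib}\big(\Z(i)^{\text{mot}}(-) \to \Z(i)^{\text{TC}}(-)\big) \xlongrightarrow{\sim} \text{cofib}\big(\Z(i)^{\text{cdh}}(-) \to L_{\text{cdh}}\Z(i)^{\text{TC}}(-)\big)$$
of $\mathcal{D}(\Z)$-valued presheaves on noetherian schemes. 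Rationalising — an exact operation that commutes with the sheafification functor $L_{\text{cdh}}$, since the latter is a left adjoint — the right-hand side becomes the cofibre of a map between two cdh sheaves, hence is itself a cdh sheaf; as cdh sheaves are nil-invariant and satisfy cdh descent, it is in particular a pro cdh sheaf on noetherian schemes.

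It then remains to propagate pro cdh descent along the resulting fibre sequence
$$\Q(i)^{\text{mot}}(-) \longrightarrow \Q(i)^{\text{TC}}(-) \longrightarrow \text{cofib}\big(\Q(i)^{\text{cdh}}(-) \to L_{\text{cdh}}\Q(i)^{\text{TC}}(-)\big).$$
All three presheaves are Nisnevich sheaves, and for every abstract blowup square of noetherian schemes the total fibres of the three associated pro squares (in the sense of the footnote to Definition~\ref{definitionprocdhsheaf}) again form a fibre sequence of pro objects in $\mathcal{D}(\Q)$, since forming total fibres and evaluating on the thickenings $rY$ are exact. The total fibre for $\Q(i)^{\text{mot}}$ is weakly zero by Proposition~\ref{propositionprocdhdescentforQ(i)TC}, and the total fibre for the cofibre term is weakly zero by the previous paragraph, so the long exact sequence of pro abelian groups shows that the total fibre for $\Q(i)^{\text{TC}}$ is weakly zero as well. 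Here one uses, as in Definition~\ref{definitionprocdhsheaf}, that all the presheaves in play are uniformly bounded above on noetherian schemes, so that this notion of weakly cartesian square behaves as expected. This proves that $\Q(i)^{\text{TC}}$ is a pro cdh sheaf on noetherian schemes.

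I do not anticipate a genuine obstacle here: essentially all the content is in Proposition~\ref{propositionprocdhdescentforQ(i)TC} and in the structural fracture square from \cite{bouis_motivic_2024}. The only mildly delicate points — the compatibility of cdh sheafification with rationalisation, and the fact that cdh sheaves are automatically pro cdh sheaves — are routine, respectively because $L_{\text{cdh}}$ preserves colimits and because cdh sheaves are nil-invariant, so that the pro system $\{F(rY)\}_r$ attached to a closed immersion is constant on $F(Y)$ and the abstract blowup square condition reduces to ordinary cdh descent.
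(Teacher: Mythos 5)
Your proof is correct and is essentially the paper's argument spelled out: the paper simply invokes \cite[Remark~$3.21$]{bouis_motivic_2024}, which records precisely the equivalence you derive from the cartesian square with corners $\Z(i)^{\text{mot}}$, $\Z(i)^{\text{TC}}$, $\Z(i)^{\text{cdh}}$, $L_{\text{cdh}}\Z(i)^{\text{TC}}$ (note this square is Remark~$3.21$, not Corollary~$4.31$, which is the arithmetic fracture square), and then concludes by Proposition~\ref{propositionprocdhdescentforQ(i)TC} exactly as you do.
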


	\begin{proof}
		By \cite[Remark~$3.21$]{bouis_motivic_2024}, the presheaf $\Q(i)^{\text{TC}}$ is a pro cdh sheaf on noetherian schemes if and only if the presheaf $\Q(i)^{\text{mot}}$ is a pro cdh sheaf on noetherian schemes. The result is then a consequence of Proposition~\ref{propositionprocdhdescentforQ(i)TC}.
	\end{proof}
	
		By \cite[Remark~$3.21$]{bouis_motivic_2024}, the presheaf $\Q(i)^{\text{TC}}$ is a pro cdh sheaf on noetherian schemes if and only if the presheaf $\Q(i)^{\text{mot}}$ is a pro cdh sheaf on noetherian schemes. One can then prove Proposition~\ref{propositionprocdhdescentforQ(i)TC} alternatively by using the Adams decomposition (\cite[Corollary~$4.60$]{bouis_motivic_2024}) and pro cdh descent for algebraic $K$-theory (\cite[Theorem~A]{kerz_algebraic_2018}).
	
	We now turn our attention to Bhatt--Morrow--Scholze's syntomic complexes $\Z_p(i)^{\text{BMS}}$.
	
	\begin{corollary}\label{corollaryprocdhdescentforrationalisedBMSforallp}
		For every integer $i \geq 0$, the presheaf $\big(\prod_{p \in \mathbb{P}} \Z_p(i)^{\emph{BMS}}\big)_{\Q}$ is a pro cdh sheaf on noetherian schemes.
	\end{corollary}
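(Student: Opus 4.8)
The plan is to realise $\big(\prod_{p \in \mathbb{P}} \Z_p(i)^{\text{BMS}}\big)_{\Q}$ as the fibre of a morphism between presheaves already known to be pro cdh sheaves on noetherian schemes, and then to use that the fibre of a morphism of pro cdh sheaves is again a pro cdh sheaf. The key input is the rational prismatic--de Rham comparison: after inverting $p$, the Nygaard filtration on the Nygaard-completed absolute prismatic cohomology $\widehat{\Prism}_{-}$ corresponds, compatibly in $p$, to the Hodge filtration on the continuous derived de Rham cohomology of the rational fibre, and the divided Frobenius $\varphi_i$, a priori only defined on the $i$-th Nygaard-filtered piece, is carried to the corresponding divided-Frobenius operator on the $i$-th Hodge-filtered piece. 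Concretely, one has from \cite{bouis_motivic_2024} a fibre sequence of presheaves on noetherian schemes of the form
$$\big(\prod_{p \in \mathbb{P}} \Z_p(i)^{\text{BMS}}\big)_{\Q}(-) \longrightarrow R\Gamma_{\text{Zar}}\Big(-,\prod'_{p \in \mathbb{P}} \underline{\widehat{\mathbb{L}\Omega}}^{\geq i}_{-_{\Q_p}/\Q_p}\Big) \xrightarrow{\ \text{can}\,-\,\varphi_i\ } R\Gamma_{\text{Zar}}\Big(-,\prod'_{p \in \mathbb{P}} \underline{\widehat{\mathbb{L}\Omega}}_{-_{\Q_p}/\Q_p}\Big),$$
the second map being the difference of the canonical inclusion of the Hodge-filtered piece and the rationalised divided Frobenius.

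Granting this, the proof is immediate. The source $R\Gamma_{\text{Zar}}\big(-,\prod'_{p \in \mathbb{P}} \underline{\widehat{\mathbb{L}\Omega}}^{\geq i}_{-_{\Q_p}/\Q_p}\big)$ is a pro cdh sheaf on noetherian schemes by Proposition~\ref{propositionprocdhdescentforrigidanalyticdR}, and the target $R\Gamma_{\text{Zar}}\big(-,\prod'_{p \in \mathbb{P}} \underline{\widehat{\mathbb{L}\Omega}}_{-_{\Q_p}/\Q_p}\big)$ is even a cdh sheaf on noetherian schemes by \cite[Corollary~4.42]{bouis_motivic_2024} (as used already in the proof of Proposition~\ref{propositionprocdhdescentforrigidanalyticdR}), hence in particular a pro cdh sheaf. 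Since being a pro cdh sheaf --- Nisnevich descent together with weak cartesianness of the pro-object square attached to every abstract blowup square --- is preserved under fibres (the relevant pro complexes being uniformly bounded above on noetherian schemes), it follows that $\big(\prod_{p \in \mathbb{P}} \Z_p(i)^{\text{BMS}}\big)_{\Q}$ is a pro cdh sheaf on noetherian schemes.

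The substance of the argument therefore lies not in the descent bookkeeping but in the displayed fibre sequence, which I expect to be the main obstacle: one must identify $\mathcal{N}^{\geq i}\widehat{\Prism}_{-}[\tfrac{1}{p}]$ with the Hodge-filtered continuous de Rham complex $\underline{\widehat{\mathbb{L}\Omega}}^{\geq i}_{-_{\Q_p}/\Q_p}$, match the divided Frobenius with $\text{can}-\varphi_i$, and --- crucially --- pass from the full product $\prod_{p \in \mathbb{P}} \Z_p(i)^{\text{BMS}}$ to the restricted product $\prod'_{p \in \mathbb{P}}$, which is designed precisely so as to compute the rationalisation correctly. All of this is available in \cite{bouis_motivic_2024}. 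One could in principle instead deduce the corollary from pro cdh descent for $\prod_{p \in \mathbb{P}} \Z_p(i)^{\text{BMS}}$ with $\Z$-coefficients, but in the order of this section that statement comes later and relies on the present corollary, so the direct de Rham route is the one to follow.
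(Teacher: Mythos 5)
Your overall strategy is sound, but it is a genuinely different route from the paper's, and it hinges on one input that you assert rather than establish. The paper does not exhibit $\big(\prod_{p \in \mathbb{P}} \Z_p(i)^{\text{BMS}}\big)_{\Q}$ as the fibre of a Frobenius fixed-point map between the two rigid-analytic de Rham complexes. Instead it rationalises the arithmetic fracture square of \cite[Corollary~$4.31$]{bouis_motivic_2024}, whose four corners are $\Q(i)^{\text{TC}}$, $R\Gamma_{\text{Zar}}\big(-,\widehat{\mathbb{L}\Omega}^{\geq i}_{-_{\Q}/\Q}\big)$, the rationalised product of syntomic complexes, and $R\Gamma_{\text{Zar}}\big(-,\prod'_{p}\underline{\widehat{\mathbb{L}\Omega}}^{\geq i}_{-_{\Q_p}/\Q_p}\big)$, and deduces the claim from pro cdh descent for the other three corners (Propositions~\ref{propositionprocdhdescentforderiveddeRhamchar0}, \ref{propositionprocdhdescentforrigidanalyticdR}, \ref{propositionprocdhdescentforQ(i)TC}). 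Note that this makes the corollary depend on pro cdh descent for $\Q(i)^{\text{TC}}$, which the paper gets from the comparison with $\Q(i)^{\text{cdh}}$ and the cotangent complex (or from Kerz--Strunk--Tamme); your route would avoid that input entirely, which is a genuine simplification of the dependency graph \emph{if} your fibre sequence is available.

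That fibre sequence is the gap. What is actually quotable from the reference is (a) the fracture square above, which only gives the map $\prod_p \Z_p(i)^{\text{BMS}} \to R\Gamma_{\text{Zar}}\big(-,\prod'_p \underline{\widehat{\mathbb{L}\Omega}}^{\geq i}_{-_{\Q_p}/\Q_p}\big)$ as one edge of a square whose other fibre involves $\Q(i)^{\text{TC}}$ and global de Rham cohomology, and (b) the filtration sequence of \cite[Remark~$4.27$]{bouis_motivic_2024} relating $\prod'_p\underline{\widehat{\mathbb{L}\Omega}}^{\geq i}$, $\prod'_p\underline{\widehat{\mathbb{L}\Omega}}$ and $\big(\prod_p(\mathbb{L}\Omega^{<i})^\wedge_p\big)_{\Q}$. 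Neither of these is the statement that $\mathrm{can}-\varphi_i$ descends to an endomorphism-pair on the restricted products with fibre $\big(\prod_p \Z_p(i)^{\text{BMS}}\big)_{\Q}$. To make your argument complete you would have to prove this Fontaine--Messing-style rational description: identify $\big(\prod_p \mathcal{N}^{\geq i}\widehat{\Prism}_{-}\{i\}\big)_{\Q}$ with the Hodge-filtered restricted product, check that the divided Frobenius rationalises compatibly to an operator valued in the unfiltered restricted product, and verify that the resulting fibre really computes the rationalisation of the full product (this is exactly where the restricted product matters). None of this is implausible, but it is the entire content of your proof and cannot be dismissed with ``all of this is available in the reference.'' If you want a proof with the references actually at hand, the paper's square-chase is the shorter path.
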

	
	\begin{proof}
		Rationalising the cartesian square of \cite[Corollary~$4.31$]{bouis_motivic_2024} yields a cartesian square of presheaves
		$$\begin{tikzcd}
			\Q(i)^{\text{TC}}(-) \arrow{r} \arrow{d} & R\Gamma_{\text{Zar}}\Big(-,\widehat{\mathbb{L}\Omega}^{\geq i}_{-_{\Q}/\Q}\Big) \ar[d] \\
			\big(\prod_{p \in \mathbb{P}} \Z_p(i)^{\text{BMS}}(-)\big)_{\Q} \arrow{r} & R\Gamma_{\text{Zar}}\Big(-,\prod'_{p \in \mathbb{P}} \underline{\widehat{\mathbb{L}\Omega}}^{\geq i}_{-_{\Q_p}/\Q_p}\Big)
		\end{tikzcd}$$
		on qcqs derived schemes, and in particular on noetherian schemes. The other three presheaves of this cartesian square being pro cdh sheaves on noetherian schemes (Propositions~\ref{propositionprocdhdescentforderiveddeRhamchar0}, \ref{propositionprocdhdescentforrigidanalyticdR}, and \ref{propositionprocdhdescentforQ(i)TC}), the bottom left presheaf is also a pro cdh sheaf on noetherian schemes.
	\end{proof}
	
	\begin{lemma}\label{lemmaprocdhdescentformodsyntomiccohomology}
		Let $p$ be a prime number. Then for every integer $i \geq 0$, the presheaf $\F_p(i)^{\emph{BMS}}$ is a pro cdh sheaf on noetherian schemes.
	\end{lemma}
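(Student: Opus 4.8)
The plan is to reduce $\F_p(i)^{\text{BMS}}$ to Zariski cohomology of derived exterior powers of the cotangent complex, where pro cdh descent is already available from Corollary~\ref{corollaryprocdhdescentforcotangentcomplexandvariants}, exactly in the spirit of the proofs of Propositions~\ref{propositionprocdhdescentforderiveddeRhamchar0} and~\ref{propositionprocdhdescentforrigidanalyticdR}. The two formal ingredients are: first, that the class of presheaves which are Nisnevich sheaves and send abstract blowup squares of noetherian schemes to weakly cartesian squares of pro objects is closed under shifts, fibres, cofibres, and extensions along finite filtrations --- because the total fibre of the associated square transforms accordingly, and a finite iterated extension or fibre of weakly zero pro complexes, each uniformly bounded above on noetherian schemes by a constant depending only on the dimension, is again weakly zero; and second, that all presheaves appearing are uniformly bounded above in this sense (for $\F_p(i)^{\text{BMS}}$ via the bound for $\Z(i)^{\text{mot}}$ in \cite[Proposition~$4.49$]{bouis_motivic_2024}, and for the cotangent complex pieces $R\Gamma_{\text{Zar}}(-,\mathbb{L}^j_{-_{\F_p}/\F_p})$ since $\mathbb{L}^j$ is connective and the input scheme has finite Krull dimension).

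The substance is to exhibit, starting from the prismatic description of Bhatt--Morrow--Scholze's syntomic complexes (Bhatt--Lurie, \cite{bhatt_absolute_2022}, and \cite[Section~$5$]{bouis_motivic_2024}), a presentation of $\F_p(i)^{\text{BMS}}$ built by these operations out of the presheaves $R\Gamma_{\text{Zar}}(-,\mathbb{L}^j_{-/\Z})$ and $R\Gamma_{\text{Zar}}(-,\mathbb{L}^j_{-_{\F_p}/\F_p})$ for $0 \leq j \leq i$. Concretely, modulo $p$ the complex $\F_p(i)^{\text{BMS}}(R)$ is the fibre of the map $\varphi_i - \text{can}$ on the Nygaard-filtered prismatic complex reduced mod $p$, and the Nygaard-completed prismatic complex mod $p$ carries a conjugate filtration whose graded pieces are shifts of $\mathbb{L}^j_{-/\Z}$ after $p$-completion, so that one passes between the $\Z$-relative and $\F_p$-relative cotangent complexes using the transitivity sequence for $\Z \to \F_p$ together with the flexibility built into Corollary~\ref{corollaryprocdhdescentforcotangentcomplexandvariants}. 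The main obstacle is the finiteness input: although the Nygaard completion is an infinite limit, modulo $p$ the Frobenius fixed-point construction only sees a bounded range of the filtration, so that by the comparison between the Nygaard and conjugate filtrations (the ``Segal conjecture'' input, in the style of Bhatt--Lurie and Antieau--Mathew--Morrow--Nikolaus, \cite{antieau_beilinson_2020}), $\F_p(i)^{\text{BMS}}(R)$ is the fibre of a map between two presheaves each admitting a \emph{finite} filtration with graded pieces among the $R\Gamma_{\text{Zar}}(-,\mathbb{L}^j_{-_{\F_p}/\F_p})[\ast]$, $0 \leq j \leq i$. Isolating this finite presentation is the only non-formal step, and I would either extract it directly from \cite{bhatt_absolute_2022} or record it as a separate lemma refining \cite[Section~$5$]{bouis_motivic_2024}.

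Granting the finite presentation, the conclusion is immediate: by Corollary~\ref{corollaryprocdhdescentforcotangentcomplexandvariants} each $R\Gamma_{\text{Zar}}(-,\mathbb{L}^j_{-_{\F_p}/\F_p})$ and each $R\Gamma_{\text{Zar}}(-,\mathbb{L}^j_{-/\Z})$ is a pro cdh sheaf on noetherian schemes; hence so is any presheaf built from these by shifts, fibres, and finite filtrations by the first formal ingredient above; in particular $\F_p(i)^{\text{BMS}}$ is a pro cdh sheaf on noetherian schemes. As an aside, one might instead try to start from the fibre sequence $R\Gamma_{\text{ét}}(-,j_!\mu_p^{\otimes i}) \to \F_p(i)^{\text{syn}} \to \F_p(i)^{\text{BMS}}$ of \cite[Remark~$8.4.4$]{bhatt_absolute_2022}, but the compactly supported étale term is not visibly a pro cdh sheaf, so the prismatic--de Rham route above seems the right one.
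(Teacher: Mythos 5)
Your proposal follows essentially the same route as the paper: the paper invokes \cite[Corollary~$5.31$]{antieau_beilinson_2020} (the ``Segal conjecture''-type bound you describe) to write $\F_p(i)^{\text{BMS}}$ as the fibre of $\mathrm{can}-\phi_i$ between finite quotients of the mod~$p$ Nygaard filtration, then uses the conjugate filtration on the relative prismatic complex $\overline{\Prism}_{-/\Z_p\llbracket \widetilde{p}\rrbracket}/p$ to reduce each Nygaard graded piece to a finite filtration with graded pieces given by mod~$p$ powers of the cotangent complex, and concludes by Corollary~\ref{corollaryprocdhdescentforcotangentcomplexandvariants}. The finite presentation you flag as the one non-formal step is exactly what these two citations supply, so your argument is correct and matches the paper's.
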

	
	\begin{proof}
		By \cite[Corollary~$5.31$]{antieau_beilinson_2020}, there exists an integer $m \geq 0$ and an equivalence of presheaves\footnote{Prismatic cohomology was first defined on $p$-complete $p$-quasisyntomic rings (\cite{bhatt_topological_2019,bhatt_prisms_2022}), and then generalised to arbitrary animated commutative rings by taking the left Kan extension from polynomial $\Z$-algebras, and imposing that it depends only on the derived $p$-completion of its input (\cite{antieau_beilinson_2020,bhatt_absolute_2022}). On noetherian rings $R$, the derived and classical $p$-completions agree, so the prismatic cohomology of $R$ is naturally identified with the prismatic cohomology of the classical $p$-completion of $R$.}
		$$\F_p(i)^{\text{BMS}}(-) \xlongrightarrow{\sim} \text{fib} \Big(\text{can}-\phi_i : (\mathcal{N}^{\geq i} \Prism_{-}\{i\}/\mathcal{N}^{\geq i+m} \Prism_{-}\{i\})/p \longrightarrow (\Prism_{-}\{i\}/\mathcal{N}^{\geq i+m} \Prism_{-}\{i\})/p\Big).$$
		In particular, it suffices to prove that for every integer $j \geq 0$, the presheaf $\mathcal{N}^j \Prism_{-}/p$ is a pro cdh sheaf on noetherian schemes. By \cite[Remark~$5.5.8$ and Example~$4.7.8$]{bhatt_absolute_2022}, there is a fibre sequence of presheaves
		$$\mathcal{N}^j \Prism_{-} \{i\}/p \longrightarrow \text{Fil}^{\text{conj}}_j \overline{\Prism}_{-/\Z_p\llbracket \widetilde{p} \rrbracket}/p \xlongrightarrow{\Theta + j} \text{Fil}^{\text{conj}}_{j-1} \overline{\Prism}_{-/\Z_p\llbracket \widetilde{p} \rrbracket}/p.$$
		The presheaves $\text{Fil}^{\text{conj}}_j \overline{\Prism}_{-/\Z_p\llbracket \widetilde{p} \rrbracket}/p$ and $\text{Fil}^{\text{conj}}_{j-1} \overline{\Prism}_{-/\Z_p\llbracket \widetilde{p} \rrbracket}/p$ have finite filtrations with graded pieces given by modulo $p$ powers of the cotangent complex. The result is then a consequence of Corollary~\ref{corollaryprocdhdescentforcotangentcomplexandvariants}.
	\end{proof}
	
	\begin{lemma}\label{lemmatorsionimpliesboundedtorsion}
		Let $A$ be an abelian group of the form $A=\prod_{p \in \mathbb{P}} A_p$, where $A_p$ is a derived $p$-complete abelian group. If $A$ is torsion, then $A$ is bounded torsion ({\it i.e.}, there exists an integer $N \geq 1$ such that $A$ is $N$-torsion).
	\end{lemma}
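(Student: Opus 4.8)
The plan is to use the product and torsion hypotheses to reduce to a single prime, and then to appeal to the structure theory of abelian $p$-groups. Fix a prime $p$. Since $A$ is torsion, so is $A_p$, and its prime-to-$p$ part is a direct summand of $A_p$ (every torsion abelian group being the direct sum of its primary components), hence derived $p$-complete; it is moreover a $\Z[1/p]$-module, since multiplication by $p$ is bijective on $\ell$-primary torsion for $\ell \neq p$. The derived $p$-completion of a $\Z[1/p]$-module vanishes (already $\Z/p^n \otimes^{\mathbb{L}}_{\Z}\Z[1/p] = 0$), so this summand is zero and each $A_p$ is $p$-primary torsion. If infinitely many of the $A_p$ were nonzero, choosing $0 \neq a_p \in A_p$ for each such $p$ (and $0$ in the remaining coordinates) would produce an element of $A$ of infinite order, a contradiction. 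Hence $A \cong \prod_{p \in S} A_p$ for a finite set $S$ of primes, and since a finite product of bounded torsion groups is bounded torsion, it suffices to prove that a $p$-primary torsion derived $p$-complete abelian group $M$ is bounded.

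First I would kill the divisible part. The maximal divisible subgroup $D$ of $M$ is a direct summand, hence derived $p$-complete, and being $p$-primary divisible it is a direct sum of copies of $\Q_p/\Z_p$; if $D \neq 0$ then $\text{Hom}_{\Z_p}(\Q_p/\Z_p, D) \neq 0$, contradicting — via the short exact sequence relating $\text{H}^n((-)^\wedge_p)$ to $\text{Ext}^1_{\Z_p}(\Q_p/\Z_p,-)$ and $\text{Hom}_{\Z_p}(\Q_p/\Z_p,-)$ used in the proof of Lemma~\ref{lemmaprozeroimpliesprozeropcompletion} — that $D$ is derived $p$-complete. So $M$ is reduced. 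For a reduced $p$-primary torsion group $\text{Hom}_{\Z_p}(\Q_p/\Z_p, M) = 0$ automatically, so derived $p$-completeness gives $M \xrightarrow{\sim} \text{H}^0(M^\wedge_p)$; combining this with the surjection $\text{H}^0(M^\wedge_p) \twoheadrightarrow \varprojlim_n M/p^n M$ coming from the same short exact sequence, and with the injectivity of the completion map $M \to \varprojlim_n M/p^n M$ (whose kernel is $\bigcap_n p^n M = 0$), one finds that $M$ is classically $p$-adically complete and separated.

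Finally, suppose $M$ is unbounded. Then, $M$ being a reduced unbounded abelian $p$-group, the classical structure theory of abelian $p$-groups (basic subgroups) provides a pure subgroup $N \cong \bigoplus_{i \geq 1} \Z/p^{n_i}$ with $n_i \to \infty$. Purity yields injections $N/p^k N \hookrightarrow M/p^k M$ for all $k$, which after passing to the inverse limit and using that $M$ is classically $p$-complete produces an injection $N^\wedge_p \hookrightarrow M$. But the $p$-adic completion $N^\wedge_p = \{(c_i)_i \in \prod_i \Z/p^{n_i} : v_p(c_i) \to \infty\}$ visibly contains elements of infinite order (for instance any $(c_i)$ with $v_p(c_i) = \lfloor n_i/2 \rfloor$), so it is not torsion — contradicting that $M$ is torsion. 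Hence $M$ is bounded, which finishes the proof. The main obstacle is this last step: the real content of the lemma is that a derived $p$-complete torsion abelian group is bounded, and the device that makes the argument run is the observation that the $p$-adic completion of an unbounded direct sum of finite cyclic $p$-groups ceases to be torsion; the preceding reductions are essentially bookkeeping with the comparison between the derived and classical $p$-completions.
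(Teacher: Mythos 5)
Your argument is correct in substance but follows a genuinely different route from the paper. The paper's proof is short because it quotes the key single-prime statement --- a torsion derived $p$-complete abelian group is bounded --- directly from \cite[Theorem~$1.1$]{bhatt_torsion_2019}, and then only has to observe that at most finitely many $A_p$ can be nonzero (via the non-torsion element $(1,1,\dots)$ of $\prod_{p \in S}\F_p$). You instead reprove that input from scratch: reducing each $A_p$ to a $p$-primary group via the vanishing of derived $p$-completions of $\Z[\tfrac{1}{p}]$-modules, eliminating the divisible part via $\mathrm{Hom}_{\Z_p}(\Q_p/\Z_p,-)=0$ for derived $p$-complete groups, and deriving the final contradiction from a basic subgroup $\bigoplus_i \Z/p^{n_i}$ whose $p$-adic completion is not torsion. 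This is essentially the content of the cited theorem, so your writeup is a self-contained but longer version of the same statement; the citation buys brevity, your version buys independence from the reference and makes explicit the $p$-primariness of each $A_p$, which the paper's proof passes over quickly.

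One step needs repair. You justify injectivity of the completion map $M \to \varprojlim_n M/p^nM$ by asserting $\bigcap_n p^nM = 0$ for the reduced group $M$. This is false for general reduced $p$-groups: the torsion subgroup $T$ of $\prod_n \Z/p^n$ is reduced, yet the order-$p$ element $(p^{n-1} \bmod p^n)_n$ lies in $p^kT$ for every $k$. Establishing separatedness of a torsion derived $p$-complete group before knowing boundedness would in any case be close to circular. Fortunately you never need injectivity: derived $p$-completeness already gives the surjection $M \cong \mathrm{Ext}^1_{\Z}(\Q_p/\Z_p,M) \twoheadrightarrow \varprojlim_n M/p^nM$, so the classical completion is a quotient of $M$ and hence torsion, while your purity argument embeds the non-torsion group $N^\wedge_p$ into that completion. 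Running the contradiction with $\varprojlim_n M/p^nM$ in place of $M$ closes the gap.
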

	
	\begin{proof}
		Assume that the abelian group $A$ is torsion. Then for every prime number $p$, the abelian group $A_p$ is torsion and derived $p$-complete, hence it is bounded $p$-power torsion by \cite[Theorem~$1.1$]{bhatt_torsion_2019}, in the sense that there exists an integer $n \geq 1$ such that $A_p[p^m]=A_p[p^n]$ for all $m \geq n$. Let $S$ be the set of prime numbers $p$ such that $A_p$ is not the zero group. Then there exists an inclusion of abelian groups $\prod_{p \in S} \F_p \subseteq A$, and, if $S$ is infinite, then $\prod_{p \in S} \F_p$ is not torsion. So $S$ is finite, and, as a finite product of bounded torsion abelian groups, the abelian group $A$ is bounded torsion.
	\end{proof}
	
	\begin{proposition}\label{propositionprocdhdescentforprodZp(i)syntomic}
		For every integer $i \geq 0$, the presheaf $\prod_{p \in \mathbb{P}} \Z_p(i)^{\emph{BMS}}$ is a pro cdh sheaf on noetherian schemes.
	\end{proposition}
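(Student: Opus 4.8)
The plan is the following. Since $\prod_{p} \Z_p(i)^{\emph{BMS}}$ is a product of Nisnevich sheaves, it is a Nisnevich sheaf, so it suffices to show that for every abstract blowup square of noetherian schemes the associated square of pro objects is weakly cartesian; equivalently, that the total fibre pro object $\{C_r\}_r$ of that square is weakly zero. Writing $C_r = \prod_{p} C_r^{(p)}$, where $C_r^{(p)}$ is the total fibre of the square for $\Z_p(i)^{\emph{BMS}}$ (products commute with finite limits), each $H^n(C_r^{(p)})$ is derived $p$-complete and $H^n(C_r) = \prod_{p} H^n(C_r^{(p)})$.

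First I would separate the rational and torsion parts by means of the fibre sequence $C_r \to (C_r)_{\Q} \to C_r \otimes^{\mathbb{L}}_{\Z} \Q/\Z$. By Corollary~\ref{corollaryprocdhdescentforrationalisedBMSforallp}, and since rationalisation commutes with total fibres and with pro objects, $\{(C_r)_{\Q}\}_r$ is weakly zero; hence it suffices to prove that $\{C_r \otimes^{\mathbb{L}}_{\Z} \Q/\Z\}_r$ is weakly zero, i.e. (writing $\Q/\Z$ as a filtered colimit of $\Z/N$) that the systems $\{C_r \otimes^{\mathbb{L}}_{\Z} \Z/N\}_r$ are weakly zero in a way that is uniform in $N$. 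For the $\Z/N$-statement itself, note that $\big(\prod_{p} \Z_p(i)^{\emph{BMS}}\big)/p \simeq \F_p(i)^{\emph{BMS}}$, since for $q \neq p$ the factor $\Z_q(i)^{\emph{BMS}}$ is derived $q$-complete and therefore vanishes modulo $p$; combining this with the Chinese remainder theorem and reduction along the short exact sequences $0 \to \Z/p \to \Z/p^k \to \Z/p^{k-1} \to 0$, Lemma~\ref{lemmaprocdhdescentformodsyntomiccohomology} shows that $\{C_r \otimes^{\mathbb{L}}_{\Z} \Z/N\}_r$ is weakly zero for every $N \geq 1$.

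The remaining point, and the place where the product structure and Lemma~\ref{lemmatorsionimpliesboundedtorsion} are used, is to make this uniform in $N$. Fix $n \in \Z$ and an index $r_0$. Since $\{(C_r)_{\Q}\}_r$ is weakly zero, there is $r_1 \geq r_0$ such that the image $T$ of $H^n(C_{r_1}) \to H^n(C_{r_0})$ is a torsion subgroup of $H^n(C_{r_0})$. The transition maps split as products over $p$, so $T = \prod_{p} T^{(p)}$ with $T^{(p)}$ a torsion subgroup of the derived $p$-complete group $H^n(C_{r_0}^{(p)})$; Lemma~\ref{lemmatorsionimpliesboundedtorsion} (applied to this product) then shows that $T$ is bounded torsion, say killed by $N \geq 1$. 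By the previous paragraph $\{H^n(C_r)/N\}_r$ is pro-zero (it is a sub-pro-object of $\{H^n(C_r \otimes^{\mathbb{L}}_{\Z} \Z/N)\}_r$ via the universal coefficient sequence), so there is $r_2 \geq r_1$ with $H^n(C_{r_2})/N \to H^n(C_{r_1})/N$ the zero map, i.e. the image of $H^n(C_{r_2}) \to H^n(C_{r_1})$ lies in $N \cdot H^n(C_{r_1})$. Composing, the image of $H^n(C_{r_2}) \to H^n(C_{r_0})$ lies in $N \cdot T = 0$; hence $\{H^n(C_r)\}_r$ is pro-zero for all $n$, so $\{C_r\}_r$ is weakly zero.

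The hard part is this uniformity step: the per-prime vanishing of Lemma~\ref{lemmaprocdhdescentformodsyntomiccohomology} a priori produces, for each $p$, an index of the pro system depending on $p$, whereas one needs a single index valid for all primes at once. Lemma~\ref{lemmatorsionimpliesboundedtorsion} is precisely the device that converts "the image is pro-torsion" (from the rational statement) into "the image is bounded torsion", which then pins down a single modulus $N$; the technical input that makes it applicable is that the cohomology of the total fibre is genuinely a product over $p$ of derived $p$-complete groups, which rests on syntomic cohomology of noetherian rings being derived $p$-complete and on finite limits preserving derived $p$-completeness.
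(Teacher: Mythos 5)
Your argument is correct and is essentially the paper's own proof: the rational vanishing (Corollary~\ref{corollaryprocdhdescentforrationalisedBMSforallp}) makes the image of a transition map on $\mathrm{H}^n$ torsion, the product-of-derived-$p$-complete structure together with Lemma~\ref{lemmatorsionimpliesboundedtorsion} upgrades this to bounded $N$-torsion, and the mod~$N$ pro-vanishing deduced from Lemma~\ref{lemmaprocdhdescentformodsyntomiccohomology} then kills the composite transition map. The only cosmetic differences are your initial $\Q/\Z$ framing, which you in any case discard in favour of the direct uniformity argument, and that you should state explicitly that each factor $T^{(p)}$ of the image is itself derived $p$-complete (being the image of a map of derived $p$-complete groups), which is the hypothesis Lemma~\ref{lemmatorsionimpliesboundedtorsion} actually needs.
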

	
	\begin{proof}
		Fix an abstract blowup square of noetherian schemes ($\ref{equationabstractblowupsquare}$). Let $\{C_r\}_r$ be the pro object in the derived category $\mathcal{D}(\Z)$ defined as the total fibre of the commutative square obtained by applying the presheaf $\prod_{p \in \mathbb{P}} \Z_p(i)^{\text{BMS}}$ to this abstract blowup square. We want to prove that $\{C_r\}_r$ is weakly zero. By Corollary~\ref{corollaryprocdhdescentforrationalisedBMSforallp}, its rationalisation $\{C_r \otimes_{\Z} \Q\}_r$ is weakly zero.
		
		Let $r_0 \geq 0$ and $n \in \Z$ be integers. Let $r_1 \geq r_0$ be an integer such that the map $$\text{H}^n(C_{r_1}) \otimes_{\Z} \Q \longrightarrow \text{H}^n(C_{r_0}) \otimes_{\Z} \Q$$ is the zero map. We now construct an integer $r_2 \geq r_1$ such that the map $$\text{H}^n(C_{r_2}) \longrightarrow \text{H}^n(C_{r_0})$$ is the zero map. By Lemma~\ref{lemmaprocdhdescentformodsyntomiccohomology}, and for every prime number $p$, the pro abelian group $\{\text{H}^n(C_r/p)\}_r$ is zero, which implies that the pro abelian group $\{\text{H}^n(C_r)/p\}_r$ is zero. By induction, this implies that for every integer $N \geq 1$, the pro abelian group $\{\text{H}^n(C_r)/N\}_r$ is zero. By construction, the cohomology groups $\text{H}^n(C_r)$ ($r \geq 0$) are naturally products, indexed by prime numbers $p$, of derived $p$-complete abelian groups. The kernel and cokernel of a map of derived $p$-complete abelian groups are derived $p$-complete abelian groups. So the image $A_{r_0}$ of the map $\text{H}^n(C_{r_1}) \rightarrow \text{H}^n(C_{r_0})$ is a product, indexed by prime numbers $p$, of derived $p$-complete abelian groups. This abelian group $A_{r_0}$ is also torsion by definition of the integer~$r_1$, so Lemma~\ref{lemmatorsionimpliesboundedtorsion} implies that there exists an integer $N \geq 1$ such that $A_{r_0}$ is $N$-torsion. Let~$r_2 \geq r_1$ be an integer such that the map $\text{H}^n(C_{r_2})/N \rightarrow \text{H}^n(C_{r_1})/N$ is the zero map. Then the map $$\text{H}^n(C_{r_2}) \longrightarrow \text{H}^n(C_{r_0})$$ factors as
		$$\text{H}^n(C_{r_2}) \longrightarrow \text{H}^n(C_{r_2})/N \xlongrightarrow{0} \text{H}^n(C_{r_1})/N \longrightarrow A_{r_0} \subseteq \text{H}^n(C_{r_0}),$$
		and is thus the zero map, which concludes the proof.
	\end{proof}
	
	\begin{corollary}\label{corollaryprocdhdescentforsyntomiccomplexes}
		Let $p$ be a prime number. Then for every integer $i \geq 0$, the presheaf $\Z_p(i)^{\emph{BMS}}$ is a pro cdh sheaf on noetherian schemes.
	\end{corollary}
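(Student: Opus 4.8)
The plan is to deduce Corollary~\ref{corollaryprocdhdescentforsyntomiccomplexes} directly from the all-primes statement of Proposition~\ref{propositionprocdhdescentforprodZp(i)syntomic}, by observing that $\Z_p(i)^{\text{BMS}}$ is a direct factor of the presheaf $\prod_{q \in \mathbb{P}} \Z_q(i)^{\text{BMS}}$: splitting the indexing set as $\mathbb{P} = \{p\} \sqcup (\mathbb{P} \setminus \{p\})$ yields a natural identification $\prod_{q \in \mathbb{P}} \Z_q(i)^{\text{BMS}} \simeq \Z_p(i)^{\text{BMS}} \times \prod_{q \neq p} \Z_q(i)^{\text{BMS}}$ of presheaves of complexes. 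It therefore suffices to check that the class of pro cdh sheaves on noetherian schemes is closed under passing to the factors of a product.

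For the Nisnevich descent condition this is immediate, since a product of maps of complexes is an equivalence if and only if each of its factors is. For the pro cdh excision condition, fix an abstract blowup square $(\ref{equationabstractblowupsquare})$ of noetherian schemes, and for each prime $q$ let $\{C_r^{(q)}\}_r$ be the total fibre pro object in $\mathcal{D}(\Z)$ obtained by applying $\Z_q(i)^{\text{BMS}}$ to this square. The formation of the total fibre of a commutative square is a finite limit, hence commutes with products, so $\{\prod_{q \in \mathbb{P}} C_r^{(q)}\}_r$ is the total fibre pro object attached to $\prod_{q \in \mathbb{P}} \Z_q(i)^{\text{BMS}}$, and its transition maps are computed componentwise. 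By Proposition~\ref{propositionprocdhdescentforprodZp(i)syntomic} this pro object is weakly zero. Since cohomology commutes with products, $\text{H}^n(\prod_q C_r^{(q)}) = \prod_q \text{H}^n(C_r^{(q)})$ for all $n$ and $r$; hence for every index $r_0$ there is an index $r_1 \geq r_0$ such that the map $\prod_q \text{H}^n(C_{r_1}^{(q)}) \to \prod_q \text{H}^n(C_{r_0}^{(q)})$ vanishes, and projecting onto the $q$-th factor shows that each $\{C_r^{(q)}\}_r$ is weakly zero. Taking $q = p$ shows that the square obtained by applying $\Z_p(i)^{\text{BMS}}$ to $(\ref{equationabstractblowupsquare})$ is weakly cartesian, as desired.

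I do not expect any genuine obstacle: all of the real work has already been absorbed into Proposition~\ref{propositionprocdhdescentforprodZp(i)syntomic} — and through it into the rational input of Corollary~\ref{corollaryprocdhdescentforrationalisedBMSforallp}, the mod-$p$ input of Lemma~\ref{lemmaprocdhdescentformodsyntomiccohomology}, and the bounded-torsion bookkeeping of Lemma~\ref{lemmatorsionimpliesboundedtorsion}. The only point worth stating with a little care is that being weakly zero passes to the factors of a (possibly infinite) product of pro objects, which works precisely because cohomology commutes with products and the transition maps remain componentwise; the reverse implication fails in general for infinite products, but is not needed here.
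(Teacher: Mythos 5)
Your proposal is correct and follows exactly the paper's argument, which simply notes that $\Z_p(i)^{\text{BMS}}$ is a direct summand of $\prod_{\ell \in \mathbb{P}} \Z_{\ell}(i)^{\text{BMS}}$ and invokes Proposition~\ref{propositionprocdhdescentforprodZp(i)syntomic}. You merely spell out the (easy, and correctly handled) verification that weak vanishing of a pro object built from a product passes to each factor, which the paper leaves implicit.
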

	
	\begin{proof}
		The presheaf $\Z_p(i)^{\text{BMS}}$ is a direct summand of the presheaf $\prod_{\ell \in \mathbb{P}} \Z_{\ell}(i)^{\text{BMS}}$, so the result is a consequence of Proposition~\ref{propositionprocdhdescentforprodZp(i)syntomic}.
	\end{proof}
	
	\begin{proposition}\label{propositionprocdhdescentforZ(i)TC}
		For every integer $i \geq 0$, the presheaf $\Z(i)^{\emph{TC}}$ is a pro cdh sheaf on noetherian schemes.
	\end{proposition}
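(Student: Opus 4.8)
The plan is to obtain this as a formal consequence of the pro cdh descent results already proved for the three other presheaves appearing in the fracture square for $\Z(i)^{\text{TC}}$. Concretely, I would start from the cartesian square of presheaves underlying \cite[Corollary~$4.31$]{bouis_motivic_2024} (the square whose rationalisation was used in the proof of Corollary~\ref{corollaryprocdhdescentforrationalisedBMSforallp}),
$$\begin{tikzcd}
\Z(i)^{\text{TC}}(-) \arrow{r} \arrow{d} & R\Gamma_{\text{Zar}}\big(-,\widehat{\mathbb{L}\Omega}^{\geq i}_{-_{\Q}/\Q}\big) \arrow{d} \\
\prod_{p \in \mathbb{P}} \Z_p(i)^{\text{BMS}}(-) \arrow{r} & R\Gamma_{\text{Zar}}\big(-,\prod'_{p \in \mathbb{P}} \underline{\widehat{\mathbb{L}\Omega}}^{\geq i}_{-_{\Q_p}/\Q_p}\big)
\end{tikzcd}$$
on qcqs derived schemes, and in particular on noetherian schemes. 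The top-right presheaf is a pro cdh sheaf on noetherian schemes by Proposition~\ref{propositionprocdhdescentforderiveddeRhamchar0}, the bottom-left presheaf by Proposition~\ref{propositionprocdhdescentforprodZp(i)syntomic}, and the bottom-right presheaf by Proposition~\ref{propositionprocdhdescentforrigidanalyticdR}.

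It then suffices to observe that the class of pro cdh sheaves on noetherian schemes is stable under limits, so that $\Z(i)^{\text{TC}}$, being the pullback of these three presheaves, is again a pro cdh sheaf. To see this, I would check the two conditions of Definition~\ref{definitionprocdhsheaf} separately: Nisnevich descent is evidently stable under limits of presheaves; and, given an abstract blowup square, the total fibre of the associated commutative square for the pullback presheaf is the pullback, equivalently an iterated fibre, of the total fibres of the three constituent presheaves. Since all the pro complexes in sight are uniformly bounded above and the category of pro abelian groups is abelian, a fibre sequence of such pro complexes yields a long exact sequence of cohomology pro abelian groups; hence an iterated fibre of weakly zero pro complexes is weakly zero, and the total fibre for $\Z(i)^{\text{TC}}$ is weakly zero, as wanted.

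I do not expect a genuine obstacle here: this proposition is essentially a bookkeeping step assembling the earlier results, and the only point requiring care is the closure of pro cdh sheaves under limits, which is where the uniform boundedness recorded in Definition~\ref{definitionprocdhsheaf} enters. As an alternative route, one could instead use the arithmetic fracture square $\Z(i)^{\text{TC}} \simeq \Q(i)^{\text{TC}} \times_{(\prod_{p \in \mathbb{P}} \Z_p(i)^{\text{BMS}})_{\Q}} \prod_{p \in \mathbb{P}} \Z_p(i)^{\text{BMS}}$ and invoke Corollary~\ref{corollaryprocdhdescentforQ(i)TC}, Corollary~\ref{corollaryprocdhdescentforrationalisedBMSforallp}, and Proposition~\ref{propositionprocdhdescentforprodZp(i)syntomic} for the three other corners.
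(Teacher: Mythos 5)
Your proposal is correct and follows exactly the paper's own argument: it uses the cartesian square from \cite[Corollary~$4.31$]{bouis_motivic_2024} and deduces pro cdh descent for $\Z(i)^{\text{TC}}$ from Propositions~\ref{propositionprocdhdescentforderiveddeRhamchar0}, \ref{propositionprocdhdescentforrigidanalyticdR}, and \ref{propositionprocdhdescentforprodZp(i)syntomic} applied to the other three corners. The only difference is that you spell out the (routine) stability of pro cdh sheaves under pullbacks, which the paper leaves implicit.
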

	
	\begin{proof}
		By \cite[Corollary~$4.31$]{bouis_motivic_2024}, there is a cartesian square of presheaves
		$$\begin{tikzcd}
			\Z(i)^{\text{TC}} \ar[d] \ar[r] & R\Gamma_{\text{Zar}}\big(-,\widehat{\mathbb{L}\Omega}^{\geq i}_{-_{\Q}/\Q}\big) \ar[d] \\
			\prod_{p \in \mathbb{P}} \Z_p(i)^{\text{BMS}} \ar[r] & R\Gamma_{\text{Zar}}(-,\prod'_{p \in \mathbb{P}} \underline{\widehat{\mathbb{L}\Omega}}^{\geq i}_{-_{\Q_p}/\Q_p}\big)
		\end{tikzcd}$$
		on qcqs derived schemes, and in particular on noetherian schemes. The presheaves $$R\Gamma_{\text{Zar}}\big(-,\widehat{\mathbb{L}\Omega}^{\geq i}_{-_{\Q}/\Q}\big), \text{ } R\Gamma_{\text{Zar}}\big(-,\prod_{p \in \mathbb{P}}{}^{'} \underline{\widehat{\mathbb{L}\Omega}}^{\geq i}_{-_{\Q_p}/\Q_p}\big), \text{ and } \prod_{p \in \mathbb{P}} \Z_p(i)^{\text{BMS}}$$ are pro cdh sheaves on noetherian schemes by Propositions~\ref{propositionprocdhdescentforderiveddeRhamchar0}, \ref{propositionprocdhdescentforrigidanalyticdR}, and \ref{propositionprocdhdescentforprodZp(i)syntomic} respectively. So the presheaf $\Z(i)^{\text{TC}}$ is a pro cdh sheaf on noetherian schemes.
	\end{proof}

	The following result was proved on noetherian schemes over a field by Elmanto--Morrow \cite{elmanto_motivic_2023}.
	
	\begin{theorem}[Pro cdh descent]\label{theoremprocdhdescentformotiviccohomology}
		For every integer $i \geq 0$, the motivic complex $\Z(i)^{\emph{mot}}$ is a pro cdh sheaf on noetherian schemes.
	\end{theorem}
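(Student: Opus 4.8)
The plan is to deduce pro cdh descent for $\Z(i)^{\text{mot}}$ formally from pro cdh descent for $\Z(i)^{\text{TC}}$, which is Proposition~\ref{propositionprocdhdescentforZ(i)TC}; this is the integral counterpart of the reduction carried out rationally in Corollary~\ref{corollaryprocdhdescentforQ(i)TC}. The bridge is the description of the motivic complexes recalled in \cite[Remark~$3.21$]{bouis_motivic_2024}: there is a natural cartesian square with corners $\Z(i)^{\text{mot}}$, $\Z(i)^{\text{TC}}$, $\Z(i)^{\text{cdh}}$, and $L_{\text{cdh}}\Z(i)^{\text{TC}}$, so that the fibre of the natural map $\Z(i)^{\text{mot}} \to \Z(i)^{\text{TC}}$ is identified with the fibre of the map of cdh sheaves $\Z(i)^{\text{cdh}} \to L_{\text{cdh}}\Z(i)^{\text{TC}}$.

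The key point to exploit is that any cdh sheaf on noetherian schemes is automatically a pro cdh sheaf. Indeed, for every index $r \geq 1$ one has $(rY)_{\text{red}} = Y_{\text{red}}$, and the inclusion $Y_{\text{red}} \hookrightarrow rY$ fits into a degenerate abstract blowup square with empty upper row; hence a cdh sheaf $F$ is nilpotent-invariant, the pro system $\{F(rY)\}_r$ is pro-constant with value $F(Y)$, and the pro cdh square for $F$ reduces to the ordinary cdh-descent square of the abstract blowup square, which is cartesian. Applying this to the fibre of $\Z(i)^{\text{cdh}} \to L_{\text{cdh}}\Z(i)^{\text{TC}}$, the fibre of $\Z(i)^{\text{mot}} \to \Z(i)^{\text{TC}}$ is a pro cdh sheaf; since $\Z(i)^{\text{mot}}$ and $\Z(i)^{\text{TC}}$ are Nisnevich sheaves, the induced fibre sequence of total fibres of the associated abstract-blowup squares shows that $\Z(i)^{\text{mot}}$ is a pro cdh sheaf as soon as $\Z(i)^{\text{TC}}$ is. Invoking Proposition~\ref{propositionprocdhdescentforZ(i)TC} then finishes the proof.

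I do not expect a genuine obstacle in this last step, which is purely formal: all of the substantive work lies upstream — pro cdh descent for powers of the cotangent complex (Corollary~\ref{corollaryprocdhdescentforcotangentcomplexandvariants}), for the rigid-analytic Hodge-completed de Rham terms (Proposition~\ref{propositionprocdhdescentforrigidanalyticdR}), and, most delicately, for the product $\prod_{p \in \mathbb{P}} \Z_p(i)^{\text{BMS}}$ of $p$-adic syntomic complexes (Proposition~\ref{propositionprocdhdescentforprodZp(i)syntomic}), whose proof passes through the boundedness input of Lemma~\ref{lemmatorsionimpliesboundedtorsion}. The only point deserving care here is to confirm that the cartesian-square description of $\Z(i)^{\text{mot}}$ relating it to $\Z(i)^{\text{TC}}$ and cdh sheaves is available with integral coefficients, i.e.\ that \cite[Remark~$3.21$]{bouis_motivic_2024} applies in the form used above; granting this, the theorem follows formally from what precedes.
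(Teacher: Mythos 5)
Your proposal is correct and follows essentially the same route as the paper: the paper's proof also invokes the cartesian square of \cite[Remark~$3.21$]{bouis_motivic_2024} relating $\Z(i)^{\text{mot}}$, $\Z(i)^{\text{TC}}$, $\Z(i)^{\text{cdh}}$, and $L_{\text{cdh}}\Z(i)^{\text{TC}}$, observes that the two cdh-sheaf corners are automatically pro cdh sheaves, and concludes from Proposition~\ref{propositionprocdhdescentforZ(i)TC}. Your extra care about nilpotent-invariance of cdh sheaves and the identification of the fibres is a correct unwinding of the same formal step.
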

	
	\begin{proof}
		By \cite[Remark~$3.21$]{bouis_motivic_2024}, there is a cartesian square of presheaves
		$$\begin{tikzcd}
			\Z(i)^{\text{mot}} \ar[d] \ar[r] & \Z(i)^{\text{TC}} \ar[d] \\
			\Z(i)^{\text{cdh}} \ar[r] & L_{\text{cdh}} \Z(i)^{\text{TC}}
		\end{tikzcd}$$
		on qcqs schemes, and in particular on noetherian schemes. The presheaf $\Z(i)^{\text{TC}}$ is a pro cdh sheaf on noetherian schemes by Proposition~\ref{propositionprocdhdescentforZ(i)TC}. The presheaves $\Z(i)^{\text{cdh}}$ and $L_{\text{cdh}} \Z(i)^{\text{TC}}$ are cdh sheaves on noetherian schemes by construction, hence pro cdh sheaves on noetherian schemes. So the presheaf $\Z(i)^{\text{mot}}$ is a pro cdh sheaf.
	\end{proof}

	\begin{remark}[Pro cdh descent for algebraic $K$-theory]\label{remarkprocdhdescentforKtheory}
		The arguments to prove Theorem~\ref{theoremprocdhdescentformotiviccohomology} can be adapted to give a new proof of the pro cdh descent for algebraic $K$-theory of Kerz--Strunk--Tamme \cite{kerz_algebraic_2018}. More precisely, by \cite[Theorem~$1.1$]{bouis_motivic_2024}, pro cdh descent for algebraic $K$-theory is equivalent to pro cdh descent for TC. By \cite[Corollary~$4.63$]{bouis_motivic_2024}, the result rationally reduces to the pro cdh descent for HC, which is proved by Morrow (\cite[Theorem~$0.2$]{morrow_pro_2016}). The result mod $p$ is similar to that of Lemma~\ref{lemmaprocdhdescentformodsyntomiccohomology}, where the Nygaard filtration and the relative prismatic cohomology are replaced by the Tate filtration and by relative THH; the pro cdh descent for relative THH then reduces to the pro cdh descent for powers of the cotangent complex by \cite[Section~$5.2$]{antieau_beilinson_2020}. Following \cite[Section~$4.2$]{bouis_motivic_2024}, there is a natural cartesian square
		$$\begin{tikzcd}
			\text{TC}(-) \ar[r] \ar[d] & \text{HC}^-(-_{\Q}/\Q) \ar[d] \\
			\prod_{p \in \mathbb{P}} \text{TC}(-;\Z_p) \ar[r] & \Big(\prod'_{p \in \mathbb{P}} \text{HH}(-;\Q_p)\Big)^{h\text{S}^1}.
		\end{tikzcd}$$
		Using the cdh descent for the presheaves $\text{HP}(-_{\Q}/\Q)$ (\cite{land_k-theory_2019}) and $\big(\prod'_{p \in \mathbb{P}} \text{HH}(-;\Q_p)\big)^{t\text{S}^1}$ (\cite[Corollary~$4.42$]{bouis_motivic_2024}), the pro cdh descent of the two right terms reduces to the pro cdh descent for HC. The integral statement is then similarly a consequence of Lemma~\ref{lemmatorsionimpliesboundedtorsion}.
	\end{remark}
	
	\subsection{Motivic Weibel vanishing}
	
	\vspace{-\parindent}
	\hspace{\parindent}
	
	In this subsection, we prove Theorem~\ref{theoremmotivicWeibelvanishing}, which is a motivic refinement of Weibel's vanishing conjecture on negative $K$-groups (\cite[Theorem~B\,$(i)$]{kerz_algebraic_2018}).
	
	\begin{lemma}\label{lemmamotiviccohomologyofhenselianvaluationringisindegreesatmosti}
		Let $V$ be a henselian valuation ring. Then for every integer $i \geq 0$, the motivic complex $\Z(i)^{\emph{mot}}(V) \in \mathcal{D}(\Z)$ is in degrees at most $i$.
	\end{lemma}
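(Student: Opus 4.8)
The plan is to exploit that a henselian valuation ring is a point of the cdh topos, so that the $L_{\text{cdh}}$-term in the fracture square defining $\Z(i)^{\text{mot}}$ contributes nothing over $\text{Spec}(V)$; this will identify $\Z(i)^{\text{mot}}(V)$ with a variant of motivic cohomology whose boundedness is elementary.

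First I would record that $\text{Spec}(V)$ is a point for the cdh topology, i.e.\ that $(L_{\text{cdh}}G)(V) \simeq G(V)$ for every presheaf $G$. Indeed, $V$ is henselian local, hence a point for the Nisnevich topology; and every finitely generated ideal of a valuation ring is principal, so the centre of any abstract blow-up square over $\text{Spec}(V)$ is of the form $V(f)$ for some $f \in V$, and the valuative criterion of properness provides a section of the map $X' \to \text{Spec}(V)$ in such a square (extend, from the generic point, the tautological section defined over $\text{Spec}(V[1/f])$); hence the associated cdh covering of $\text{Spec}(V)$ is refined by the trivial covering, and $L_{\text{cdh}}$ does nothing on $\text{Spec}(V)$. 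Since $V$ need not be noetherian, one works here with the cdh topology on qcqs schemes, as in the construction of $\Z(i)^{\text{mot}}$ and $\Z(i)^{\text{cdh}}$. Next I would apply this to the cartesian square $\Z(i)^{\text{mot}} \simeq \Z(i)^{\text{TC}} \times_{L_{\text{cdh}}\Z(i)^{\text{TC}}} \Z(i)^{\text{cdh}}$ of \cite[Remark~$3.21$]{bouis_motivic_2024}: evaluated at $V$, the map $\Z(i)^{\text{TC}}(V) \longrightarrow (L_{\text{cdh}}\Z(i)^{\text{TC}})(V)$ is the cdh-sheafification map at a cdh point, hence an equivalence, so $\Z(i)^{\text{mot}}(V) \longrightarrow \Z(i)^{\text{cdh}}(V)$ is also an equivalence. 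It then suffices to bound $\Z(i)^{\text{cdh}}(V)$. But $\Z(i)^{\text{cdh}}$ is, by construction, the cdh-sheafification of the lisse motivic complex $\Z(i)^{\text{lisse}}$, so the same point gives $\Z(i)^{\text{cdh}}(V) \simeq \Z(i)^{\text{lisse}}(V)$; and $\Z(i)^{\text{lisse}}(-)$ is Zariski-locally in degrees at most $i$ (as in the proof of Proposition~\ref{propositionrationalcomparisonlissemotivicforlocalrings}: $\Z(i)^{\text{cla}}(-)$ is, by \cite[Corollary~$4.4$]{geisser_motivic_2004}, and left Kan extension from $\text{Sm}_{\Z}$ preserves this property). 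As $V$ is local, $\Z(i)^{\text{lisse}}(V)$ — and hence $\Z(i)^{\text{mot}}(V)$ — lies in degrees at most $i$. (This is consistent with, and could also be assembled from, Corollary~\ref{corollarylissemotivicmaincomparisontheorem}, which identifies $\tau^{\leq i}\Z(i)^{\text{mot}}(V)$ with $\Z(i)^{\text{lisse}}(V)$.)

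I expect the only genuinely substantive point to be the first step, namely that $\text{Spec}(V)$ carries no nontrivial cdh-covering information. The Nisnevich-point assertion is immediate from henselianness; the abstract-blow-up part reduces, via principality of finitely generated ideals together with the valuative criterion, to the existence of a section of a proper morphism to $\text{Spec}(V)$ that is an isomorphism over a principal open — standard, but worth stating carefully in the non-noetherian qcqs framework in which valuation rings live. Everything after that is a formal manipulation of the fracture square together with the Zariski-local degree bound for the classical motivic complex, both already available.
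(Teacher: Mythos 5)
Your proposal is correct and follows essentially the same route as the paper: the proof there also observes that henselian valuation rings are local for the cdh topology, deduces that $\Z(i)^{\text{mot}}(V) \to \Z(i)^{\text{cdh}}(V) \leftarrow \Z(i)^{\text{lisse}}(V)$ are equivalences, and concludes from the fact that $\Z(i)^{\text{lisse}}$ is Zariski-locally in degrees at most $i$. You merely spell out in more detail the cdh-point property of $\mathrm{Spec}(V)$ and the fracture-square manipulation, which the paper handles by citation.
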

	
	\begin{proof}
		Henselian valuation rings are local rings for the cdh topology, so the natural maps
		$$\Z(i)^{\text{mot}}(V) \longrightarrow \Z(i)^{\text{cdh}}(V) \longleftarrow \Z(i)^{\text{lisse}}(V)$$
		are equivalences in the derived category $\mathcal{D}(\Z)$ (\cite[Remark~$3.24$ and Definition~$3.10$]{bouis_motivic_2024}).
	\end{proof}
	
	\begin{lemma}\label{lemmanilfibreofmotiviccohomologyisindegreesatmosti}
		Let $A$ be a local ring, and $I$ be a nil ideal of $A$. Then for every integer $i \geq 0$, the fibre of the natural map
		$$\Z(i)^{\emph{mot}}(A) \longrightarrow \Z(i)^{\emph{mot}}(A/I)$$
		is in degrees at most $i$.
	\end{lemma}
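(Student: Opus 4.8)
The plan is to prove the two facts that together say the fibre lies in degrees at most $i$, namely that $\Z(i)^{\text{mot}}(A)\to\Z(i)^{\text{mot}}(A/I)$ is an isomorphism on cohomology in degrees $>i$ and surjective on $\text{H}^i$. First I would reduce to $I$ nilpotent: a nil ideal is the filtered union of its finitely generated — hence nilpotent — subideals, $A/I$ is the corresponding filtered colimit, $\Z(i)^{\text{mot}}$ is finitary (\cite[Theorem~C]{bouis_motivic_2024}), and a filtered colimit of complexes in degrees $\leq i$ remains in degrees $\leq i$. The case $i=0$ is immediate (the fibre vanishes after the reductions below, since all the complexes involved are then nilpotent invariant), so assume $i\geq 1$; then $(A,I)$ is a henselian pair and $\text{Spec}(A/I)\hookrightarrow\text{Spec}(A)$ is a nilpotent closed immersion.

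Since the cdh sheaves $\Z(i)^{\text{cdh}}$ and $L_{\text{cdh}}\Z(i)^{\text{TC}}$ are invariant under nilpotent closed immersions, the cdh fracture square (\cite[Remark~$3.21$]{bouis_motivic_2024}) identifies the fibre in question with $\text{fib}\big(\Z(i)^{\text{TC}}(A)\to\Z(i)^{\text{TC}}(A/I)\big)$. By the arithmetic fracture square (\cite[Corollary~$4.31$]{bouis_motivic_2024}), the latter is the pullback of the fibres over $A\to A/I$ of $R\Gamma_{\text{Zar}}(-,\widehat{\mathbb{L}\Omega}^{\geq i}_{-_{\Q}/\Q})$, of $R\Gamma_{\text{Zar}}(-,\prod'_{p\in\mathbb{P}}\underline{\widehat{\mathbb{L}\Omega}}^{\geq i}_{-_{\Q_p}/\Q_p})$, and of $\prod_{p\in\mathbb{P}}\Z_p(i)^{\text{BMS}}$. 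The un-truncated complexes $\widehat{\mathbb{L}\Omega}_{-_{\Q}/\Q}$ and $\prod'_{p}\underline{\widehat{\mathbb{L}\Omega}}_{-_{\Q_p}/\Q_p}$ are cdh sheaves (\cite[Corollary~$4.42$]{bouis_motivic_2024}, together with the cdh descent of $\text{HP}(-_{\Q}/\Q)$ recalled in Remark~\ref{remarkprocdhdescentforKtheory}), hence nilpotent invariant, so the first two fibres identify with $\text{fib}\big(\mathbb{L}\Omega^{<i}_{A_{\Q}/\Q}\to\mathbb{L}\Omega^{<i}_{(A/I)_{\Q}/\Q}\big)[-1]$ and with its $p$-adic counterpart. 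As the relative cotangent complex of a surjection of rings is $1$-connective, $\text{fib}\big(\mathbb{L}^j_{A/\Z}\to\mathbb{L}^j_{(A/I)/\Z}\big)$ is connective for every $j$; running through the finite Hodge filtration of $\mathbb{L}\Omega^{<i}$ and shifting, both of these fibres lie in degrees $\leq i$.

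It remains to bound the syntomic fibre and then assemble. Modulo $p$, the fibre sequence $R\Gamma_{\text{ét}}(-,j_!\mu_p^{\otimes i})\to\F_p(i)^{\text{syn}}\to\F_p(i)^{\text{BMS}}$, the rigidity of étale cohomology, and the rigidity of $\tau^{>i}\F_p(i)^{\text{syn}}$ (proof of Proposition~\ref{propositionmodpmotiviccohoisleftKanextendedinsmalldegrees}) reduce $\text{fib}\big(\F_p(i)^{\text{BMS}}(A)\to\F_p(i)^{\text{BMS}}(A/I)\big)$ to $\text{fib}\big(\tau^{\leq i}\F_p(i)^{\text{syn}}(A)\to\tau^{\leq i}\F_p(i)^{\text{syn}}(A/I)\big)$, which lies in degrees $\leq i+1$; since $\prod_p\Z_p(i)^{\text{BMS}}$ is a product, indexed by primes, of derived $p$-complete complexes, a cohomology-group argument as in Proposition~\ref{propositionprocdhdescentforprodZp(i)syntomic} (using Lemma~\ref{lemmatorsionimpliesboundedtorsion}) upgrades this to the statement that $\text{fib}\big(\prod_p\Z_p(i)^{\text{BMS}}(A)\to\prod_p\Z_p(i)^{\text{BMS}}(A/I)\big)$ lies in degrees $\leq i+1$. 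Hence the pullback, that is, the original fibre $F$, lies in degrees $\leq i+1$. Finally, $A$ and $A/I$ are local and $i\geq 1$, so Corollary~\ref{corollaryHilbert90} gives $\text{H}^{i+1}_{\text{mot}}(A,\Z(i))=0$, and the long exact sequence of $F$ yields $\text{H}^{i+1}(F)\cong\text{coker}\big(\text{H}^i_{\text{mot}}(A,\Z(i))\to\text{H}^i_{\text{mot}}(A/I,\Z(i))\big)$. This cokernel vanishes: by Theorem~\ref{theoremmotiviccohomologyisleftKanextendedonlocalringsinsmalldegrees}, $\text{H}^i_{\text{mot}}(-,\Z(i))$ on local rings is a filtered colimit of $\text{H}^i_{\text{mot}}(P,\Z(i))$ over local essentially smooth $\Z$-algebras $P$, each formally smooth over $\Z$, so every map $P\to A/I$ lifts along the henselian pair $(A,I)$ to a map $P\to A$, whence every term of the colimit for $A/I$ already occurs in the colimit for $A$. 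Therefore $\text{H}^{i+1}(F)=0$ and $F$ lies in degrees $\leq i$.

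The main obstacle is the amplitude bound for the syntomic fibre: in the prismatic description of $\F_p(i)^{\text{BMS}}$ the Nygaard- and conjugate-filtration pieces are individually unbounded, and keeping the complex — and hence its fibre over a nilpotent thickening — of bounded cohomological amplitude genuinely uses the rigidity of $\tau^{>i}\F_p(i)^{\text{syn}}$ rather than just the connectivity of cotangent complexes, after which Lemma~\ref{lemmatorsionimpliesboundedtorsion} and the derived $p$-completeness of $\Z_p(i)^{\text{BMS}}$ pass to $\Z$-coefficients; the short final step with Corollary~\ref{corollaryHilbert90} then closes the last remaining degree.
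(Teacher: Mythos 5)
Your argument is correct, but it assembles the bound differently from the paper. The paper does not pass through the cdh and arithmetic fracture squares at the level of $\Z(i)^{\text{TC}}$: it proves the statement rationally via \cite[Corollary~4.67]{bouis_motivic_2024} (reducing directly to the surjectivity of $\Omega^{i-1}_{(A_{\Q})/\Q}\rightarrow\Omega^{i-1}_{((A/I)_{\Q})/\Q}$) and modulo $p$ via \cite[Corollary~3.26]{bouis_motivic_2024} together with rigidity of $\F_p(i)^{\text{BMS}}$ for the henselian pair $(A,I)$, and then glues the two: the rational statement makes $\text{H}^j(F)$ torsion for $j\geq i+1$, the mod~$p$ statement makes it torsionfree for $j\geq i+2$, hence zero there. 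Your route instead kills the cdh-local corners by nil-invariance, splits $\Z(i)^{\text{TC}}$ by the arithmetic fracture square, bounds each corner separately, and recovers the integral $p$-adic bound at the end; this is a valid alternative, structurally parallel to the proof of pro cdh descent in Section~\ref{subsectionprocdhdescentmotiviccohomology}, but it needs nil-invariance of the two Hodge-completed de Rham corners as an extra input, and the appeal to Lemma~\ref{lemmatorsionimpliesboundedtorsion} and Proposition~\ref{propositionprocdhdescentforprodZp(i)syntomic} is not quite what you use: for honest complexes (rather than pro-objects) the passage from $\F_p$- to $\Z_p$-coefficients in degrees $>i+1$ is simply derived Nakayama applied to the derived $p$-complete cohomology groups, with no torsion-boundedness needed. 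The endgame is essentially identical: both proofs reduce $\text{H}^{i+1}(F)=0$ to the surjectivity of $\text{H}^i_{\text{mot}}(A,\Z(i))\rightarrow\text{H}^i_{\text{mot}}(A/I,\Z(i))$ via Theorem~\ref{theoremmotiviccohomologyisleftKanextendedonlocalringsinsmalldegrees}, which the paper obtains slightly more directly by choosing one surjection $P\rightarrow A$ from a local ind-smooth $\Z$-algebra and observing that $P\rightarrow A/I$ is then also such a surjection — this sidesteps your lifting step, which is fine for a genuinely essentially smooth $P$ but requires some care if the colimit is taken over ind-smooth resolutions.
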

	
	\begin{proof}
		We first prove the result rationally, and modulo $p$ for every prime number $p$. Any finitary cdh sheaf is invariant under nil extensions. By \cite[Corollary~$4.67$]{bouis_motivic_2024}, the result after rationalisation is thus equivalent to the fact that the fibre of the natural map
		$$\mathbb{L}\Omega^{<i}_{(A_{\Q})/\Q}[-1] \longrightarrow \mathbb{L}\Omega^{<i}_{((A/I)_{\Q})/\Q}[-1]$$
		is in degrees at most $i$. Both terms of this map are in degrees at most $i$. In degree $i$, this map is given by the natural map
		$$\Omega^{i-1}_{(A_{\Q})/\Q} \longrightarrow \Omega^{i-1}_{((A/I)_{\Q})/\Q},$$
		which is surjective as the $\Q$-algebra $(A/I)_{\Q}$ is a quotient of the $\Q$-algebra $A_{\Q}$. Let $p$ be a prime number. By \cite[Corollary~$3.26$]{bouis_motivic_2024}, the result modulo $p$ is equivalent to the fact that the fibre of the natural map
		$$\F_p(i)^{\text{BMS}}(A) \longrightarrow \F_p(i)^{\text{BMS}}(A/I)$$
		is in degrees at most $i$. The pair $(A,I)$ is henselian, so this is a consequence of \cite[Theorem~$2.27$]{bouis_motivic_2024}. 
		
		By the previous rational statement, the fibre $F \in \mathcal{D}(\Z)$ of the natural map $$\Z(i)^{\text{mot}}(A) \longrightarrow \Z(i)^{\text{mot}}(A/I)$$ has torsion cohomology groups in degrees at least $i+1$. By the short exact sequence of abelian groups
		$$0 \longrightarrow \text{H}^j(F)/p \longrightarrow \text{H}^j(F/p) \longrightarrow \text{H}^{j+1}(F)[p] \longrightarrow 0$$
		for every prime number $p$ and every integer $j \geq i+1$,
		the previous torsion statement implies that these cohomology groups are also torsionfree, hence zero, in degrees at least $i+2$. It then remains to prove that the abelian group $\text{H}^{i+1}(F)$ is zero. By Corollary~\ref{corollaryHilbert90} and its proof, the abelian group $\text{H}^{i+1}_{\text{mot}}(A,\Z(i))$ is torsionfree, so it suffices to prove that the natural map of abelian groups $\text{H}^i_{\text{mot}}(A,\Z(i)) \rightarrow \text{H}^i_{\text{mot}}(A/I,\Z(i))$ is surjective. Let $P$ be a local ind-smooth $\Z$-algebra with a surjective map $P \rightarrow A$. By Theorem~\ref{theoremmotiviccohomologyisleftKanextendedonlocalringsinsmalldegrees} (see also Lemma~\ref{lemmasymbolmapfactorsthoughimprovedMilnorKgroup} for a related argument), and because $P \rightarrow A/I$ is also a surjection from a local ind-smooth $\Z$-algebra, the composite map of abelian groups
		$$\text{H}^i_{\text{mot}}(P,\Z(i)) \longrightarrow \text{H}^i_{\text{mot}}(A,\Z(i)) \longrightarrow \text{H}^i_{\text{mot}}(A/I,\Z(i))$$
		is surjective, so the right map is surjective, as desired.
	\end{proof}
	
	\begin{theorem}[Motivic Weibel vanishing]\label{theoremmotivicWeibelvanishing}
		Let $d \geq 0$ be an integer, and $X$ be a noetherian scheme of dimension at most $d$. Then for every integer $i \geq 0$, the motivic complex $\Z(i)^{\emph{mot}}(X) \in \mathcal{D}(\Z)$ is in degrees at most $i+d$.  
	\end{theorem}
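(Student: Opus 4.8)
The plan is to mimic the Kerz--Strunk--Tamme / Elmanto--Morrow strategy for Weibel vanishing, using pro cdh descent (Theorem~\ref{theoremprocdhdescentformotiviccohomology}) as the engine and reducing, via Noetherian induction on the dimension, to statements about henselian valuation rings and nilpotent thickenings, which we already control by Lemmas~\ref{lemmamotiviccohomologyofhenselianvaluationringisindegreesatmosti} and~\ref{lemmanilfibreofmotiviccohomologyisindegreesatmosti}. First I would reduce to the affine case $X = \mathrm{Spec}(A)$ with $A$ a $d$-dimensional Noetherian ring: the motivic complex is a Nisnevich (indeed Zariski) sheaf, and the relevant bound can be checked on a finite affine cover by using the Zariski (Mayer--Vietoris / \v{C}ech) spectral sequence, which on a $d$-dimensional scheme contributes at most $d$ to the cohomological amplitude — but one has to be slightly careful, and it is cleaner to argue by induction on $d$ directly with the abstract blowup formalism. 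Since the statement is local, I would also first reduce to the case where $A$ is reduced: by Lemma~\ref{lemmanilfibreofmotiviccohomologyisindegreesatmosti} the fibre of $\Z(i)^{\mathrm{mot}}(A) \to \Z(i)^{\mathrm{mot}}(A_{\mathrm{red}})$ is in degrees $\leq i \leq i+d$, so $\Z(i)^{\mathrm{mot}}(A)$ is in degrees $\leq i+d$ iff $\Z(i)^{\mathrm{mot}}(A_{\mathrm{red}})$ is.

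The induction is on $d = \dim X$. For $d = 0$ a reduced Noetherian ring is a finite product of fields, and $\Z(i)^{\mathrm{mot}}$ of a field is in degrees $\leq i$ (e.g.\ by Lemma~\ref{lemmamotiviccohomologyofhenselianvaluationringisindegreesatmosti} applied to the field itself, a henselian valuation ring, or by the comparison with $\Z(i)^{\mathrm{cdh}}$ and lisse motivic cohomology). For the inductive step, with $X = \mathrm{Spec}(A)$ reduced of dimension $d$, I would choose (by Noetherian normalisation-type arguments, cf.\ Kerz--Strunk--Tamme) a blowup: there is a nowhere-dense closed subscheme $Z \subsetneq X$ and a proper birational morphism $X' \to X$ which is an isomorphism over $X \setminus Z$, fitting into an abstract blowup square with $Z' = X' \times_X Z$, such that $\dim Z \leq d-1$ and such that $X'$ is "better" — in the valuation-ring reduction one ultimately wants to arrive at henselian valuation rings via the Riemann--Zariski / pro cdh argument, so I would follow Elmanto--Morrow's formulation: it suffices to show that after pro cdh sheafification the motivic complex is captured by henselian valuation rings (where it is in degrees $\leq i$ by Lemma~\ref{lemmamotiviccohomologyofhenselianvaluationringisindegreesatmosti}), with the dimension bookkeeping supplied by a limit over models whose centres drop in dimension. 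Concretely: by Theorem~\ref{theoremprocdhdescentformotiviccohomology} the square
$$\begin{tikzcd}
\Z(i)^{\mathrm{mot}}(X) \ar[r] \ar[d] & \Z(i)^{\mathrm{mot}}(X') \ar[d] \\
\{\Z(i)^{\mathrm{mot}}(rZ)\}_r \ar[r] & \{\Z(i)^{\mathrm{mot}}(rZ')\}_r
\end{tikzcd}$$
is weakly cartesian; by the inductive hypothesis (applied to $Z$, $Z'$, and each infinitesimal thickening, which have dimension $\leq d-1$, using Lemma~\ref{lemmanilfibreofmotiviccohomologyisindegreesatmosti} to pass between $rZ$ and $Z$) the two pro-objects on the bottom row are in degrees $\leq i + d - 1$, hence the total fibre of the square is, up to a shift, controlled in degrees $\leq i+d$; so it remains to bound $\Z(i)^{\mathrm{mot}}(X')$ in degrees $\leq i+d$. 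One then iterates/takes a limit over a cofiltered system of such blowups (a Riemann--Zariski-type limit) so that the limit is a filtered colimit of henselian valuation rings, where Lemma~\ref{lemmamotiviccohomologyofhenselianvaluationringisindegreesatmosti} gives the bound $\leq i$, the extra $d$ coming from the (at most $d$-step) dévissage through the $Z$'s; finitariness of $\Z(i)^{\mathrm{mot}}$ (\cite[Theorem~C]{bouis_motivic_2024}) allows the passage to the limit.

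The main obstacle, as in all proofs of Weibel vanishing, is the dimension bookkeeping in the Riemann--Zariski limit: one must organise the cofiltered system of blowups so that (a) at each stage the "new" contributions live on closed subschemes of strictly smaller dimension, so that the inductive hypothesis applies with the correct bound, and (b) the colimit of coordinate rings along the system is a henselian valuation ring (or a filtered colimit of such), so that Lemma~\ref{lemmamotiviccohomologyofhenselianvaluationringisindegreesatmosti} applies and one has not lost the $d$ accumulated shifts. This is exactly the content of the Kerz--Strunk--Tamme argument as streamlined by Elmanto--Morrow, and I would reproduce that bookkeeping essentially verbatim, the only new inputs being pro cdh descent for $\Z(i)^{\mathrm{mot}}$ (Theorem~\ref{theoremprocdhdescentformotiviccohomology}) in place of its $K$-theoretic analogue, and the two lemmas above; in particular nothing characteristic-specific is needed here beyond those, since the hard mixed-characteristic work has already been absorbed into pro cdh descent.
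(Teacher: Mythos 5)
Your ingredients are the right ones (pro cdh descent, Theorem~\ref{theoremprocdhdescentformotiviccohomology}; the degree-$\leq i$ bound on henselian valuation rings, Lemma~\ref{lemmamotiviccohomologyofhenselianvaluationringisindegreesatmosti}; the bound on nil fibres, Lemma~\ref{lemmanilfibreofmotiviccohomologyisindegreesatmosti}; finitariness), and the reductions to the reduced case, the base case $d=0$, and the use of the weakly cartesian square to control $\mathrm{fib}(\Z(i)^{\mathrm{mot}}(X) \to \Z(i)^{\mathrm{mot}}(X'))$ are all fine. But the paper does not run the Kerz--Strunk--Tamme induction on $\Z(i)^{\mathrm{mot}}$ itself: it applies the packaged induction of \cite[Proposition~$8.10$]{elmanto_motivic_2023} to the shifted fibre $\mathrm{fib}\big(\Z(i)^{\mathrm{mot}} \to \Z(i)^{\mathrm{cdh}}\big)[i]$, which \emph{vanishes} on henselian valuation rings, and then separately quotes the known bound that $\Z(i)^{\mathrm{cdh}}(X)$ is in degrees at most $i+d$ (\cite[Theorem~$7.12$]{bachmann_A^1-invariant_2024}). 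This difference is not cosmetic; it is located exactly at the step you flag as "the main obstacle".

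The gap in your version is the terminal step of the induction. The inverse limit over all modifications $X' \to X$ is the Riemann--Zariski space of $X$, which is \emph{not} a filtered colimit of henselian valuation rings: only its local rings are (henselizations of) valuation rings. Since $\Z(i)^{\mathrm{mot}}$ is merely in degrees $\leq i$ on henselian valuation rings rather than zero, the stalkwise bound does not directly bound $\mathrm{colim}_{X'}\,\Z(i)^{\mathrm{mot}}(X')$; you additionally need that this spectral space has (Nisnevich/cdh) cohomological dimension $\leq d$, an input which does not follow from the four properties you list and which your bookkeeping never supplies --- you attribute the entire $+d$ to the d\'evissage through the centres $Z$, whereas a second, independent $+d$ enters through the cohomology of the limit space. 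The paper's device of subtracting off $\Z(i)^{\mathrm{cdh}}$ is precisely what makes this step degenerate: the fibre is stalkwise zero on the Riemann--Zariski space, so no cohomological-dimension input is needed for it, and the $\leq i+d$ bound for the cdh-local part (which is where the cohomological-dimension argument really lives) is outsourced to a known theorem. Your route can be repaired by adding the cohomological dimension bound for the Riemann--Zariski (equivalently, cdh) topos as an explicit ingredient, but as written the induction does not close.
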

	
	\begin{proof}
		The presheaf $\Z(i)^{\text{mot}} : \text{Sch}^{\text{qcqs,op}} \rightarrow \mathcal{D}(\Z)$ satisfies the following properties:
		\begin{enumerate}
			\item it is finitary (\cite[Corollary~$4.59$]{bouis_motivic_2024});
			\item it satisfies pro cdh descent on noetherian schemes (Theorem~\ref{theoremprocdhdescentformotiviccohomology});
			\item for every henselian valuation ring $V$, the complex $\Z(i)^{\text{mot}}(V)$ is in degrees at most~$i$ (Lemma~\ref{lemmamotiviccohomologyofhenselianvaluationringisindegreesatmosti});
			\item for every noetherian local ring $A$ and every nilpotent ideal $I$ of $A$, the fibre of the natural map
			$\Z(i)^{\text{mot}}(A) \rightarrow \Z(i)^{\text{mot}}(A/I)$
			is in degrees at most $i$ (Lemma~\ref{lemmanilfibreofmotiviccohomologyisindegreesatmosti}). 
		\end{enumerate}
		By \cite[Definition~$3.12$ and Lemma~$4.57$]{bouis_motivic_2024}, the presheaf $\Z(i)^{\text{cdh}} : \text{Sch}^{\text{qcqs,op}} \rightarrow \mathcal{D}(\Z)$ is a finitary cdh sheaf which is in degrees at most $i$ on henselian valuation rings, hence it also satisfies the previous properties. 
		
		By \cite[Proposition~$8.10$]{elmanto_motivic_2023} applied to the presheaf $\text{fib}\big(\Z(i)^{\text{mot}} \rightarrow \Z(i)^{\text{cdh}}\big)[i]$, this implies that for every noetherian scheme $X$ of dimension at most $d$, the complex $$\text{fib}\big(\Z(i)^{\text{mot}}(X) \longrightarrow \Z(i)^{\text{cdh}}(X)\big)$$ is in degrees at most $i+d$. The complex $\Z(i)^{\text{cdh}}(X)$ is also in degrees at most $i+d$ (\cite[Theorem~$7.12$]{bachmann_A^1-invariant_2024}, see also \cite[Theorem~$3.13\,(1)$]{bouis_motivic_2024}), so the complex $\Z(i)^{\text{mot}}(X)$ is in degrees at most $i+d$.
	\end{proof}
	
	\begin{remark}[Relation to Weibel's $K$-theoretic vanishing conjecture]\label{remarkrelationWeibelKtheoreticconjecture}
		Let $X$ be a noetherian scheme of dimension at most $d$. Theorem~\ref{theoremmotivicWeibelvanishing} states that the Atiyah--Hirzebruch spectral sequence
		$$E_2^{i,j} = \text{H}^{i-j}_{\text{mot}}(X,\Z(-j)) \Longrightarrow \text{K}_{-i-j}(X)$$
		is supported in the left half plane $x \leq d$: see the following representation of the $E_2$ page, where $\text{H}^j(i)$ denotes the motivic cohomology group $\text{H}^j_{\text{mot}}(X,\Z(i))$.
		\medskip
		
		\begin{equation*}
			\begin{tikzcd}[sep=tiny]
				\cdots & 0 & 0 & 0 & 0 & \cdots & 0 & 0 & 0 & 0 \\
				\cdots & 0 & 0 & \text{H}^0(0) & \text{H}^1(0) & \cdots & \text{H}^{d-2}(0) \ar[rrd] & \text{H}^{d-1}(0) & \text{H}^d(0) & 0 \\
				\cdots & 0 & \text{H}^0(1) \ar[rrd] & \text{H}^1(1) & \text{H}^2(1) & \cdots & \text{H}^{d-1}(1) \ar[rrd] & \text{H}^d(1) & \text{H}^{d+1}(1) & 0 \\
				\cdots & \text{H}^0(2) & \text{H}^1(2) & \text{H}^2(2) & \text{H}^3(2) & \cdots & \text{H}^d(2) & \text{H}^{d+1}(2) & \text{H}^{d+2}(2) & 0 \\
				& \vdots & \vdots & \vdots & \vdots & \vdots & \vdots & \vdots & \vdots & \vdots
			\end{tikzcd}
		\end{equation*}
		\medskip
		
		In particular, the negative $K$-groups $\text{K}_{-i-j}(X)$ vanish for $-i-j<-d$ (this is Weibel's vanishing conjecture on algebraic $K$-theory), and there is a natural edge map isomorphism
		$$\text{K}_{-d}(X) \cong \text{H}^d_{\text{mot}}(X,\Z(0))$$
		of abelian groups. Using the description of weight zero motivic cohomology (\cite[Example~$4.68$]{bouis_motivic_2024}), the latter result recovers the known description of $\text{K}_{-d}(X)$ (\cite[Corollary~D]{kerz_algebraic_2018}). Note that Theorem~\ref{theoremmotivicWeibelvanishing} is however not a new proof of these results of Kerz--Strunk--Tamme, as our Atiyah--Hirzebruch spectral sequence relating motivic cohomology and algebraic $K$-theory relies on \cite[Theorem~$1.1$]{bouis_motivic_2024}, which itself relies on the results in \cite{kerz_algebraic_2018}.  
	\end{remark}
	
	\begin{remark}\label{remarkdescriptionofK_-d}
		Let $X$ be a noetherian scheme of dimension at most $d$. Then for every integer~$i \geq 0$, the proof of Theorem~\ref{theoremmotivicWeibelvanishing} also implies that the natural map $$\Z(i)^{\text{mot}}(X) \longrightarrow \Z(i)^{\text{cdh}}(X)$$ is surjective on $\text{H}^{i+d}$. For $i=0$, this map is even an isomorphism on $\text{H}^d$ (actually on all cohomology groups, by \cite[Example~$4.68$]{bouis_motivic_2024}), thus recovering Weibel's conjecture that the natural map $\text{K}_{-d}(X) \rightarrow \text{KH}_{-d}(X)$ is an isomorphism \cite{weibel_K-theory_1980,kerz_algebraic_2018}.
	\end{remark}
	
	The following result is a description of the group $\text{K}_{-d+1}$, similar to the description of the group $\text{K}_{-d}$ predicted by Weibel (Remark~\ref{remarkrelationWeibelKtheoreticconjecture}).
	
	\begin{corollary}\label{corollaryweibelupgraded}
		Let $d \geq 0$ be an integer, and $X$ be a noetherian scheme of dimension at most $d$. Then there is a natural exact sequence
		$$\emph{H}^{d-2}_{\emph{cdh}}(X,\Z) \xlongrightarrow{\delta} \emph{H}^{d+1}_{\emph{mot}}(X,\Z(1)) \longrightarrow \emph{K}_{-d+1}(X) \longrightarrow \emph{H}^{d-1}_{\emph{cdh}}(X,\Z) \longrightarrow 0$$
		of abelian groups, where $\delta$ is the differential map coming from the $E_2$-page of the Atiyah--Hirzebruch spectral sequence (\cite[Corollary~$4.55$]{bouis_motivic_2024}). Moreover, for every integer $m \geq 2$, if $m$ is invertible in~$X$, then the image of the map $(m-1)\delta$ is $m$-power torsion.
	\end{corollary}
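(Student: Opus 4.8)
The plan is to extract the sequence directly from the Atiyah--Hirzebruch spectral sequence
$$E_2^{p,q} = \text{H}^{p-q}_{\text{mot}}(X,\Z(-q)) \Longrightarrow \text{K}_{-p-q}(X)$$
of \cite[Corollary~$4.55$]{bouis_motivic_2024}, which is concentrated in bidegrees with $q \leq 0$ and converges on the finite-dimensional noetherian scheme $X$. First I would locate the terms contributing to $\text{K}_{-d+1}(X)$, {\it i.e.}, those on the antidiagonal $p+q=d-1$. Motivic Weibel vanishing (Theorem~\ref{theoremmotivicWeibelvanishing}) forces $E_2^{p,q}=0$ as soon as $p-q > -q+d$, {\it i.e.}, $p > d$, and motivic cohomology vanishes in negative weights; so on this antidiagonal only $E_2^{d-1,0} = \text{H}^{d-1}_{\text{mot}}(X,\Z(0))$ and $E_2^{d,-1} = \text{H}^{d+1}_{\text{mot}}(X,\Z(1))$ can be nonzero, and $\text{H}^{d-1}_{\text{mot}}(X,\Z(0)) \cong \text{H}^{d-1}_{\text{cdh}}(X,\Z)$ by \cite[Example~$4.68$]{bouis_motivic_2024}.

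Next I would chase the differentials through these two spots. Every differential $d_r$ ($r \geq 2$) leaving $E_r^{d-1,0}$ or $E_r^{d,-1}$ lands in a bidegree with first coordinate $> d$, hence in a group that vanishes by Theorem~\ref{theoremmotivicWeibelvanishing}; and every differential entering these two spots vanishes because its source lies in negative weight, with the single exception of $d_2 \colon E_2^{d-2,0} = \text{H}^{d-2}_{\text{mot}}(X,\Z(0)) \cong \text{H}^{d-2}_{\text{cdh}}(X,\Z) \longrightarrow E_2^{d,-1} = \text{H}^{d+1}_{\text{mot}}(X,\Z(1))$, which is the map $\delta$. Therefore $E_\infty^{d-1,0} = \text{H}^{d-1}_{\text{cdh}}(X,\Z)$ and $E_\infty^{d,-1} = \text{coker}(\delta)$, and these are the only nonzero graded pieces of a two-step filtration on $\text{K}_{-d+1}(X)$ in which the weight-one piece $\text{coker}(\delta)$ is the subobject and the weight-zero piece $\text{H}^{d-1}_{\text{cdh}}(X,\Z)$ is the quotient. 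Unwinding this short exact sequence gives the asserted four-term exact sequence.

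For the final assertion I would use the Adams operations $\psi^m$. Since $m$ is invertible on $X$, these act on $\text{K}(X)$ compatibly with the motivic filtration of \cite{bouis_motivic_2024}, acting on the $i$-th graded piece $\Z(i)^{\text{mot}}[2i]$ by $m^i$ up to operators annihilated by a power of $m$, and hence act on the spectral sequence commuting with its differentials. As $\delta = d_2$ runs from the weight-zero line to the weight-one line, commuting $\psi^m$ past $\delta$ yields, modulo $m$-power torsion, the identity $m\,\delta = \delta$; so the image of $(m-1)\delta$ is annihilated by a power of $m$. (Rationally this is immediate from the rational Adams decomposition \cite[Corollary~$4.60$]{bouis_motivic_2024}, which degenerates the spectral sequence after $\otimes\,\Q$ and shows $\delta$ is torsion.) The main obstacle is precisely this last point: verifying that $\psi^m$ preserves the motivic filtration on $\text{K}$-theory in the present non-$\mathbb{A}^1$-invariant setting and acts on $\text{gr}^i$ by $m^i$ up to controlled $m$-power-torsion error, so that the eigenvalue-difference argument can be run already on the $E_2$-page; everything else is formal once Theorem~\ref{theoremmotivicWeibelvanishing} is in hand.
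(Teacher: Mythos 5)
Your proposal is correct and follows essentially the same route as the paper: extract the four-term sequence from the Atiyah--Hirzebruch spectral sequence using motivic Weibel vanishing, the vanishing in negative weights, and the identification of weight zero with cdh cohomology, then deduce the torsion statement from the compatibility of $\delta$ with the Adams operation $\psi^m$. The one point you flag as the main obstacle --- that $\psi^m$ is compatible with the motivic filtration and acts by $m^i$ on the weight-$i$ graded piece after inverting $m$ --- is exactly what the paper imports from \cite[Construction~$4.9$ and Corollary~$4.10$]{bouis_motivic_2024}, so no new verification is needed there.
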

	
	\begin{proof}
		The motivic complex $\Z(i) ^{\text{mot}}(X)$ is zero for $i<0$ (\cite[Corollary~$4.47$]{bouis_motivic_2024}), and is naturally identified with the complex $R\Gamma_{\text{cdh}}(X,\Z)$ for $i=0$ (\cite[Example~$4.68$]{bouis_motivic_2024}). The first statement is then a consequence of the Atiyah--Hirzebruch spectral sequence (\cite[Corollary~$4.55$]{bouis_motivic_2024}) and of motivic Weibel's vanishing (Theorem~\ref{theoremmotivicWeibelvanishing}). The second statement is a consequence of the compatibility of the map $\delta$ with the Adams operation $\psi^m$ (\cite[Construction~$4.9$]{bouis_motivic_2024}). More precisely, \cite[Corollary~$4.10$]{bouis_motivic_2024} implies that the induced map $$\delta : \text{H}^{d-2}_{\text{cdh}}(X,\Z)[\tfrac{1}{m}] \longrightarrow \text{H}^{d+1}_{\text{mot}}(X,\Z(1))[\tfrac{1}{m}]$$
		satisfies the equation $\delta = m\delta$, {\it i.e.}, $(m-1)\delta=0$, which implies the desired result.
	\end{proof}

	\subsection{Comparison to pro cdh motivic cohomology}
	
	\vspace{-\parindent}
	\hspace{\parindent}
	
	In this subsection, we compare the motivic complexes $\Z(i)^{\text{mot}}$ to Kelly--Saito's pro cdh motivic complexes $\Z(i)^{\text{procdh}}$ (Theorem~\ref{theoremcomparisonprocdhmotivic}). In equicharacteristic, this is \cite[Corollary~$1.11$]{kelly_procdh_2024} (see also \cite[Theorem~$1.15$]{elmanto_motivic_2023}). Our proof is structurally the same, although finitariness and pro cdh descent in mixed characteristic rely on the main results of \cite[Section~$4$]{bouis_motivic_2024}, and our proof of the comparison to lisse motivic cohomology is different in mixed characteristic (see comment before Corollary~\ref{corollarylissemotivicmaincomparisontheorem}).
	
	\begin{lemma}\label{lemmanilextensionhenselianvaluationringlisse=mot}
		Let $R$ be a nil extension of a henselian valuation ring, {\it i.e.}, a commutative ring $R$ whose quotient $R/I$ by its ideal of nilpotent elements $I$ is a henselian valuation ring. Then for every integer $i \geq 0$, the lisse-motivic comparison map
		$$\Z(i)^{\emph{lisse}}(R) \longrightarrow \Z(i)^{\emph{mot}}(R)$$
		is an equivalence in the derived category $\mathcal{D}(\Z)$.
	\end{lemma}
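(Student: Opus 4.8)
The plan is to combine the comparison to lisse motivic cohomology in degrees at most $i$ (Corollary~\ref{corollarylissemotivicmaincomparisontheorem}) with the observation that, for such a ring $R$, the motivic complex $\Z(i)^{\text{mot}}(R)$ is already concentrated in degrees at most $i$, so that the truncation appearing in that corollary does nothing.

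First I would record that $R$ is a local ring: writing $I$ for the ideal of nilpotent elements of $R$, the hypothesis is that $R/I$ is a henselian valuation ring, hence local; since $\text{Spec}(R)$ and $\text{Spec}(R/I)$ have the same underlying topological space, $R$ is local as well. This is exactly what is needed to apply Corollary~\ref{corollarylissemotivicmaincomparisontheorem}, which provides a natural equivalence $\Z(i)^{\text{lisse}}(R) \xlongrightarrow{\sim} \tau^{\leq i}\Z(i)^{\text{mot}}(R)$; in particular, the lisse-motivic comparison map is an equivalence as soon as $\Z(i)^{\text{mot}}(R)$ is in degrees at most $i$.

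To establish the latter, I would use the fibre sequence
$$F \longrightarrow \Z(i)^{\text{mot}}(R) \longrightarrow \Z(i)^{\text{mot}}(R/I)$$
in $\mathcal{D}(\Z)$. The ideal $I$ is a nil ideal, so Lemma~\ref{lemmanilfibreofmotiviccohomologyisindegreesatmosti} shows that $F$ is in degrees at most $i$, while Lemma~\ref{lemmamotiviccohomologyofhenselianvaluationringisindegreesatmosti} shows that $\Z(i)^{\text{mot}}(R/I)$ is in degrees at most $i$. A glance at the long exact cohomology sequence of the fibre sequence then forces $\Z(i)^{\text{mot}}(R)$ to be in degrees at most $i$, which completes the argument.

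I do not expect any genuine obstacle here: the statement is a formal consequence of the three cited results. The only points worth verifying explicitly are that $R$ is local (needed for the lisse comparison to be available) and that the nilradical qualifies as a nil ideal in the sense of Lemma~\ref{lemmanilfibreofmotiviccohomologyisindegreesatmosti} — which it does, since every element of the nilradical is nilpotent, so that $(R,I)$ is a henselian pair and the lemma applies verbatim.
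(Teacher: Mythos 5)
Your proposal is correct and follows exactly the paper's own argument: reduce via Corollary~\ref{corollarylissemotivicmaincomparisontheorem} to showing $\Z(i)^{\text{mot}}(R)$ lies in degrees at most $i$, then conclude from the fibre sequence over $\Z(i)^{\text{mot}}(R/I)$ together with Lemmas~\ref{lemmamotiviccohomologyofhenselianvaluationringisindegreesatmosti} and~\ref{lemmanilfibreofmotiviccohomologyisindegreesatmosti}. Your explicit checks that $R$ is local and that the nilradical is a nil ideal are correct and are left implicit in the paper.
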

	
	\begin{proof}
		By Corollary~\ref{corollarylissemotivicmaincomparisontheorem}, it suffices to prove that the complex $\Z(i)^{\text{mot}}(R) \in \mathcal{D}(\Z)$ is in degrees at most~$i$. Let $I$ be the ideal of nilpotent elements of the commutative ring $R$. Using the natural fibre sequence
		$$\text{fib}\big(\Z(i)^{\text{mot}}(R) \longrightarrow \Z(i)^{\text{mot}}(R/I)\big) \longrightarrow \Z(i)^{\text{mot}}(R) \longrightarrow \Z(i)^{\text{mot}}(R/I)$$
		in the derived category $\mathcal{D}(\Z)$, the result is then a consequence of Lemmas~\ref{lemmamotiviccohomologyofhenselianvaluationringisindegreesatmosti} and~\ref{lemmanilfibreofmotiviccohomologyisindegreesatmosti}.
	\end{proof}
	
	\begin{theorem}[Comparison to pro cdh motivic cohomology]\label{theoremcomparisonprocdhmotivic}
		Let $X$ be a noetherian scheme. Then for every integer $i \geq 0$, the lisse-motivic comparison map induces a natural equivalence
		$$\Z(i)^{\emph{procdh}}(X) \xlongrightarrow{\sim} \Z(i)^{\emph{mot}}(X)$$
		in the derived category $\mathcal{D}(\Z)$.
	\end{theorem}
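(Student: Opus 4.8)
The plan is to identify $\Z(i)^{\text{procdh}}$ as the pro cdh sheafification of the lisse motivic complex $\Z(i)^{\text{lisse}}$ and then check that the lisse-motivic comparison map exhibits $\Z(i)^{\text{mot}}$ as precisely this sheafification. First I would recall that $\Z(i)^{\text{procdh}}$ is by definition the pro cdh sheafification of the left Kan extension from smooth $\Z$-algebras of $\Z(i)^{\text{cla}}$, i.e., the pro cdh sheafification of $\Z(i)^{\text{lisse}}$ (on animated commutative rings, then restricted to noetherian schemes via Nisnevich descent). By Theorem~\ref{theoremprocdhdescentformotiviccohomology}, the presheaf $\Z(i)^{\text{mot}}$ is already a pro cdh sheaf on noetherian schemes, so the lisse-motivic comparison map $\Z(i)^{\text{lisse}} \to \Z(i)^{\text{mot}}$ factors canonically through the pro cdh sheafification, yielding a natural map $\Z(i)^{\text{procdh}} \to \Z(i)^{\text{mot}}$; the content is to show it is an equivalence.

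For this I would invoke the universal property of pro cdh sheafification, in the form established by Kelly--Saito: a map from the pro cdh sheafification of $F$ to a pro cdh sheaf $G$ is an equivalence as soon as it is an equivalence on the points of the pro cdh topos. Since $X$ has a noetherian topological space and (being of interest here) finite valuative dimension, Kelly--Saito's hypercompleteness and enough-points statements (quoted in the paragraph before Definition~\ref{definitionprocdhsheaf}) reduce the claim to checking the comparison on the relevant local rings for the pro cdh topology. The key input is that these points are (henselizations of) rings whose reduction is a henselian valuation ring — more precisely, one reduces to checking that $\Z(i)^{\text{lisse}}(R) \to \Z(i)^{\text{mot}}(R)$ is an equivalence for $R$ a nil extension of a henselian valuation ring, which is exactly Lemma~\ref{lemmanilextensionhenselianvaluationringlisse=mot}. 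Combined with the finitariness of $\Z(i)^{\text{mot}}$ (\cite[Corollary~$4.59$]{bouis_motivic_2024}) and of $\Z(i)^{\text{lisse}}$, which lets one pass to filtered colimits of such rings, this shows the map of pro cdh sheaves is a stalkwise equivalence, hence an equivalence.

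The main obstacle is matching up the abstract pro cdh sheafification with the concrete construction: one must verify that the sheafification of $\Z(i)^{\text{lisse}}$ is computed correctly at the level of stalks by nil extensions of henselian valuation rings, and that Lemma~\ref{lemmanilextensionhenselianvaluationringlisse=mot} covers exactly the rings that arise as pro cdh points. I would handle this by following the structure of Kelly--Saito's argument \cite[Corollary~$1.11$]{kelly_procdh_2024} and of \cite[Theorem~$1.15$]{elmanto_motivic_2023}: the pro cdh sheafification of a finitary Nisnevich sheaf is controlled by its values on valuation rings and their infinitesimal thickenings, so Lemma~\ref{lemmanilextensionhenselianvaluationringlisse=mot} is precisely the nontrivial computation needed, and everything else is formal. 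In particular, nothing beyond the finitariness, pro cdh descent (Theorem~\ref{theoremprocdhdescentformotiviccohomology}), the comparison to lisse motivic cohomology (Corollary~\ref{corollarylissemotivicmaincomparisontheorem}), and Lemma~\ref{lemmanilextensionhenselianvaluationringlisse=mot} enters, and the mixed-characteristic case goes through exactly as the equicharacteristic one.
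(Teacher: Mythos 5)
Your proposal is correct and follows essentially the same route as the paper: the paper's proof verifies exactly the three inputs you identify — finitariness of $\Z(i)^{\text{mot}}$, pro cdh descent (Theorem~\ref{theoremprocdhdescentformotiviccohomology}), and the equivalence $\Z(i)^{\text{lisse}}(R) \to \Z(i)^{\text{mot}}(R)$ on pro cdh local rings, i.e.\ nil extensions of henselian valuation rings (Lemma~\ref{lemmanilextensionhenselianvaluationringlisse=mot} together with \cite[Proposition~$1.7$]{kelly_procdh_2024}) — and then cites \cite[Theorem~$9.7$]{kelly_procdh_2024} to conclude, which is precisely the ``check on points'' mechanism you spell out by hand.
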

	
	\begin{proof}
		The presheaf $\Z(i)^{\text{mot}} : \text{Sch}^{\text{qcqs,op}} \rightarrow \mathcal{D}(\Z)$ satisfies the following properties:
		\begin{enumerate}
			\item it is finitary (\cite[Corollary~$4.59$]{bouis_motivic_2024});
			\item it satisfies pro cdh descent on noetherian schemes (Theorem~\ref{theoremprocdhdescentformotiviccohomology});
			\item for every pro cdh local ring $R$, the lisse-motivic comparison map
			$$\Z(i)^{\text{lisse}}(R) \longrightarrow \Z(i)^{\text{mot}}(R)$$
			is an equivalence in the derived category $\mathcal{D}(\Z)$ (Lemma~\ref{lemmanilextensionhenselianvaluationringlisse=mot} and \cite[Proposition~$1.7$]{kelly_procdh_2024}).
		\end{enumerate}
		By \cite[Theorem~$9.7$]{kelly_procdh_2024},\footnote{This theorem is stated for schemes over a field, but the proof works over any noetherian commutative ring.} this implies that for every noetherian scheme $X$, the lisse-motivic comparison map induces a natural equivalence
		$$\Z(i)^{\text{procdh}}(X) \xlongrightarrow{\sim} \Z(i)^{\text{mot}}(X)$$
		in the derived category $\mathcal{D}(\Z)$.
	\end{proof}

	\section{The projective bundle formula}\label{sectionpbf}
	
	\vspace{-\parindent}
	\hspace{\parindent}
	
	In this section, we prove that the motivic complexes $\Z(i)^{\text{mot}}$ satisfy the projective bundle formula (Theorem~\ref{theoremprojectivebundleformula}) and regular blowup excision (Theorem~\ref{theoremregularblowupformula}). This implies in particular that the presheaves $\Z(i)^{\text{mot}}$ fit within the recent theory of non-$\mathbb{A}^1$-invariant motives of Annala--Iwasa \cite{annala_motivic_2023} and Annala--Hoyois--Iwasa \cite{annala_algebraic_2025,annala_atiyah_2024}.
	
	\subsection{First Chern classes}\label{subsectionfirstChernclasses}
	
	\vspace{-\parindent}
	\hspace{\parindent}
	
	In this subsection, we construct the motivic first Chern class (Definition~\ref{definitionmotivicfirstChernclass}) in order to formulate the projective bundle formula (Theorem~\ref{theoremprojectivebundleformula}).
	
	\begin{definition}[Motivic first Chern class]\label{definitionmotivicfirstChernclass}
		Let $X$ be a qcqs derived scheme. The {\it motivic first Chern class} is the natural map
		$$c_1^{\text{mot}} : R\Gamma_{\text{Nis}}(X,\mathbb{G}_m)[-1] \longrightarrow \Z(1)^{\text{mot}}(X),$$
		in the derived category $\mathcal{D}(\Z)$, defined as the Nisnevich sheafification of the natural map of presheaves 
		$$\big(\tau^{\leq 1} R\Gamma_{\text{Zar}}(-,\mathbb{G}_m)\big)[-1] \longrightarrow \Z(1)^{\text{mot}}(-)$$
		induced by Definition~\ref{definitionlissemotiviccomparisonmap} and \cite[Example~$3.9$]{bouis_motivic_2024}. We also denote by
		$$c_1^{\text{mot}} : \text{Pic}(X) \longrightarrow \text{H}^2_{\text{mot}}(X,\Z(1))$$
		the map induced on $\text{H}^2$ (Example~\ref{exampleweightonemotiviccohomology}).
	\end{definition}
	
	\begin{remark}\label{remarkunicityfirstChernclassfromsmoothcase}
		The motivic first Chern class of Definition~\ref{definitionmotivicfirstChernclass} is uniquely determined by its naturality, and the fact that it is given by the map of \cite[Definition~$3.23$]{bouis_motivic_2024} on smooth $\Z$-schemes.
	\end{remark}
	
	For every qcqs scheme $X$, the line bundle $\mathcal{O}(1) \in \text{Pic}(\mathbb{P}^1_X)$, via the multiplicative structure of the motivic complexes $\Z(i)^{\text{mot}}$, induces, for every integer $i \in \Z$, a natural map
	
	$$c_1^{\text{mot}}(\mathcal{O}(1)) : \Z(i-1)^{\text{mot}}(\mathbb{P}^1_X)[-2] \longrightarrow \Z(i)^{\text{mot}}(\mathbb{P}^1_X)$$
	in the derived category $\mathcal{D}(\Z)$. If $\pi : \mathbb{P}^1_X \rightarrow X$ is the canonical projection map, this in turn induces a natural map
	\begin{equation}\label{equationmapforP^1bundleformule}
		\pi^\ast \oplus c_1^{\text{mot}}(\mathcal{O}(1))\pi^\ast : \Z(i)^{\text{mot}}(X) \oplus \Z(i-1)^{\text{mot}}(X)[-2] \longrightarrow \Z(i)^{\text{mot}}(\mathbb{P}^1_X)
	\end{equation}
	in the derived category $\mathcal{D}(\Z)$. The aim of the following section is to prove that this map is an equivalence (Theorem~\ref{theoremP^1bundleformulaformotiviccohomology}). To prove such an equivalence, we will need compatibilities with other first Chern classes.
	
	\begin{construction}[$\mathbb{P}^1$-bundle formula for additive invariants]
		Following \cite[Section~$5.1$]{elmanto_motivic_2023}, every additive invariant $E$ of $\Z$-linear $\infty$-categories in the sense of \cite[Definition~$5.11$]{hoyois_higher_2017} admits a natural first Chern class, inducing a natural map of spectra
		$$\pi^\ast \oplus (1-c_1)(\mathcal{O}(-1))\pi^\ast : E(X) \oplus E(X) \longrightarrow E(\mathbb{P}^1_X).$$
		For every additive invariant of $\Z$-linear $\infty$-categories, this map is an equivalence (\cite[Lemma~$5.6$]{elmanto_motivic_2023}). 
	\end{construction}
	
	\begin{remark}[Compatibility with filtrations]\label{remarkfilteredadditiveinvariants}
		If the additive invariant $E$ of $\Z$-linear $\infty$-categories, seen as a presheaf of spectra on qcqs schemes, admits a multiplicative filtered refinement $\text{Fil}^\star E$ which is a multiplicative filtered module over the lisse motivic filtration $\text{Fil}^\star_{\text{lisse}} \text{K}^{\text{conn}}$ (\cite[Definition~$3.7$]{bouis_motivic_2024}),\footnote{The lisse motivic filtration $\text{Fil}^\star_{\text{lisse}} \text{K}^{\text{conn}}$ is usually defined only on affine schemes. Here the argument works if $\text{Fil}^\star E$ is a Zariski sheaf of filtered spectra on qcqs schemes, and if its restriction to affine schemes is a multiplicative filtered module over the lisse motivic filtration $\text{Fil}^\star_{\text{lisse}} \text{K}^{\text{conn}}$.} then this map has a natural filtered refinement
		$$\pi^\ast \oplus (1-c_1)(\mathcal{O}(-1)) \pi^\ast : \text{Fil}^\star E(X) \oplus \text{Fil}^{\star - 1} E(X) \longrightarrow \text{Fil}^\star E(\mathbb{P}^1_X)$$
		by \cite[Construction~$5.11$ and Lemma~$5.12$]{elmanto_motivic_2023}. If $E$ is algebraic $K$-theory, equipped with the motivic filtration $\text{Fil}^\star_{\text{mot}} \text{K}$ (\cite[Definition~$3.19$]{bouis_motivic_2024}), the argument of \cite[Lemma~$5.12$]{elmanto_motivic_2023} and Remark~\ref{remarkunicityfirstChernclassfromsmoothcase} imply that this map recovers, up to a shift, the map $(\ref{equationmapforP^1bundleformule})$ on graded pieces.
	\end{remark}
	
	\begin{example}[Compatibility with cdh-local motivic cohomology]
		The multiplicative cdh-local filtration $\text{Fil}^\star_{\text{cdh}} \text{KH}$ (\cite[Definition~$3.10$]{bouis_motivic_2024}) is naturally a module over the multiplicative filtration $\text{Fil}^\star_{\text{mot}} \text{K}$ ({\it e.g.},~because cdh sheafification preserves multiplicative structures), so the first Chern class for the cdh\nobreakdash-local motivic complexes of \cite{bachmann_A^1-invariant_2024} is compatible with the motivic first Chern class of Definition~\ref{definitionmotivicfirstChernclass}.
	\end{example}
	
	\begin{example}[Compatibility with syntomic cohomology]\label{exampleSelmerKtheory}
		Let $X$ be a qcqs scheme, and $p$ be a prime number. As explained in \cite[Remark~$5.25$]{bachmann_A^1-invariant_2024}, the syntomic first Chern class of \cite[Section~$7$]{bhatt_absolute_2022} is compatible with the motivic first Chern class of Definition~\ref{definitionmotivicfirstChernclass} via the motivic-syntomic comparison map (\cite[Construction~$5.8$]{bouis_motivic_2024}). Note here that the motivic-syntomic comparison map can be seen as the map induced on graded pieces from a multiplicative map of filtered spectra
		$$\text{Fil}^{\star}_{\text{mot}} \text{K}(X;\Z_p) \longrightarrow \text{Fil}^\star_{\text{mot}} \text{K}^{\text{Sel}}(X;\Z_p),$$
		where the target is the filtration on $p$-completed Selmer $K$-theory (\cite[Remark~$8.4.3$]{bhatt_absolute_2022}). These motivic and syntomic first Chern classes then coincide with the first Chern classes coming from the additive invariants $\text{K}(-;\Z_p)$ and $\text{K}^{\text{Sel}}(-;\Z_p)$ (Remark~\ref{remarkfilteredadditiveinvariants}).
	\end{example}

	\subsection{\texorpdfstring{$\mathbb{P}^1$}{TEXT}-bundle formula}
	
	\vspace{-\parindent}
	\hspace{\parindent}
	
	In this subsection, we prove that the motivic complexes $\Z(i)^{\text{mot}}$ satisfy the $\mathbb{P}^1$-bundle formula on qcqs schemes (Theorem~\ref{theoremP^1bundleformulaformotiviccohomology}). Note that the $\mathbb{P}^1$-bundle formula is unknown for the cdh-local motivic complexes $\Z(i)^{\text{cdh}}$ on general qcqs schemes.\footnote{More precisely, Bachmann--Elmanto--Morrow proved in \cite{bachmann_A^1-invariant_2024} that if a qcqs scheme $X$ satisfies that every valuation ring $V$ with a map $\text{Spec}(V) \rightarrow X$ is $F$-smooth, then the cdh-local motivic complexes $\Z(i)^{\text{cdh}}$ satisfy the $\mathbb{P}^1$-bundle formula at $X$. In particular, the $\mathbb{P}^1$-bundle formula is known for these complexes over a field, and over a mixed characteristic perfectoid valuation ring by the results of \cite{bouis_cartier_2023}, but not on general qcqs schemes.} The cartesian square of \cite[Remark~$3.21$]{bouis_motivic_2024} thus cannot be used directly to prove the $\mathbb{P}^1$-bundle formula for the motivic complexes $\Z(i)^{\text{mot}}$, as was done by Elmanto--Morrow in equicharacteristic (\cite[Section~$5$]{elmanto_motivic_2023}). Instead, we use in a crucial way our main result on $p$-adic motivic cohomology (\cite[Theorem~$5.10$]{bouis_motivic_2024}), and a degeneration argument using Selmer $K$-theory.
	
	\begin{theorem}[$\mathbb{P}^1$-bundle formula]\label{theoremP^1bundleformulaformotiviccohomology}
		Let $X$ be a qcqs scheme, and $\pi : \mathbb{P}^1_X \rightarrow X$ be the canonical projection map. Then for every integer $i \in \Z$, the natural map
		$$\pi^\ast \oplus c_1^{\emph{mot}}(\mathcal{O}(1)) \pi^\ast : \Z(i)^{\emph{mot}}(X) \oplus \Z(i-1)^{\emph{mot}}(X)[-2] \longrightarrow \Z(i)^{\emph{mot}}(\mathbb{P}^1_X)$$
		is an equivalence in the derived category $\mathcal{D}(\Z)$.
	\end{theorem}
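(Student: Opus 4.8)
The plan is to check that the map is an equivalence after $-\otimes^{\mathbb{L}}_{\Z}\Q$ and after $-\otimes^{\mathbb{L}}_{\Z}\F_p$ for every prime $p$, which suffices for a map in $\mathcal{D}(\Z)$ since a complex that is acyclic both rationally and modulo every prime is acyclic, with no boundedness hypothesis needed. Regarded as functors of $X$, both sides are finitary ($\Z(i)^{\text{mot}}$ is finitary by \cite[Corollary~4.59]{bouis_motivic_2024} and projective bundles are finitely presented), so we may assume $X=\text{Spec}(R)$ with $R$ finitely presented over $\Z$; and the cases $i\leq 0$ are immediate, $\Z(i)^{\text{mot}}$ being zero for $i<0$ and $R\Gamma_{\text{cdh}}(-,\Z)$ for $i=0$ (\cite[Example~4.68]{bouis_motivic_2024}), which inherits the $\mathbb{P}^1$-bundle formula from homotopy $K$-theory. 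So we take $i\geq 1$. Rationally, $\Q(i)^{\text{mot}}(-)[2i]$ is the weight-$i$ Adams eigenspace of $\text{K}(-;\Q)$ (\cite[Corollary~4.60]{bouis_motivic_2024}); algebraic $K$-theory, a finitary localizing invariant, satisfies the $\mathbb{P}^1$-bundle formula via an equivalence $\text{K}(X;\Q)\oplus\text{K}(X;\Q)\xrightarrow{\sim}\text{K}(\mathbb{P}^1_X;\Q)$ built from $\pi^\ast$ and multiplication by $[\mathcal{O}(1)]-1$ (\cite[Lemma~5.6]{elmanto_motivic_2023}), which refines to a filtered equivalence with the second summand's motivic filtration shifted by one (Remark~\ref{remarkfilteredadditiveinvariants}). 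Since $([\mathcal{O}(1)]-1)^2=0$ in $\text{K}_0$ of $\mathbb{P}^1_X$, the class $[\mathcal{O}(1)]-1$ has Adams weight one, so this filtered equivalence is compatible with the Adams decomposition; extracting the weight-$i$ graded pieces and matching the normalisation of first Chern classes via Remarks~\ref{remarkunicityfirstChernclassfromsmoothcase} and~\ref{remarkfilteredadditiveinvariants} yields the rational $\mathbb{P}^1$-bundle formula for $\Q(i)^{\text{mot}}$.

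For the mod $p$ statement I would pass through syntomic cohomology. By \cite[Theorem~5.10]{bouis_motivic_2024} there is a natural fibre sequence $\F_p(i)^{\text{mot}}\to\F_p(i)^{\text{syn}}\to L_{\text{cdh}}\big(\tau^{>i}\F_p(i)^{\text{syn}}\big)$, so it is enough to prove the $\mathbb{P}^1$-bundle formula for $\F_p(i)^{\text{syn}}$ and for $L_{\text{cdh}}\big(\tau^{>i}\F_p(i)^{\text{syn}}\big)$, with the appropriate index shift on the $(i-1)$-weight summand. Syntomic cohomology satisfies the $\mathbb{P}^1$-bundle formula: it arises as the associated graded of a filtered $\mathbb{P}^1$-bundle formula for the motivic filtration on $p$-complete Selmer $K$-theory $\text{K}^{\text{Sel}}(-;\Z_p)$, a finitary localizing invariant whose motivic graded pieces are the syntomic complexes, and the compatibilities recorded in Example~\ref{exampleSelmerKtheory} — the syntomic first Chern class is multiplicative, of filtration degree one, and coincides with the one coming from the additive invariant $\text{K}^{\text{Sel}}(-;\Z_p)$ — both make the filtered $\mathbb{P}^1$-bundle map an equivalence (the degeneration step) and identify the resulting equivalence on graded pieces with the one built from $c_1^{\text{syn}}(\mathcal{O}(1))$. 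It then remains to establish the $\mathbb{P}^1$-bundle formula for $L_{\text{cdh}}\big(\tau^{>i}\F_p(i)^{\text{syn}}\big)$, and this is the main obstacle.

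The difficulty is that $L_{\text{cdh}}$ commutes neither with the truncation $\tau^{>i}$ nor, in any evident way, with $X\mapsto\mathbb{P}^1_X$ — indeed the $\mathbb{P}^1$-bundle formula for $\F_p(i)^{\text{cdh}}=L_{\text{cdh}}\F_p(i)^{\text{syn}}$ itself remains open in mixed characteristic, as it would follow from $F$-smoothness of all valuation rings. The point will be that this particular truncated term is better behaved: applying the syntomic $\mathbb{P}^1$-bundle formula and then $\tau^{>i}$ produces $\tau^{>i}\F_p(i)^{\text{syn}}(X)\oplus\big(\tau^{>i-2}\F_p(i-1)^{\text{syn}}(X)\big)[-2]$, which differs from the answer predicted by the $\mathbb{P}^1$-bundle formula for $\F_p(i)^{\text{mot}}$ only in cohomological degree $i-1$ of the $(i-1)$-weight summand; this discrepancy is governed by the degree-$(i-1)$ cohomology of $\F_p(i-1)^{\text{syn}}$, where the syntomic--étale comparison in low degrees and a special case of the regular blowup formula (Theorem~\ref{theoremregularblowupformula}), used to account for the contribution of the exceptional divisor, can be brought to bear, together with the finite-coefficient syntomic comparisons of \cite[Section~5]{bouis_motivic_2024}. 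Assembling the syntomic and cdh-truncated cases through the fibre sequence then gives the mod $p$ statement, and with the rational case, the theorem.
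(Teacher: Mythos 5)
Your reduction is the same as the paper's: check the statement rationally and mod $p$; rationally use the filtered/Adams refinement of the $\mathbb{P}^1$-bundle formula for $K$-theory; mod $p$ use the fibre sequence $\F_p(i)^{\text{mot}}\to\F_p(i)^{\text{syn}}\to (L_{\text{cdh}}\tau^{>i}\F_p(i)^{\text{syn}})$ together with the $\mathbb{P}^1$-bundle formula for syntomic cohomology. Up to that point the proposal is sound. But the step you yourself flag as ``the main obstacle'' --- the $\mathbb{P}^1$-bundle formula for $\big(L_{\text{cdh}}\tau^{>i}\F_p(i)^{\text{syn}}\big)$ --- is not proved; it is only gestured at, and the gesture points in a direction that does not compute the relevant object. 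Your analysis of the ``discrepancy'' compares $\tau^{>i}$ applied to the syntomic $\mathbb{P}^1$-bundle formula with the expected answer, but the object in the fibre sequence is the cdh sheafification of $\tau^{>i}\F_p(i)^{\text{syn}}$ evaluated on the scheme $\mathbb{P}^1_X$, and $L_{\text{cdh}}$ commutes neither with $\tau^{>i}$ nor with passage to $\mathbb{P}^1$ over the base; so the claim that the failure is concentrated in a single cohomological degree of the weight-$(i-1)$ summand is not justified, and the proposed tools (low-degree syntomic--\'etale comparison, the regular blowup formula for the exceptional divisor) are not assembled into an argument.

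For comparison, the paper's proof of this step (Proposition~\ref{propositionP^1bundleformulaforcdhlocalhighdegreesyntomiccohomology}) is the technical heart of the theorem: one first shows that both $\big(L_{\text{cdh}}\tau^{>i}\Z/p^k(i)^{\text{syn}}\big)(-)$ and $\big(L_{\text{cdh}}\tau^{>i}\Z/p^k(i)^{\text{syn}}\big)(\mathbb{P}^1_{-})$ are finitary cdh sheaves satisfying henselian $v$-excision (Lemma~\ref{lemmaAdescentresultforcdhhighdegreesyntomiccohomology}), which reduces the equivalence to henselian valuation rings of rank at most one (Lemma~\ref{lemmaBhvexcisivefinitarycdhsheavesimplysufficientonrankatmost1henselianvaluationrings}); the equal-characteristic cases are then handled by citing \cite{bachmann_A^1-invariant_2024} and \cite{elmanto_motivic_2023}, while the mixed-characteristic rank-one case (Proposition~\ref{propositionCP1bundleformulaformixedcharvaluationringrank1}) requires rigidity of the BMS complexes, a ``weak rigidity'' for their cdh-sheafified truncations (Corollary~\ref{corollaryEhighdegreecdhBMSsyntomiccohoisweaklyrigid}) to pass to the residue field, and a separate degeneration argument via the Selmer $K$-theory spectral sequence combined with the one-degree concentration of $\big(L_{\text{cdh}}\tau^{>i}R\Gamma_{\text{\'et}}(-,j_!\mu_{p^k}^{\otimes i})\big)(\mathbb{P}^1_V)$ (Lemma~\ref{lemmaFweirdétalecohomologygroupisconcentratedincohomologicaldegreei+1}). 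None of this machinery, or a substitute for it, appears in your proposal, so the mod $p$ case remains open as written.
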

	
	Using \cite[Theorem~$5.10$]{bouis_motivic_2024}, the proof of Theorem~\ref{theoremP^1bundleformulaformotiviccohomology} will reduce to the proof of a similar equivalence for the cdh sheaves $\big(L_{\text{cdh}} \tau^{>i} \F_p(i)^{\text{syn}}\big)(-)$ (Proposition~\ref{propositionP^1bundleformulaforcdhlocalhighdegreesyntomiccohomology}). Most of this section is devoted to the study of these cdh sheaves.
	
	\begin{lemma}\label{lemmaGfibresequencecdhhighdegreesyntomiccohomologywithétaleandBMS}
		Let $p$ be a prime number. Then for any integers $i,j \geq 0$ and $k \geq 1$, the natural sequence
		$$L_{\emph{Nis}} \tau^{>j} R\Gamma_{\emph{ét}}(-,j_!\mu_{p^k}^{\otimes i}) \longrightarrow \big(L_{\emph{Nis}} \tau^{>j} \Z/p^k(i)^{\emph{syn}}\big)(-) \longrightarrow \big(L_{\emph{Nis}} \tau^{>j} \Z/p^k(i)^{\emph{BMS}}\big)(-)$$
		is a fibre sequence of $\mathcal{D}(\Z/p^k)$-valued presheaves on qcqs schemes.
	\end{lemma}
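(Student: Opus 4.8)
The plan is to apply $L_{\text{Nis}}\tau^{>j}$ to the fibre sequence of $\mathcal{D}(\Z/p^k)$-valued presheaves
$$R\Gamma_{\text{ét}}(-,j_!\mu_{p^k}^{\otimes i}) \longrightarrow \Z/p^k(i)^{\text{syn}}(-) \longrightarrow \Z/p^k(i)^{\text{BMS}}(-)$$
on qcqs schemes (\cite[Remark~$8.4.4$]{bhatt_absolute_2022}, already used in the proof of Proposition~\ref{propositionmodpmotiviccohoisleftKanextendedinsmalldegrees}). Although $\tau^{>j}$ does not preserve fibre sequences in general, a short inspection of the long exact cohomology sequence shows that for a fibre sequence $A \to B \to C$ in $\mathcal{D}(\Z/p^k)$ the sequence $\tau^{>j}A \to \tau^{>j}B \to \tau^{>j}C$ is again a fibre sequence if and only if the boundary map $\text{H}^j(C) \to \text{H}^{j+1}(A)$ vanishes, i.e.\ if and only if $\text{H}^j(B) \to \text{H}^j(C)$ is surjective. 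Since sheafification $L_{\text{Nis}}$ is exact, and a sequence of Nisnevich sheaves of complexes is a fibre sequence precisely when it is one on all stalks (i.e.\ after evaluation on henselian local rings), it therefore suffices to show that for every henselian local ring $R$ and every $j$, the natural map
$$\text{H}^j\big(\Z/p^k(i)^{\text{syn}}(R)\big) \longrightarrow \text{H}^j\big(\Z/p^k(i)^{\text{BMS}}(R)\big)$$
is surjective.

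First I would dispose of the degenerate cases. If $p$ is invertible in $R$, then $\Z/p^k(i)^{\text{BMS}}(R)=0$, as it depends only on the (vanishing) $p$-completion $R^\wedge_p$; if $p$ is nilpotent in $R$ (in particular if $R$ is an $\F_p$-algebra), then $\text{Spec}(R[1/p])=\emptyset$, so $R\Gamma_{\text{ét}}(-,j_!\mu_{p^k}^{\otimes i})(R)=0$ and $\Z/p^k(i)^{\text{syn}}(R) \simeq \Z/p^k(i)^{\text{BMS}}(R)$. In both cases the surjectivity is clear. It remains to treat a henselian local ring $R$ of mixed characteristic $(0,p)$. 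For $j \geq i+1$ one uses that $\Z/p^k(i)^{\text{BMS}}(R)$ is concentrated in cohomological degrees $\leq i+1$ (a general feature of the BMS syntomic complexes): for $j \geq i+2$ the target vanishes, and in degree $i+1$ the vanishing of $\Z/p^k(i)^{\text{BMS}}(R)$ in degrees $\geq i+2$ forces $\text{H}^{i+2}$ of the fibre sequence to be left exact, so that $\text{H}^{i+1}(\Z/p^k(i)^{\text{syn}}(R)) \to \text{H}^{i+1}(\Z/p^k(i)^{\text{BMS}}(R))$ is a surjection onto a cokernel. For $j \leq i$ one uses, compatibly with the $j_!$-étale fibre sequence, the identification in degrees $\leq i$ of the syntomic and BMS complexes of $R$ with truncations of the étale cohomology of $\text{Spec}(R[1/p])$ and of $\text{Spec}(R^\wedge_p[1/p])$ respectively; since $(R,pR)$ is a henselian pair, a rigidity statement for the étale cohomology of the complement of $V(p)$ shows that the resulting map $\text{H}^j_{\text{ét}}(\text{Spec}(R[1/p]),\mu_{p^k}^{\otimes i}) \to \text{H}^j_{\text{ét}}(\text{Spec}(R^\wedge_p[1/p]),\mu_{p^k}^{\otimes i})$ is an isomorphism, which gives the (stronger) conclusion for $j \leq i$.

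The hard part is precisely this mixed-characteristic case, and within it the bookkeeping at the boundary degree $j=i+1$: one needs the coconnectivity bound $\Z/p^k(i)^{\text{BMS}}(R) \in \mathcal{D}^{\leq i+1}$ on local rings, and one needs the syntomic--étale comparison maps for $\Z/p^k(i)^{\text{syn}}$ and for $\Z/p^k(i)^{\text{BMS}}$ to be compatible with the $j_!$-étale fibre sequence, so that the problem in degrees $\leq i$ genuinely reduces to comparing the étale cohomology of $\text{Spec}(R[1/p])$ with that of $\text{Spec}(R^\wedge_p[1/p])$, where rigidity for the henselian pair $(R,pR)$ can be applied. By contrast, the two formal ingredients used at the start — the criterion for $\tau^{>j}$ to preserve a fibre sequence, and the detection of fibre sequences of Nisnevich sheaves on henselian-local stalks — are routine and cause no difficulty.
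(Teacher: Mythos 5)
Your formal setup is the same as the paper's: reduce to henselian local rings by checking on Nisnevich stalks, observe that truncation preserves the fibre sequence of \cite[Remark~$8.4.4$]{bhatt_absolute_2022} exactly when $\text{H}^j\big(\Z/p^k(i)^{\text{syn}}(R)\big) \to \text{H}^j\big(\Z/p^k(i)^{\text{BMS}}(R)\big)$ is surjective, and dispose of the cases where $p$ is invertible or nilpotent. But the mixed-characteristic case, which you rightly flag as the crux, does not work as written. For $j \leq i$ you invoke an identification of $\tau^{\leq i}\Z/p^k(i)^{\text{syn}}(R)$ and $\tau^{\leq i}\Z/p^k(i)^{\text{BMS}}(R)$ with truncated \'etale cohomology of $\text{Spec}(R[1/p])$ and of $\text{Spec}(R^\wedge_p[1/p])$. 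That is a Beilinson--Lichtenbaum-type comparison which holds only under strong hypotheses ($F$-smoothness in the sense of Bhatt--Mathew); it fails for general, e.g.\ singular, henselian local rings, which are precisely the rings the lemma must cover. Separately, your bookkeeping at $j=i+1$ is backwards: in the long exact sequence, the vanishing of $\text{H}^{i+2}\big(\Z/p^k(i)^{\text{BMS}}(R)\big)$ yields surjectivity of $\text{H}^{i+2}$ of the $j_!$-\'etale term onto $\text{H}^{i+2}$ of the syntomic term, not surjectivity of $\text{H}^{i+1}(\text{syn}) \to \text{H}^{i+1}(\text{BMS})$; for the latter you would need injectivity of that map, which you have not established.

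The idea you are missing is that in the mixed-characteristic case there is nothing to compute. A henselian local ring $R$ in which $p$ is not invertible is $p$-henselian, and for $p$-henselian rings the map $\Z/p^k(i)^{\text{syn}}(R) \to \Z/p^k(i)^{\text{BMS}}(R)$ is an equivalence in all degrees (\cite[Notation~$5.7$]{bouis_motivic_2024}). Equivalently, the first term $R\Gamma_{\text{\'et}}(R,j_!\mu_{p^k}^{\otimes i})$ of the fibre sequence vanishes outright: it is the fibre of $R\Gamma_{\text{\'et}}(R,\mu_{p^k}^{\otimes i}) \to R\Gamma_{\text{\'et}}(R/p,\mu_{p^k}^{\otimes i})$, which is an equivalence by Gabber's rigidity applied to the henselian pair $(R,(p))$. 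You correctly sensed that rigidity for $(R,pR)$ is the relevant input, but you routed it through the generic fibres via an unjustified comparison instead of applying it directly to the $j_!$-term. With that correction the argument collapses to the paper's one-line proof.
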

	
	\begin{proof}
		The three presheaves are finitary Nisnevich sheaves, so it suffices to prove the result on henselian local rings (\cite[Corollary~$3.27$ and Example~$4.31$]{clausen_hyperdescent_2021}). Let $A$ be a henselian local ring. By \cite[Remark~$8.4.4$]{bhatt_absolute_2022}, there is a natural fibre sequence
		$$R\Gamma_{\text{ét}}(A,j_!\mu_{p^k}^{\otimes i}) \longrightarrow \Z/p^k(i)^{\text{syn}}(A) \longrightarrow \Z/p^k(i)^{\text{BMS}}(A)$$
		in the derived category $\mathcal{D}(\Z/p^k)$, so it suffices to prove that the natural map
		$$\Z/p^k(i)^{\text{syn}}(A) \longrightarrow \Z/p^k(i)^{\text{BMS}}(A)$$
		is surjective in degree $j$. If $p$ is invertible in the henselian valuation ring $A$, the target of this map is zero. If $p$ is not invertible in $A$, then the valuation ring $A$ is $p$-henselian, and this map is an equivalence (\cite[Notation~$5.7$]{bouis_motivic_2024}).
	\end{proof}

	\begin{lemma}\label{lemmaFweirdétalecohomologygroupisconcentratedincohomologicaldegreei+1}
		Let $p$ be a prime number, and $V$ be a rank one henselian valuation ring of mixed characteristic $(0,p)$. Then for any integers $i \geq 0$ and $k \geq 1$, the complex $$\big(L_{\emph{cdh}} \tau^{>i} R\Gamma_{\emph{ét}}(-,j_!\mu_{p^k}^{\otimes i})\big)(\mathbb{P}^1_V) \in \mathcal{D}(\Z/p^k)$$ is concentrated in degree $i+1$.\footnote{We will prove, at the end of Proposition~\ref{propositionCP1bundleformulaformixedcharvaluationringrank1}, that this complex is actually zero.} 
	\end{lemma}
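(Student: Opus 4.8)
Here is how I would approach this.

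\medskip

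\textbf{Reduction to a vanishing statement.} Since $\tau^{>i}$ of any object of $\mathcal{D}(\Z/p^k)$ lies in cohomological degrees $\geq i+1$, and both $L_{\text{Nis}}$ and $L_{\text{cdh}}$ preserve coconnectivity (sheafification is built from limits and filtered colimits, which preserve $\mathcal{D}^{\geq i+1}$), the complex $\big(L_{\text{cdh}}\tau^{>i}R\Gamma_{\text{ét}}(-,j_!\mu_{p^k}^{\otimes i})\big)(\mathbb{P}^1_V)$ automatically lies in degrees $\geq i+1$; the content is therefore the vanishing of $\mathrm{H}^n$ for $n \geq i+2$. For the upper bound I would use that $\mathbb{P}^1_V$ has valuative dimension $1+\mathrm{vdim}(V)=2$, so by Kelly--Saito its cdh topos is hypercomplete with enough points --- the henselian valuation rings with a map to $\mathbb{P}^1_V$ --- and has cdh-cohomological dimension $\leq 2$. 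Running the descent spectral sequence $\mathrm{H}^s_{\text{cdh}}(\mathbb{P}^1_V,\underline{\mathrm{H}}^t)\Rightarrow \mathrm{H}^{s+t}$, where $\underline{\mathrm{H}}^t$ is the cdh-sheafification of $X\mapsto \mathrm{H}^t_{\text{ét}}(X,j_!\mu_{p^k}^{\otimes i})$ (which is $0$ for $t\leq i$), the target lies in degrees $[i+1,i+3]$, and it remains to kill the degree-$(i+2)$ and degree-$(i+3)$ contributions.

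\medskip

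\textbf{Analysis of the sheaves $\underline{\mathrm{H}}^t$.} I would compute the stalks $\mathrm{H}^t_{\text{ét}}(W,j_!\mu_{p^k}^{\otimes i})$ on henselian valuation rings $W$: if $W$ has equal characteristic $p$ then $j_!\mu_{p^k}^{\otimes i}=0$; if $p$ is invertible in $W$, then Gabber's rigidity identifies $R\Gamma_{\text{ét}}(W,j_!\mu_{p^k}^{\otimes i})$ with the Galois cohomology $R\Gamma_{\text{ét}}(\kappa_W,\mu_{p^k}^{\otimes i})$ of the residue field; and if $W$ is of mixed characteristic one uses the localization triangle $R\Gamma_{\text{ét}}(W,j_!\mu_{p^k}^{\otimes i})\to R\Gamma_{\text{ét}}(W,\mu_{p^k}^{\otimes i})\to R\Gamma_{\text{ét}}(\kappa_W,i^{\ast}\mu_{p^k}^{\otimes i})$, noting that the special-fibre term is the cohomology of a (locally) constant sheaf on $\kappa_W$. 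The case $i=0$ is immediate and separate: for a henselian mixed-characteristic valuation ring the restriction to the special fibre is an isomorphism on étale cohomology with constant coefficients, so $R\Gamma_{\text{ét}}(W,j_!\Z/p^k)=0$, hence $\underline{\mathrm{H}}^t=0$ for all $t$. For $i\geq 1$ the decisive point is that $j_!\mu_{p^k}^{\otimes i}$ is supported on the characteristic-zero locus $\mathbb{P}^1_K$, where $K=\mathrm{Frac}(V)$ (and $V[1/p]=K$ because $V$ has rank one, so the value group is archimedean); on $\mathbb{P}^1_K$ the prime $p$ is invertible and the scheme is regular of dimension $1$, so absolute purity and the Gersten conjecture apply. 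In particular the presheaves $\mathrm{H}^t_{\text{ét}}(-,\mu_{p^k}^{\otimes i})$ have Cousin/Gersten resolutions, the top-degree residue maps are injective, and the relevant global cdh-sections of $\underline{\mathrm{H}}^t$ over $\mathbb{P}^1_V$ vanish; combined with $\mathrm{cd}_{\text{cdh}}(\mathbb{P}^1_V)\leq 2$ this removes the degree-$(i+2)$ and degree-$(i+3)$ terms.

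\medskip

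\textbf{The main obstacle.} The hard part is exactly this last step. The high-degree étale cohomology of $\mathbb{P}^1$ over a valuation ring is genuinely nonzero --- already $\mathrm{H}^{i+2}_{\text{ét}}(W,j_!\mu_{p^k}^{\otimes i})$ can be nonzero for suitable henselian valuation rings $W$ mapping to $\mathbb{P}^1_V$ (e.g.\ with residue field a function field of large cohomological dimension), so $\underline{\mathrm{H}}^{i+2}$ is a \emph{nonzero} cdh sheaf --- and the vanishing in degrees $\geq i+2$ must come from cdh-sheafification on the honest two-dimensional scheme $\mathbb{P}^1_V$, not from any pointwise bound: on the cdh point $\mathrm{Spec}(V)$ itself no such vanishing holds. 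Making the Gersten-type vanishing of cdh-cohomology of these cohomology sheaves precise in the not-necessarily-noetherian valuation-ring setting --- reducing to finitely generated subrings by finitariness where needed, and using regularity of $\mathbb{P}^1_K$ and the étale projective bundle formula to organize the bookkeeping --- is the crux. This is also what prepares the sharper statement, that the complex is in fact zero, which is obtained later using the comparison fibre sequence with $\F_p(i)^{\text{BMS}}$ and the $\mathbb{P}^1$-bundle formula for syntomic cohomology.
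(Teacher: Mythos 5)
There is a genuine gap. Your first paragraph sets up the right reduction (the lower bound $\geq i+1$ is automatic from sheafifying a coconnective presheaf, and the content is the upper bound), but the mechanism you propose for the upper bound is never actually supplied: you appeal to Cousin/Gersten resolutions, injectivity of residue maps, and vanishing of cdh-cohomology of the sheaves $\underline{\mathrm{H}}^t$, and then your own third paragraph concedes that making this precise is ``the crux'' and ``the hard part.'' That is exactly the step a proof must contain. Moreover, your claim that the descent spectral sequence puts the target in degrees $[i+1,i+3]$ is not justified: contributions to degree $n$ come from $\mathrm{H}^s_{\text{cdh}}(\mathbb{P}^1_V,\underline{\mathrm{H}}^{n-s})$ with $0\leq s\leq 2$, and since (as you note yourself) $\underline{\mathrm{H}}^{t}$ need not vanish for $t\geq i+2$, there are a priori contributions in all degrees $\geq i+2$, not just $i+2$ and $i+3$.

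The paper's argument sidesteps all of this with one observation you are missing: the \emph{untruncated} complex $R\Gamma_{\text{ét}}(\mathbb{P}^1_V,j_!\mu_{p^k}^{\otimes i})$ is zero. Indeed, the $\mathbb{P}^1$-bundle formula for $j_!\mu_{p^k}^{\otimes \bullet}$-cohomology identifies it with $R\Gamma_{\text{ét}}(V,j_!\mu_{p^k}^{\otimes i})\oplus R\Gamma_{\text{ét}}(V,j_!\mu_{p^k}^{\otimes(i-1)})[-2]$, and both summands vanish because $V$ is $p$-henselian and $R\Gamma_{\text{ét}}(-,j_!\mu_{p^k}^{\otimes \bullet})$ is rigid with vanishing special fibre. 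Since $R\Gamma_{\text{ét}}(-,j_!\mu_{p^k}^{\otimes i})$ is already a cdh (even arc) sheaf, cdh-sheafifying the truncation fibre sequence gives
$$\big(L_{\text{cdh}}\tau^{>i}R\Gamma_{\text{ét}}(-,j_!\mu_{p^k}^{\otimes i})\big)(\mathbb{P}^1_V)\simeq \big(L_{\text{cdh}}\tau^{\leq i}R\Gamma_{\text{ét}}(-,j_!\mu_{p^k}^{\otimes i})\big)(\mathbb{P}^1_V)[1],$$
and the right-hand side is in degrees at most $i+2$ before the shift by the valuative-dimension bound ($\mathbb{P}^1_V$ has valuative dimension two), hence at most $i+1$ after it. In other words, the cohomological-dimension bound should be applied to the \emph{coconnective} truncation, where it immediately gives what you need; applied to the connective truncation as in your sketch, it forces you into the unproven Gersten-type vanishing.
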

	
	\begin{proof}
		The presheaf $L_{\text{cdh}} \tau^{>i} R\Gamma_{\text{ét}}(-,j_!\mu_{p^k}^{\otimes i})$ is the cdh sheafification of a presheaf taking values in degrees at least $i+1$, so it takes values in degrees at least $i+1$. To prove that the complex
		$$\big(L_{\text{cdh}} \tau^{>i} R\Gamma_{\text{ét}}(-,j_!\mu_{p^k}^{\otimes i})\big)(\mathbb{P}^1_V) \in \mathcal{D}(\Z/p^k)$$
		is in degrees at most $i+1$, consider the fibre sequence
		$$\big(L_{\text{cdh}} \tau^{\leq i} R\Gamma_{\text{ét}}(-,j_!\mu_{p^k}^{\otimes i})\big)(\mathbb{P}^1_V) \longrightarrow R\Gamma_{\text{ét}}(\mathbb{P}^1_V,j_!\mu_{p^k}^{\otimes i}) \longrightarrow \big(L_{\text{cdh}} \tau^{>i} R\Gamma_{\text{ét}}(-,j_!\mu_{p^k}^{\otimes i})\big)(\mathbb{P}^1_V)$$
		in the derived category $\mathcal{D}(\Z/p^k)$, which is a consequence of arc descent for the presheaf $R\Gamma_{\text{ét}}(-,j_!\mu_{p^k}^{\otimes i})$ (\cite[Theorem~$1.8$]{bhatt_arc-topology_2021}). The scheme $\mathbb{P}^1_V$ has valuative dimension two, so the complex
		$$\big(L_{\text{cdh}} \tau^{\leq i} R\Gamma_{\text{ét}}(-,j_!\mu_{p^k}^{\otimes i})\big)(\mathbb{P}^1_V) \in \mathcal{D}(\Z/p^k)$$
		is in degrees at most $i+2$ (\cite[Theorem~$2.4.15$]{elmanto_cdh_2021}). By the $\mathbb{P}^1$-bundle formula for the presheaves $R\Gamma_{\text{ét}}(-,j_!\mu_{p^k}^{\otimes i})$ (\cite[proof of Theorem~$9.1.1$]{bhatt_absolute_2022}), there is a natural equivalence
		$$R\Gamma_{\text{ét}}(V,j_!\mu_{p^k}^{\otimes i}) \oplus R\Gamma_{\text{ét}}(V,j_!\mu_{p^k}^{\otimes (i-1)})[-2] \longrightarrow R\Gamma_{\text{ét}}(\mathbb{P}^1_V,j_!\mu_{p^k}^{\otimes i})$$
		in the derived category $\mathcal{D}(\Z/p^k)$. The functors $R\Gamma_{\text{ét}}(-,j_!\mu_{p^k}^{\otimes i})$ and $R\Gamma_{\text{ét}}(-,j_!\mu_{p^k}^{\otimes (i-1)})$ are moreover rigid (\cite{gabber_affine_1994}, see also \cite[Corollary~$1.18\,(1)$]{bhatt_arc-topology_2021}) and the valuation ring $V$ is $p$\nobreakdash-henselian, so the complex $R\Gamma_{\text{ét}}(\mathbb{P}^1_V,j_!\mu_{p^k}^{\otimes i}) \in \mathcal{D}(\Z/p^k)$ is zero. This implies that the complex $\big(L_{\text{cdh}} \tau^{>i} R\Gamma_{\text{ét}}(-,j_!\mu_{p^k}^{\otimes i})\big)(\mathbb{P}^1_V)$ is naturally identified with the complex $$\big(L_{\text{cdh}} \tau^{\leq i} R\Gamma_{\text{ét}}(-,j_!\mu_{p^k}^{\otimes i})\big)(\mathbb{P}^1_V)[1] \in \mathcal{D}(\Z/p^k),$$ and is thus in degrees at most~\hbox{$i+1$}.
	\end{proof}

	Following \cite{elmanto_cdh_2021}, we say that a $\mathcal{D}(\Z)$-valued presheaf on qcqs schemes satisfies {\it henselian $v$-excision} if for every henselian valuation ring $V$ and every prime ideal $\mathfrak{p}$ of $V$, this presheaf sends the bicartesian square of commutative rings
	$$\begin{tikzcd}
		V \ar[r] \ar[d] & V_{\mathfrak{p}} \ar[d] \\
		V/\mathfrak{p} \ar[r] & V_{\mathfrak{p}} / \mathfrak{p} V_{\mathfrak{p}}
	\end{tikzcd}$$
	to a cartesian square. Note that in the previous bicartesian square, all the commutative rings are henselian valuation rings by \cite[Lemma~$3.3.5$]{elmanto_cdh_2021}. The following lemma explains how to use henselian $v$-excision to prove that a map of cdh sheaves is an equivalence.
	
	\begin{lemma}\label{lemmaBhvexcisivefinitarycdhsheavesimplysufficientonrankatmost1henselianvaluationrings}
		Let $S$ be a qcqs scheme of finite valuative dimension, $\mathcal{C}$ be a $\infty$-category which is compactly generated by cotruncated objects, and $F,G : \emph{Sch}^{\emph{qcqs,op}}_S \rightarrow \mathcal{C}$ be finitary cdh sheaves satisfying henselian $v$-excision. Then a map of presheaves $F \rightarrow G$ is an equivalence of presheaves if and only if the map $F(V) \rightarrow G(V)$ is an equivalence in $\mathcal{C}$ for every henselian valuation ring $V$ of rank at most one with a map $\emph{Spec}(V) \rightarrow S$.
	\end{lemma}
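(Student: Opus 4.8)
The plan is to reduce the statement to the case of henselian valuation rings of rank at most one by a standard continuity-and-excision argument, using that both $F$ and $G$ are finitary cdh sheaves satisfying henselian $v$-excision. The "only if" direction is trivial, so assume that $F(V) \to G(V)$ is an equivalence for every henselian valuation ring $V$ of rank at most one admitting a map $\mathrm{Spec}(V) \to S$; I want to deduce that $F \to G$ is an equivalence of presheaves on $\mathrm{Sch}^{\mathrm{qcqs}}_S$.

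First I would reduce to checking the map on stalks of the cdh topos. Since $S$ has finite valuative dimension and both $F$ and $G$ are cdh sheaves with values in a compactly generated $\infty$-category generated by cotruncated objects, the cdh $\infty$-topos over $S$ is hypercomplete with enough points, the points being given by henselian valuation rings (this is the standard package from \cite{elmanto_cdh_2021}, Theorem~2.4.15 and the surrounding discussion). Thus it suffices to prove that $F(V) \to G(V)$ is an equivalence for every henselian valuation ring $V$ with a map $\mathrm{Spec}(V) \to S$, of arbitrary finite rank. Since $F$ and $G$ are finitary, I may further assume $V$ has finite rank $n$, and then induct on $n$: the base cases $n = 0$ (a field, hence rank zero, covered by the hypothesis since a field is a henselian valuation ring of rank zero $\leq 1$) and $n = 1$ are given. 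For the inductive step, pick a height-one prime $\mathfrak{p}$ of $V$; then $V_{\mathfrak{p}}$ has rank one, $V/\mathfrak{p}$ has rank $n-1$, and $V_{\mathfrak{p}}/\mathfrak{p}V_{\mathfrak{p}}$ is the residue field, a field; all of these are henselian valuation rings by \cite[Lemma~$3.3.5$]{elmanto_cdh_2021}. Applying henselian $v$-excision to $F$ and to $G$ gives a map of cartesian squares, and on three of the four corners the comparison map is an equivalence by the inductive hypothesis (and the rank-one hypothesis for $V_{\mathfrak p}$), so it is an equivalence on the fourth corner $V$ as well.

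The main obstacle, and the only place where one must be slightly careful, is the passage from "equivalence on all henselian valuation rings mapping to $S$" back to "equivalence of presheaves on all qcqs $S$-schemes". This is exactly where finitariness together with the cdh-sheaf property (enough points in the hypercomplete cdh topos of a finite-valuative-dimensional base) is used: one writes an arbitrary qcqs $S$-scheme as a cofiltered limit of finitely presented ones to reduce to the noetherian/finitely-presented case, then invokes that a map of cdh sheaves is an equivalence iff it is so on stalks, and the stalks are the henselian valuation rings. The rank reduction via $v$-excision is then a clean Noetherian induction on the rank, with no further difficulty.
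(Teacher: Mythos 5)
Your proposal is correct and follows essentially the same route as the paper: reduce to henselian valuation rings via conservativity of points for finitary cdh sheaves over a base of finite valuative dimension (the paper cites \cite[Proposition~3.1.8\,(2)]{elmanto_cdh_2021} for this), use finitariness to restrict to finite rank, and then induct on the rank using the henselian $v$-excision square at the height-one prime. The only discrepancy is a cosmetic one of citation (Theorem~2.4.15 versus Proposition~3.1.8\,(2) of the same reference); the mathematical content is identical.
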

	
	\begin{proof}
		By \cite[Proposition~$3.1.8\,(2)$]{elmanto_cdh_2021}, a map $F \rightarrow G$ is an equivalence of presheaves if and only if it is an equivalence on henselian valuation rings over $S$. The presheaves $F$ and $G$ being finitary, this is equivalent to the fact that it is an equivalence on henselian valuation rings of finite rank over $S$. By induction, and using henselian $v$-excision, this is in turn equivalent to the fact that it is an equivalence on henselian valuation rings of rank at most one over $S$.
	\end{proof}
	
	\begin{lemma}\label{lemmaAdescentresultforcdhhighdegreesyntomiccohomology}
		Let $p$ be a prime number. Then for any integers $i \geq 0$ and $k \geq 1$, the $\mathcal{D}(\Z/p^k)$-valued presheaves $\big(L_{\emph{cdh}} \tau^{>i} \Z/p^k(i)^{\emph{syn}}\big)(-)$ and $\big(L_{\emph{cdh}} \tau^{>i} \Z/p^k(i)^{\emph{syn}}\big)(\mathbb{P}^1_{-})$ are finitary cdh sheaves on qcqs schemes, and satisfy henselian $v$-excision.
	\end{lemma}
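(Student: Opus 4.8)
The plan is to deduce everything from the structure theory of cdh sheafification of finitary Nisnevich sheaves, so most of the work is bookkeeping about the functor $X \mapsto \mathbb{P}^1_X$. Write $G := \tau^{>i}\Z/p^k(i)^{\text{syn}}$, a $\mathcal{D}(\Z/p^k)$-valued presheaf on qcqs schemes; then $L_{\text{cdh}}G$ is a cdh sheaf by construction. For the $\mathbb{P}^1$-variant, the first point is that $X \mapsto \mathbb{P}^1_X$ preserves finite limits and sends Nisnevich covers, abstract blowup squares, and cdh covers to Nisnevich covers, abstract blowup squares, and cdh covers respectively; hence precomposition with it carries cdh sheaves to cdh sheaves and commutes with the sheafification functors $L_{\text{Nis}}$ and $L_{\text{cdh}}$. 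In particular there is a natural equivalence $(L_{\text{cdh}}G)(\mathbb{P}^1_{-}) \simeq L_{\text{cdh}}\big(G(\mathbb{P}^1_{-})\big)$, so the $\mathbb{P}^1$-variant is again a cdh sheaf, and it suffices to establish finitariness and henselian $v$-excision for the two presheaves $L_{\text{cdh}}G$ and $L_{\text{cdh}}\big(G(\mathbb{P}^1_{-})\big)$.

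Next I would observe that $G(-)$ and $G(\mathbb{P}^1_{-})$ are both finitary Nisnevich sheaves: the presheaf $\Z/p^k(i)^{\text{syn}}$ is a finitary Nisnevich sheaf, so $G = \tau^{>i}\Z/p^k(i)^{\text{syn}}$ and its Nisnevich sheafification $L_{\text{Nis}}G$ are finitary because filtered colimits are $t$-exact in $\mathcal{D}(\Z/p^k)$ (this is already recorded in the proof of Lemma~\ref{lemmaGfibresequencecdhhighdegreesyntomiccohomologywithétaleandBMS}), and $G(\mathbb{P}^1_{-})$ is again a finitary Nisnevich sheaf since $X \mapsto \mathbb{P}^1_X$ commutes with cofiltered limits of qcqs schemes along affine transition maps. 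The lemma therefore reduces to the single assertion that, for every finitary Nisnevich sheaf $F$ of $\mathcal{D}(\Z/p^k)$-modules on qcqs schemes, the cdh sheafification $L_{\text{cdh}}F$ is a finitary cdh sheaf satisfying henselian $v$-excision. Its finitariness is the fact that cdh sheafification preserves finitary Nisnevich sheaves on qcqs schemes (\cite[Theorem~$2.4.15$]{elmanto_cdh_2021}).

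The remaining point, which I expect to be the only real difficulty, is henselian $v$-excision for $L_{\text{cdh}}F$; here the plan is to quote the analysis of cdh sheafification on valuation rings from \cite{elmanto_cdh_2021}. By \cite[Proposition~$3.1.8$]{elmanto_cdh_2021} a finitary Nisnevich sheaf is determined after cdh sheafification by its restriction to henselian valuation rings, and the explicit computation of that restriction is set up precisely so that $L_{\text{cdh}}F$ sends each bicartesian square $V \to V_{\mathfrak{p}}$, $V \to V/\mathfrak{p}$ — whose four terms are all henselian valuation rings by \cite[Lemma~$3.3.5$]{elmanto_cdh_2021} — to a cartesian square, that is, satisfies henselian $v$-excision. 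Applying this to $F = L_{\text{Nis}}G$ and $F = L_{\text{Nis}}\big(G(\mathbb{P}^1_{-})\big)$ then finishes the proof. One can also organise the $v$-excision step through the fibre sequence of Lemma~\ref{lemmaGfibresequencecdhhighdegreesyntomiccohomologywithétaleandBMS}: after applying $L_{\text{cdh}}$ (and using $L_{\text{cdh}}L_{\text{Nis}} = L_{\text{cdh}}$) it exhibits, with $j=i$, the presheaf $L_{\text{cdh}}\tau^{>i}\Z/p^k(i)^{\text{syn}}$ as an extension of $L_{\text{cdh}}\tau^{>i}\Z/p^k(i)^{\text{BMS}}$ by $L_{\text{cdh}}\tau^{>i}R\Gamma_{\text{ét}}(-,j_!\mu_{p^k}^{\otimes i})$, so that $v$-excision for the middle term follows from $v$-excision for the two outer ones (fibres of cartesian squares being cartesian); for the étale factor this is close to automatic, since $R\Gamma_{\text{ét}}(-,j_!\mu_{p^k}^{\otimes i})$ is itself an arc sheaf (\cite[Theorem~$1.8$]{bhatt_arc-topology_2021}) and a $v$-sheaf sends every $v$-excision square to a cartesian square. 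However, each outer term is again $L_{\text{cdh}}$ of a finitary Nisnevich sheaf, so this reformulation does not ultimately avoid the structural input from \cite{elmanto_cdh_2021} — that input is the genuine content of the argument.
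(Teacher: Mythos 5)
Your treatment of finitariness and of the cdh-sheaf property (covers are stable under base change, cdh sheafification of a finitary presheaf is finitary) matches the paper and is fine. The genuine gap is in the henselian $v$-excision step. You reduce everything to the assertion that \emph{for every finitary Nisnevich sheaf $F$, the cdh sheafification $L_{\text{cdh}}F$ satisfies henselian $v$-excision}, citing \cite[Proposition~$3.1.8$]{elmanto_cdh_2021}. That proposition only says that henselian valuation rings are conservative points for finitary cdh sheaves; it gives no relation whatsoever between the four values of $L_{\text{cdh}}F$ at the corners of the square $V,\ V_{\mathfrak p},\ V/\mathfrak p,\ V_{\mathfrak p}/\mathfrak p V_{\mathfrak p}$, and henselian $v$-excision is precisely the extra property isolated in \cite{elmanto_cdh_2021} (there shown to be equivalent to Milnor excision / cdarc descent for finitary cdh sheaves), which fails for general finitary cdh sheaves. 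The paper's actual mechanism is different and uses the specific presheaves at hand: since all four corners are henselian valuation rings, hence cdh-local, $L_{\text{cdh}}$ does not change the values there; and the constituents $\tau^{>i}R\Gamma_{\text{\'et}}(-,j_!\mu_{p^k}^{\otimes i})$ and $\tau^{>i}\Z/p^k(i)^{\text{BMS}}$ are \emph{rigid} (Gabber, resp.\ \cite[Theorem~$2.27$]{bouis_motivic_2024}), so both vertical maps in the square are equivalences — the pairs $(V,\mathfrak p)$ and $(V_{\mathfrak p},\mathfrak p V_{\mathfrak p})$ being henselian — and the square is trivially cartesian. The fibre sequence of Lemma~\ref{lemmaGfibresequencecdhhighdegreesyntomiccohomologywithétaleandBMS} then transfers this to the syntomic presheaf, and \cite[Lemma~$3.3.7$]{elmanto_cdh_2021} handles the $\mathbb{P}^1$-variant.

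Your fallback paragraph correctly identifies this fibre-sequence decomposition but then concludes that the two outer terms still require the (unjustified) general structural claim — missing that rigidity is what does the work for the BMS term. For the \'etale term, arc descent of the untruncated $R\Gamma_{\text{\'et}}(-,j_!\mu_{p^k}^{\otimes i})$ does not directly give $v$-excision for $L_{\text{cdh}}\tau^{>i}R\Gamma_{\text{\'et}}(-,j_!\mu_{p^k}^{\otimes i})$, since the truncation destroys descent; again rigidity (which passes through $\tau^{>i}$ and through evaluation on cdh-local rings) is the correct input. A minor additional issue: the asserted commutation $(L_{\text{cdh}}G)(\mathbb{P}^1_{-})\simeq L_{\text{cdh}}\big(G(\mathbb{P}^1_{-})\big)$ is not justified (cdh covers of $\mathbb{P}^1_X$ need not be pulled back from $X$) and is not needed — stability of covers under base change already shows $(L_{\text{cdh}}G)(\mathbb{P}^1_{-})$ is a cdh sheaf, and its $v$-excision follows from that of $(L_{\text{cdh}}G)(-)$ via \cite[Lemma~$3.3.7$]{elmanto_cdh_2021}.
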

	
	\begin{proof}
		The presheaf $\Z/p^k(i)^{\text{syn}}$ is finitary, and the cdh sheafification of a finitary presheaf is a finitary cdh sheaf (\cite[Lemma~$4.57$]{bouis_motivic_2024}), so the presheaf $\big(L_{\text{cdh}} \tau^{>i} \Z/p^k(i)^{\text{syn}}\big)(-)$ is a finitary cdh sheaf. Covers in a site are stable under base change, so the presheaf $$\big(L_{\text{cdh}} \tau^{>i} \Z/p^k(i)^{\text{syn}}\big)(\mathbb{P}^1_{-})$$ is also a finitary cdh sheaf. Henselian valuation rings are local rings for the cdh topology, and the presheaves $\tau^{>i} R\Gamma_{\text{ét}}(-,j_!\mu_{p^k}^{\otimes i})$ and $\tau^{>i} \Z/p^k(i)^{\text{BMS}}$ are rigid (\cite{gabber_affine_1994} and \cite[Theorem~$2.27$]{bouis_motivic_2024}), so the presheaves $$\big(L_{\text{cdh}} \tau^{>i} R\Gamma_{\text{ét}}(-,j_!\mu_{p^k}^{\otimes i})\big)(-) \quad \text{and} \quad \big(L_{\text{cdh}} \tau^{>i} \Z/p^k(i)^{\text{BMS}}\big)(-)$$ satisfy henselian $v$-excision. By Lemma~\ref{lemmaGfibresequencecdhhighdegreesyntomiccohomologywithétaleandBMS} (applied for $j=i$ and after cdh sheafification), the presheaf $\big(L_{\text{cdh}} \tau^{>i} \Z/p^k(i)^{\text{syn}}\big)(-)$ then satisfies henselian $v$-excision. Finally, the presheaf $$\big(L_{\text{cdh}} \tau^{>i} \Z/p^k(i)^{\text{syn}}\big)(\mathbb{P}^1_{-})$$ satisfies henselian $v$\nobreakdash-excision, as a consequence of \cite[Lemma~$3.3.7$]{elmanto_cdh_2021}, and henselian $v$-excision for the presheaf $\big(L_{\text{cdh}} \tau^{>i} \Z/p^k(i)^{\text{syn}}\big)(-)$.
	\end{proof}

    \begin{lemma}\label{lemmaDcdhsheafificationofrigidisweaklyrigid}
		Let $B$ be a commutative ring, $\pi$ be an element of $B$, $\mathcal{C}$ be a presentable $\infty$-category, and $F : \emph{Alg}_{B} \rightarrow \mathcal{C}$ be a finitary and rigid functor. If the functor $F$ is zero on $B[\tfrac{1}{\pi}]$-algebas, then for every qcqs $B$-scheme $X$, the natural map
		$$\big(L_{\emph{cdh}} F\big)(X) \longrightarrow \big(L_{\emph{cdh}} F\big)(X_{B/\pi})$$
		is an equivalence in the $\infty$-category $\mathcal{C}$.
	\end{lemma}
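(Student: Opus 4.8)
The plan is to reduce the assertion to the case of henselian valuation rings, where it follows by playing off the rigidity of $F$ against the fact that henselian valuation rings are local for the cdh topology. Write $G := L_{\text{cdh}} F$. It is a finitary cdh sheaf on qcqs $B$-schemes (\cite[Lemma~$4.57$]{bouis_motivic_2024}), and it is nilinvariant, since surjective closed immersions are cdh covers. Because base change along $\text{Spec}(B/\pi) \to \text{Spec}(B)$ preserves Nisnevich covers and abstract blowup squares, and commutes with cofiltered limits of qcqs schemes along affine transition maps, the presheaf $X \mapsto G(X_{B/\pi})$ on qcqs $B$-schemes is again a finitary cdh sheaf. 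By \cite[Proposition~$3.1.8\,(2)$]{elmanto_cdh_2021}, the natural transformation $G(-) \to G(-_{B/\pi})$ is therefore an equivalence if and only if, for every henselian valuation ring $V$ with a map $\text{Spec}(V) \to \text{Spec}(B)$, the map $G(V) \to G(V/\pi V)$ is an equivalence in $\mathcal{C}$.

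So I would fix such a $V$. Since $V$ is local for the cdh topology, the canonical map $F(V) \to G(V)$ is an equivalence. If $\pi$ is a unit in $V$, then $V$ is a $B[\tfrac1\pi]$-algebra, so $F(V)$ is a zero object; on the other hand $V/\pi V = 0$, so $G(V/\pi V) = G(\emptyset)$ is a zero object as well, and the map is an equivalence. If $\pi$ is not a unit in $V$, then $\pi V$ is contained in the maximal ideal of the henselian local ring $V$, so $(V,\pi V)$ is a henselian pair and rigidity of $F$ gives an equivalence $F(V) \xrightarrow{\sim} F(V/\pi V)$. Moreover $\sqrt{\pi V}$ is a prime ideal of $V$ — radicals of ideals in a valuation ring are prime, as one sees by writing $b = ac$ when $v(a) \le v(b)$ and observing that $(ab)^n \in \pi V$ forces $b^{2n} = (ab)^n c^n \in \pi V$ — so the reduction $(V/\pi V)_{\text{red}} = V/\sqrt{\pi V}$ is again a henselian valuation ring. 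Consequently $F(V/\pi V) \xrightarrow{\sim} F((V/\pi V)_{\text{red}})$ by another application of rigidity (to the nilradical of $V/\pi V$), $F((V/\pi V)_{\text{red}}) \xrightarrow{\sim} G((V/\pi V)_{\text{red}})$ by cdh-locality, and $G(V/\pi V) \xrightarrow{\sim} G((V/\pi V)_{\text{red}})$ by nilinvariance of $G$; by naturality of $F \to G$ these identifications fit into a commutative square showing that the natural map $G(V) \to G(V/\pi V)$, post-composed with the equivalence $G(V/\pi V) \xrightarrow{\sim} G((V/\pi V)_{\text{red}})$, agrees with the composite of equivalences $F(V) \xrightarrow{\sim} F(V/\pi V) \xrightarrow{\sim} F((V/\pi V)_{\text{red}}) \xrightarrow{\sim} G((V/\pi V)_{\text{red}})$, hence is an equivalence, and therefore so is $G(V) \to G(V/\pi V)$.

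I expect the only mildly delicate point to be the applicability of \cite[Proposition~$3.1.8\,(2)$]{elmanto_cdh_2021} over the possibly non-Noetherian, infinite-dimensional base $\text{Spec}(B)$: if a finiteness hypothesis on the base is needed there, I would arrange it by writing $B$ as a filtered colimit of its finitely generated subrings and invoking that $F$, and hence $G$ and $X \mapsto G(X_{B/\pi})$, are finitary. Everything else — nilinvariance of cdh sheaves, the henselian-pair assertions, the primality of $\sqrt{\pi V}$, and the cdh-locality of henselian valuation rings — is standard, so I do not anticipate a real obstacle.
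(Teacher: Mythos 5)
Your proposal is correct and follows essentially the same route as the paper: reduce via finitariness and cdh-descent to henselian valuation rings $V$ over $B$, use nilinvariance of the cdh sheaf to replace $V/\pi V$ by $V/\sqrt{(\pi)}$, observe that the latter is again a henselian valuation ring (hence cdh-local), and conclude by the vanishing hypothesis when $\pi$ is invertible and by rigidity of the $\pi$-henselian pair otherwise. The only cosmetic differences are that you factor the rigidity step through $F(V/\pi V)$ rather than passing directly to $F(V/\sqrt{(\pi)})$, and that you explicitly flag (and correctly repair, via writing $B$ as a filtered colimit of finitely generated subrings) the finiteness hypothesis needed to invoke the valuation-ring criterion, a point the paper leaves implicit.
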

	
	\begin{proof}
		Covers in a site are stable under base change, and the cdh sheafification of a finitary presheaf is finitary, so the presheaf $$(L_{\text{cdh}} F)(-_{B/\pi})$$ is a finitary cdh sheaf on qcqs $B$-schemes. It then suffices to prove that for every henselian valuation ring $V$ which is a $B$-algebra, the natural map
		$$F(V) \longrightarrow \big(L_{\text{cdh}} F\big)(V/\pi)$$
		is an equivalence in the $\infty$-category $\mathcal{C}$. The presheaf $L_{\text{cdh}} F$ is a finitary cdh sheaf, so it is invariant under nilpotent extensions. In particular, the natural map
		$$\big(L_{\text{cdh}} F\big)(V/\pi) \longrightarrow \big(L_{\text{cdh}} F\big)(V/\sqrt{(\pi)})$$
		is an equivalence in the $\infty$-category $\mathcal{C}$. The quotient of a henselian valuation ring by one of its prime ideals is a henselian valuation ring, so the target of the previous map is naturally identified with the object $F(V/\sqrt{(\pi)}) \in \mathcal{C}$. We finally prove that the natural map
		$$F(V) \longrightarrow F(V/\sqrt{(\pi)})$$
		is an equivalence in the $\infty$-category $\mathcal{C}$. If $\pi$ is invertible in the henselian valuation ring $V$, then both terms are zero by hypothesis on the functor $F$. And if $\pi$ is not invertible in $V$, then in particular the henselian local ring $V$ is $\pi$-henselian, and the result is a consequence of rigidity for the functor $F$.
	\end{proof}
	
	\begin{corollary}\label{corollaryEhighdegreecdhBMSsyntomiccohoisweaklyrigid}
		Let $p$ be a prime number, $B$ be a discrete valuation ring of mixed characteristic $(0,p)$, $\pi$ be a uniformizer of $B$, and $X$ be a qcqs $B$-scheme. Then for any integers $i \geq 0$ and $k \geq 1$, the natural map
		$$\big(L_{\emph{cdh}} \tau^{>i} \Z/p^k(i)^{\emph{BMS}}\big)(X) \longrightarrow \big(L_{\emph{cdh}} \tau^{>i} \Z/p^k(i)^{\emph{BMS}}\big)(X_{B/\pi})$$
		is an equivalence in the derived category $\mathcal{D}(\Z/p^k)$. 
	\end{corollary}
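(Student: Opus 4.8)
The plan is to deduce this from Lemma~\ref{lemmaDcdhsheafificationofrigidisweaklyrigid}, applied with the ring $B$, the element $\pi \in B$, the presentable $\infty$-category $\mathcal{D}(\Z/p^k)$, and the functor $F := \tau^{>i}\Z/p^k(i)^{\emph{BMS}}(-)$ on $B$-algebras. It then suffices to check the three hypotheses of that lemma: that $F$ is finitary, that $F$ is rigid, and that $F$ vanishes on $B[\tfrac{1}{\pi}]$-algebras.

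First, the presheaf $\Z/p^k(i)^{\emph{BMS}}$ is finitary, and the truncation functors $\tau^{>i}$ on $\mathcal{D}(\Z/p^k)$ commute with filtered colimits, so $F = \tau^{>i}\Z/p^k(i)^{\emph{BMS}}$ is finitary. Second, the functor $\tau^{>i}\Z/p^k(i)^{\emph{BMS}}$ is rigid: this is recorded in the proof of Lemma~\ref{lemmaAdescentresultforcdhhighdegreesyntomiccohomology}, and ultimately rests on \cite[Theorem~$2.27$]{bouis_motivic_2024}. Third, since $B$ is a discrete valuation ring of mixed characteristic $(0,p)$ and $\pi$ is a uniformizer, the prime $p$ equals a unit times a power of $\pi$, so inverting $\pi$ inverts $p$; hence every $B[\tfrac{1}{\pi}]$-algebra is a $\Z[\tfrac{1}{p}]$-algebra. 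On any commutative ring $R$ in which $p$ is invertible, the derived $p$-completion $R^\wedge_p$ vanishes, and since $\Z/p^k(i)^{\emph{BMS}}(R)$ depends only on $R^\wedge_p$, one gets $\Z/p^k(i)^{\emph{BMS}}(R) = 0$, and a fortiori $F(R)=0$. Thus $F$ is zero on $B[\tfrac{1}{\pi}]$-algebras.

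With these three facts in hand, Lemma~\ref{lemmaDcdhsheafificationofrigidisweaklyrigid} applies directly: for every qcqs $B$-scheme $X$, the natural map $(L_{\emph{cdh}}F)(X) \to (L_{\emph{cdh}}F)(X_{B/\pi})$ is an equivalence in $\mathcal{D}(\Z/p^k)$, which is exactly the assertion. I do not expect a genuine obstacle here; the only point requiring care is the vanishing on $\pi$-inverted algebras, where one must use that $p$ and $\pi$ have the same radical in the discrete valuation ring $B$ — a feature special to $B$ having residue characteristic $p$ — so that $B[\tfrac{1}{\pi}]$ really is $p$-inverted and $\Z/p^k(i)^{\emph{BMS}}$ dies there.
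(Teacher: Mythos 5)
Your proposal is correct and is essentially identical to the paper's own proof: both apply Lemma~\ref{lemmaDcdhsheafificationofrigidisweaklyrigid} to $F = \tau^{>i}\Z/p^k(i)^{\text{BMS}}$ after checking finitariness, rigidity, and vanishing on $B[\tfrac{1}{\pi}]$-algebras (the paper cites \cite[Theorem~$2.25\,(2)$, Theorem~$2.27$, Remark~$2.16$]{bouis_motivic_2024} for these three points). Your extra observation that inverting $\pi$ inverts $p$ in a mixed characteristic DVR is exactly the implicit step the paper uses to pass from $\Z[\tfrac{1}{p}]$-algebras to $B[\tfrac{1}{\pi}]$-algebras.
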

	
	\begin{proof}
		The functor 
		$$\tau^{>i} \Z/p^k(i)^{\text{BMS}} : \text{Alg}_{B} \longrightarrow \mathcal{D}(\Z/p^k)$$ 
		is finitary (\cite[Theorem~$2.25\,(2)$]{bouis_motivic_2024}) and rigid (\cite[Theorem~$2.27$]{bouis_motivic_2024}). The functor $\Z/p^k(i)^{\text{BMS}}$ is moreover zero on $\Z[\tfrac{1}{p}]$-algebras (\cite[Remark~$2.16$]{bouis_motivic_2024}), so the result is a consequence of Lemma~\ref{lemmaDcdhsheafificationofrigidisweaklyrigid}.
	\end{proof}
	
	\begin{remark}\label{remarkhighdegreecdhsheafifiedsyntomicisweaklyrigidgeneralised}
		One can prove similarly that Corollary~\ref{corollaryEhighdegreecdhBMSsyntomiccohoisweaklyrigid} holds for $B$ a general valuation ring of mixed characteristic $(0,p)$, where the base change to the characteristic $p$ field $B/\pi$ is replaced by the base change to the characteristic $p$ valuation ring $B/\sqrt{(p)}$.
	\end{remark}
	
	For every qcqs scheme $X$, the compatibilities between the motivic and syntomic first Chern classes of Section~\ref{subsectionfirstChernclasses} imply that the natural diagram
	$$\begin{tikzcd}[column sep = huge]
		\Z/p^k(i)^{\text{mot}}(X) \oplus \Z/p^k(i-1)^{\text{mot}}(X)[-2] \text{ } \ar[r,"\pi^\ast \oplus c_1^{\text{mot}}(\mathcal{O}(1))\pi^\ast"] \ar[d] & \Z/p^k(i)^{\text{mot}}(\mathbb{P}^1_X) \ar[d] \\
		\Z/p^k(i)^{\text{syn}}(X) \oplus \Z/p^k(i-1)^{\text{syn}}(X)[-2] \text{ } \ar[r,"\pi^\ast \oplus c_1^{\text{syn}}(\mathcal{O}(1))\pi^\ast"] & \Z/p^k(i)^{\text{syn}}(\mathbb{P}^1_X)
	\end{tikzcd}$$
	is commutative. We define the natural map
	$$\big(L_{\text{cdh}} \tau^{>i} \Z/p^k(i)^{\text{syn}}\big)(X) \oplus \big(L_{\text{cdh}} \tau^{>i-1} \Z/p^k(i-1)^{\text{syn}}\big)(X)[-2] \longrightarrow \big(L_{\text{cdh}} \tau^{>i} \Z/p^k(i)^{\text{syn}}\big)(\mathbb{P}^1_X)$$
	in the derived category $\mathcal{D}(\Z/p^k)$ as the map induced, via \cite[Theorem~$5.10$]{bouis_motivic_2024}, by taking cofibres along the vertical maps of this commutative diagram.
	
	\begin{proposition}\label{propositionCP1bundleformulaformixedcharvaluationringrank1}
		Let $p$ be a prime number, and $V$ be a rank one henselian valuation ring of mixed characteristic $(0,p)$. Then for any integers $i \geq 0$ and $k \geq 1$, the natural map
		$$\tau^{>i} \Z/p^k(i)^{\emph{syn}}(V) \oplus \big(\tau^{>i-1} \Z/p^k(i-1)^{\emph{syn}}(V)\big)[-2] \longrightarrow \big(L_{\emph{cdh}} \tau^{>i} \Z/p^k(i)^{\emph{syn}}\big)(\mathbb{P}^1_V)$$
		is an equivalence in the derived category $\mathcal{D}(\Z/p^k)$.
	\end{proposition}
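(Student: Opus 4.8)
The plan is to decompose the syntomic first Chern class map along the fibre sequence of Lemma~\ref{lemmaGfibresequencecdhhighdegreesyntomiccohomologywithétaleandBMS}. Cdh-sheafifying that fibre sequence in weights $i$ and $i-1$ (with truncation levels $i$ and $i-1$ respectively) and invoking the compatibility of the motivic and syntomic first Chern classes recalled in Section~\ref{subsectionfirstChernclasses}, the map of the proposition becomes the middle one in a map of fibre sequences whose outer columns are a ``$j_!\mu_{p^k}$-étale part'' and a ``$\Z/p^k$-BMS part''; since a map of fibre sequences whose two outer maps are equivalences has its middle map an equivalence, it suffices to treat each outer part separately. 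Throughout I use that, $V$ being a henselian valuation ring, it is a local ring for the cdh topology, so $(L_{\text{cdh}}G)(V) = G(V)$ for the finitary presheaves $G$ at hand; and that, since $V$ is $p$-henselian, the functors $R\Gamma_{\text{ét}}(-,j_!\mu_{p^k}^{\otimes\bullet})$ being rigid (\cite{gabber_affine_1994}) and vanishing on $\F_p$-algebras forces $R\Gamma_{\text{ét}}(V,j_!\mu_{p^k}^{\otimes\bullet}) = R\Gamma_{\text{ét}}(V/p,j_!\mu_{p^k}^{\otimes\bullet}) = 0$, whence $\Z/p^k(j)^{\text{syn}}(V) \simeq \Z/p^k(j)^{\text{BMS}}(V)$ for all $j$.

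\textbf{The BMS part.} By rigidity for $\tau^{>j}\Z/p^k(j)^{\text{BMS}}$ (\cite[Theorem~2.27]{bouis_motivic_2024}) applied to the henselian pair $(V,\mathfrak{m}_V)$, the source of the BMS map is identified with $\tau^{>i}\Z/p^k(i)^{\text{BMS}}(k) \oplus \tau^{>i-1}\Z/p^k(i-1)^{\text{BMS}}(k)[-2]$, where $k$ denotes the residue field of $V$, a field of characteristic $p$ (for $V$ of rank one, the radical of $(p)$ is $\mathfrak{m}_V$, so $V/\sqrt{(p)} = k$). On the target side, weak rigidity for $L_{\text{cdh}}\tau^{>i}\Z/p^k(i)^{\text{BMS}}$ (Corollary~\ref{corollaryEhighdegreecdhBMSsyntomiccohoisweaklyrigid} together with Remark~\ref{remarkhighdegreecdhsheafifiedsyntomicisweaklyrigidgeneralised}, applied with base $V$) gives $(L_{\text{cdh}}\tau^{>i}\Z/p^k(i)^{\text{BMS}})(\mathbb{P}^1_V) \simeq (L_{\text{cdh}}\tau^{>i}\Z/p^k(i)^{\text{BMS}})(\mathbb{P}^1_k)$. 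Since $j_!\mu_{p^k}^{\otimes j}$ vanishes in characteristic $p$, we have $\Z/p^k(j)^{\text{BMS}} = \Z/p^k(j)^{\text{syn}}$ on $\F_p$-schemes, so under these identifications the BMS map becomes the analogous syntomic map over the field $k$, which is an equivalence by the equicharacteristic case (\cite{elmanto_motivic_2023}; $k$ being a field of characteristic $p$ is $F$-smooth).

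\textbf{The étale part (and the footnote to Lemma~\ref{lemmaFweirdétalecohomologygroupisconcentratedincohomologicaldegreei+1}).} The source of the étale map is $\tau^{>i}R\Gamma_{\text{ét}}(V,j_!\mu_{p^k}^{\otimes i}) \oplus \tau^{>i-1}R\Gamma_{\text{ét}}(V,j_!\mu_{p^k}^{\otimes(i-1)})[-2] = 0$ by the rigidity recalled above. It remains to show that the target $(L_{\text{cdh}}\tau^{>i}R\Gamma_{\text{ét}}(-,j_!\mu_{p^k}^{\otimes i}))(\mathbb{P}^1_V)$ vanishes, which is exactly the assertion promised in the footnote to Lemma~\ref{lemmaFweirdétalecohomologygroupisconcentratedincohomologicaldegreei+1}. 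By that lemma this complex is concentrated in degree $i+1$ and, using arc descent for $R\Gamma_{\text{ét}}(-,j_!\mu_{p^k}^{\otimes i})$ and the vanishing $R\Gamma_{\text{ét}}(\mathbb{P}^1_V,j_!\mu_{p^k}^{\otimes i}) = 0$ (the projective bundle formula for $R\Gamma_{\text{ét}}(-,j_!\mu_{p^k}^{\otimes\bullet})$ reduces this to $R\Gamma_{\text{ét}}(V,j_!\mu_{p^k}^{\otimes\bullet}) = 0$), it is identified with $(L_{\text{cdh}}\tau^{\leq i}R\Gamma_{\text{ét}}(-,j_!\mu_{p^k}^{\otimes i}))(\mathbb{P}^1_V)[1]$. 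Finally, by rigidity the presheaf $R\Gamma_{\text{ét}}(-,j_!\mu_{p^k}^{\otimes i})$ is killed at every point of $\mathbb{P}^1_V$ lying over $V/p$ (these have residue characteristic $p$), so its cdh-local cohomology sheaves on $\mathbb{P}^1_V$ are supported on the characteristic-zero open $\mathbb{P}^1_{V[1/p]}$; combining this with the degree-$(i{+}1)$ concentration and with $R\Gamma_{\text{ét}}(\mathbb{P}^1_V,j_!\mu_{p^k}^{\otimes i}) = 0$ forces the vanishing of the target. Hence the étale map is an equivalence, and the proposition follows.

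\textbf{Main obstacle.} The delicate point is this last vanishing of the higher $j_!\mu_{p^k}$-étale cohomology over $\mathbb{P}^1_V$ --- equivalently, the footnote to Lemma~\ref{lemmaFweirdétalecohomologygroupisconcentratedincohomologicaldegreei+1} --- since the étale summand is precisely the part that does not submit directly to weak rigidity; in contrast, once the BMS summand is reduced to the equicharacteristic case by Corollary~\ref{corollaryEhighdegreecdhBMSsyntomiccohoisweaklyrigid} and Remark~\ref{remarkhighdegreecdhsheafifiedsyntomicisweaklyrigidgeneralised}, the remaining bookkeeping (identifications via rigidity and the cdh-locality of $V$) is routine.
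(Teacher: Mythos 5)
Your overall architecture matches the paper's: reduce the source to BMS cohomology via $p$-henselianness of $V$, split the target along the (cdh-sheafified) fibre sequence of Lemma~\ref{lemmaGfibresequencecdhhighdegreesyntomiccohomologywithétaleandBMS}, and handle the BMS summand by rigidity on the source, Corollary~\ref{corollaryEhighdegreecdhBMSsyntomiccohoisweaklyrigid} plus nil-invariance on the target, and the known $\mathbb{P}^1$-bundle formula over the residue field. The gap is in the étale summand, which is precisely the hard point of the proposition. You correctly reduce it to the vanishing of $\big(L_{\text{cdh}}\tau^{>i}R\Gamma_{\text{ét}}(-,j_!\mu_{p^k}^{\otimes i})\big)(\mathbb{P}^1_V)$, but your argument for that vanishing does not work. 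Vanishing of the stalks of this cdh sheaf at henselian valuation rings of residue characteristic $p$ does not let you compute its global sections over $\mathbb{P}^1_V$ from the generic fibre: there is no open/closed localization sequence in the cdh topology, and the sheaf is genuinely nonzero at points of residue characteristic zero (its stalk at such a point $W$ is $\tau^{>i}R\Gamma_{\text{ét}}(W,\mu_{p^k}^{\otimes i})$, which is nonzero for fields of large cohomological dimension). Thus the combination of "concentration in degree $i+1$", "$R\Gamma_{\text{ét}}(\mathbb{P}^1_V,j_!\mu_{p^k}^{\otimes i})=0$", and "support over $V[\tfrac{1}{p}]$" merely restates Lemma~\ref{lemmaFweirdétalecohomologygroupisconcentratedincohomologicaldegreei+1}; it does not force the degree-$(i+1)$ group to vanish, and no mechanism in your write-up produces that vanishing.

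The paper's actual mechanism is global in the weight rather than local on $\mathbb{P}^1_V$: by Example~\ref{exampleSelmerKtheory}, the complexes $\big(L_{\text{cdh}}\tau^{>i}R\Gamma_{\text{ét}}(-,j_!\mu_{p^k}^{\otimes i})\big)(\mathbb{P}^1_V)$, as $i$ varies, are the graded pieces of the filtered $\mathbb{P}^1$-bundle-formula defect of $\text{cofib}\big(\text{K}(-;\Z/p^k)\rightarrow\text{K}^{\text{Sel}}(-;\Z/p^k)\big)$. This cofibre is an additive invariant of $\Z$-linear $\infty$-categories, so its defect vanishes and the associated spectral sequence converges to zero; Lemma~\ref{lemmaFweirdétalecohomologygroupisconcentratedincohomologicaldegreei+1} then forces degeneration, hence the vanishing of every graded piece. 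This degeneration argument via Selmer $K$-theory is the essential input you are missing; without it (or a substitute such as $F$-smoothness of $V$, which is not available for general rank one henselian valuation rings of mixed characteristic), the étale summand — and hence the proposition — is not proved.
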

	
	\begin{proof}
		The valuation ring $V$ is $p$-henselian, so the natural maps
		$$\tau^{>i} \Z/p^k(i)^{\text{syn}}(V)  \longrightarrow \tau^{>i} \Z/p^k(i)^{\text{BMS}}(V)$$
		and
		$$\big(\tau^{>i-1}\Z/p^k(i-1)^{\text{syn}}(V)\big)[-2] \longrightarrow \big(\tau^{>i-1} \Z/p^k(i)^{\text{BMS}}(V)\big)[-2]$$
		are equivalences in the derived category $\mathcal{D}(\Z/p^k)$. We first prove that the induced map
		$$\tau^{>i} \Z/p^k(i)^{\text{BMS}}(V) \oplus \big(\tau^{>i-1} \Z/p^k(i-1)^{\text{BMS}}(V)\big)[-2] \longrightarrow \big(L_{\text{cdh}} \tau^{>i} \Z/p^k(i)^{\text{BMS}}\big)(\mathbb{P}^1_V)$$
		is an equivalence in the derived category $\mathcal{D}(\Z/p^k)$. Let $\kappa$ be the residue field of $V$. By the rigidity property \cite[Theorem~$2.27$]{bouis_motivic_2024}, the natural maps
		$$\tau^{>i} \Z/p^k(i)^{\text{BMS}}(V) \longrightarrow \tau^{>i} \Z/p^k(i)^{\text{BMS}}(\kappa)$$
		and
		$$\big(\tau^{>i-1} \Z/p^k(i-1)^{\text{BMS}}(V)\big)[-2] \longrightarrow \big(\tau^{>i-1} \Z/p^k(i-1)^{\text{BMS}}(\kappa)\big)[-2]$$
		are equivalences in the derived category $\mathcal{D}(\Z/p^k)$. By Corollary~\ref{corollaryEhighdegreecdhBMSsyntomiccohoisweaklyrigid}, the natural map
		$$\big(L_{\text{cdh}} \tau^{>i} \Z/p^k(i)^{\text{BMS}}\big)(\mathbb{P}^1_V) \longrightarrow \big(L_{\text{cdh}} \tau^{>i} \Z/p^k(i)^{\text{BMS}}\big)(\mathbb{P}^1_{V/p})$$
		is an equivalence in the derived category $\mathcal{D}(\Z/p^k)$. The presheaf $(L_{\text{cdh}} \tau^{>i} \Z/p^k(i)^{\text{BMS}})(\mathbb{P}^1_{-})$ is moreover a finitary cdh sheaf, so it is invariant under nilpotent extensions. In particular, the natural map
		$$\big(L_{\text{cdh}} \tau^{>i} \Z/p^k(i)^{\text{BMS}}\big)(\mathbb{P}^1_{V/p}) \longrightarrow \big(L_{\text{cdh}} \tau^{>i} \Z/p^k(i)^{\text{BMS}}\big)(\mathbb{P}^1_{\kappa})$$
		is an equivalence in the derived category $\mathcal{D}(\Z/p^k)$. It then suffices to prove that the natural map
		$$\tau^{>i} \Z/p^k(i)^{\text{BMS}}(\kappa) \oplus \big(\tau^{>i-1} \Z/p^k(i-1)^{\text{BMS}}(\kappa)\big)[-2] \longrightarrow \big(L_{\text{cdh}} \tau^{>i} \Z/p^k(i)^{\text{BMS}}\big)(\mathbb{P}^1_{\kappa})$$
		is an equivalence in the derived category $\mathcal{D}(\Z/p^k)$, and this is a consequence of the $\mathbb{P}^1$-bundle formula on characteristic $p$ fields for the presheaves $\Z/p^k(i)^{\text{cdh}}$ (\cite[Section~$8$]{bachmann_A^1-invariant_2024}) and $L_{\text{cdh}} \Z/p^k(i)^{\text{BMS}}$ (\cite[Lemma~$5.17$]{elmanto_motivic_2023}).
		
		We prove now that the natural map
		$$\tau^{>i} \Z/p^k(i)^{\text{syn}}(V) \oplus \big(\tau^{>i-1} \Z/p^k(i-1)^{\text{syn}}(V)\big)[-2] \longrightarrow \big(L_{\text{cdh}} \tau^{>i} \Z/p^k(i)^{\text{syn}}\big)(\mathbb{P}^1_V)$$
		is an equivalence in the derived category $\mathcal{D}(\Z/p^k)$. By Lemma~\ref{lemmaGfibresequencecdhhighdegreesyntomiccohomologywithétaleandBMS} (applied for $j=i$ and after cdh sheafification), we just proved that the cofibre of this map is naturally identified with the complex
		$$\big(L_{\text{cdh}} \tau^{>i} R\Gamma_{\text{ét}}(-,j_!\mu_{p^k}^{\otimes i})\big)(\mathbb{P}^1_V) \in \mathcal{D}(\Z/p^k).$$
		By Example~\ref{exampleSelmerKtheory}, these complexes, indexed by integers $i \geq 0$, form the graded pieces of the filtered spectrum defined as the cofibre of the natural map of filtered spectra
		$$\text{Fil}^\star_{\text{mot}} \text{K}(X;\Z/p^k) \longrightarrow \text{Fil}^\star_{\text{mot}} \text{K}^{\text{Sel}}(X;\Z/p^k).$$
		The cofibre of the natural map $\text{K}(-;\Z/p^k) \rightarrow \text{K}^{\text{Sel}}(-;\Z/p^k)$, as a cofibre of two additive invariants of $\Z$-linear $\infty$-categories, is an additive invariant of $\Z$-linear $\infty$-categories and, as such, satisfies the $\mathbb{P}^1$-bundle formula (\cite[Lemma~$5.6$]{elmanto_motivic_2023}). This filtration then induces a spectral sequence
		$$E_2^{i,j} = \text{H}^{i-j}\big(\big(L_{\text{cdh}} \tau^{>-j} R\Gamma_{\text{ét}}(-,j_!\mu_{p^k}^{\otimes (-j)})\big)(\mathbb{P}^1_V)\big) \Longrightarrow 0.$$
		By Lemma~\ref{lemmaFweirdétalecohomologygroupisconcentratedincohomologicaldegreei+1}, for every integer $i \geq 0$, the complex $\big(L_{\text{cdh}} \tau^{>i} R\Gamma_{\text{ét}}(-,j_!\mu_{p^k}^{\otimes i})\big)(\mathbb{P}^1_V) \in \mathcal{D}(\Z/p^k)$ is concentrated in degree $i+1$, so this spectral sequence degenerates. This implies the desired equivalence.
	\end{proof}
	
	\begin{remark}\label{remarkP^1bundleformulaforfields}
		The proof of Proposition~\ref{propositionCP1bundleformulaformixedcharvaluationringrank1} uses a reduction to the case of fields of characteristic $p$, where the result is a consequence of the $\mathbb{P}^1$-bundle formula for the presheaves $\Z/p^k(i)^{\text{cdh}}$ (\cite[Section~$8$]{bachmann_A^1-invariant_2024}) and $L_{\text{cdh}} \Z/p^k(i)^{\text{BMS}}$ (\cite[Lemma~$5.17$]{elmanto_motivic_2023}). It is however possible to bypass these two results and prove directly the $\mathbb{P}^1$-bundle formula on fields of characteristic~$p$ for the presheaves $L_{\text{cdh}} \tau^{>i} \Z/p^k(i)^{\text{BMS}}$, by imitating the degeneration argument of \cite[Lemma~$5.17$]{elmanto_motivic_2023}.
	\end{remark}
	
	\begin{proposition}\label{propositionP^1bundleformulaforcdhlocalhighdegreesyntomiccohomology}
		Let $X$ be a qcqs scheme, and $p$ be a prime number. Then for any integers $i \geq 0$ and $k \geq 1$, the natural map
		$$\big(L_{\emph{cdh}} \tau^{>i} \Z/p^k(i)^{\emph{syn}}\big)(X) \oplus \big(L_{\emph{cdh}} \tau^{>i-1} \Z/p^k(i-1)^{\emph{syn}}\big)(X)[-2] \longrightarrow \big(L_{\emph{cdh}} \tau^{>i} \Z/p^k(i)^{\emph{syn}}\big)(\mathbb{P}^1_X)$$
		is an equivalence in the derived category $\mathcal{D}(\Z/p^k)$.
	\end{proposition}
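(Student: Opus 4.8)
The plan is to deduce the statement from Proposition~\ref{propositionCP1bundleformulaformixedcharvaluationringrank1} by a cdh-descent argument. Since all the presheaves in play are finitary, one may restrict attention to finitely presented $\Z$-schemes and hence work over $S = \text{Spec}(\Z)$, which has finite valuative dimension. By Lemma~\ref{lemmaAdescentresultforcdhhighdegreesyntomiccohomology} (applied in the weights $i$ and $i-1$; the shift $[-2]$ does not affect any of the relevant properties), the source and the target of the map are finitary cdh sheaves satisfying henselian $v$-excision, so by Lemma~\ref{lemmaBhvexcisivefinitarycdhsheavesimplysufficientonrankatmost1henselianvaluationrings} it suffices to show the map is an equivalence after evaluation at every henselian valuation ring $V$ of rank at most one. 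I would then distinguish cases according to the arithmetic of $V$; when $V$ has rank one and mixed characteristic $(0,p)$, the desired equivalence is exactly Proposition~\ref{propositionCP1bundleformulaformixedcharvaluationringrank1}.

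Suppose next that $V$ is an $\F_p$-algebra (a field, or a rank-one valuation ring, of characteristic $p$). Then $\Z/p^k(i)^{\text{syn}}$ agrees with $\Z/p^k(i)^{\text{BMS}}$ on $V$-algebras, since $j_!\mu_{p^k}^{\otimes i}$ vanishes on $\F_p$-schemes. Applying $L_{\text{cdh}}$ to the fibre sequence $\Z/p^k(i)^{\text{mot}} \to \Z/p^k(i)^{\text{syn}} \to L_{\text{cdh}}\tau^{>i}\Z/p^k(i)^{\text{syn}}$ of \cite[Theorem~5.10]{bouis_motivic_2024} identifies, over $\F_p$-schemes, the presheaf $L_{\text{cdh}}\tau^{>i}\Z/p^k(i)^{\text{syn}}$ with the cofibre of $\Z/p^k(i)^{\text{cdh}} \to L_{\text{cdh}}\Z/p^k(i)^{\text{BMS}}$. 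The $\mathbb{P}^1$-bundle formula holds over $\F_p$-schemes for $\Z/p^k(i)^{\text{cdh}}$ (\cite[Section~8]{bachmann_A^1-invariant_2024}, characteristic $p$ valuation rings being $F$-smooth) and for $L_{\text{cdh}}\Z/p^k(i)^{\text{BMS}}$ (\cite[Lemma~5.17]{elmanto_motivic_2023}), hence for their cofibre, and the compatibility of the motivic and syntomic first Chern classes recorded in Section~\ref{subsectionfirstChernclasses} identifies this with the map at hand.

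Suppose finally that $p$ is invertible in $V$ (equal characteristic zero, or mixed characteristic $(0,\ell)$ with $\ell \neq p$). Then $\Z/p^k(i)^{\text{syn}} \simeq R\Gamma_{\text{ét}}(-,\mu_{p^k}^{\otimes i})$ on $V$-algebras, so the map becomes the analogous one for $L_{\text{cdh}}\tau^{>i}R\Gamma_{\text{ét}}(-,\mu_{p^k}^{\otimes i})$. Here I would exploit the arc-descent fibre sequence
$$\big(L_{\text{cdh}}\tau^{\leq i}R\Gamma_{\text{ét}}(-,\mu_{p^k}^{\otimes i})\big)(-) \longrightarrow R\Gamma_{\text{ét}}(-,\mu_{p^k}^{\otimes i}) \longrightarrow \big(L_{\text{cdh}}\tau^{>i}R\Gamma_{\text{ét}}(-,\mu_{p^k}^{\otimes i})\big)(-)$$
as in Lemma~\ref{lemmaFweirdétalecohomologygroupisconcentratedincohomologicaldegreei+1}: étale cohomology satisfies the $\mathbb{P}^1$-bundle formula, and over $\Z[\tfrac{1}{p}]$-schemes the left-hand term is identified, again via \cite[Theorem~5.10]{bouis_motivic_2024}, with $\Z/p^k(i)^{\text{cdh}}$, which satisfies the $\mathbb{P}^1$-bundle formula by \cite{bachmann_A^1-invariant_2024} ($F$-smoothness at $p$ being automatic when $p$ is invertible). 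Hence so does the cofibre $L_{\text{cdh}}\tau^{>i}R\Gamma_{\text{ét}}(-,\mu_{p^k}^{\otimes i})$, which completes the case analysis and the proof.

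The bulk of the genuinely new content is already carried by Proposition~\ref{propositionCP1bundleformulaformixedcharvaluationringrank1}; in the present statement the main obstacle is organisational rather than conceptual. One must set up the reduction to rank-one henselian valuation rings so that the $\mathbb{P}^1$-variants of the auxiliary presheaves provably remain finitary cdh sheaves satisfying henselian $v$-excision (this is what Lemma~\ref{lemmaAdescentresultforcdhhighdegreesyntomiccohomology} supplies), and one must then patch the equicharacteristic-$p$ and $p$-invertible regimes with the results of Bachmann--Elmanto--Morrow and Elmanto--Morrow while keeping every identification compatible with the syntomic first Chern class through cdh sheafification and truncation.
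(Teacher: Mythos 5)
Your proposal is correct and follows essentially the same route as the paper: reduction to rank at most one henselian valuation rings via Lemmas~\ref{lemmaAdescentresultforcdhhighdegreesyntomiccohomology} and~\ref{lemmaBhvexcisivefinitarycdhsheavesimplysufficientonrankatmost1henselianvaluationrings}, then the trichotomy ($p$ invertible, $p$ zero, mixed characteristic), with the mixed characteristic case delegated to Proposition~\ref{propositionCP1bundleformulaformixedcharvaluationringrank1} and the other two cases handled by the fibre sequence relating $\Z/p^k(i)^{\text{cdh}}$, étale (resp.\ cdh-sheafified BMS) cohomology, and the high-degree truncation. The only quibble is a citation: the identification of $L_{\text{cdh}}\tau^{\leq i}R\Gamma_{\text{ét}}(-,\mu_{p^k}^{\otimes i})$ with $\Z/p^k(i)^{\text{cdh}}$ on $\Z[\tfrac{1}{p}]$-schemes is \cite[Theorem~$7.14$]{bachmann_A^1-invariant_2024}, not \cite[Theorem~$5.10$]{bouis_motivic_2024}.
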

	
	\begin{proof}
		The presheaves $L_{\text{cdh}} \tau^{>i} \Z/p^k(i)^{\text{syn}}$ and $\big(L_{\text{cdh}} \tau^{>i} \Z/p^k(i)^{\text{syn}}\big)(\mathbb{P}^1_{-})$ are finitary cdh shea\-ves on qcqs schemes, and satisfy henselian $v$-excision (Lemma~\ref{lemmaAdescentresultforcdhhighdegreesyntomiccohomology}). It then suffices to prove the desired equivalence for henselian valuation rings of rank at most one (Lemma~\ref{lemmaBhvexcisivefinitarycdhsheavesimplysufficientonrankatmost1henselianvaluationrings}). Let~$V$ be a henselian valuation ring of rank at most one. 
		
		If $p$ is invertible in the valuation ring $V$, then this is equivalent to proving that the natural map
		$$\tau^{>i} R\Gamma_{\text{ét}}(V,\mu_{p^k}^{\otimes i}) \oplus \big(\tau^{>i-1} R\Gamma_{\text{ét}}(V,\mu_{p^k}^{\otimes (i-1)})\big)[-2] \longrightarrow \big(L_{\text{cdh}} \tau^{>i} R\Gamma_{\text{ét}}(-,\mu_{p^k}^{\otimes i})\big)(\mathbb{P}^1_V)$$
		is an equivalence in the derived category $\mathcal{D}(\Z/p^k)$. For every integer $i \geq 0$, there is a fibre sequence of $\mathcal{D}(\Z/p^k)$-valued presheaves
		$$\Z/p^k(i)^{\text{cdh}}(-) \longrightarrow R\Gamma_{\text{ét}}(-,\mu_{p^k}^{\otimes i}) \longrightarrow L_{\text{cdh}} \tau^{>i} R\Gamma_{\text{ét}}(-,\mu_{p^k}^{\otimes i})$$
		on qcqs $\Z[\tfrac{1}{p}]$-schemes (\cite[Theorem~$7.14$]{bachmann_A^1-invariant_2024}). The desired equivalence is then a consequence of the $\mathbb{P}^1$\nobreakdash-bundle formula on qcqs $\Z[\tfrac{1}{p}]$-schemes for the presheaf $\Z/p^k(i)^{\text{cdh}}$ (\cite[Section~$8$]{bachmann_A^1-invariant_2024}) and the presheaf $R\Gamma_{\text{ét}}(-,\mu_{p^k}^{\otimes i})$ (\cite[proof of Theorem~$9.1.1$]{bhatt_absolute_2022}). 
		
		If $p$ is zero in the valuation ring $V$, then this is a consequence of the $\mathbb{P}^1$-bundle formula on qcqs $\F_p$-schemes for the presheaves $\Z/p^k(i)^{\text{cdh}}$ (\cite[Section~$8$]{bachmann_A^1-invariant_2024}) and $L_{\text{cdh}} \Z/p^k(i)^{\text{BMS}}$ (\cite[Theorem~$5.14$]{elmanto_motivic_2023}). 
		
		If $p$ is neither invertible nor zero in the valuation ring $V$, then $V$ is a rank one henselian valuation ring of mixed characteristic $(0,p)$, and the result is Proposition~\ref{propositionCP1bundleformulaformixedcharvaluationringrank1}.
	\end{proof}
	
	\begin{proof}[Proof of Theorem~\ref{theoremP^1bundleformulaformotiviccohomology}]
		It suffices to prove the result rationally, and modulo $p$ for every prime number~$p$. Rationally, the Atiyah--Hirzebruch spectral sequence degenerates (\cite[Theorem~$4.1$]{bouis_motivic_2024}), so the result is a consequence of the $\mathbb{P}^1$-bundle formula for algebraic $K$-theory (Section~\ref{subsectionfirstChernclasses}). Let $p$ be a prime number. By \cite[Theorem~$5.10$]{bouis_motivic_2024}, for every integer $i \in \Z$, there is a fibre sequence of $\mathcal{D}(\F_p)$-valued presheaves on qcqs schemes
		$$\F_p(i)^{\text{mot}}(-) \longrightarrow \F_p(i)^{\text{syn}}(-) \longrightarrow \big(L_{\text{cdh}} \tau^{>i} \F_p(i)^{\text{syn}}\big)(-).$$
		By \cite[Theorem~$9.1.1$]{bhatt_absolute_2022}, the natural map
		$$\pi^\ast \oplus c_1^{\text{syn}}(\mathcal{O}(1))\pi^\ast : \F_p(i)^{\text{syn}}(X) \oplus \F_p(i-1)^{\text{syn}}(X)[-2] \longrightarrow \F_p(i)^{\text{syn}}(\mathbb{P}^1_X)$$
		is an equivalence in the derived category $\mathcal{D}(\F_p)$. The result modulo $p$ is then a consequence of Proposition~\ref{propositionP^1bundleformulaforcdhlocalhighdegreesyntomiccohomology}.
	\end{proof}
	
	\subsection{Regular blowup and projective bundle formulae}
	
	\vspace{-\parindent}
	\hspace{\parindent}
	
	In this subsection, we prove the regular blowup formula for the motivic complexes $\Z(i)^{\text{mot}}$ (Theorem~\ref{theoremregularblowupformula}). By an argument of Annala--Iwasa, this and the $\mathbb{P}^1$-bundle formula imply the general projective bundle formula for the motivic complexes $\Z(i)^{\text{mot}}$ (Theorem~\ref{theoremprojectivebundleformula}).
	
	\begin{theorem}[Regular blowup formula]\label{theoremregularblowupformula}
		Let $Y \rightarrow Z$ be a regular closed immersion of qcqs schemes.\footnote{A morphism $Y \rightarrow Z$ is a regular closed immersion if it is a closed immersion, and if $Z$ admits an affine open cover such that $Y$ is defined by a regular sequence on each of the corresponding affine schemes.} Then for every integer $i \geq 0$, the commutative diagram
		$$\begin{tikzcd}
			\Z(i)^{\emph{mot}}(Z) \ar[r] \ar[d] & \Z(i)^{\emph{mot}}(Y) \ar[d] \\
			\Z(i)^{\emph{mot}}(\emph{Bl}_Y(Z)) \ar[r] & \Z(i)^{\emph{mot}}(\emph{Bl}_Y(Z) \times_Z Y)
		\end{tikzcd}$$
		is a cartesian square in the derived category $\mathcal{D}(\Z)$.
	\end{theorem}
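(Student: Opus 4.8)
The plan is to show the square becomes cartesian after rationalisation and after reduction modulo $p$ for every prime $p$; since a complex in $\mathcal{D}(\Z)$ vanishes exactly when it does so rationally and modulo every prime, this forces the total fibre of the square to vanish. One may also assume throughout that $Z$ is noetherian, even affine, as $\Z(i)^{\text{mot}}$ is finitary and regular closed immersions and blowups are compatible with filtered colimits of rings. Rationally, I would use that algebraic $K$-theory splits naturally as $\text{K}(-;\Q) \simeq \bigoplus_{j \geq 0} \Q(j)^{\text{mot}}(-)[2j]$ (via Adams operations; equivalently, the Atiyah--Hirzebruch spectral sequence degenerates rationally, \cite[Theorem~$4.1$]{bouis_motivic_2024}). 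As Thomason's theorem (\cite{thomason_K-groupes_1993}) asserts precisely that algebraic $K$-theory carries the regular blowup square to a cartesian square, so does $\text{K}(-;\Q)$, hence so does each weight summand; in particular $\Q(i)^{\text{mot}}$ does.

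For the reduction modulo $p$, note first that the regular blowup square is an abstract blowup square in the sense of Notation~\ref{notationabstractblowupsquare}: $\text{Bl}_Y(Z) \to Z$ is proper and finitely presented (the centre being a finitely presented closed subscheme, since $Y \to Z$ is a regular closed immersion), $Y \to Z$ is a finitely presented closed immersion, and $\text{Bl}_Y(Z)$ is an isomorphism over $Z \setminus Y$; hence every cdh sheaf sends it to a cartesian square. Combining this with the fibre sequence of $\mathcal{D}(\F_p)$-valued presheaves $\F_p(i)^{\text{mot}}(-) \to \F_p(i)^{\text{syn}}(-) \to (L_{\text{cdh}} \tau^{>i} \F_p(i)^{\text{syn}})(-)$ of \cite[Theorem~$5.10$]{bouis_motivic_2024}, whose third term is a cdh sheaf, reduces the problem to the regular blowup formula for syntomic cohomology $\F_p(i)^{\text{syn}}$. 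For this I would argue as in the proof of Lemma~\ref{lemmaprocdhdescentformodsyntomiccohomology}: the recollement fibre sequence relating $\F_p(i)^{\text{syn}}$ to $R\Gamma_{\text{ét}}(-,j_!\mu_{p^k}^{\otimes i})$ and $\F_p(i)^{\text{BMS}}$ (\cite[Remark~$8.4.4$]{bhatt_absolute_2022}), together with the conjugate and Nygaard filtrations on the relevant prismatic complexes — whose graded pieces are modulo $p$ powers of the cotangent complex — reduces it further to the regular blowup formula for étale cohomology $R\Gamma_{\text{ét}}(-,j_!\mu_{p^k}^{\otimes i})$, which is a standard consequence of the six-functor formalism (compare the arc-descent argument of Lemma~\ref{lemmaFweirdétalecohomologygroupisconcentratedincohomologicaldegreei+1}), and for the presheaves $R\Gamma_{\text{Zar}}(-,\mathbb{L}^j_{-/\Z})$ and their modulo $p$ variants.

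The hard part will be this last input, the regular blowup formula for powers of the cotangent complex. I expect the cleanest route is Orlov's semiorthogonal decomposition of the derived category of the blowup of a regular closed immersion — valid in the required generality through the theory of derived blowups — applied to the additive invariant given by Hochschild homology relative to $\Z$: this shows that $\text{HH}(-/\Z)$ carries the regular blowup square to a cartesian square, hence so does each graded piece $\mathbb{L}^j_{-/\Z}[j]$ of its Hochschild--Kostant--Rosenberg filtration, and likewise for the modulo $p$ and $p$-completed variants. A variant of the whole argument that avoids invoking Thomason's theorem would reduce, via the cartesian square of \cite[Remark~$3.21$]{bouis_motivic_2024} (whose bottom row consists of cdh sheaves), to the regular blowup formula for $\Z(i)^{\text{TC}}$, and then apply the fracture square of \cite[Corollary~$4.31$]{bouis_motivic_2024}, so that both the rational and $p$-adic contributions come down uniformly to the regular blowup formula for powers of the cotangent complex and for $\prod_{p} \Z_p(i)^{\text{BMS}}$ — the same technical heart.
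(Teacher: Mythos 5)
Your overall architecture coincides with the paper's: one proves the statement rationally and modulo every prime $p$, and in the mod $p$ case one reduces, through the syntomic/BMS complexes and the Nygaard and conjugate filtrations on prismatic cohomology, to the regular blowup formula for (mod $p$) powers of the cotangent complex. Your rational step is a legitimate variant: the paper instead uses the fibre sequence of \cite[Corollary~$4.67$]{bouis_motivic_2024} to reduce to $R\Gamma_{\text{Zar}}(-,\mathbb{L}\Omega^{<i}_{-_{\Q}/\Q})$ and hence again to powers of the cotangent complex, whereas you invoke Thomason's theorem together with the rational degeneration of the Atiyah--Hirzebruch spectral sequence; the latter is exactly the mechanism the paper uses for the rational part of the $\mathbb{P}^1$-bundle formula, so nothing is lost there. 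Your detour through $\F_p(i)^{\text{syn}}$ and the recollement with $R\Gamma_{\text{ét}}(-,j_!\mu_{p^k}^{\otimes i})$ is also harmless, though the paper short-circuits it by quoting \cite[Corollary~$3.26$]{bouis_motivic_2024} to pass directly from $\F_p(i)^{\text{mot}}$ to $\F_p(i)^{\text{BMS}}$.

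The genuine gap is in your treatment of the ``technical heart''. From Orlov's semiorthogonal decomposition one does get that $\text{HH}(-/\Z)$ sends the regular blowup square to a cartesian square, but the inference ``hence so does each graded piece $\mathbb{L}^j_{-/\Z}[j]$ of its HKR filtration'' is not formal: a filtered object can have vanishing underlying object and nonvanishing associated graded, so cartesianness of the unfiltered square says nothing about the graded squares unless you first upgrade the decomposition to an equivalence of \emph{filtered} objects. That upgrade is nontrivial precisely because the summands in Orlov's decomposition involve twists by $\mathcal{O}(j)$ on the exceptional divisor, which shift the HKR filtration; producing the filtered (or graded) refinement is exactly the kind of bookkeeping the paper is careful about elsewhere (compare Remark~\ref{remarkfilteredadditiveinvariants} on filtered refinements of the $\mathbb{P}^1$-bundle map), and it is essentially equivalent in difficulty to proving the blowup formula for $\mathbb{L}^j$ directly. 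The paper avoids the issue entirely by citing \cite[Lemma~$9.4.3$]{bhatt_absolute_2022}, which establishes the regular blowup formula for the presheaves $R\Gamma_{\text{Zar}}(-,\mathbb{L}^j_{-/\Z})$ by a direct argument with the cotangent complex. As written, your proof of this input does not go through; either quote that lemma or supply the filtered refinement of the semiorthogonal decomposition.
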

	
	\begin{proof}
		It suffices to prove the result rationally, and modulo $p$ for every prime number $p$. By definition, a cdh sheaf sends an abstract blowup square to a cartesian square, and in particular satisfies the regular blowup formula. By \cite[Corollary~$4.67$]{bouis_motivic_2024}, the regular blowup formula for the presheaf $\Q(i)^{\text{mot}}$ is then equivalent to the regular blowup formula for the presheaf $R\Gamma_{\text{Zar}}(-,\mathbb{L}\Omega^{<i}_{-_{\Q}/\Q})$. And the regular blowup formula for the presheaf $R\Gamma_{\text{Zar}}(-,\mathbb{L}\Omega^{<i}_{-_{\Q}/\Q})$ is a consequence of the fact that for every integer $j \geq 0$, the presheaf $R\Gamma_{\text{Zar}}(-,\mathbb{L}^j_{-_/\Z} \otimes_{\Z} \Q)$ satisfies the regular blowup formula (\cite[Lemma~$9.4.3$]{bhatt_absolute_2022}). 
		
		Let $p$ be a prime number. Similarly, \cite[Corollary~$3.26$]{bouis_motivic_2024} implies that the regular blowup formula for the presheaf $\F_p(i)^{\text{mot}}$ is equivalent to the regular blowup formula for the presheaf $\F_p(i)^{\text{BMS}}$. By \cite[Corollary~$5.31$]{antieau_beilinson_2020}, there exists an integer $m \geq 0$ and an equivalence of presheaves
		$$\F_p(i)^{\text{BMS}}(-) \xlongrightarrow{\sim} \text{fib} \Big(\text{can}-\phi_i : (\mathcal{N}^{\geq i} \Prism_{-}\{i\}/\mathcal{N}^{\geq i+m} \Prism_{-}\{i\})/p \longrightarrow (\Prism_{-}\{i\}/\mathcal{N}^{\geq i+m} \Prism_{-}\{i\})/p\Big).$$
		In particular, it suffices to prove that for every integer $j \geq 0$, the presheaf $\mathcal{N}^j \Prism_{-}/p$ satisfies the regular blowup formula. By \cite[Remark~$5.5.8$ and Example~$4.7.8$]{bhatt_absolute_2022}, there is a fibre sequence of presheaves
		$$\mathcal{N}^j \Prism_{-} \{i\}/p \longrightarrow \text{Fil}^{\text{conj}}_j \overline{\Prism}_{-/\Z_p\llbracket \widetilde{p} \rrbracket}/p \xlongrightarrow{\Theta + j} \text{Fil}^{\text{conj}}_{j-1} \overline{\Prism}_{-/\Z_p\llbracket \widetilde{p} \rrbracket}/p.$$
		The presheaves $\text{Fil}^{\text{conj}}_j \overline{\Prism}_{-/\Z_p\llbracket \widetilde{p} \rrbracket}/p$ and $\text{Fil}^{\text{conj}}_{j-1} \overline{\Prism}_{-/\Z_p\llbracket \widetilde{p} \rrbracket}/p$ have finite filtrations with graded pieces given by modulo $p$ powers of the cotangent complex, and the result is then a consequence of the regular blowup formula for powers of the cotangent complex (\cite[Lemma~9.4.3]{bhatt_absolute_2022}).
	\end{proof}
	
	\begin{theorem}[Projective bundle formula]\label{theoremprojectivebundleformula}
		Let $X$ be a qcqs scheme, $r \geq 1$ be an integer, $\mathcal{E}$~be a vector bundle of rank $r+1$ on $X$, and $\pi : \mathbb{P}_X(\mathcal{E}) \rightarrow X$ be the projectivisation of $\mathcal{E}$. Then for every integer $i \in \Z$, the natural map
		$$\sum_{j=0}^r c_1^{\emph{mot}}(\mathcal{O}(1))^j \pi^{\ast} : \bigoplus_{j=0}^r \Z(i-j)^{\emph{mot}}(X)[-2j] \longrightarrow \Z(i)^{\emph{mot}}(\mathbb{P}_X(\mathcal{E}))$$
		is an equivalence in the derived category $\mathcal{D}(\Z)$.
	\end{theorem}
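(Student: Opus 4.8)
The plan is to deduce the general projective bundle formula from the two special cases already at hand: the $\mathbb{P}^1$-bundle formula (Theorem~\ref{theoremP^1bundleformulaformotiviccohomology}) and the regular blowup formula (Theorem~\ref{theoremregularblowupformula}), following the formal argument of Annala--Iwasa. First I would recall that for a qcqs scheme $X$ and a rank-$(r+1)$ vector bundle $\mathcal{E}$ on $X$, the projectivisation $\mathbb{P}_X(\mathcal{E})$ is Zariski-locally on $X$ isomorphic to $\mathbb{P}^r_X$; since the motivic complexes $\Z(i)^{\text{mot}}$ are Nisnevich (hence Zariski) sheaves, and since both sides of the claimed equivalence are Zariski sheaves in $X$ compatibly with the map $\sum_j c_1^{\text{mot}}(\mathcal{O}(1))^j\pi^\ast$, a standard induction on the size of a trivialising cover (using the Mayer--Vietoris/descent spectral sequence, and the fact that on intersections the bundle is again trivial) reduces the statement to the case $\mathcal{E} = \mathcal{O}_X^{r+1}$, i.e.\ to the projective bundle $\mathbb{P}^r_X$. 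Here one must keep track of the compatibility of the first Chern class $c_1^{\text{mot}}(\mathcal{O}(1))$ and the pullback maps under restriction to opens, which is immediate from naturality of Definition~\ref{definitionmotivicfirstChernclass}; the multiplicativity of $\Z(i)^{\text{mot}}$ ensures that the cup-product powers $c_1^{\text{mot}}(\mathcal{O}(1))^j$ are well-defined and compatible.

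Next I would run the induction on $r$. The base case $r=1$ is exactly Theorem~\ref{theoremP^1bundleformulaformotiviccohomology}. For the inductive step, the key geometric input is that $\mathbb{P}^r_X$ is obtained from $\mathbb{P}^{r-1}_X \times_X \mathbb{P}^1_X$ — or more precisely, one uses the blowup of $\mathbb{P}^r_X$ along a linearly embedded $\mathbb{P}^{r-2}_X$ (a regular closed immersion), whose exceptional divisor and whose target both fit into smaller projective bundles over projective bundles. Concretely, following Annala--Iwasa, one considers the regular closed immersion $\mathbb{P}^{r-2}_X \hookrightarrow \mathbb{P}^r_X$; its blowup $\mathrm{Bl}_{\mathbb{P}^{r-2}_X}(\mathbb{P}^r_X)$ is a $\mathbb{P}^1$-bundle over $\mathbb{P}^{r-1}_X$, and the exceptional divisor is a $\mathbb{P}^1$-bundle over $\mathbb{P}^{r-2}_X$. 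Feeding this regular blowup square into Theorem~\ref{theoremregularblowupformula} produces a cartesian square whose three "known" corners are, by the $\mathbb{P}^1$-bundle formula (Theorem~\ref{theoremP^1bundleformulaformotiviccohomology}) and the inductive hypothesis for $r-1$ and $r-2$, each a finite sum of shifted copies of $\Z(i-j)^{\text{mot}}(X)$; a bookkeeping of the resulting long exact sequence, together with the compatibility of all the first Chern classes and projection maps, then identifies the fourth corner $\Z(i)^{\text{mot}}(\mathbb{P}^r_X)$ with $\bigoplus_{j=0}^r \Z(i-j)^{\text{mot}}(X)[-2j]$ via $\sum_j c_1^{\text{mot}}(\mathcal{O}(1))^j\pi^\ast$.

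The main obstacle, as usual with this kind of argument, is not any single deep statement but rather the careful verification that the abstract equivalences respect the \emph{specific} map $\sum_{j=0}^r c_1^{\text{mot}}(\mathcal{O}(1))^j\pi^\ast$ built from the motivic first Chern class — i.e.\ that the splitting produced by the blowup-plus-induction bookkeeping is genuinely the one given by powers of $c_1^{\text{mot}}(\mathcal{O}(1))$, and not merely some abstract isomorphism of the two objects. This is handled exactly as in \cite{annala_motivic_2023} (see also \cite[Section~5]{elmanto_motivic_2023}): one notes that the relevant pullback and Gysin/Chern-class maps among $\mathbb{P}^r_X$, $\mathbb{P}^{r-1}_X$, $\mathbb{P}^{r-2}_X$, and the blowup are all compatible by construction (Definition~\ref{definitionmotivicfirstChernclass} and its naturality, together with the multiplicative structure), so the diagram chase can be carried out with the explicit maps throughout. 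The only point requiring a little care is convergence/boundedness of the iterated fibre sequences, but since $\Z(i)^{\text{mot}}$ takes values in $\mathcal{D}(\Z)$ and all the schemes involved are qcqs, no finiteness hypothesis is needed; the purely formal argument of Annala--Iwasa applies verbatim once the $\mathbb{P}^1$-bundle and regular blowup formulae are known, which is the content of Theorems~\ref{theoremP^1bundleformulaformotiviccohomology} and~\ref{theoremregularblowupformula}.
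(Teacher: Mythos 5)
Your overall template --- reduce to the trivial bundle by Zariski descent, then induct on $r$ by combining the $\mathbb{P}^1$-bundle formula (Theorem~\ref{theoremP^1bundleformulaformotiviccohomology}) with the regular blowup formula (Theorem~\ref{theoremregularblowupformula}) via the Annala--Iwasa mechanism --- is exactly the paper's strategy. But the geometric input you feed into the inductive step is incorrect: the blowup of $\mathbb{P}^r_X$ along a linearly embedded $\mathbb{P}^{r-2}_X$ is \emph{not} a $\mathbb{P}^1$-bundle over $\mathbb{P}^{r-1}_X$. Linear projection away from the centre exhibits $\mathrm{Bl}_{\mathbb{P}^{r-2}_X}(\mathbb{P}^r_X)$ as a $\mathbb{P}^{r-1}$-bundle over $\mathbb{P}^1_X$ (the fibres are the hyperplanes containing the centre); the two descriptions coincide only for $r=2$. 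Already for $r=3$ the threefold $\mathrm{Bl}_{\mathbb{P}^1}(\mathbb{P}^3)$ has exactly two nontrivial contractions --- the blowdown to $\mathbb{P}^3$ and the $\mathbb{P}^2$-fibration over $\mathbb{P}^1$ --- and admits no $\mathbb{P}^1$-fibration over $\mathbb{P}^2$. Consequently the three ``known'' corners of your blowup square are not what you claim, and the bookkeeping you defer to cannot be carried out as described: identifying the fourth corner would require the $\mathbb{P}^{r-1}$-bundle formula for a nontrivial bundle over $\mathbb{P}^1_X$, which is not yet available at that stage of your induction without further argument.

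There are two standard repairs. One is to blow up along a \emph{section} $X \hookrightarrow \mathbb{P}^r_X$ instead: then $\mathrm{Bl}_X(\mathbb{P}^r_X) \cong \mathbb{P}_{\mathbb{P}^{r-1}_X}(\mathcal{O}\oplus\mathcal{O}(1))$ genuinely is a $\mathbb{P}^1$-bundle over $\mathbb{P}^{r-1}_X$, with exceptional divisor $\mathbb{P}^{r-1}_X$ (not a $\mathbb{P}^1$-bundle over $\mathbb{P}^{r-2}_X$); this works, at the cost of needing the rank-two projective bundle formula for the nontrivial bundle $\mathcal{O}\oplus\mathcal{O}(1)$ (another round of Zariski descent) and a careful comparison of the various $\mathcal{O}(1)$'s. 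The other is what the paper actually does: apply the regular blowup formula only to the zero section $X \to \mathbb{A}^{m+1}_X$, i.e.\ to the square
$$\begin{tikzcd}
	\mathbb{P}^m_X \ar[r] \ar[d] & \mathrm{Bl}_X(\mathbb{A}^{m+1}_X) \ar[d] \\
	X \arrow[r] & \mathbb{A}^{m+1}_X
\end{tikzcd}$$
which is precisely the input to \cite[Lemma~$3.3.5$]{annala_motivic_2023}. Your closing claim that the Annala--Iwasa argument ``applies verbatim'' is therefore in tension with your own choice of square: that lemma is formulated for the affine-space blowup square above, not for blowups of projective space along linear centres, so you cannot both use your square and outsource the diagram chase to their lemma.
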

	
	\begin{proof}
		By Zariski descent, it suffices to consider the case where the vector bundle $\mathcal{E}$ is given by $\mathbb{A}^{r+1}_X$, {\it i.e.}, to prove that the natural map 
		$$\sum_{j=0}^r c_1^{\text{mot}}(\mathcal{O}(1))^j \pi^{\ast} : \bigoplus_{j=0}^r \Z(i-j)^{\text{mot}}(X)[-2j] \longrightarrow \Z(i)^{\text{mot}}(\mathbb{P}^r_X)$$
		is an equivalence in the derived category $\mathcal{D}(\Z)$. The presheaves $\Z(i)^{\text{mot}}$ satisfy the $\mathbb{P}^1$-bundle formula (Theorem~\ref{theoremP^1bundleformulaformotiviccohomology}). Moreover, for every qcqs scheme $X$ and every integer $m \geq 0$, they send the blowup square
		$$\begin{tikzcd}
			\mathbb{P}^m_X \ar[r] \ar[d] & \text{Bl}_X(\mathbb{A}^{m+1}_X) \ar[d] \\
			X \arrow[r,"0"] & \mathbb{A}^{m+1}_X
		\end{tikzcd}$$
		to a cartesian square in the derived category $\mathcal{D}(\Z)$ (Theorem~\ref{theoremregularblowupformula}, in the special case where the regular closed immersion $Y \rightarrow Z$ is the zero section $X \rightarrow \mathbb{A}^{m+1}_X$).
		By the argument of \cite[Lemma~$3.3.5$]{annala_motivic_2023}, these two properties imply, by induction, the desired projective bundle formula.
	\end{proof}
	
	In the following result, denote by $\Z(i)^{\text{mot}}_X : \text{Sm}^{\text{op}}_X \rightarrow \mathcal{D}(\Z)$ the Zariski sheaves on smooth schemes over $X$ induced by restriction of the motivic complexes $\Z(i)^{\text{mot}}$.
	
	\begin{corollary}[Motivic cohomology is represented in motivic spectra]\label{corollaryP1motivicspectra}
		For every qcqs scheme $X$, the motivic complexes $\{\Z(i)^{\emph{mot}}_X\}_{i \in \Z}$ are represented by a $\mathbb{P}^1$-motivic spectrum in the sense of \cite{annala_motivic_2023}.
	\end{corollary}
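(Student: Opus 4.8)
The plan is to assemble the family $\{\Z(i)^{\mathrm{mot}}_X\}_{i \in \Z}$ into a $\mathbb{P}^1$-$\Omega$-spectrum on $\mathrm{Sm}_X$ by hand, and then invoke the recognition/universality results of Annala--Iwasa and Annala--Hoyois--Iwasa. For each $i \in \Z$, I would set $E_i := \Z(i)^{\mathrm{mot}}_X[2i]$, the Nisnevich sheaf of spectra on $\mathrm{Sm}_X$ obtained by restricting the motivic complex and placing weight $i$ in cohomological degree $2i$ (this is a Nisnevich sheaf by \cite[Theorem~C]{bouis_motivic_2024}, and it is finitary by \cite[Corollary~$4.59$]{bouis_motivic_2024}). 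The multiplicative structure on the motivic complexes \cite{bouis_motivic_2024} upgrades $(E_i)_{i \in \Z}$ to a commutative algebra object, which will give the ring structure on the representing spectrum.

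Next I would verify the two geometric axioms of the Annala--Iwasa framework. First, blowup excision: by Theorem~\ref{theoremregularblowupformula}, for every regular closed immersion $Y \hookrightarrow Z$ of schemes smooth over $X$ the associated square is cartesian, and in particular this covers the blowup of a smooth center, for which $\mathrm{Bl}_Y(Z)$ and $\mathrm{Bl}_Y(Z) \times_Z Y$ remain smooth over $X$. Second, the $\mathbb{P}^1$-bundle formula supplies the bonding maps: for $U \in \mathrm{Sm}_X$ with projection $\pi \colon \mathbb{P}^1_U \to U$, Theorem~\ref{theoremP^1bundleformulaformotiviccohomology} states that $\pi^\ast$ together with $c_1^{\mathrm{mot}}(\mathcal{O}(1))\pi^\ast$ (Definition~\ref{definitionmotivicfirstChernclass}) yields a natural equivalence $\Z(i)^{\mathrm{mot}}(U) \oplus \Z(i-1)^{\mathrm{mot}}(U)[-2] \xrightarrow{\sim} \Z(i)^{\mathrm{mot}}(\mathbb{P}^1_U)$. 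Splitting off the $\pi^\ast$-summand identifies the reduced $(\mathbb{P}^1,\infty)$-evaluation of $E_i$ with $E_{i-1}$; since smashing with $(\mathbb{P}^1,\infty)$ is the Tate twist-and-shift $(1)[2]$, this says exactly that the adjoint structure maps $E_{i-1} \to \Omega_{\mathbb{P}^1} E_i$ are equivalences, i.e. that $(E_i)_i$ is a $\mathbb{P}^1$-$\Omega$-spectrum; the full projective bundle formula (Theorem~\ref{theoremprojectivebundleformula}) then records the compatible Tate-twist decomposition of $\mathbb{P}^r$-bundles.

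With these ingredients in place, I would invoke the recognition principle of \cite{annala_motivic_2023} (see also \cite{annala_algebraic_2025,annala_atiyah_2024}): a finitary Nisnevich sheaf of spectra on $\mathrm{Sm}_X$ satisfying blowup excision for regular closed immersions and equipped with $\mathbb{P}^1$-loop equivalences determines an object of the category of $\mathbb{P}^1$-motivic spectra over $X$ which represents it, i.e. whose Tate twists recover the given sheaves. Applied to the $\mathbb{E}_\infty$-object $(E_i)_i$, this produces the desired $\mathbb{P}^1$-motivic spectrum $\mathbf{Z}^{\mathrm{mot}}_X$.

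The main obstacle is the coherence involved in the bonding maps: one must promote the pointwise equivalences of Theorem~\ref{theoremP^1bundleformulaformotiviccohomology} to a map of Nisnevich sheaves of spectra on $\mathrm{Sm}_X$ that is compatible with the motivic first Chern class and with the multiplication, and one must check that the projection onto the reduced summand is natural in $U$. This is precisely where the compatibilities assembled in Section~\ref{subsectionfirstChernclasses} do the work — the identification of $c_1^{\mathrm{mot}}$ with the first Chern classes of the additive invariants $\mathrm{K}$ and $\mathrm{K}^{\mathrm{Sel}}$ through their filtered refinements — so that the $\mathbb{P}^1$-bundle equivalence is witnessed by genuinely structured maps rather than merely by an abstract isomorphism, and hence splices together coherently over all $i$.
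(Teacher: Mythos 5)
Your proposal is correct and follows essentially the same route as the paper: the paper's proof likewise deduces the statement directly from the definition of $\mathbb{P}^1$-motivic spectra, using elementary blowup excision (as a special case of Theorem~\ref{theoremregularblowupformula}) and the $\mathbb{P}^1$-bundle formula (Theorem~\ref{theoremP^1bundleformulaformotiviccohomology}). Your additional discussion of the bonding maps and the coherence supplied by Section~\ref{subsectionfirstChernclasses} is a faithful unwinding of what the citation to \cite{annala_motivic_2023} packages.
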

	
	\begin{proof}
		By definition of $\mathbb{P}^1$-motivic spectra, this is a consequence of elementary blowup excision (which is a special case of Theorem~\ref{theoremregularblowupformula}) and the $\mathbb{P}^1$-bundle formula (Theorem~\ref{theoremP^1bundleformulaformotiviccohomology}).
	\end{proof}

	
	
	
	
	\bibliographystyle{alpha}
	
	{\footnotesize
\bibliography{biblio.bib}
}

\end{document}